\documentclass[10pt,a4paper]{article}

\usepackage[utf8]{inputenc}
\usepackage[T1]{fontenc}
\usepackage[english]{babel}
\usepackage{lmodern}

\usepackage[
a4paper,
top=2.2cm, bottom=2.2cm,
left=2.2cm, right=2.2cm,
headheight=12pt
]{geometry}
\usepackage{amsmath,amssymb,amsthm,mathtools}
\usepackage{mathrsfs}   % \mathscr
\numberwithin{equation}{section}  % equations numbered as (section.n), e.g. (5.5)
\usepackage{tikz}
\usetikzlibrary{patterns, decorations.pathreplacing, arrows.meta}
\usepackage{microtype}
\usepackage[skip=1pt plus 1pt]{parskip}
\usepackage[compact]{titlesec}

\usepackage{enumitem}
\setlist{noitemsep, topsep=2pt}

\usepackage{booktabs}

\usepackage[dvipsnames]{xcolor}
\definecolor{RoyalRed}{RGB}{157, 16, 45}
\usepackage[
colorlinks = true,
linkcolor  = RoyalBlue,
citecolor  = RoyalRed,
urlcolor   = RoyalRed
]{hyperref}

\usepackage[capitalise, noabbrev]{cleveref}
\crefformat{equation}{(#2#1#3)}                    % \cref{eq:foo}      -> (5.4)
\crefrangeformat{equation}{(#3#1#4--#5#2#6)}       % \crefrange{}{} -> (5.4--5.7)

\newtheorem{theorem}{Theorem}[section]
\newtheorem{proposition}[theorem]{Proposition}
\newtheorem{lemma}[theorem]{Lemma}

\theoremstyle{definition}
\newtheorem{definition}[theorem]{Definition}
\newtheorem{assumption}{Assumption}[section]

\theoremstyle{remark}
\newtheorem{remark}[theorem]{Remark}

\newcommand{\R}{\mathbb{R}}

\newcommand{\omg}{\omega}               % in-plane cross-section
\newcommand{\Ohmh}{\Omega_{h}}         % thin domain
\newcommand{\Iint}{I}                  % thickness interval
\newcommand{\Yp}{Y^{\prime}}           % in-plane reference cell

\newcommand{\Ghpm}{\Gamma_{h}^{\pm}}
\newcommand{\Ghw}{\Gamma_{h}^{w}}
\newcommand{\Ghlat}{\Gamma_{h}^{\mathrm{lat}}}

\newcommand{\Szero}{\mathscr{S}_{0}}
\newcommand{\Tgrad}{\mathbb{T}}
\newcommand{\Hc}{\mathcal{H}}

\newcommand{\bA}{\mathbf{A}}
\DeclareMathOperator{\tr}{tr}           % trace
\newcommand{\abs}[1]{\left\lvert #1 \right\rvert}

\newcommand{\dd}{\,\mathrm{d}}         % upright differential

\newcommand{\eps}{\varepsilon}
\let\e\eps                              % \e is now an alias for \eps

\newcommand{\Fhe}{\mathbf{F}_{h,\eps}}
\newcommand{\mhe}{\mathbf{m}_{h,\eps}}

\newcommand{\Qc}{\mathcal{Q}}

\newcommand{\Rc}{\mathcal{R}}
\newcommand{\Mc}{\mathcal{M}}
\newcommand{\Te}{\mathcal{T}_{\varepsilon}}
\newcommand{\Peh}{\Pi_{h,\varepsilon}}

\newcommand{\wh}[1]{\widehat{#1}}
\newcommand{\wo}[1]{\overline{#1}}
\newcommand{\pe}{\mathrm{per}}          % periodicity subscript

\newcommand{\p}{\partial}              % partial derivative
\newcommand{\X}{\times}                % Cartesian product / cross
\def\O{{\Omega}}
\def\o{{\omega}}
\def\Ac{{\mathcal{A}}}
\def\Cc{{\mathcal{C}}}
\def\GF{{\bf F}}
\def\Gc{{\mathcal{G}}}
\def\Gm{{\bf m}}
\def\Ebulk{{E_{\mathrm{bulk}}}}
\def\surf{\mathrm{surf}}
\def\hQ{\wh Q}
\def\hR{\wh R}
\def\el{\mathrm{el}}

\def\S{\mathbb{S}}

\def\bulk{\mathrm{bulk}}
\def\per{\mathrm{per}}

\usepackage{color}

\def\A{\mathbb{A}}
\def\C{\mathbb{C}}
\def\D{\mathbb{D}}
\def\bn{{\mathbf{n}}}
\def\B{{\mathbb{B}}}

\begin{document}
	%=====================================================================
	
%	\title{\bfseries Homogenisation of Heterogeneous Nematic Thin Films: a Landau--de Gennes Analysis}
	
		\title{\bfseries Multiscale Analysis of a Landau--de Gennes Model for Nematic thin-film composites}
		
	\author{
		Amartya Chakrabortty\thanks{
			Department of Mathematics, Indian Institute of Technology Kharagpur,  721302, Kharagpur
			India.\\
			Email: \href{amartyat26@kgpian.iitkgp.ac.in}{amartyat26@kgpian.iitkgp.ac.in}\\
			This work was largely completed when the author was a PhD student and research assistant in the department of Processes and Material Simulation, Fraunhofer ITWM, Kaiserslautern, Germany.
		}
%		\and
%		Georges Griso\thanks{
%			Laboratoire Jacques-Louis Lions (LJLL), Sorbonne Université, Paris, France.
%			Email: \href{griso@ljll.math.upmc.fr}{griso@ljll.math.upmc.fr}
%		}
	}
	
	\date{\today}
	\maketitle
	\thispagestyle{empty}
	
\begin{abstract}
	We study the simultaneous limits of homogenisation $(\varepsilon\to0)$ and
	dimension reduction $(h\to0)$ for thin heterogeneous nematic liquid crystal
	films in the Landau--de Gennes $Q$-tensor framework. The elastic energy density
	is governed by a tensor $\bA(x'/\varepsilon,x_3/h)$ that is periodic in the
	in-plane fast variable and measurable in the normalised thickness variable.
	Surface anchoring on the top and bottom faces is modelled by a weak anchoring
	energy of strength $h^\gamma$, $\gamma\geq0$, in a general set-valued framework
	covering the principal classical anchoring geometries.
	
	The simultaneous limit reveals two principal features. First, the anchoring
	scaling yields a hierarchy of effective behaviours depending on $\gamma$: a hard
	constraint $\mathcal{Q}\in H^1(\omega;\mathscr{A})$ for $0\leq\gamma<1$, a finite
	surface density contribution for $\gamma=1$, and vanishing anchoring for
	$\gamma>1$. For $0\leq \gamma<1$ with uniaxial anchoring, the homogenised energy reduces on the constrained class to anisotropic Oseen--Frank and Ericksen energies.Second, the effective elastic response depends on the scale ratio
	$\rho=\lim h/\varepsilon\in[0,\infty]$: each regime has
	a distinct corrector structure and yields a two-dimensional LdG energy with
	homogenised tensor $\bA^{\mathrm{hom}}_\rho$, characterised by a
	regime-dependent cell problem.
	
	To the best of our knowledge, this is the first rigorous treatment of the simultaneous homogenisation and dimension reduction limit for the Landau--de Gennes energy, and the first in which the anchoring strength enters as a scaling parameter, producing a hierarchy of qualitatively distinct effective models.
\end{abstract}

\smallskip
\noindent\textbf{MSC 2020:} 35B27; 35Q70; 49J45; 74K20; 74Q05; 76A15.\\[2pt]
\textbf{Keywords:} Landau--de Gennes theory; $Q$-tensors; homogenisation;
dimension reduction; nematic liquid
crystals; surface anchoring; cell problem.

{\small	\tableofcontents}

\section{Introduction}
\label{sec:intro}

Nematic liquid crystals are partially ordered materials intermediate between
isotropic liquids and crystalline solids. Their rod-like molecules retain the
positional disorder of a liquid but develop a preferred orientational direction,
called the director. This orientational order underlies characteristic optical
and rheological effects, such as birefringence, flow alignment, and sensitivity
to electric and magnetic fields, with applications in displays, optical devices,
and soft actuators \cite{dGP93}.

Several continuum theories describe nematic order at different levels of
resolution \cite{dGP93,Virga94}. The Oseen--Frank theory \cite{Virga94}
represents the state by a unit director field $\mathbf{n}\in\S^2$ and gives a
geometrically transparent elastic theory away from defects, but its unit-vector
constraint prevents a regular description of defect cores and excludes biaxial
configurations. The Landau--de Gennes (LdG) theory \cite{dGP93,MZ10,BM10,MN14}
resolves this by using a symmetric traceless matrix field $Q\in\Szero$ as the
order parameter, whose eigenvalues and eigenvectors encode the degree and axes of
orientational order. This framework naturally accommodates biaxiality, defect
cores, and configurations not representable by a continuous director field
\cite{BZ11}. The corresponding free energy couples an elastic term, quadratic in
$\nabla Q$, with a non-convex bulk potential $f_b(Q)$ governing the
isotropic-to-nematic transition. Mathematical foundations of the LdG theory,
including existence and regularity of minimisers, the Oseen--Frank limit, and
refined approximation results, are developed in \cite{MZ10,BM10,NZ13}. The intermediate Ericksen theory \cite{Ericksen91,Lin91} retains the uniaxial
structure but allows the scalar order parameter to vary.

We study simultaneous homogenisation and dimension reduction for a thin
heterogeneous nematic film governed by the LdG energy. The film occupies $\Omega_h=\omega\times\left(-\frac h2,\frac h2\right)$,
where $\omega\subset\mathbb R^2$ is a bounded Lipschitz domain. Its thickness is
of order $h$, while the in-plane heterogeneity has period $\varepsilon$. We
consider the simultaneous limit
\begin{equation}\label{eq:rho}
(h,\varepsilon)\to(0,0),
\qquad
\frac h\varepsilon\to\rho\in[0,\infty].
\end{equation}
The limit is a family of two-dimensional LdG energies whose homogenised elastic
tensor depends on $\rho$. In the anchoring-dominated regime, suitable choices of
the effective constraint set yield homogenised anisotropic Oseen--Frank and
Ericksen energies.

The dependence on $\rho$ reflects the interaction between the thickness and
microstructure scales. If $\varepsilon\ll h$, in-plane homogenisation precedes
transverse relaxation; if $h$ and $\varepsilon$ are comparable, the two
relaxations remain coupled; and if $h\ll\varepsilon$, dimension reduction
precedes the in-plane homogenisation. This three-regime structure is familiar
from the simultaneous homogenisation and dimension reduction of elastic plates
\cite{CDG,Neu10,NV13}. Here it is developed for heterogeneous LdG energies with
anisotropic elastic coefficients and surface anchoring.

%The elastic energy density is governed by the heterogeneous tensor
%$\bA(x'/\varepsilon,x_3/h)$, $Y'$-periodic in the in-plane fast variable
%and measurable in the normalised thickness variable
%$x_3/h\in I:=(-\frac{1}{2},\frac{1}{2})$; no periodicity in the thickness
%direction is imposed. No continuity or periodicity in $x_3/h$ is
%required. 
%The bulk potential is the standard quartic LdG polynomial
%\[
%f_b(Q) := a\,\mathrm{tr}(Q^2) - b\,\mathrm{tr}(Q^3)
%+ c\bigl(\mathrm{tr}(Q^2)\bigr)^2,
%\quad a,b\in\mathbb{R},\; c>0,
%\]
%which is the local thermotropic part of the LdG energy \cite{dGP93,MN14,Gar18}.
%Surface anchoring on the top and bottom faces is formulated with strength
%$h^\gamma W(x'/\varepsilon, x_3/h, \nu)$ and exponent $\gamma\geq0$. Rather
%than fixing a single anchoring geometry, we work with a set-valued map
%$\mathcal{A}(y,\nu)$ that is $Y'$-periodic and accommodates heterogeneous
%preferred states on each face independently, with possible top--bottom asymmetry.
%The four classical anchoring models in the liquid crystal literature
%(Rapini--Papoular \cite{RP69}, degenerate planar, Fournier--Galatola \cite{FournierGalatola2005}, and planar
%anchoring with free in-plane director) all fit within this framework, as special
%cases verified in Section~\ref{sec:energy}.

The elastic density is determined by a uniformly coercive tensor $\bA\left(\frac{x'}{\varepsilon},\frac{x_3}{h}\right)$,
which is $Y'$-periodic in the in-plane variable and measurable in the normalised
thickness variable $x_3/h\in I:=(-\frac12,\frac12)$. No periodicity in the
thickness direction is imposed. The bulk potential is the quartic LdG
polynomial
\[
f_b(Q)
:=
a\,\operatorname{tr}(Q^2)
-b\,\operatorname{tr}(Q^3)
+c\bigl(\operatorname{tr}(Q^2)\bigr)^2,
\qquad a,b\in\mathbb R,\quad c>0.
\]
Surface anchoring on the top and bottom faces has strength
\[
h^\gamma
W\left(\frac{x'}{\varepsilon},\frac{x_3}{h},\nu\right),
\qquad \gamma\geq0.
\]
The preferred states are described by a set-valued map
$\mathcal A(y,\nu)\subset\Szero$. This formulation allows heterogeneous and
possibly different anchoring conditions on the two faces. Representative
examples include prescribed $Q$-tensor anchoring, prescribed normal action, and
fixed-order degenerate planar anchoring; see
Proposition~\ref{prop:compatible-examples}.

The relative rate between dimension reduction and homogenisation is encoded by
the parameter $\rho$.
%\begin{equation}
%	\label{eq:rho}
%	\rho := \lim_{(\varepsilon,h)\to(0,0)} \frac{h}{\varepsilon} \in [0,+\infty].
%\end{equation}
Our analysis identifies three distinct asymptotic regimes:
\begin{description}[leftmargin=2.5em,font=\normalfont]
	\item[(R1)\;$\rho=+\infty$:]
	The microstructure is much finer than the thickness, $\varepsilon\ll h$.
	In-plane homogenisation occurs at each fixed thickness level, followed by
	transverse relaxation.
	
	\item[(R2)\;$\rho\in(0,\infty)$:]
	The two scales are comparable. The in-plane and transverse microscopic
	relaxations are coupled, and the cell problem depends explicitly on $\rho$.
	
	\item[(R3)\;$\rho=0$:]
	The thickness is much finer than the microstructure, $h\ll\varepsilon$.
	The thickness reduction dominates, and the in-plane corrector is independent
	of the thickness variable.
\end{description}
Each regime leads to a different corrector space and cell problem. In the
central regime $\rho\in(0,\infty)$, the in-plane and transverse correctors enter
through the coupled field
\[
\widehat Q_\rho=\widehat Q+\rho\widehat R.
\]
The main result is the sequential $\Gamma$-convergence of the rescaled
three-dimensional energies to
\[
\GF^{\mathrm{hom},\gamma}_\rho(\Qc)
=
\frac12\int_\omega
\bA^{\mathrm{hom}}_\rho\nabla'\Qc:\nabla'\Qc\,dx'
+\int_\omega f_b(\Qc)\,dx'
+E^\gamma_{\mathrm{surf}}(\Qc).
\]
The homogenised tensor $\bA^{\mathrm{hom}}_\rho$ is characterised by a
regime-dependent variational cell problem. The convergence is established with
respect to the unfolded convergence of Definition~\ref{def:hom-conv} and is
accompanied by compactness of energy-bounded sequences and convergence of
minimum values and minimisers.

The anchoring contribution depends on $\gamma$. If $0\leq\gamma<1$, bounded
energy forces $\Qc\in H^1(\omega;\mathscr A)$,
where $\mathscr A$ is the effective intersection of the preferred states on the
two faces. If $\gamma=1$, the anchoring energy survives as the finite
contribution
\[
E^\gamma_{\mathrm{surf}}(\Qc) :=
\begin{cases}
	\displaystyle\int_\omega \bar{f}_s(\Qc(x'))\,dx', & \gamma=1,\\[4pt]
	0, & \gamma\in[0,\infty)\setminus\{1\}.
\end{cases}
\]
If $\gamma>1$, the anchoring energy vanishes in the limit. These regimes are
summarised in Table~\ref{tab:anchoring}.

\begin{table}[h]
	\centering
	\renewcommand{\arraystretch}{1.2}
	\begin{tabular}{cccc}
		\hline
		Range of $\gamma$ & Normalisation & Anchoring regime & Effective anchoring \\
		\hline
		$0\leq\gamma<1$ & $h^{-1}$ & Supercritical & Strong (hard constraint) \\
		$\gamma=1$       & $h^{-1}$ & Critical      & Weak (finite surface density) \\
		$\gamma>1$       & $h^{-1}$ & Subcritical   & Vanishing \\
		\hline
	\end{tabular}
	\caption{The three anchoring regimes determined by the exponent $\gamma\geq0$.
		In all cases the natural normalisation of the energy is $h^{-1}$. The effective
		anchoring set $\mathscr{A}\subset\Szero$ appearing in the supercritical regime
		is defined in Assumption~\ref{asm:compatible}.}
	\label{tab:anchoring}
\end{table}
%The anchoring hierarchy recorded in Table~\ref{tab:anchoring} is a theorem: from
%a single microscopic anchoring model with strength
%$h^\gamma W(x'/\varepsilon,x_3/h,\nu)$, the simultaneous limit produces three
%qualitatively distinct macroscopic behaviours, and the critical exponent
%$\gamma=1$ is identified variationally.
In the constrained regime $0\leq\gamma<1$, particular choices of
$\mathscr A$ recover lower-dimensional director theories. For
\[
\mathscr A
=
\left\{
s_0\left(P-\frac13I\right):P\in\mathbb P
\right\},
\qquad s_0\neq0,
\]
the limiting energy is equivalent to a homogenised anisotropic Oseen--Frank
energy for the unoriented director
$[\mathbf n]\in H^1(\omega;\mathbb S^2/\{\pm1\})$; see
Theorem~\ref{thm:OF}. Allowing the scalar order parameter to vary in
$[s_0,s_1]\subset(0,1)$ yields the corresponding homogenised Ericksen energy;
see Theorem~\ref{thm:Er}.

The relevant literature concerns dimension reduction for nematic films,
homogenisation of nematic composites, and simultaneous homogenisation and
dimension reduction for thin structures.

Dimension reduction for nematic thin films has been studied in several related
settings. In the director framework, dimension reduction was considered in
\cite{Canevari2023DimensionalRA}; surface free energies for nematic shells
are treated in \cite{NV12}, and a rigorous variational analysis of nematic shell
models is developed in \cite{SSV16}. In the $Q$-tensor framework, Bauman, Park,
and Phillips \cite{BPP12} analyse a two-dimensional LdG model on planar domains
and study minimisers with disclination lines. 
Golovaty, Montero, and Sternberg \cite{GMS15} establish a
$\Gamma$-convergence result for three-dimensional LdG thin films with spatially
homogeneous elastic coefficients and general weak anchoring conditions on the
top and bottom faces.
Further singular regimes for LdG thin
films, involving the nematic correlation length or textures with tactoids and
disclinations, are studied in \cite{Nov18,Gol+20}.

Homogenisation of nematic composites has been studied in both director and
$Q$-tensor frameworks. In the director setting we refer to \cite{Ber+05,Cal+14};
in particular, Calderer, DeSimone, Golovaty, and Panchenko \cite{Cal+14} derive
an effective model for nematic composites with ferromagnetic inclusions at fixed
positive thickness. In the LdG framework, Canevari and Zarnescu \cite{CZ20}
establish convergence of local minimisers for a nematic host doped with colloidal
inclusions in a dilute regime with Rapini--Papoular anchoring. 
Homogenisation of effective surface energies for rapidly oscillating boundary
conditions is studied by Ceuc\u{a}, Taylor, and Zarnescu \cite{CTZ23} at
fixed positive thickness; this is complementary to the averaged anchoring
contribution obtained here.

The simultaneous limit $(h,\varepsilon)\to(0,0)$ has a substantial history in
elasticity. Rescaling--unfolding methods for thin heterogeneous structures were
developed by Blanchard, Gaudiello, and Griso \cite{BGG07a,BGG07b}, and a
three-regime analysis for linearised elastic plates is presented in
\cite[Chapter~11]{CDG}. The geometrically nonlinear membrane and von
K\'{a}rm\'{a}n settings were treated by Babadjian and Baia \cite{babadjian20063d}, Braides--Fonseca--Francfort \cite{braides20003d},
and Neukamm and Vel\v{c}i\'{c} \cite{NV13}, respectively. The
rescaling--unfolding operator goes back to the Cioranescu--Damlamian--Griso
framework \cite{CDG02,CDG08,CDG}; the thickness decomposition used below is
inspired by Griso's decomposition of plate displacements \cite{GrisoDecomp,CDG}, deformations \cite{blanchard2010decomposition}, and general thin film \cite{GrisoError}. These decompositions have been applied
extensively to homogenised reduced-order models for elastic structures in
\cite{griso2020homogenization,griso2021asymptotic,griso2020asymptotic,falconi2025asymptotic}.
Related simultaneous homogenisation and dimension-reduction methods were
developed in previous joint work of the author
\cite{CGO_soft,CGOC_rigid,CGO_mosaic} for thin composite plates with soft or
disconnected rigid inclusions.

%The following points distinguish the LdG setting: the unknown is already a
%matrix-valued $Q$-tensor field, so $\nabla Q$ takes values in the third-order
%tensor space $\Szero\otimes\mathbb{R}^3$, increasing the dimension of the corrector
%problem and complicating the identification of admissible corrector spaces in
%each regime. Beyond this, 
%%sustaining the measurability-only assumption on $\bA$
%%in the thickness variable throughout a nonlinear LdG energy requires arguments
%%without counterpart in the elastic plate setting. 
%the nonconvex quartic bulk potential
%$f_b(Q)$ interacts with the two-scale structure, requiring strong $L^p$
%convergence for $p<4$ to be established via the thickness decomposition rather
%than equi-integrability. The set-valued anchoring framework with exponent
%$\gamma$ produces the three-regime hierarchy from a single
%microscopic model. Two further contributions complete the analysis: continuity of
%$\rho\mapsto\bA^{\mathrm{hom}}_\rho$ on $[0,\infty]$
%(Theorem~\ref{thm:hom-tensor-continuity}), showing that the three regimes fit
%together into a coherent one-parameter family; and a density result for smooth
%constrained maps in $H^1(\omega;\mathscr{A})$ (Lemma~\ref{lem:density-Astar}),
%established using the Schoen--Uhlenbeck approximation theorem
%\cite{schoen1983boundary} at the critical exponent $p=n=2$ and verified for all
%four principal anchoring models.

The analysis combines several features specific to the present problem. The
elastic tensor is fully anisotropic and is assumed only measurable in the
thickness variable. The thickness decomposition gives strong $L^2$ convergence
of the macroscopic component; interpolation with the uniform $L^4$ bound then
yields strong $L^p$ convergence for $2\leq p<4$, which is used to pass to the
lower-order terms in the non-convex bulk potential. The surface term requires a
separate treatment in the three ranges of $\gamma$, including the identification
of the effective constraint set when $0\leq\gamma<1$.

We also prove that
\[
\rho\longmapsto\bA^{\mathrm{hom}}_\rho
\]
is continuous on the compactified interval $[0,\infty]$; see
Theorem~\ref{thm:hom-tensor-continuity}. Thus the two scale-separated regimes
are recovered as the endpoint limits of the coupled regime. Finally,
Lemma~\ref{lem:density-Astar} establishes the density of smooth constrained maps
in $H^1(\omega;\mathscr A)$ for the anchoring sets considered in
Proposition~\ref{prop:compatible-examples}.

\noindent\textbf{Organisation of the paper.}
Section~\ref{sec:pro} formulates the model and states the main results.
Section~\ref{sec:pre} introduces the rescaling--unfolding operators and the
thickness decomposition. Section~\ref{sec:asymptotic} proves existence, uniform
bounds, compactness, the liminf inequality, and the recovery result.
Section~\ref{sec:proofs} identifies the homogenised cell problems, proves
continuity with respect to $\rho$, and derives the Oseen--Frank and Ericksen
representations. The auxiliary density and coercivity results are collected in
Appendix~\ref{app:den}.

\subsection*{Notation}\label{sec:N}
	The reference cell is $Y := Y' \times I$ where $Y' := (0,1)^2$ 
	and $I := (-\tfrac{1}{2},\tfrac{1}{2})$.
	Periodicity is always understood with respect to the in-plane 
	variable $y' \in Y'$ only; no periodicity in $y_3 \in I$ is 
	imposed or required. Set
	\begin{equation}
		\label{eq:S0}
		\Szero \;:=\; \bigl\{Q\in\R^{3\times 3}
		: Q = Q^{\top},\; \tr Q = 0\bigr\},
	\end{equation}
	We define the periodic Sobolev spaces
	\begin{align*}
		H^1_{\mathrm{per}}(Y';\Szero)
		&:= \bigl\{ f_{|Y'}\,:\,f \in H^1_{\mathrm{loc}}(\mathbb{R}^2;\Szero),\quad f \text{ is } Y'\text{-periodic} \bigr\},
		\\
		H^1_{\mathrm{per}}(Y;\Szero)
		&:= \bigl\{ f \in L^2(I;\,H^1_{\mathrm{per}}(Y';\Szero))
		: \partial_{y_3} f \in L^2(Y;\Szero) \bigr\},
	\end{align*}
	and their zero-mean subspaces
	\begin{align*}
		H^1_{\mathrm{per},0}(Y';\Szero)
		&:= \Bigl\{ f \in H^1_{\mathrm{per}}(Y';\Szero)
		: \int_{Y'} f(y')\,dy' = 0 \Bigr\},\\
		H^1_{\mathrm{per},0}(Y;\Szero)
		&:= \Bigl\{ f \in H^1_{\mathrm{per}}(Y;\Szero)
		: \int_{Y} f(y)\,dy = 0 \Bigr\}, \quad 		H^1_\sharp(I;\,\Szero) 
		:= \Bigl\{f \in H^1(I;\,\Szero) 
		: \textstyle\int_I f\,dy_3 = 0\Bigr\},\\
		H^1_{\mathrm{per},\sharp}(Y;\Szero)
		&:= \Bigl\{ f \in H^1_{\mathrm{per}}(Y;\Szero)
		: \int_{I} f(\cdot,y_3)\,dy_3 = 0
		\text{ a.e.\ in } Y' \Bigr\}.
	\end{align*}
	All four are closed subspaces of their parent Hilbert spaces, 
	hence Hilbert spaces in their own right.

	%=====================================================================
	\section{Problem description and main results}
	\label{sec:pro}
	
	\subsection{Geometry of the nematic thin film}
	\label{sec:geometry}
	
	Let $\omg\subset\R^{2}$ be a {bounded Lipschitz domain} (the
	mid-surface) and let $\Iint := (-\frac{1}{2},\frac{1}{2})$.
	For each $h > 0$ the {thin plate} is
	\begin{equation}
		\label{eq:Ohmh}
		\Ohmh \;:=\; \omg\times h\Iint
		\;=\; \bigl\{x=(x^{\prime},x_3)\in\R^{3}
		: x^{\prime}\in\omg,\; \abs{x_3} < h/2\bigr\}.
	\end{equation}
	We decompose the boundary $\partial\Ohmh$ into three disjoint parts:
	\begin{equation}
		\label{eq:boundary}
		\partial\Ohmh
		\;=\; \Ghpm \;\cup\; \Ghlat,
		\qquad
		\Ghpm := \omg\times\bigl\{\pm h/2\bigr\},
		\qquad
		\Ghlat := \partial\omg\times\bigl(-h/2,\,h/2\bigr).
	\end{equation}
	The top and bottom faces form the {weak-anchoring region}
	\begin{equation}
		\label{eq:Ghw}
		\Ghw \;:=\; \Gamma_{h}^{+}\cup\Gamma_{h}^{-},
		\qquad \mathcal{H}^{2}(\Ghw) = 2\abs{\omg},
	\end{equation}
	whose area is independent of $h$.  The outward unit normal on
	$\Gamma_{h}^{\pm}$ is $\nu = \pm e_3$.  No boundary condition is
	imposed on the lateral face $\Ghlat$.
	The order-parameter space is $\Szero$,
%	\begin{equation}
%		\label{eq:S0}
%		\Szero \;:=\; \bigl\{Q\in\R^{3\times 3}
%		: Q = Q^{\top},\; \tr Q = 0\bigr\},
%	\end{equation}
	which is a five-dimensional real linear subspace of $\R^{3\times 3}$ equipped
	with the Frobenius inner product $Q:P := \tr(QP)$ and the induced
	norm $\abs{Q} := (\tr Q^{2})^{1/2}$.  The {space of admissible
		gradients} is
	\begin{equation}
		\label{eq:T}
		\Tgrad \;:=\; \Szero\otimes\R^{3}
		\;=\; \bigl\{P=(P_{ij\alpha}) :
		P_{ij\alpha} = P_{ji\alpha},\;
		{\textstyle\sum_{i}}P_{ii\alpha}=0\;\forall\alpha\bigr\}.
	\end{equation}
	For $Q\in H^{1}(\Ohmh;\Szero)$, one has $\nabla Q(x)\in\Tgrad$ for
	a.e.\ $x\in\Ohmh$.  The natural admissible class is
	$H^{1}(\Ohmh;\Szero)$.
	
	\begin{figure}[ht]
		\centering
		\begin{tikzpicture}[
			>=Stealth,
			font=\small,
			top face/.style  ={fill=blue!15, draw=blue!60!black, line width=0.9pt},
			bot face/.style  ={fill=blue!7,  draw=blue!45!black, line width=0.7pt},
			lat face/.style  ={fill=gray!18, pattern=north east lines,
				pattern color=gray!60,
				draw=gray!55, line width=0.6pt},
			midsurface/.style={draw=green!55!black, dashed, line width=0.85pt},
			axis line/.style ={->, gray!65!black, line width=0.7pt},
			dim line/.style  ={<->, gray!55!black, line width=0.55pt},
			leader/.style    ={dashed, gray!50, line width=0.45pt},
			int cell/.style  ={fill=teal!28,   draw=gray!45, line width=0.25pt},
			bnd cell/.style  ={fill=orange!22, draw=gray!45, line width=0.25pt},
			hi cell/.style   ={fill=blue!10,   draw=blue!65!black, line width=1.3pt},
			]
			
			% ==============================================================
			%  LEFT PANEL — oblique projection of thin plate Omega_h
			% ==============================================================
			
			\begin{scope}[xshift=-3.6cm, yshift=0.2cm]
				
				% ---- 1. Lateral face (visible right strip, drawn FIRST) ----
				\fill[lat face]
				(1.56, 1.16)
				.. controls (1.76, 0.56) and (1.46,-0.44) ..
				(0.66,-0.99)
				-- (1.04,-1.21)
				.. controls (1.84,-0.66) and (2.14, 0.34) ..
				(1.94, 0.94)
				-- cycle;
				
				\fill[lat face]
				(0.66,-0.99)
				.. controls (-0.14,-1.54) and (-1.19,-1.34) ..
				(-1.94,-0.59)
				-- (-1.56,-0.81)
				.. controls (-0.81,-1.56) and ( 0.24,-1.76) ..
				( 1.04,-1.21)
				-- cycle;
				
				% ---- 2. Bottom face Gamma_h^- ----
				\fill[bot face]
				(-2.09, 0.81)
				.. controls (-1.79, 1.66) and (-0.89, 2.06) ..
				(-0.09, 2.11)
				.. controls ( 0.71, 2.16) and ( 1.36, 1.76) ..
				( 1.56, 1.16)
				.. controls ( 1.76, 0.56) and ( 1.46,-0.44) ..
				( 0.66,-0.99)
				.. controls (-0.14,-1.54) and (-1.19,-1.34) ..
				(-1.94,-0.59)
				.. controls (-2.74, 0.16) and (-2.39, 0.46) ..
				(-2.09, 0.81);
				
				% ---- 3. Mid-surface omega (dashed green) ----
				\draw[midsurface]
				(-1.90, 0.70)
				.. controls (-1.60, 1.55) and (-0.70, 1.95) ..
				( 0.10, 2.00)
				.. controls ( 0.90, 2.05) and ( 1.55, 1.65) ..
				( 1.75, 1.05)
				.. controls ( 1.95, 0.45) and ( 1.65,-0.55) ..
				( 0.85,-1.10)
				.. controls ( 0.05,-1.65) and (-1.00,-1.45) ..
				(-1.75,-0.70)
				.. controls (-2.55, 0.05) and (-2.20, 0.35) ..
				(-1.90, 0.70);
				\node[green!50!black, font=\footnotesize] at (-0.15, 0.25) {$\omega$};
				
				% ---- 4. Top face Gamma_h^+ ----
				\fill[top face]
				(-1.71, 0.59)
				.. controls (-1.41, 1.44) and (-0.51, 1.84) ..
				( 0.29, 1.89)
				.. controls ( 1.09, 1.94) and ( 1.74, 1.54) ..
				( 1.94, 0.94)
				.. controls ( 2.14, 0.34) and ( 1.84,-0.66) ..
				( 1.04,-1.21)
				.. controls ( 0.24,-1.76) and (-0.81,-1.56) ..
				(-1.56,-0.81)
				.. controls (-2.36,-0.06) and (-2.01, 0.24) ..
				(-1.71, 0.59);
				
				% Omega_h interior label
				\node at (0.55,-0.40) {$\Omega_h$};
				
				% ---- Axes ----
				\draw[axis line] (-3.3,-2.2) -- (-3.3, 2.6) node[above] {$x_3$};
				\draw[axis line] (-3.3,-2.2) -- ( 3.0,-2.2) node[right] {$x'$};
				
				% ---- Thickness markers ----
				\draw[gray!60] (-3.5,-0.22) -- (-3.1,-0.22);
				\draw[gray!60] (-3.5, 0.22) -- (-3.1, 0.22);
				\node[left, gray!55!black, font=\footnotesize] at (-3.55,-0.22)
				{$+\frac{h}{2}$};
				\node[left, gray!55!black, font=\footnotesize] at (-3.55, 0.22)
				{$-\frac{h}{2}$};
				
				% ---- Gamma_h^w brace ----
				\draw[decorate,
				decoration={brace, amplitude=5pt, raise=4pt},
				gray!60!black]
				(-4.1, -0.22) -- (-4.1,0.22)
				node[midway, right=-25pt, font=\footnotesize] {$\Gamma_h^{w}$};
				
				% ---- Leader labels ----
				\draw[leader] (1.94, 0.94) -- (3.1, 1.55);
				\node[anchor=west, font=\footnotesize] at (3.1, 1.75)
				{$\Gamma_h^{+}$};
				\node[anchor=west, font=\scriptsize] at (3.1, 1.45)
				{$(\nu = +e_3)$};
				
				\draw[leader] (1.56, 1.16) -- (3.1, 2.55);
				\node[anchor=west, font=\footnotesize] at (3.1, 2.75)
				{$\Gamma_h^{-}$};
				\node[anchor=west, font=\scriptsize] at (3.1, 2.45)
				{$(\nu = -e_3)$};
				
				\draw[leader] (1.80,-0.05) -- (3.1, 0.60);
				\node[anchor=west, font=\footnotesize] at (3.1, 0.80)
				{$\Gamma_h^{\mathrm{lat}}$};
				\node[anchor=west, font=\scriptsize] at (3.1, 0.50)
				{(no BC)};
				
			\end{scope}
			
			% ============================================================
			%  DIVIDING LINE
			% ============================================================
			\draw[gray!30, densely dashed, line width=0.4pt]
			(1.2,-3.0) -- (1.2, 2.9);
			\begin{scope}[xshift=4.8cm, yshift=0.2cm]
				
				\def\s{0.48}% cell size
				
				% -------------------------------------------------------
				%  STEP 1: Draw background (all cells in bounding box) as
				%  white first, then paint interior teal and boundary amber.
				% -------------------------------------------------------
				
				% ------ boundary-layer cells (amber) drawn FIRST ------
				% All cells that overlap the blob (interior will be painted over).
				\foreach \ci/\cj in {
					-1/4, 0/4,
					-3/3, -2/3, -1/3, 0/3, 1/3, 2/3,
					-4/2, -3/2, -2/2, -1/2, 0/2, 1/2, 2/2, 3/2,
					-4/1, -3/1, -2/1, -1/1, 0/1, 1/1, 2/1, 3/1,
					-5/0, -4/0, -3/0, -2/0, -1/0, 0/0, 1/0, 2/0, 3/0,
					-5/-1, -4/-1, -3/-1, -2/-1, -1/-1, 0/-1, 1/-1, 2/-1, 3/-1,
					-4/-2, -3/-2, -2/-2, -1/-2, 0/-2, 1/-2, 2/-2, 3/-2,
					-3/-3, -2/-3, -1/-3, 0/-3, 1/-3, 2/-3,
					0/-4
				}{%
					\fill[bnd cell]
					({\ci*\s},{\cj*\s}) rectangle ({(\ci+1)*\s},{(\cj+1)*\s});
				}
				
				% ------ interior cells (teal) painted ON TOP ------
				% Solid rectangular block of cells fully inside the blob,
				% plus two extra cells on the left that are fully interior.
				\foreach \ci/\cj in {
					-3/2, -2/2, -1/2, 0/2, 1/2, 2/2,
					-1/3, 0/3,
					-3/1, -2/1, -1/1, 0/1, 1/1, 2/1,
					-4/0, -3/0, -2/0, -1/0, 0/0, 1/0, 2/0,
					-4/-1, -3/-1, -2/-1, -1/-1, 0/-1, 1/-1, 2/-1,
					-3/-2, -2/-2, -1/-2, 0/-2, 1/-2
				}{%
					\fill[int cell]
					({\ci*\s},{\cj*\s}) rectangle ({(\ci+1)*\s},{(\cj+1)*\s});
				}
				
				% ------ grid lines ------
				\draw[gray!38, line width=0.28pt, xstep=\s, ystep=\s]
				({-5.5*\s},{-4.5*\s}) grid ({4.5*\s},{5.0*\s});
				
				% ------ blob boundary (same as mid-surface from Part A) ------
				\draw[gray!70!black, line width=1.3pt]
				(-1.90, 0.70)
				.. controls (-1.60, 1.55) and (-0.70, 1.95) ..
				( 0.10, 2.00)
				.. controls ( 0.90, 2.05) and ( 1.55, 1.65) ..
				( 1.75, 1.05)
				.. controls ( 1.95, 0.45) and ( 1.65,-0.55) ..
				( 0.85,-1.10)
				.. controls ( 0.05,-1.65) and (-1.00,-1.45) ..
				(-1.75,-0.70)
				.. controls (-2.55, 0.05) and (-2.20, 0.35) ..
				(-1.90, 0.70);
				
				% omega label
				\node[font=\small] at (1.55, 1.75) {$\omega$};
				
				% ------ highlighted unit cell Y' ------
				\fill[hi cell]
				({0.0*\s},{0.0*\s}) rectangle ({1.0*\s},{1.0*\s});
				\node[blue!65!black, font=\footnotesize]
				at ({0.5*\s},{0.5*\s}) {};
				
				% ------ epsilon dimension annotation ------
				\draw[dim line]
				({-4.0*\s},{-4.2*\s}) -- ({-3.0*\s},{-4.2*\s})
				node[midway, below=2pt, font=\footnotesize] {$\varepsilon$};
				
				% ------ Lambda_eps brace on the right ------
				\draw[decorate,
				decoration={brace, amplitude=4pt, raise=3pt},
				orange!70!black, line width=0.7pt]
				({4.0*\s},{0.5*\s}) -- ({4.0*\s},{-0.5*\s})
				node[midway, right=8pt, orange!65!black, font=\footnotesize]
				{$\Lambda_\varepsilon$};
				
				% ------ omega_hat_eps label inside teal region ------
				\node[teal!60!black, font=\footnotesize]
				at ({-1.5*\s},{-0.5*\s}) {$\widehat{\omega}_\varepsilon$};
				
			\end{scope}
			
			% ============================================================
			%  LEGEND
			% ============================================================
			\begin{scope}[yshift=-3.2cm]
				% teal swatch
				\fill[int cell] (-4.0,-0.22) rectangle (-3.5, 0.22);
				\draw[gray!45, line width=0.25pt] (-4.0,-0.22) rectangle (-3.5, 0.22);
				\node[anchor=west, font=\footnotesize] at (-3.38, 0.00)
				{interior cell $(\varepsilon\xi+\varepsilon Y'\subset\omega)$,\;
					forming $\widehat{\omega}_\varepsilon$};
				
				% orange swatch
				\fill[bnd cell] (2.2,-0.22) rectangle (2.7, 0.22);
				\draw[gray!45, line width=0.25pt] (2.2,-0.22) rectangle (2.7, 0.22);
				\node[anchor=west, font=\footnotesize] at (2.80, 0.00)
				{cells containing the boundary layer $\Lambda_\varepsilon$};
			\end{scope}
			
		\end{tikzpicture}
\caption{%
	\textbf{Left:} The thin film $\Omega_h=\omega\times hI$.
	Weak anchoring is imposed on
	$\Gamma_h^{w}=\Gamma_h^{+}\cup\Gamma_h^{-}$, with outward normals
	$e_3$ and $-e_3$, respectively. The lateral boundary
	$\Gamma_h^{\mathrm{lat}}$ is hatched, and the dashed green curve
	represents the mid-surface $\omega$.
	\textbf{Right:} The $\varepsilon$-periodic decomposition of $\omega$.
	The teal region is $\widehat{\omega}_\varepsilon$, the union of the
	complete cells indexed by $\Xi_\varepsilon$, while the amber region is the cells containing
	the boundary strip $\Lambda_\varepsilon$. The cell outlined in blue is
	a representative $\varepsilon$-cell.}
		\label{fig:geometry}
	\end{figure}
%	Now we introduce the periodicity structure of the thin heterogeneous nematic film; this framework is used again in Section~\ref{sec:unfolding} when we define the unfolding operators. 
%	We start by introducing the in-plane {unit cell}
%	$\Yp := (0,1)^{2}\subset\R^{2}$ and write a generic fast in-plane
%	variable as $y^{\prime}\in\Yp$. In the periodic setting $x'\in\R^2$ can be decomposed a.e.~as
%	$$x'=\left(\e\left[{x'\over \e}\right]+\e\left\{{x'\over \e}\right\}\right),\quad\text{where $\left[{x'\over \e}\right]\in \Z^2$, $\left\{{x'\over \e}\right\}\in Y'$}.$$
%	Set 
%	$$\Xi_\e=\left\{\xi\in\Z^2\,|\,\e\xi+\e Y'\subset \o\right\},\quad \wh\o_\e=\mathrm{interior}\left\{\bigcup_{\xi\Xi_\e}(\e\xi+\e \wo {Y'})\right\},\quad \Lambda_\e=\left(\o\setminus\wo{\wh \o_\e}\right),$$
%	where the boundary layer set $\Lambda_\e$ contains the parts of the cells intersecting the boundary $\partial\o$. Observe that $|\Lambda_\e|\leq C\e\to 0$ as $\e\to0$, since $\o$ is a bounded domain with Lipschitz boundary.
%	We also set $\wh \O_{h,\e}=\wh \o_\e\X hI$.
	
	Let $\Yp:=(0,1)^2$ be the in-plane unit cell. For almost every
	$x'\in\mathbb R^2$, we write
	\[
	x'
	=
	\varepsilon\left[\frac{x'}{\varepsilon}\right]
	+
	\varepsilon\left\{\frac{x'}{\varepsilon}\right\},
	\qquad
	\left[\frac{x'}{\varepsilon}\right]\in\mathbb Z^2,
	\quad
	\left\{\frac{x'}{\varepsilon}\right\}\in\Yp.
	\]
	Define the indices of the cells contained in $\omega$ by
	\[
	\Xi_\varepsilon
	:=
	\left\{
	\xi\in\mathbb Z^2:
	\varepsilon\xi+\varepsilon\Yp\subset\omega
	\right\},\quad\text{and set}\quad
	\widehat\omega_\varepsilon
	:=
	\operatorname{int}
	\left(
	\bigcup_{\xi\in\Xi_\varepsilon}
	\bigl(\varepsilon\xi+\varepsilon\overline{\Yp}\bigr)
	\right),
	\qquad
	\Lambda_\varepsilon
	:=
	\omega\setminus\overline{\widehat\omega_\varepsilon}.
	\]
	The boundary strip $\Lambda_\varepsilon$ consists of the parts of the cells
	intersecting $\partial\omega$. Since $\omega$ is a bounded Lipschitz domain, $|\Lambda_\varepsilon|
	\leq C\varepsilon$
	for all sufficiently small $\varepsilon>0$. Finally, define
	\[
	\widehat\Omega_{h,\varepsilon}
	:=
	\widehat\omega_\varepsilon\times hI.
	\]

\subsection{The Landau--de~Gennes energy on $\Ohmh$}
\label{sec:energy}
%=====================================================================

The {Landau--de~Gennes free energy} of the heterogeneous nematic
film is the functional
$\Fhe : H^{1}(\Ohmh;\Szero)\to\R$ defined by
\begin{multline}
	\label{eq:energy}
	\Fhe(Q)=E^{h,\e}_{\mathrm{el}}(Q)+E^{h,\e}_{\mathrm{bulk}}(Q)
	+E^{h,\e}_{\mathrm{surf}}(Q)\\
	=\underbrace{\int_{\O_h}f^{h,\e}_e(\nabla Q)\,dx
	}_{\displaystyle\text{(elastic energy)}}
	+
	\underbrace{
		\int_{\Ohmh}\!f^{h,\e}_b(Q)\,dx
	}_{\displaystyle\text{(bulk potential)}}
	+
	\underbrace{
		\int_{\Gamma_h^w}f^{h,\e}_s(Q)\,\dd\Hc^2
	}_{\displaystyle\text{(surface anchoring)}}.
\end{multline}
\textbf{Elastic term $E^{h,\e}_{\mathrm{el}}$:}
The elastic energy density is
\[
f^{h,\e}_e(\nabla Q)=
\frac{1}{2}\bA\!\Bigl(\frac{x^{\prime}}{\eps},\frac{x_3}{h}\Bigr)
\nabla Q : \nabla Q,
\quad\forall\,Q\in H^1(\O_h;\Szero),
\]
using Einstein summation in the compact notation
$\bA(y)\,P:P := A_{ijkl}^{\alpha\beta}(y)\,P_{ij\alpha}\,P_{kl\beta}$.

The heterogeneous elastic tensor
$\bA : Y=\Yp\times\Iint\to\mathcal{L}(\Tgrad,\Tgrad)$
encodes the spatially varying elastic response of the composite.
Its dependence on $x$ is through the two-scale argument
$(x^{\prime}/\eps,\,x_3/h)$: the in-plane variable $x^{\prime}/\eps$
oscillates rapidly at the microstructure scale $\eps$, while $x_3/h$
is the normalised thickness coordinate.

\begin{assumption}[Elastic tensor $\bA$]
	\label{asm:elastic}
	The tensor
	$\bA = (A_{ijkl}^{\alpha\beta}) : Y
	\to \mathcal{L}(\Tgrad,\Tgrad)$
	satisfies:
	\begin{enumerate}[label=\textnormal{(A\arabic*)},leftmargin=3em]
		\item\label{A1}
		\textbf{Measurability and in-plane $\Yp$-periodicity.}
		For a.e.\ $y_3\in\Iint$, the map
		$y^{\prime}\mapsto A_{ijkl}^{\alpha\beta}(y^{\prime},y_3)$ is
		$\Yp$-periodic and bounded. Globally,
		\[
		\bA \;\in\;
		L^{\infty}\!\bigl(\Iint;\,
		L^{\infty}_{\mathrm{per}}(\Yp;\mathcal{L}(\Tgrad,\Tgrad))\bigr).
		\]
		No periodicity in $y_3$ is assumed.
		
		\item\label{A2}
		\textbf{Minor and major symmetry.}
		For a.e.\ $(y^{\prime},y_3)\in Y=\Yp\times\Iint$:
		\[
		A_{ijkl}^{\alpha\beta}(y^{\prime},y_3)
		\;=\; A_{klij}^{\beta\alpha}(y^{\prime},y_3),\quad A_{ijkl}^{\alpha\beta}(y^{\prime},y_3)=A_{jikl}^{\alpha\beta}(y^{\prime},y_3),\quad A_{ijkl}^{\alpha\beta}(y^{\prime},y_3)=A_{ijlk}^{\alpha\beta}(y^{\prime},y_3) .
		\]
		\item\label{A3}
		\textbf{Uniform ellipticity on $\Tgrad$.}
		There exist constants $0<\lambda\leq\Lambda<\infty$ such that
		for a.e.\ $(y^{\prime},y_3)\in\Yp\times\Iint$ and every
		$P\in\Tgrad$:
		\[
		\lambda\,\abs{P}^{2}
		\;\leq\;
		A_{ijkl}^{\alpha\beta}(y^{\prime},y_3)\,
		P_{ij\alpha}\,P_{kl\beta}
		\;\leq\;
		\Lambda\,\abs{P}^{2}.
		\]
	\end{enumerate}
\end{assumption}

\begin{remark}[The isotropic LdG elastic energy]
	\label{rem:L1L2L3}
	The classical isotropic LdG elastic density \cite{Lon+87,CZ20}
	\[
	f_{e}^{\mathrm{LdG}}(\nabla Q)
	= \frac{L_1}{2}\,\partial_k Q_{ij}\,\partial_k Q_{ij}
	+ \frac{L_2}{2}\,\partial_j Q_{ij}\,\partial_k Q_{ik}
	+ \frac{L_3}{2}\,\partial_k Q_{ij}\,\partial_j Q_{ik}
	\]
	is a special case with constant tensor components
	$A_{ijkl}^{\alpha\beta}
	= L_1\,\delta_{\alpha\beta}\delta_{ik}\delta_{jl}
	+ L_2\,\delta_{i\alpha}\delta_{\beta l}\delta_{jk}
	+ L_3\,\delta_{i\beta}\delta_{\alpha k}\delta_{jl}$.
	Assumption~\ref{asm:elastic}\ref{A3} is equivalent to
	$L_1>0$, $-L_1<L_3<2L_1$, and
	$-\frac{3}{5}L_1-\frac{1}{10}L_3<L_2$; see \cite{Lon+87}.
\end{remark}

\smallskip
%------------------------------------------------------------------
\textbf{Bulk term $E^{h,\e}_{\mathrm{bulk}}$:}
The LdG bulk potential $f^{h,\e}_b:\Szero\to\R$ models the
isotropic-to-nematic phase transition. We take the standard quartic
polynomial
\begin{equation}
	\label{eq:fb}
	f^{h,\e}_b(Q)=f_b(Q) \;:=\, a\,\tr(Q^{2}) - b\,\tr(Q^{3})
	+ c\,\bigl(\tr(Q^{2})\bigr)^{2},
	\qquad a,b\in\R,\quad c>0.
\end{equation}
The coefficient $a$ is proportional to temperature;
$b>0$ controls the first-order character of the transition; and $c>0$
ensures the potential is bounded below.

\begin{assumption}[Bulk potential $f^{h,\e}_b$]
	\label{asm:bulk}
	The bulk potential is given by \cref{eq:fb} with $a,b\in\R$ and $c>0$.
\end{assumption}
All coercivity and growth properties follow from the above assumption
and are recorded in Lemma~\ref{lem:fB}.

%------------------------------------------------------------------
\textbf{Surface anchoring:}
The surface energy is formulated in a general set-valued framework that
accommodates the principal physically relevant anchoring geometries,
including spatially heterogeneous preferred states on each face
independently. We recall 
$\mathbb{S}^2:=\{x\in \R^3\,:\, \|x\|_2=1\}$. 
Let
\[
\mathcal{A} : \partial Y \times \mathbb{S}^2
\;\longrightarrow\; 2^{\Szero}
\]
be a set-valued map assigning to each pair
$(y,\nu)\in\partial Y\times\mathbb{S}^2$ a non-empty closed set
$\mathcal{A}(y,\nu)\subset\Szero$ of preferred boundary states,
where $y=(y',y_3)\in\partial Y=(Y'\times\partial I)\cup(\partial Y'\times\bar{I})$
is the fast boundary variable.

\begin{definition}[Rescaled anchoring data]
	\label{def:anchoring-data}
	Let $W:\partial Y\times\mathbb{S}^2\to[0,\infty)$ be $Y'$-periodic in $y'$,
	Borel measurable, and essentially bounded.
	For $h,\varepsilon>0$ and anchoring exponent $\gamma\geq 0$, define
	$W^{h,\varepsilon}:\partial\Omega_h\times\mathbb{S}^2\to[0,\infty)$ by
	\[
	W^{h,\varepsilon}(x,\nu)
	:=
	\begin{cases}
		h^\gamma\,W\!\Bigl(\dfrac{x'}{\varepsilon},\;\dfrac{x_3}{h},\;\nu\Bigr)
		& x\in\Gamma_h^{\pm}, \\[8pt]
		0 & x\in\Gamma_h^{\mathrm{lat}},
	\end{cases}
	\]
	and the rescaled anchoring set
	$\mathcal{A}^{h,\varepsilon}:\Gamma_h^{w}\times\mathbb{S}^2\to 2^{\Szero}$ by
	\[
	\mathcal{A}^{h,\varepsilon}(x,\nu)
	:= \mathcal{A}\!\Bigl(\frac{x'}{\varepsilon},\;\frac{x_3}{h},\;\nu\Bigr),
	\qquad x\in\Gamma_h^{w},\quad\nu\in\mathbb{S}^2.
	\]
\end{definition}

For $x\in\Gamma_h^w=\Gamma_h^+\cup\Gamma_h^-$, the rescaled spatial argument
satisfies $(x'/\varepsilon,\,x_3/h)=(x'/\varepsilon,\,\pm\tfrac{1}{2})
\in Y'\times\partial I\subset\partial Y$, so $\mathcal{A}^{h,\varepsilon}(x,\nu)$
is well-defined. The zero value of $W^{h,\varepsilon}$ on $\Gamma_h^{\mathrm{lat}}$
renders the value of $\mathcal{A}$ on $\partial Y'\times\bar{I}$ irrelevant.

The surface energy density is
\[
f_s^{h,\varepsilon}(Q,x,\nu)
:= W^{h,\varepsilon}(x,\nu)\,
\operatorname{dist}^2\!\bigl(Q,\,\mathcal{A}^{h,\varepsilon}(x,\nu)\bigr),
\qquad
Q\in\Szero,\quad x\in\Gamma_h^{w},\quad\nu\in\mathbb{S}^2,
\]
where
$\operatorname{dist}(Q,\mathcal{A}^{h,\varepsilon}(x,\nu))
:=\inf_{A\in\mathcal{A}^{h,\varepsilon}(x,\nu)}|Q-A|$.
The total surface energy is
\[
E_{\mathrm{surf}}^{h,\varepsilon}(Q)
:= \int_{\Gamma_h^{w}}
f_s^{h,\varepsilon}\!\bigl(Q(x),x,\nu(x)\bigr)\,d\mathcal{H}^2(x),
\]
where $\nu(x)=+e_3$ on $\Gamma_h^+$ and $\nu(x)=-e_3$ on $\Gamma_h^-$.
Splitting $\Gamma_h^w=\Gamma_h^+\cup\Gamma_h^-$:
\begin{multline}\label{eq:Esurf-split}
	E_{\mathrm{surf}}^{h,\varepsilon}(Q)
	=
	\int_{\Gamma_h^{+}}
	W^{h,\varepsilon}(x,e_3)\,
	\operatorname{dist}^2\!\bigl(Q,\mathcal{A}^{h,\varepsilon}(x,e_3)\bigr)
	\,d\mathcal{H}^2 \\
	+\;
	\int_{\Gamma_h^{-}}
	W^{h,\varepsilon}(x,-e_3)\,
	\operatorname{dist}^2\!\bigl(Q,\mathcal{A}^{h,\varepsilon}(x,-e_3)\bigr)
	\,d\mathcal{H}^2.
\end{multline}
The two integrands involve
$\mathcal{A}(x'/\varepsilon,+\tfrac{1}{2},e_3)$ and
$\mathcal{A}(x'/\varepsilon,-\tfrac{1}{2},-e_3)$ respectively,
which need not coincide.

\begin{assumption}[Anchoring data]\label{asm:anchoring}
	The set-valued map
	$\mathcal{A}:\partial Y\times\mathbb{S}^2\to 2^{\Szero}$ satisfies:
	\begin{enumerate}[label=\textup{(S\arabic*)},ref=\textup{S\arabic*}]
		
		\item\label{SS1}
		\textbf{Non-emptiness, closedness, and measurability.}
		For every $\nu\in\mathbb{S}^2$ and a.e.\ $y\in\partial Y$, the set
		$\mathcal{A}(y,\nu)$ is non-empty and closed in $\Szero$.
		For every $Q\in\Szero$ and every $\nu\in\mathbb{S}^2$, the map
		$y\mapsto\operatorname{dist}(Q,\mathcal{A}(y,\nu))$ is Borel measurable
		on $\partial Y$.
		
		\item\label{SS2}
		\textbf{Uniform local boundedness.}
		There exists $C_{\mathcal{A}}>0$ such that for every $\nu\in\mathbb{S}^2$
		and a.e.\ $y\in\partial Y$, there exists $A_0\in\mathcal{A}(y,\nu)$
		with $|A_0|\leq C_{\mathcal{A}}$.
		
	\end{enumerate}
\end{assumption}

\begin{remark}[Anchoring framework and comparison with \cite{GMS15}]
	\label{rem:anchoring-framework-comments}
	Two assumptions commonly imposed in the literature are unnecessary in the
	present setting. First, no continuity of
	$\nu\mapsto\operatorname{dist}(Q,\mathcal{A}(y,\nu))$ is required, since the
	surface energy is supported on
	$\Gamma_h^w=\Gamma_h^+\cup\Gamma_h^-$, where the outer unit normal takes only
	the values $\pm e_3$. Second, no symmetry between the two faces is imposed:
	the preferred sets and anchoring weights may be prescribed independently on
	$\Gamma_h^+$ and $\Gamma_h^-$.
	
	The homogeneous weak anchoring potential considered in \cite{GMS15} has a
	zero set of the form
	\[
	\{Q\in\mathbb{S}_0:(Q-Q_0)\nu=0\},
	\]
	and is therefore covered, at the level of its preferred set, by the affine
	constraint in~\ref{E3}. Its full quadratic density, however, assigns
	generally different weights to the normal and tangential components of
	$(Q-Q_0)\nu$ and hence is not, in general, a scalar multiple of
	$\operatorname{dist}^2(Q,\mathcal{A}(\nu))$ with respect to the Frobenius
	norm. Thus, except for the corresponding special choice of coefficients, the
	present framework reproduces the zero set of the anchoring potential in
	\cite{GMS15}, but not its full surface density.
\end{remark}

%------------------------------------------------------------------
\textbf{Additional structure for the anchoring-dominated regime.}
The following assumption is imposed in addition to
Assumption~\ref{asm:anchoring} exclusively in the regime $0\leq\gamma<1$,
and is not required elsewhere. It captures the situation in which the
anchoring geometry is uniform across the in-plane microstructure, so that
the effective constraint set is determined solely by the face normals.

\begin{assumption}[Compatible anchoring]\label{asm:compatible}
	The map $\mathcal{A}(y,\nu)$ is independent of $y'$, so that
	$\mathcal{A}(\cdot,\nu)$ depends only on $y_3$ and $\nu$. The
	effective constraint set
	\begin{equation}\label{eq:Astar}
		\mathscr{A}
		:= \mathcal{A}\!\left(+\tfrac{1}{2},+e_3\right)
		\cap \mathcal{A}\!\left(-\tfrac{1}{2},-e_3\right)
		\subset \Szero
	\end{equation}
	is non-empty and closed, and there exists $Q^*\in H^1(\omega;\mathscr{A})$.
	The anchoring weight satisfies the uniform lower bound
	\[
	\operatorname{ess\,inf}_{y'\in Y'}
	W\!\left(y',\pm\tfrac{1}{2},\pm e_3\right) \geq w_0 > 0.
	\]
	Moreover, $\mathscr{A}$ satisfies one of the following geometric conditions:
	\begin{enumerate}[label=\textup{(\roman*)}]
		\item $\mathscr{A}$ is a closed convex subset of $\Szero$; or
		\item $\mathscr{A}$ is a compact smooth submanifold without boundary
		of $\Szero$; or
		\item For fixed constants $0<s_0<s_1<1$,
		\begin{equation}\label{eq:csmwb+}
			\mathscr{A}
			=
			\Bigl\{s\Bigl(P-\tfrac{1}{3}I\Bigr):
			P\in\mathbb{P},\ s\in[s_0,s_1]\Bigr\},
			\qquad
			\mathbb{P}
			:=
			\{P\in\mathbb{R}^{3\times3}_{\mathrm{sym}}:
			P^2=P,\ \operatorname{tr}P=1\},
		\end{equation}
		where $I$ is the identity matrix.
		Then $\mathbb{P}$ is naturally diffeomorphic to the space of unoriented directions
		$\mathbb{S}^2/\{\pm 1\}\cong\mathbb{RP}^2$, and $\mathscr{A}$ is a
		compact smooth submanifold with boundary of $\Szero$, diffeomorphic
		to $[s_0,s_1]\times\mathbb{RP}^2$. Observe that $P\in\mathbb{P}$ implies $\mathrm{rank}(P)=1$.
	\end{enumerate}
	Furthermore, in the regime $\rho=0$, the scales additionally
satisfy
\begin{equation}\label{eq:subcritical-scale}
	\frac{\varepsilon^2}{h}
	\longrightarrow 0
	\qquad\text{as }(h,\varepsilon)\to(0,0).
\end{equation}
\end{assumption}
\begin{remark}
A further restriction on the relative rate of $h$ and $\e$, of the type \cref{eq:subcritical-scale}, is also required in the regime $h/\e\to0$ in \cite{CGOC_rigid,cherdantsev2015bending,velvcic2015derivation} for simultaneous homogenisation and dimension reduction in nonlinear elasticity, where it arises in the bending and constrained von Kármán regimes, whereas here it is inherited from the anchoring strength $h^\gamma$.
\end{remark}
Closedness of $\mathscr{A}$ follows immediately from~\eqref{eq:Astar}:
each of $\mathcal{A}(\pm\tfrac{1}{2},\pm e_3)$ is closed in $\Szero$
by Assumption~\ref{asm:anchoring}\ref{SS1}, and $\mathscr{A}$ is
their intersection. 		Density of smooth maps in $H^1(\omega;\mathscr{A})$ follows from
Lemma~\ref{lem:density-Astar}.

\begin{proposition}[Representative anchoring models]
	\label{prop:compatible-examples}
	The following examples satisfy Assumption~\ref{asm:anchoring}. Suppose,
	in addition, that the preferred sets are independent of $y'$ on each face
	and that the anchoring weight $W$ satisfies the positive lower bound in
	Assumption~\ref{asm:compatible}. Then the effective constraint set
	$\mathscr{A}$ is as stated below. Assumption~\ref{asm:compatible} holds
	whenever the indicated compatibility condition between the two faces is
	satisfied.
	
	\begin{enumerate}[label=\textup{(E\arabic*)},ref=\textup{E\arabic*}]
		
		\item\label{E1}
		\textbf{Prescribed $Q$-tensor anchoring.}
		Let $Q_0:\partial Y\times\mathbb{S}^2\to\Szero$ be $Y'$-periodic
		in $y'$, essentially bounded, and measurable in $y$ for each fixed
		$\nu$. Set $\mathcal{A}(y,\nu):=\{Q_0(y,\nu)\}$.
		The corresponding surface density is
		\[
		f_s^{h,\varepsilon}(Q,x,\nu)
		=
		W^{h,\varepsilon}(x,\nu)
		\bigl|Q-Q_0(x'/\varepsilon,x_3/h,\nu)\bigr|^2.
		\]
		Assumption~\ref{asm:anchoring} holds with
		$A_0(y,\nu):=Q_0(y,\nu)$ and
		$C_{\mathcal{A}}:=\|Q_0\|_{L^\infty}$.
		
		Under $y'$-independence, define
		\[
		Q_0^+:=Q_0\Bigl(\frac12,e_3\Bigr),
		\qquad
		Q_0^-:=Q_0\Bigl(-\frac12,-e_3\Bigr).
		\]
		Then $\mathscr{A}=\{Q_0^+\}\cap\{Q_0^-\}$.
		Hence $\mathscr{A}\neq\varnothing$ if and only if
		$Q_0^+=Q_0^-=:Q^*$, in which case
		$\mathscr{A}=\{Q^*\}$ and
		Assumption~\ref{asm:compatible}(i) holds.
		In the anchoring-dominated regime $0\leq\gamma<1$, the admissible
		limit is $Q\equiv Q^*$ and its energy is $|\omega|f_b(Q^*)$.
		A standard homogeneous example is $Q_0(\nu)
		=
		s_+\Bigl(\nu\otimes\nu-\frac13I\Bigr)$,
		for which $Q_0(e_3)=Q_0(-e_3)$; see
		\cite{RP69,CZ20}.
		
		\item\label{E2}
		\textbf{Vanishing normal action.}
		Set $\mathcal{A}(y,\nu)
		:=
		\{Q\in\Szero:Q\nu=0\}$.
		This is a closed linear subspace of $\Szero$ of codimension~$3$,
		independent of $y$. Assumption~\ref{asm:anchoring} holds with
		$A_0:=0$. If $P_{\mathcal{A}(\nu)}$ denotes the orthogonal
		projection in $\Szero$ onto $\mathcal{A}(\nu)$ and $\Pi_\nu:=I-P_{\mathcal{A}(\nu)}$,
		then $\operatorname{dist}^2
		\bigl(Q,\mathcal{A}(y,\nu)\bigr)
		=
		|\Pi_\nu Q|^2$.
		Since $\mathcal{A}(e_3)=\mathcal{A}(-e_3)$, $\mathscr{A}
		=
		\{Q\in\Szero:Qe_3=0\}$.
		This is a non-empty closed linear subspace, and
		Assumption~\ref{asm:compatible}(i) holds.
		
		\item\label{E3}
		\textbf{Prescribed normal action.}
		Let $Q_0$ be as in~\ref{E1} and set
		\[
		\mathcal{A}(y,\nu)
		:=
		\{Q\in\Szero:(Q-Q_0(y,\nu))\nu=0\}.
		\]
		This is an affine translate of the subspace in~\ref{E2}.
		Assumption~\ref{asm:anchoring} holds with
		$A_0(y,\nu):=Q_0(y,\nu)$ and
		$C_{\mathcal{A}}:=\|Q_0\|_{L^\infty}$. Moreover, $\operatorname{dist}
		\bigl(Q,\mathcal{A}(y,\nu)\bigr)
		=
		\bigl|\Pi_\nu\bigl(Q-Q_0(y,\nu)\bigr)\bigr|$.
		Under $y'$-independence, let $Q_0^\pm$ be defined as in~\ref{E1}.
		Then
		\[
		\mathscr{A}
		=
		\{Q\in\Szero:Qe_3=Q_0^+e_3\}
		\cap
		\{Q\in\Szero:Qe_3=Q_0^-e_3\}.
		\]
		Consequently, $\mathscr{A}\neq\varnothing$ if and only if $Q_0^+e_3=Q_0^-e_3$.
		Under this condition, $\mathscr{A}
		=
		\{Q\in\Szero:Qe_3=Q_0^+e_3\}$,
		which is a closed affine subspace, and
		Assumption~\ref{asm:compatible}(i) holds. In particular,
		the constant map $Q^*\equiv Q_0^+$ belongs to
		$H^1(\omega;\mathscr{A})$.
		
		\item\label{E4}
		\textbf{Fixed-order degenerate planar anchoring.}
		For a fixed $s_0\in\mathbb{R}\setminus\{0\}$, set
		\[
		\mathcal{A}(y,\nu)
		:=
		\left\{
		s_0\left(t\otimes t-\frac13I\right):
		t\in\mathbb{S}^2,\ t\cdot\nu=0
		\right\}.
		\]
		This set is independent of $y$ and is a compact smooth
		submanifold of $\Szero$ diffeomorphic to $\mathbb{RP}^1$.
		Assumption~\ref{asm:anchoring} holds with $C_{\mathcal{A}}
		=
		|s_0|\sqrt{\frac23}$.
		Since $\mathcal{A}(e_3)=\mathcal{A}(-e_3)$,
		\[
		\mathscr{A}
		=
		\left\{
		s_0\left(t\otimes t-\frac13I\right):
		t\in\mathbb{S}^2,\ t\cdot e_3=0
		\right\}.
		\]
		Thus $\mathscr{A}$ is a non-empty compact smooth submanifold of
		$\Szero$, diffeomorphic to $\mathbb{RP}^1$, and
		Assumption~\ref{asm:compatible}(ii) holds.
	\end{enumerate}
\end{proposition}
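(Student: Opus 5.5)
The verification runs example by example. For each of \ref{E1}--\ref{E4} I check (S1)--(S2) of Assumption~\ref{asm:anchoring}, compute $\mathscr{A}$ from \eqref{eq:Astar} under $y'$-independence, and identify which geometric alternative of Assumption~\ref{asm:compatible} applies. The lower bound on $W$ is assumed, and the existence of $Q^*\in H^1(\omega;\mathscr{A})$ is witnessed by a constant map. Throughout, the distance functions are measurable because they are built from the measurable datum $Q_0$ and fixed closed sets. For a fixed closed set $\mathcal{A}$, the function $\operatorname{dist}(Q,\cdot)$ is $1$-Lipschitz in the set's translation parameter, so composition with measurable $Q_0$ is measurable.

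For \ref{E1}, $\mathcal{A}(y,\nu)=\{Q_0(y,\nu)\}$ is a singleton, hence closed and non-empty. The distance is $|Q-Q_0(y,\nu)|$, which is measurable, and (S2) holds with $A_0=Q_0$ and $C_{\mathcal A}=\|Q_0\|_{L^\infty}$. The intersection of two singletons is non-empty exactly when they coincide, and a single point is convex. The constant $Q^*$ lies in $H^1(\omega;\mathscr A)$, and the energy claim is immediate because $\nabla'Q^*=0$. For the example $Q_0(\nu)=s_+(\nu\otimes\nu-\tfrac13 I)$, note that $(-e_3)\otimes(-e_3)=e_3\otimes e_3$.

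For \ref{E2}, the set $\{Q\in\Szero:Q\nu=0\}$ is the kernel of the linear map $Q\mapsto Q\nu$ from $\Szero$ to $\R^3$. This map is surjective: complete $\nu$ to an orthonormal frame and use the symmetric traceless tensors $\nu\otimes t+t\otimes\nu$ with $t\perp\nu$, together with $\nu\otimes\nu-\tfrac12(I-\nu\otimes\nu)$. Hence the kernel has codimension $3$. Distance to a subspace is the norm of the complementary projection, so $\operatorname{dist}(Q,\mathcal A)=|\Pi_\nu Q|$. Since $Q(-\nu)=-Q\nu$, we get $\mathcal A(-e_3)=\mathcal A(e_3)$, and $A_0=0$ works. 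For \ref{E3}, the set is $Q_0+\ker(Q\mapsto Q\nu)$, an affine translate, so the distance is $|\Pi_\nu(Q-Q_0)|$. The intersection of $\{Qe_3=Q_0^+e_3\}$ and $\{Q(-e_3)=Q_0^-(-e_3)\}$ is $\{Qe_3=Q_0^+e_3\}\cap\{Qe_3=Q_0^-e_3\}$. This is non-empty if and only if $Q_0^+e_3=Q_0^-e_3$: if the two vectors differ, no $Q$ can satisfy both, and conversely $Q_0^+$ is an element. The resulting affine subspace is convex and closed.

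For \ref{E4}, compute $|s_0(t\otimes t-\tfrac13 I)|^2=s_0^2(1-\tfrac23+\tfrac13)=\tfrac23 s_0^2$, which gives $C_{\mathcal A}$. The set is the image of the great circle $\{t\perp\nu\}\cong\mathbb{S}^1$ under the smooth map $t\mapsto s_0(t\otimes t-\tfrac13 I)$. This map is even, so it descends to $\mathbb{RP}^1$, and it is injective on $\mathbb{RP}^1$ because $t\otimes t$ determines $\pm t$. It is an immersion, since its derivative in the tangent direction $\tau$ is $s_0(t\otimes\tau+\tau\otimes t)\neq0$. An injective immersion of a compact manifold is an embedding, so the image is a compact smooth submanifold diffeomorphic to $\mathbb{RP}^1$; in particular it is closed. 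The set depends on $\nu$ only through $\nu^\perp$, which is the same for $\pm e_3$, so $\mathscr A$ is this same circle and alternative (ii) applies. A constant $t\otimes t$ gives $Q^*$.

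The only step needing genuine care is the embedding claim in \ref{E4}; everything else is elementary linear algebra. The surjectivity computation underlying the codimension count in \ref{E2} is also worth writing out explicitly.
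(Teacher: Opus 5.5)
Your proposal is correct and follows the same example-by-example route as the paper's proof. It uses the same ingredients: measurability from $Q_0$ or from $y$-independence, the stated selections $A_0$, the face identities $\mathcal{A}(e_3)=\mathcal{A}(-e_3)$ in \ref{E2}, \ref{E4} and $(Q-Q_0^-)(-e_3)=0\iff Qe_3=Q_0^-e_3$ in \ref{E3}, and constant maps as the witness $Q^*$. You also write out details the paper only asserts: the surjectivity of $Q\mapsto Q\nu$ behind the codimension count, the projection formula for the distance, the norm $|s_0|\sqrt{2/3}$, and the injective-immersion argument for the $\mathbb{RP}^1$ embedding. The one slip is that the constant witness in \ref{E4} should be $s_0(t\otimes t-\tfrac13 I)$ with $t\perp e_3$, not $t\otimes t$.
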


\begin{proof}
	In~\ref{E1} and~\ref{E3}, the measurability required in
	Assumption~\ref{asm:anchoring} follows from the measurability of $Q_0$;
	in~\ref{E2} and~\ref{E4}, it is immediate because the preferred sets
	are independent of $y$. The stated choices of $A_0$ give the required
	uniformly bounded selections.
	
	Under $y'$-independence, $\mathscr{A}$ is the intersection of the
	preferred sets on the two faces. The formulas above follow directly
	from the identities
	\[
	\mathcal{A}(e_3)=\mathcal{A}(-e_3)
	\quad\text{in \ref{E2} and \ref{E4}},
	\]
	and
	\[
	(Q-Q_0^-)(-e_3)=0
	\quad\Longleftrightarrow\quad
	Qe_3=Q_0^-e_3
	\quad\text{in \ref{E3}}.
	\]
	The effective sets are closed and have the geometric properties stated
	in each case. Whenever $\mathscr{A}\neq\varnothing$, any constant map
	with value in $\mathscr{A}$ belongs to $H^1(\omega;\mathscr{A})$.
	Together with the separately imposed lower bound on $W$, this verifies
	Assumption~\ref{asm:compatible}.
\end{proof}

We end this subsection by defining the three-dimensional minimisation problem 
\begin{equation}
	\label{eq:min-problem}
	\mhe \;:=\; \inf_{Q\in H^{1}(\Ohmh;\Szero)}\Fhe(Q),
\end{equation}
on the thin nematic film.

\subsection{Statement of the main results}
\label{sec:main-results}

We state the principal results of this paper. Precise definitions of all
objects appearing below are given in the sections indicated; the reader
may treat forward references as pointers to the relevant constructions.

We begin by defining the set of admissible limit $Q$-tensors.
The admissible space of limit fields is
\[
\A^\gamma_\rho 
:=H^1(\o;\mathscr{E}_\gamma)\X\C_\rho,\quad\text{where}\quad \mathscr{E}_\gamma=\left\{\begin{aligned}
	&\Szero,\quad &&\text{for $\gamma\geq1$},\\
	&\mathscr{A} ,\quad 	&&\text{for $0\leq \gamma<1$},
\end{aligned}\right.
\]
and the microscopic $Q$-tensor spaces $\C_\rho$ are defined 
regime by regime as follows.

\textit{Regime $\varrho = 0$.}
\begin{equation*}
	(\widehat{Q}_0,\,\widehat{R}) \in \C_0 
	:= L^2\!\bigl(\omega;\,H^1_{\mathrm{per},0}(Y';\,\Szero)\bigr)
	\times
	L^2\!\bigl(\omega \times Y';\,H^1_\sharp(I;\,\Szero)\bigr).
\end{equation*}
Observe that $\widehat{Q}_0$ is  $y_3$-independent.

\noindent\textit{Regime $\varrho = \infty$.}
\begin{equation*}
	(\widehat{Q}_\infty,\,\widehat{R}) \in\C_\infty 
	:=  L^2\!\bigl(\omega \times I;\,H^1_{\mathrm{per},0}(Y';\,\Szero)\bigr)
	\times
	L^2\!\bigl(\omega;\,H^1_\sharp(I;\,\Szero)\bigr).
\end{equation*}
Observe that $\widehat{R}$ is $y'$-independent. 

\noindent\textit{Regime $\varrho \in (0,\infty)$.}
\begin{multline*}
	\C_\rho 
	:= \Bigl\{
	(\widehat{Q}_\rho,\,\widehat{R}) 
	\in L^2\!\bigl(\omega;\,H^1_{\mathrm{per},0}(Y;\,\Szero)\bigr)
	\times
	L^2\!\bigl(\omega;\,H^1_{\mathrm{per},\sharp}(Y;\,\Szero)\bigr)
	:\\
	\quad\exists\;
	\widehat{Q} \in L^2\!\bigl(\omega;\,H^1_{\mathrm{per},0}(Y';\,\Szero)\bigr)
	\ \text{ such that }\ 
	\widehat{Q}_\rho = \widehat{Q} + \varrho\,\widehat{R}
	\Bigr\}.
\end{multline*}
Observe that the in-plane corrector has the coupled form
$\widehat{Q}_\rho = \widehat{Q} + \varrho\,\widehat{R}$, linking the
in-plane and transverse microscopic degrees of freedom.

Each space $\C_\rho$ is a closed subspace of the ambient Hilbert 
product space. 	Hence 
$\A^\gamma_\rho$ is 
a closed subset of a Hilbert space in each regime. Moreover for $\gamma\geq 1$, it is a Hilbert space.

\textbf{Throughout this paper, we assume that the limit $\rho$ in \cref{eq:rho} exists as  $(h,\e)\to(0,0)$.}

\begin{definition}
	\label{def:hom-conv}
	Let $Q_{h,\varepsilon}\in H^1(\Omega_h;\Szero)$. We say that
	$Q_{h,\varepsilon}$ converges to the triplet
	$(\Qc,\wh Q_\rho,\wh R)\in\A^\gamma_\rho$, and write
	\[
	Q_{h,\varepsilon}\rightharpoonup^{\mathrm{Qhom}}(\Qc,\wh Q_\rho,\wh R)
	\]
	if and only if 
	\[
	\begin{aligned}
		\Peh(Q_{h,\e}) \;&\to\; \Qc\;&&\text{strongly in $L^p(\o\X Y;\Szero)$ for $2\leq p<4$, and weakly in $L^4(\o\X Y;\Szero)$},\\
		\Te(Q^\pm_{h,\varepsilon}) &\to \Qc\;&&\text{strongly in }L^2(\omega\times Y';\Szero),\\
		\Peh(\nabla Q_{h,\e})&\rightharpoonup \mathcal G_\rho\;&&\text{weakly in }L^2(\omega\times Y;\Szero\otimes\R^3),
	\end{aligned}
	\]
	where $Q^\pm_{h,\e}$ denote the traces on the top and bottom, $\Pi_{h,\e}$, $\Te$ denote the rescaling unfolding and classical unfolding operators, respectively, defined in  Section \ref{sec:unfolding}, and $\mathcal G_\rho$ is defined from $(\Qc,\wh Q_\rho,\wh R)\in \A^\gamma_\rho$ as in Lemma \ref{prop:admissible-correctors}.  
\end{definition}

\paragraph{Sequential $\Gamma$-convergence theorem}

\begin{theorem}
	\label{thm:gamma}
	Under Assumptions~\ref{asm:elastic}, \ref{asm:bulk}, and~\ref{asm:anchoring}
	for $\gamma\geq 1$, and additionally Assumption~\ref{asm:compatible} for
	$0\leq\gamma<1$, and
	$\rho\in[0,\infty]$, the rescaled
	microscopic energies $h^{-1}\GF_{h,\varepsilon}$ sequentially
	$\Gamma$-converge, with respect to the convergence
	$\rightharpoonup^{\mathrm{Qhom}}$ of Definition~\ref{def:hom-conv}, to the
	two-scale limit energy $\GF^\gamma_\rho$ defined by
	\begin{equation}\label{eq:limE}
		\GF^\gamma_\rho(\Qc,\wh Q_\rho,\wh R)=E_{\mathrm{el}}^\rho(\Qc,\wh Q_\rho,\wh R)+E_{\mathrm{bulk}}(\Qc)+E^\gamma_{\mathrm{surf}}(\Qc),\quad 
	\end{equation}
	where 
	\begin{itemize}
		\item The two-scale elastic energy is given by
		\begin{equation*}
			E_{\mathrm{el}}^\rho(\Qc,\wh Q_\rho,\wh R)={1\over 2}\int_{\o\X Y}\bA \Gc_\rho :\Gc_\rho\,dydx'
		\end{equation*}
		with $\Gc_\rho$ defined from $(\Qc,\wh Q_\rho,\wh R)$ as in Lemma \ref{prop:admissible-correctors}. 
		\item The limit bulk energy is given by
		$$	E_{\mathrm{bulk}}(\Qc)
		=
		\int_\omega f_b(\Qc)\,dx',$$
		with $f_b$ given in \cref{eq:fb}.
		\item The limit surface energy is given by
		$$ E^\gamma_{\mathrm{surf}}(\Qc):= \left\{
		\begin{aligned}
			&			
			\int_\omega \bar{f}_s(\Qc(x'))\,dx',\quad && \text{for $\gamma=1$},\\
			& 0,\quad &&\text{for $\gamma\neq1$},
		\end{aligned}\right.$$
		where
		\begin{equation}
			\bar{f}^\pm_s(\Qc) 
			:= \int_{Y'} W\!\left(y', \pm\frac{1}{2}, \pm e_3\right)
			\mathrm{dist}^2\!\left(\Qc,\, \Ac\!\left(y', \pm\frac{1}{2}, \pm e_3\right)\right)dy',
			\qquad \Qc \in \Szero,
			\label{eq:hom-surf-density-pm}
		\end{equation}
		and their sum
		\begin{equation}
			\bar{f}_s(\Qc) := \bar{f}^+_s(\Qc) + \bar{f}^-_s(\Qc), 
			\qquad \Qc \in \Szero.
			\label{eq:hom-surf-density}
		\end{equation}
	\end{itemize}
	More precisely:

	\noindent\textbf{\emph{(i) Compactness.}}
	Let $Q_{h,\varepsilon}\in H^1(\Omega_h;\Szero)$ satisfy
	\[
	\limsup_{(h,\varepsilon)\to(0,0)}
	\frac1h\GF_{h,\varepsilon}(Q_{h,\varepsilon})
	<+\infty .
	\]
	Then there exists a subsequence, not relabelled, and a triplet $(\Qc,\wh Q_\rho,\wh R)\in\A^\gamma_\rho$
	such that
	\[
	Q_{h,\varepsilon}\rightharpoonup^{\mathrm{Qhom}}(\Qc,\wh Q_\rho,\wh R) .
	\]

	\noindent\textbf{\emph{(ii) $\Gamma$-liminf inequality.}}
	If
	\[
	Q_{h,\varepsilon}\rightharpoonup^{\mathrm{Qhom}}(\Qc,\wh Q_\rho,\wh R) ,
	\]
	then
	\begin{equation}
		\label{eq:liminf}
		\GF^\gamma_\rho(\Qc,\wh Q_\rho,\wh R)
		\leq
		\liminf_{(h,\varepsilon)\to(0,0)}
		\frac1h\GF_{h,\varepsilon}(Q_{h,\varepsilon}) .
	\end{equation}

	\noindent\textbf{\emph{(iii) $\Gamma$-limsup inequality.}}
	For every $(\Qc^\ast,\wh Q^\ast_\rho,\wh R^\ast)\in \A^\gamma_\rho$, there exists a recovery sequence
	$Q^*_{h,\varepsilon}\in H^1(\Omega_h;\Szero)$ such that
	\[
	Q^*_{h,\varepsilon}\rightharpoonup^{\mathrm{Qhom}}(\Qc^\ast,\wh Q^\ast_\rho,\wh R^\ast),\quad\text{and}\quad
	\lim_{(h,\varepsilon)\to(0,0)}
	\frac1h\GF_{h,\varepsilon}(Q^*_{h,\varepsilon})
	=
	\GF^\gamma_\rho(\Qc^\ast,\wh Q^\ast_\rho,\wh R^\ast).
	\]

	\noindent\textbf{\emph{(iv) Convergence of minima and minimisers.}}
	The rescaled minima converge:
	\begin{equation}
		\label{eq:min-convergence}
		\lim_{(h,\varepsilon)\to(0,0)}
		\frac{\Gm_{h,\varepsilon}}{h}
		=
		\min_{(\Qc^\dagger,\wh Q^\dagger_\rho,\wh R^\dagger)\in\A^\gamma_\rho}
		\GF^\gamma_\rho(\Qc^\dagger,\wh Q^\dagger_\rho,\wh R^\dagger)=\Gm^\gamma_\rho.
	\end{equation}
	Moreover, every $\rightharpoonup^{\mathrm{Qhom}}$-cluster point of a sequence
	of minimisers of $\GF_{h,\varepsilon}$ is a minimiser of
	$\GF^\gamma_\rho$.
\end{theorem}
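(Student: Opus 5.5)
The proof follows the standard rescaling--unfolding route and treats the four statements in the order (i), (ii), (iii), (iv).

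\textbf{Step 1: a priori bounds and compactness.} By Lemma~\ref{lem:fB}, $f_b(Q)\geq \tfrac c2|Q|^4-C$. Together with \ref{A3} and $W\geq0$, the energy bound $h^{-1}\GF_{h,\varepsilon}(Q_{h,\varepsilon})\leq C$ gives
\[
h^{-1}\|\nabla Q_{h,\varepsilon}\|^2_{L^2(\Omega_h)}+h^{-1}\|Q_{h,\varepsilon}\|^4_{L^4(\Omega_h)}\leq C,
\qquad
h^{\gamma-1}\int_{\Gamma_h^w}W\operatorname{dist}^2(Q_{h,\varepsilon},\mathcal A^{h,\varepsilon})\,d\mathcal H^2\leq C .
\]
We then apply the thickness decomposition of Section~\ref{sec:pre}. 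It writes $Q_{h,\varepsilon}=\overline{\mathcal Q}_{h,\varepsilon}(x')+\widetilde Q_{h,\varepsilon}$, where the mean $\overline{\mathcal Q}_{h,\varepsilon}$ is bounded in $H^1(\omega)$ and the remainder satisfies $\|\widetilde Q_{h,\varepsilon}\|_{L^2}\leq Ch\|\partial_3Q_{h,\varepsilon}\|_{L^2}$. Rellich's theorem then gives $\Peh(Q_{h,\varepsilon})\to\Qc$ strongly in $L^2$. Interpolating with the $L^4$ bound gives strong convergence in $L^p$ for $p<4$ and weak convergence in $L^4$.

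The traces $Q^\pm_{h,\varepsilon}$ differ from the mean by $O(h^{1/2}\|\partial_3 Q\|)$ in $L^2(\omega)$ after rescaling, so $\Te(Q^\pm)\to\Qc$ as well.

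The unfolded gradients are bounded in $L^2(\omega\times Y)$. Their weak limit is identified as $\Gc_\rho$ by the corrector lemma, Lemma~\ref{prop:admissible-correctors}, in each regime $\rho$. This is where the structure of $\C_\rho$, in particular $\wh Q+\rho\wh R$, enters.

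\textbf{Constraint for $\gamma<1$.} The surface bound gives
\[
\int_\omega\operatorname{dist}^2\bigl(\Te(Q^\pm),\mathcal A(\pm\tfrac12,\pm e_3)\bigr)\leq C h^{1-\gamma}/w_0\to0 .
\]
Since $\operatorname{dist}$ is $1$-Lipschitz, strong convergence of the traces yields $\Qc\in\mathcal A(\tfrac12,e_3)\cap\mathcal A(-\tfrac12,-e_3)=\mathscr A$ almost everywhere, so $\Qc\in H^1(\omega;\mathscr A)$.

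\textbf{Step 2: liminf.} For the elastic term, weak lower semicontinuity of the convex quadratic form under unfolding applies, with an error from the boundary strip $\Lambda_\varepsilon$ that has the favourable sign. For the bulk term, split $f_b$: the quartic part is convex and hence weakly lower semicontinuous in $L^4$, and the quadratic and cubic parts converge by strong $L^3$ convergence. For the surface term, when $\gamma=1$ we unfold the trace integrals and use strong convergence of $\Te(Q^\pm)$ together with Lipschitz continuity of $\operatorname{dist}^2$ on bounded sets. Using \ref{SS2} and Fatou's lemma, this gives $\int\bar f_s(\Qc)$ as a lower bound. When $\gamma\neq1$ the surface term is nonnegative and the limit term is $0$.

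\textbf{Step 3: recovery sequence.} First reduce by density to smooth $\Qc$ and smooth correctors, using Lemma~\ref{lem:density-Astar} when $\gamma<1$. Then set
\[
Q^*_{h,\varepsilon}=\Qc(x')+\varepsilon\,\wh Q\bigl(x',\tfrac{x'}{\varepsilon},\tfrac{x_3}{h}\bigr)+h\,\wh R\bigl(x',\tfrac{x'}{\varepsilon},\tfrac{x_3}{h}\bigr),
\]
with the variables of $\wh Q$ and $\wh R$ adapted to each regime. Away from the boundary, the unfolded gradient converges strongly to $\Gc_\rho$; the relation $h/\varepsilon\to\rho$ produces exactly the combination $\wh Q+\rho\wh R$. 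The bulk term converges by strong $L^4$ convergence. For the surface term, the traces equal $\Qc+O(\varepsilon+h)$ uniformly.

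\textbf{Main obstacle.} This is the anchoring term for $\gamma<1$, where it is multiplied by $h^{\gamma-1}\to\infty$. Since $\Qc\in\mathscr A$, we have
\[
\operatorname{dist}^2\leq|\varepsilon\wh Q+h\wh R|^2=O(\varepsilon^2+h^2),
\]
so the penalty is $O(h^{\gamma-1}(\varepsilon^2+h^2))$. This vanishes when $\rho>0$. When $\rho=0$ it vanishes under \cref{eq:subcritical-scale} only for $\gamma=0$; otherwise one would truncate $\wh Q$ near the faces with a cutoff in $x_3$ on a layer of thickness $\delta h$, at controlled elastic cost. One could alternatively project onto $\mathscr A$ using the structure (i)--(iii), but a nonsmooth projection is problematic.

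\textbf{Step 4: minima and minimisers.} Existence of a minimiser of $\GF_{h,\varepsilon}$ for each fixed $(h,\varepsilon)$ follows from the direct method. The energy of the constant $Q^*$ is $O(h)$, so rescaled minimal energies are bounded. The fundamental theorem of $\Gamma$-convergence, combined with Steps 1--3, then gives convergence of minima and shows that cluster points of minimisers are minimisers.
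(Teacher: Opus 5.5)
Your overall route is the paper's route: thickness decomposition and rescaling--unfolding for compactness, identification of the gradient limit via Lemma~\ref{prop:admissible-correctors}, term-by-term lower semicontinuity with the boundary strip controlled by sign, the oscillating recovery sequence $\Qc+\varepsilon\wh Q+h\wh R$, and the fundamental theorem of $\Gamma$-convergence. Steps 1, 2 and 4 are fine.

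The one place your argument goes wrong is the ``main obstacle''. Your bound $h^{-1}E^{h,\varepsilon}_{\mathrm{surf}}(Q^*_{h,\varepsilon})\leq C\,h^{\gamma-1}(\varepsilon^2+h^2)$ is correct. However, the claim that for $\rho=0$ it vanishes under \cref{eq:subcritical-scale} only when $\gamma=0$ gets the monotonicity in $\gamma$ backwards. For $h\leq1$ and $0\leq\gamma<1$,
\[
h^{\gamma-1}\varepsilon^2=h^{\gamma}\,\frac{\varepsilon^2}{h}\leq\frac{\varepsilon^2}{h}\longrightarrow0 .
\]
So $\gamma=0$ is the worst case, and the untruncated recovery sequence already works for every $\gamma\in[0,1)$ and every $\rho\in[0,\infty]$. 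This is exactly how the paper closes the case.

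The cutoff you propose instead would fail, and it fails precisely in the regime where you invoke it. Cutting off $\varepsilon\wh Q(x',x'/\varepsilon)$ in a layer of thickness $\delta h$ near the faces produces $\partial_{x_3}$-derivatives of size $\varepsilon/(\delta h)$ on a fraction $\delta$ of the thickness. After unfolding this term is $(\varepsilon/h)\,\wh Q(x',y')\,\phi_\delta'(y_3)$ with $\|\phi_\delta'\|_{L^2(I)}^2\sim\delta^{-1}$. Its contribution to the rescaled elastic energy is therefore of order $(\varepsilon/h)^2\delta^{-1}$, which diverges because $h/\varepsilon\to\rho=0$.

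The cutoff also changes the limit in-plane gradient on the cut-off layer. Removing that error requires $\delta\to0$, which only makes the cost worse. As written, your Step 3 therefore leaves the case $\rho=0$, $0<\gamma<1$ with a construction that does not work. Drop the cutoff and use the elementary estimate above.
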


\begin{remark}[Sequential $\Gamma$-convergence]
	\label{rem:seq-gamma}
	The notion of $\Gamma$-convergence used in Theorem~\ref{thm:gamma} is the 
	\emph{sequential} one, which is standard when the underlying convergence 
	(e.g., Definition~\ref{def:hom-conv}) is defined by sequential limits and the 
	relevant topologies (weak $H^1$, two-scale) are not metrizable. 
	A family of functionals is said to sequentially $\Gamma$-converge if it satisfies 
	the compactness property, the liminf inequality, and the existence of a recovery 
	sequence (see~\cite[Chapters~7--8]{DalMaso}; for a context similar to ours, 
	see~\cite{NV13}). 
\end{remark}
\begin{theorem}
	\label{cor:strong-conv-minimisers}
	Let $\{Q_{h,\varepsilon}\}_{h,\e}\subset H^1(\Omega_h;\Szero)$ be a sequence such that
	\[
	Q_{h,\varepsilon}\rightharpoonup^{\mathrm{Qhom}}(\Qc,\wh Q_\rho,\wh R),\quad \text{and}\quad
	\lim_{(h,\varepsilon)\to(0,0)}
	\frac1h\GF_{h,\varepsilon}(Q_{h,\varepsilon})
	=
	\GF^\gamma_\rho(\Qc,\wh Q_\rho,\wh R),
	\]
	with $(\Qc,\hQ_\rho,\hR)\in\A^\gamma_\rho$. Then the
	individual energy contributions converge to their corresponding limiting
	contributions:
	\begin{align}
		\lim_{(h,\varepsilon)\to(0,0)}
		\frac1h E^{h,\varepsilon}_{\el}(Q_{h,\varepsilon})
		&=
		E^\rho_{\el}(\Qc,\hQ_\rho,\hR),
		\label{eq:elastic-conv-min}\\
		\lim_{(h,\varepsilon)\to(0,0)}
		\frac1h E^{h,\e}_{\bulk}(Q_{h,\varepsilon})
		&=
		\Ebulk(\Qc),
		\label{eq:bulk-conv-min}\\
		\lim_{(h,\varepsilon)\to(0,0)}
		\frac1h E^{h,\varepsilon}_{\surf}(Q_{h,\varepsilon})
		&=
		E^\gamma_\surf(\Qc).
		\label{eq:surf-conv-min}
	\end{align}
	Moreover, if $\Gc_\rho$ denotes the limiting two-scale gradient associated
	with $(\Qc,\hQ_\rho,\hR)$ as in Lemma \ref{prop:admissible-correctors}, then
	\begin{equation}
		\label{eq:strong-grad-min}
		\Pi_{h,\varepsilon}(\nabla Q_{h,\varepsilon})
		\to
		\Gc_\rho
		\qquad\text{strongly in }L^2(\omega\times Y;\Szero\otimes\mathbb R^3).
	\end{equation}
	Finally,
	\begin{equation}
		\label{eq:strong-L4-min}
		\Pi_{h,\varepsilon}Q_{h,\varepsilon}
		\to
		\Qc
		\qquad\text{strongly in }L^4(\omega\times Y;\Szero).
	\end{equation}
\end{theorem}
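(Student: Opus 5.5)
The plan is the standard argument: liminf inequalities for each term, together with convergence of the total energy, force each liminf to be a limit. Strict convexity of the elastic term then upgrades weak to strong convergence of gradients.

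\textbf{Step 1 (separate liminf inequalities).} The proof of the $\Gamma$-liminf inequality in Theorem~\ref{thm:gamma}(ii) actually gives three separate estimates:
\[
\liminf \tfrac1h E^{h,\varepsilon}_{\el}(Q_{h,\varepsilon})\ \ge\ E^\rho_{\el}(\Qc,\hQ_\rho,\hR),
\]
and likewise $\liminf \tfrac1h E^{h,\varepsilon}_{\bulk}\ge \Ebulk(\Qc)$ and $\liminf \tfrac1h E^{h,\varepsilon}_{\surf}\ge E^\gamma_{\surf}(\Qc)$.
\begin{itemize}
\item The elastic estimate follows by unfolding, $\tfrac1h E^{h,\varepsilon}_{\el}(Q)\ge \tfrac12\int_{\omega\times Y}\bA\,\Peh(\nabla Q):\Peh(\nabla Q)$, up to the boundary strip $\Lambda_\varepsilon$, which only helps since the integrand is nonnegative. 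The bound then follows from weak lower semicontinuity of this positive quadratic form.
\item The bulk estimate uses the strong $L^p$ convergence ($p<3$) for the quadratic and cubic terms, and weak lower semicontinuity of the $L^4$ norm (more precisely of $\int(\tr Q^2)^2$, which is convex in $Q$) for the quartic term.
\item The surface estimate uses the strong $L^2$ trace convergence $\Te(Q^\pm_{h,\varepsilon})\to\Qc$. This gives equality of the limit for $\gamma=1$ and a liminf $\ge0$ otherwise.
\end{itemize}

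\textbf{Step 2 (each term converges).} Write $a_k,b_k,c_k$ for the three rescaled terms along any subsequence and $a,b,c$ for their limit counterparts. We know $\liminf a_k\ge a$, $\liminf b_k\ge b$, $\liminf c_k\ge c$, and $a_k+b_k+c_k\to a+b+c$. Then
\[
\limsup a_k\le \lim(a_k+b_k+c_k)-\liminf b_k-\liminf c_k\le a,
\]
and symmetrically for $b_k$ and $c_k$. This yields \cref{eq:elastic-conv-min}–\cref{eq:surf-conv-min}. Since every subsequence has the same limit, the whole family converges.

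\textbf{Step 3 (strong gradient convergence).} Set $G_{h,\varepsilon}:=\Peh(\nabla Q_{h,\varepsilon})$, which converges weakly to $\Gc_\rho$. Expand
\[
\tfrac12\int\bA(G_{h,\varepsilon}-\Gc_\rho):(G_{h,\varepsilon}-\Gc_\rho)=\tfrac12\int\bA G_{h,\varepsilon}:G_{h,\varepsilon}-\int\bA G_{h,\varepsilon}:\Gc_\rho+\tfrac12\int\bA\Gc_\rho:\Gc_\rho .
\]
Here $\bA$ is a fixed $L^\infty$ coefficient on $\omega\times Y$, since the unfolding puts the oscillation into $y$. So the cross term converges to $\int\bA\Gc_\rho:\Gc_\rho$ by weak convergence.

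The main obstacle is the first term: the unfolded elastic integral must converge to $E^\rho_{\el}$, not merely be bounded by $\tfrac1h E^{h,\varepsilon}_{\el}$. Its limsup is at most $\lim\tfrac1h E^{h,\varepsilon}_{\el}=E^\rho_{\el}$ because the unfolded energy omits only the nonnegative contribution of $\Lambda_\varepsilon\times hI$. Its liminf is at least $E^\rho_{\el}$ by Step 1. Hence the right-hand side tends to $0$, and the coercivity \ref{A3} gives \cref{eq:strong-grad-min}. As a by-product, the energy in the boundary layer also vanishes.

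\textbf{Step 4 (strong $L^4$ convergence).} From the bulk convergence and the strong $L^p$ convergence for $p<4$, the quadratic and cubic parts of $f_b$ converge. Hence $\tfrac1h\int_{\Omega_h}(\tr Q_{h,\varepsilon}^2)^2\to\int_\omega(\tr\Qc^2)^2$, which means $\|Q_{h,\varepsilon}\|_{L^4}^4/h\to\|\Qc\|_{L^4}^4$.

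The unfolded quantity $\|\Peh Q_{h,\varepsilon}\|^4_{L^4(\omega\times Y)}$ equals the contribution of $\widehat\Omega_{h,\varepsilon}$ divided by $h$, so its limsup is at most $\|\Qc\|^4_{L^4}$. Weak lower semicontinuity gives the reverse inequality, so the norms converge. Weak convergence in $L^4$ together with convergence of norms in the uniformly convex space $L^4$ then yields \cref{eq:strong-L4-min}.
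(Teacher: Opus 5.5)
Your proposal is correct and follows the paper's proof essentially step for step. Separate liminf inequalities together with convergence of the total energy force term-wise convergence. Expanding the $\bA$-quadratic form around $\Gc_\rho$ gives strong convergence of the unfolded gradients. Convergence of the quartic term, combined with uniform convexity of $L^4$, gives strong $L^4$ convergence. Your sandwich argument for the boundary strip $\Lambda_\varepsilon\times hI$ in Steps 3 and 4 is more careful than the paper's proof, which passes directly from the full energies to the unfolded ones.

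One detail should be added in Step 4. To get convergence of the quadratic and cubic parts over all of $\Omega_h$, the strip contributions $\frac1h\int_{\Lambda_\varepsilon\times hI}|Q_{h,\varepsilon}|^p\,dx$, $p=2,3$, must vanish. This follows from H\"older's inequality, the bound $\|Q_{h,\varepsilon}\|_{L^4(\Omega_h)}^4\le Ch$, and $|\Lambda_\varepsilon|\to 0$. Also, the cubic term needs strong $L^3$ convergence, not $p<3$ as written in Step 1; this is available since the strong convergence holds for all $2\le p<4$.
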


\paragraph{Homogenised energy}

\begin{theorem}[Homogenised energy]
	\label{thm:hom-energy}
	Under the hypotheses of Theorem~\ref{thm:gamma} with
	$\gamma\geq 0$ and $\rho\in[0,\infty]$, the limiting homogenised energy is
	\begin{equation}
		\label{eq:hom-energy}
		\GF^{\mathrm{hom},\gamma}_\rho(\Qc)
		:=
		\frac12
		\int_\omega
		\bA^{\mathrm{hom}}_\rho\,\nabla'\Qc:\nabla'\Qc\,dx'
		+
		\int_\omega f_b(\Qc)\,dx'
		+
		E^\gamma_\surf(\Qc),
	\end{equation}
	where $\bA^{\mathrm{hom}}_\rho$ is given by Definition~\ref{def:hom-tensor} and
	$E^\gamma_\surf(\Qc)$ is given by Theorem~\ref{thm:gamma}. Moreover, if
	$(\Qc,\hQ_\rho,\hR)\in\A^\gamma_\rho$ is a two-scale minimiser obtained in
	Theorem~\ref{thm:gamma}, then
	\begin{equation}
		\label{eq:limit-min}
		\lim_{(h,\varepsilon)\to(0,0)}
		\frac{\Gm_{h,\varepsilon}}{h}
		=
		\GF^\gamma_\rho(\Qc,\hQ_\rho,\hR)
		=
		\GF^{\mathrm{hom},\gamma}_\rho(\Qc)=
			\min_{\Qc^\dagger\in H^1(\omega;\mathscr{E}_\gamma)}
			\GF^{\mathrm{hom},\gamma}_\rho(\Qc^\dagger).
	\end{equation}
	In particular, $\Qc$ is a minimiser of the homogenised energy
	$\GF^{\mathrm{hom},\gamma}_\rho$, and the minimum in \cref{eq:limit-min} is attained.
\end{theorem}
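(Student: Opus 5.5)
The plan is to prove Theorem~\ref{thm:hom-energy} by minimising $\GF^\gamma_\rho$ out of the microscopic variables $(\hQ_\rho,\hR)$ for fixed $\Qc$, and then combining this with Theorem~\ref{thm:gamma}(iv).

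\emph{Step 1: the elastic energy is a pointwise quadratic form.} Fix $\Qc\in H^1(\omega;\mathscr{E}_\gamma)$. Only $E^\rho_{\el}$ depends on $(\hQ_\rho,\hR)$, so I minimise it over the fibre $\C_\rho$. By Lemma~\ref{prop:admissible-correctors}, $\Gc_\rho$ is affine in $(\nabla'\Qc,\hQ_\rho,\hR)$. Its form is $\nabla'\Qc$ placed in the in-plane slots, plus the regime-dependent corrector gradient: $\nabla_{y'}\hQ$ together with $\partial_{y_3}\hR$ in the third column, with the $\rho$-coupling in the central regime. The functional therefore decouples pointwise in $x'$. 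For a.e.\ $x'$, the infimum over correctors is a cell problem in the regime-dependent corrector space. By (A3) this cell problem is a coercive quadratic problem on a Hilbert space, because gradients control norms there: the zero-mean conditions and Poincaré--Wirtinger in $y'$ and in $y_3$ apply. Lax--Milgram then gives unique correctors that depend linearly on $M=\nabla'\Qc(x')\in\Szero\otimes\R^2$. Definition~\ref{def:hom-tensor} presumably defines $\bA^{\mathrm{hom}}_\rho M:M$ as exactly this minimum.

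\emph{Step 2: measurable assembly of the correctors.} Linearity lets me write the corrector as $\sum_k \partial_k\Qc_{ij}(x')\,\chi^{ij,k}(y)$, where the $\chi^{ij,k}$ are the basis cell solutions. This belongs to $L^2(\omega;\cdot)$ because $\nabla'\Qc\in L^2$, so the assembled pair lies in $\C_\rho$. It follows that
\[
\min_{(\hQ_\rho,\hR)\in\C_\rho}\GF^\gamma_\rho(\Qc,\hQ_\rho,\hR)=\GF^{\mathrm{hom},\gamma}_\rho(\Qc),
\]
and the inequality $\ge$ holds for every admissible pair. In the central regime I must check that the constraint $\hQ_\rho=\hQ+\rho\hR$ is preserved by the assembly. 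It is, since it is a linear condition. This measurability and admissibility check, including the regime-by-regime definition of the cell space, is the only delicate point.

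\emph{Step 3: conclusion.} Let $(\Qc,\hQ_\rho,\hR)$ be a two-scale minimiser from Theorem~\ref{thm:gamma}(iv), with value $\Gm^\gamma_\rho=\lim \Gm_{h,\e}/h$. Step~2 gives $\GF^{\mathrm{hom},\gamma}_\rho(\Qc)\le\GF^\gamma_\rho(\Qc,\hQ_\rho,\hR)$. Now take any $\Qc^\dagger\in H^1(\omega;\mathscr{E}_\gamma)$ with its optimal correctors. Its triple lies in $\A^\gamma_\rho$, so
\[
\GF^{\mathrm{hom},\gamma}_\rho(\Qc^\dagger)=\GF^\gamma_\rho(\Qc^\dagger,\cdot,\cdot)\ge \Gm^\gamma_\rho=\GF^\gamma_\rho(\Qc,\hQ_\rho,\hR)\ge\GF^{\mathrm{hom},\gamma}_\rho(\Qc).
\]
Taking $\Qc^\dagger=\Qc$ forces equality throughout. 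Hence $\Qc$ attains $\min_{H^1(\omega;\mathscr{E}_\gamma)}\GF^{\mathrm{hom},\gamma}_\rho$, which proves \eqref{eq:limit-min}. This argument also shows that the minimiser's correctors are the optimal ones.
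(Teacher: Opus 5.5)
Your proposal is correct and follows essentially the same route as the paper: for fixed $\Qc$, the minimisation over $\C_\rho$ is carried out fibre-wise in $x'$ using the cell formula of Definition~\ref{def:hom-tensor}, and the result is then combined with the convergence of minima in Theorem~\ref{thm:gamma}(iv). Your measurable assembly of the optimal correctors, obtained from their linearity in $M$ (best written in a basis of $\Szero\otimes\R^2$ rather than matrix entries $ij$), makes explicit what the paper only asserts as ``the inner minimisation is local in $x'$'', and your inequality chain gives attainment directly, without the paper's preliminary direct-method argument.
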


\begin{remark}
	\label{rem:R1-hierarchy}
	In regime R2, the in-plane corrector has the coupled form
	$\wh Q_\rho=\wh Q+\rho\wh R$. Thus, for every $\rho\in(0,\infty)$, in-plane
	microscopic relaxation and transverse relaxation interact, and this interaction
	is responsible for the explicit dependence of $\bA^{\mathrm{hom}}_\rho$ on the
	ratio $\rho=h/\varepsilon$.
	
	Regime R1 corresponds to the hierarchy $\varepsilon\ll h$. In this case,
	in-plane homogenisation is performed at each fixed thickness level $y_3$, while
	the thickness corrector describes the remaining transverse relaxation. Because
	$\bA$ may couple in-plane and transverse gradients, $\bA^{\mathrm{hom}}_\infty$
	need not be the thickness average of slice-wise homogenised tensors.
\end{remark}

\begin{theorem}
	\label{thm:hom-tensor-continuity}
	For every $M \in \Szero \otimes \mathbb{R}^2$, the map
	\[
	[0,\infty] \ni \rho \;\longmapsto\; \bA^{\mathrm{hom}}_\rho M:M
	\]
	is continuous.
\end{theorem}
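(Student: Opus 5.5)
The plan is to rewrite $\bA^{\mathrm{hom}}_\rho$ as the minimum of a single quadratic cell functional whose only dependence on $\rho$ is a factor $\rho^{-1}$ in front of the transverse derivative. Continuity on $(0,\infty)$ then follows from a standard $\Gamma$-convergence argument for cell problems, and the endpoints $\rho\in\{0,\infty\}$ are handled by compactness plus explicit recovery correctors.

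\emph{Step 1: reformulation.} I expect Definition~\ref{def:hom-tensor} to have the following structure, read off from $\C_\rho$ and Lemma~\ref{prop:admissible-correctors}: $\bA^{\mathrm{hom}}_\rho M:M$ is the minimum of $\int_Y \bA\,\Gc:\Gc\,dy$, where $\Gc=(M+\nabla_{y'}\wh Q_\rho \,|\, \partial_{y_3}\wh R)$ and $(\wh Q_\rho,\wh R)$ ranges over the cell version of $\C_\rho$. This should be checked against the definition first.

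For $\rho\in(0,\infty)$, put $W:=\wh Q_\rho=\wh Q+\rho\wh R\in H^1_{\mathrm{per},0}(Y;\Szero)$. Since $\wh Q$ does not depend on $y_3$ and $\wh R$ has zero $y_3$-mean, we get $\wh R=\rho^{-1}\bigl(W-\int_I W\,dy_3\bigr)$ and $\partial_{y_3}\wh R=\rho^{-1}\partial_{y_3}W$. Conversely, every $W\in H^1_{\mathrm{per},0}(Y;\Szero)$ arises this way. Hence
\[
\bA^{\mathrm{hom}}_\rho M:M=\min_{W\in H^1_{\mathrm{per},0}(Y;\Szero)} \int_Y \bA\,\bigl(M+\nabla_{y'}W\,|\,\rho^{-1}\partial_{y_3}W\bigr):\bigl(M+\nabla_{y'}W\,|\,\rho^{-1}\partial_{y_3}W\bigr)\,dy=:\inf_W J_\rho(W).
\]
By \ref{A3}, testing with $W=0$ gives $\lambda\|(\nabla_{y'}W,\rho^{-1}\partial_{y_3}W)\|^2_{L^2}\le C|M|^2$ for minimisers, uniformly in $\rho$.

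\emph{Step 2: $\rho_n\to\rho\in(0,\infty)$.} The limsup is immediate: for a fixed competitor $W$, $J_{\rho_n}(W)\to J_\rho(W)$, so $\rho\mapsto\bA^{\mathrm{hom}}_\rho M:M$ is upper semicontinuous as an infimum of continuous functions. For the liminf, take minimisers $W_n$. They are bounded in $H^1$ by Step~1 and Poincaré's inequality, so up to a subsequence $W_n\rightharpoonup W$. Then $(M+\nabla_{y'}W_n\,|\,\rho_n^{-1}\partial_{y_3}W_n)\rightharpoonup(M+\nabla_{y'}W\,|\,\rho^{-1}\partial_{y_3}W)$ in $L^2$, and weak lower semicontinuity of the convex quadratic form gives $\liminf_n J_{\rho_n}(W_n)\ge J_\rho(W)$.

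\emph{Step 3: $\rho_n\to0$.} Here the minimisers satisfy $\|\partial_{y_3}W_n\|_{L^2}\le C\rho_n\to0$. Hence the weak limit $W_n\rightharpoonup\wh Q$ in $L^2(I;H^1_{\mathrm{per}}(Y'))$ is $y_3$-independent, and $\nabla_{y'}W_n\rightharpoonup\nabla_{y'}\wh Q$. Set $\wh R_n:=\rho_n^{-1}\bigl(W_n-\int_I W_n\,dy_3\bigr)$. Then $\partial_{y_3}\wh R_n$ is bounded in $L^2$, and the Poincaré inequality in $y_3$ bounds $\wh R_n$ in $L^2(Y';H^1_\sharp(I))$, so $\wh R_n\rightharpoonup\wh R$ there. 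Lower semicontinuity gives $\liminf\ge\bA^{\mathrm{hom}}_0M:M$.

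For the limsup, take $(\wh Q,\wh R)\in\C_0$ (cell version). By density we may assume $\wh R$ is smooth in $y'$. Put $W_n=\wh Q+\rho_n\wh R$. The corresponding gradient field equals $(M+\nabla_{y'}\wh Q+\rho_n\nabla_{y'}\wh R\,|\,\partial_{y_3}\wh R)$, which converges strongly. The density step is what requires $y'$-regularity of $\wh R$.

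\emph{Step 4: $\rho_n\to\infty$.} This is the step I expect to be the main obstacle, because compactness degenerates in $y_3$. Now $\xi_n:=\rho_n^{-1}\partial_{y_3}W_n$ and $\nabla_{y'}W_n$ are bounded in $L^2$. Subtracting the slice means $c_n(y_3)=\int_{Y'}W_n\,dy'$ and applying Poincaré in $y'$ slice by slice gives $W_n-c_n\rightharpoonup\wh Q_\infty$ in $L^2(I;H^1_{\mathrm{per},0}(Y'))$. Also $\xi_n\rightharpoonup\xi$.

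The key point is that $\xi$ is $y'$-independent. Indeed, in distributions $\nabla_{y'}\xi_n=\rho_n^{-1}\partial_{y_3}\nabla_{y'}W_n\to0$, because $\nabla_{y'}W_n$ is bounded in $L^2$. Moreover $\int_I\xi\,dy_3=0$, since $\int_I \partial_{y_3}W_n\,dy_3$ is a difference of traces and is bounded in $L^2(Y')$ after the factor $\rho_n^{-1}$ is applied. So $\xi=\partial_{y_3}\wh R$ for some $\wh R\in H^1_\sharp(I)$, and lower semicontinuity yields $\liminf\ge\bA^{\mathrm{hom}}_\infty M:M$.

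For the limsup, given smooth $\wh Q_\infty(y',y_3)$ and $\wh R(y_3)$, take $W_n=\wh Q_\infty+\rho_n\bigl(\wh R-\int_I\wh R\,dy_3\bigr)$. Its gradient field is $(M+\nabla_{y'}\wh Q_\infty\,|\,\partial_{y_3}\wh R+\rho_n^{-1}\partial_{y_3}\wh Q_\infty)$, which converges strongly. The general case follows by density of smooth functions in $\C_\infty$ and continuity of the quadratic form.
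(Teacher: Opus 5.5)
Your proposal is correct, and it takes a genuinely different route from the paper.

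\emph{The paper's route.} It works with the exact minimising pairs $(\hQ_n,\hR_n)$ and derives uniform bounds on the combined corrector $\hQ_n+\rho_n\hR_n$ and on $\partial_{y_3}\hR_n$. It then passes to the limit in the Euler--Lagrange cell equations (CP$\rho$a)--(CP$\rho$b) and identifies the weak limit of the cell gradients through uniqueness of the limiting cell problem. It concludes with the reduced formula $\bA^{\mathrm{hom}}_\rho M:M=\int_Y\bA\,\Gc^M_\rho:(M,0)\,dy$, which is linear in the cell gradient, so weak convergence suffices. At $\rho=\infty$ this needs two extra ingredients: the estimate $\|\nabla_{y'}\hR_n\|_{L^2(Y)}\le C/\rho_n$, obtained by testing with $\hR_n$, and a separate two-step argument to recover slice-wise in-plane equilibrium.

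\emph{Your route.} Your substitution $W=\hQ+\rho\hR$, with inverse $\hQ=\int_I W\,dy_3$ and $\hR=\rho^{-1}\bigl(W-\int_I W\,dy_3\bigr)$, is a correct bijection from $\C^0_\rho$ onto $H^1_{\mathrm{per},0}(Y;\Szero)$. It turns the R2 family into the standard anisotropically rescaled functional $J_\rho$. After that you compare energies only: weak lower semicontinuity for the liminf, and explicit competitors plus density for the limsup. This bypasses the cell equations, uniqueness, the quadratic inequality and the two-step argument. The $y'$-independence of the transverse limit then follows from the one-line computation $\nabla_{y'}\xi_n=\rho_n^{-1}\partial_{y_3}\nabla_{y'}W_n\to0$ in distributions. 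Your formulation also shows that the endpoint tensors are the $\Gamma$-limits of $J_\rho$ as $\rho^{-1}\to\infty$ and $\rho^{-1}\to0$.

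\emph{What each buys.} The paper's route additionally identifies the limit correctors. You can recover this, even with strong convergence of the cell gradients, from convergence of the minimum values, uniqueness of the limit minimiser and uniform ellipticity. The theorem itself does not need it.

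\emph{One sentence in Step 4 should be removed.} The claim $\int_I\xi\,dy_3=0$ is false in general. Your justification would not prove it anyway: boundedness of $\rho_n^{-1}\bigl(W_n(\cdot,\tfrac12)-W_n(\cdot,-\tfrac12)\bigr)$ in $L^2(Y')$ does not make it tend to zero.

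\begin{itemize}
\item \emph{Counterexample.} For constant $\bA$, Jensen's inequality shows that the minimiser of $J_{\rho_n}$ is $W_n=\rho_n\,y_3\,\xi^*$, where $\xi^*\in\Szero$ minimises $\bA(M,\xi):(M,\xi)$. Hence $\xi_n\equiv\xi^*$, which is nonzero whenever the in-plane/transverse coupling block of $\bA$ does not vanish on $M$.
\item \emph{Why it is not needed.} The space $H^1_\sharp(I;\Szero)$ constrains the mean of $\hR$, not of $\partial_{y_3}\hR$. Moreover, $\partial_{y_3}$ maps $H^1_\sharp(I;\Szero)$ onto all of $L^2(I;\Szero)$: take an antiderivative and subtract its mean. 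So every $y'$-independent $\xi\in L^2(I;\Szero)$ is of the form $\partial_{y_3}\hR$ with $\hR\in H^1_\sharp(I;\Szero)$.
\end{itemize}

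With that sentence deleted, the rest of your Step 4 goes through unchanged.
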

	The continuity of $\rho \mapsto \bA^{\mathrm{hom}}_\rho$ at the
	endpoints $\rho = 0$ and $\rho = \infty$ is not immediate, since the
	corrector spaces change qualitatively across regimes.

\paragraph{Oseen--Frank and Ericksen energies from LdG}

The following results concern the anchoring-dominated regime
$0\leq\gamma<1$, where the anchoring imposes a hard constraint on the
macroscopic field. We show that under specific anchoring constraints the
homogenised energy takes the form of an Oseen--Frank or Ericksen energy.

\begin{theorem}[Homogenised anisotropic Ericksen energy]
	\label{thm:Er}
	Let the hypotheses of Theorem~\ref{thm:gamma} hold with
	$0\leq\gamma<1$. Let $\mathscr A$ be given by \cref{eq:csmwb+}, and
	suppose that the anchoring sets satisfy $\mathcal A(\nu)=\mathscr A$
	independently of $y$ and $\nu$ on both faces $y_3=\pm\frac12$.
	Then Assumption~\ref{asm:compatible}(iii) holds.
	
	Moreover, every $\Qc\in H^1(\omega;\mathscr A)$ admits the unique
	representation
	\[
	\Qc=s\left(P-\frac13I\right)
	\quad\text{a.e. in }\omega,\quad\text{with}\quad
	s\in H^1(\omega;[s_0,s_1]),
	\qquad
	P\in H^1(\omega;\mathbb P).
	\]
	Equivalently, $P$ represents an unoriented director field
	$[\bn]\in H^1(\omega;\mathbb S^2/\{\pm1\})$.
	
	The homogenised energy $\GF^{\hom,\gamma}_\rho$ restricted to
	$H^1(\omega;\mathscr A)$ is equivalently written as the homogenised
	anisotropic Ericksen energy
	\begin{equation}\label{eq:Er}
		\begin{aligned}
			\GF^{\hom}_{\mathrm{Er},\rho}(s,P)
			:=
			{1\over 2}\int_\omega
			\bA^{\hom}_\rho \Mc(s,P):\Mc(s,P)\,dx'
			+
			\int_\omega f^{\mathrm{Er}}_b(s)\,dx'=\GF^{\hom,\gamma}_\rho(\Qc),
		\end{aligned}
	\end{equation}
	where
	\[
	\Mc(s,P)_\alpha
	:=
	\partial_\alpha s\left(P-\frac13I\right)
	+
	s\,\partial_\alpha P,
	\qquad \alpha=1,2,\quad\text{and}\quad
	f^{\mathrm{Er}}_b(s)
	:=
	f_b\left(s\left(P-\frac13I\right)\right).
	\]
	The definition of $f^{\mathrm{Er}}_b$ is independent of $P\in\mathbb P$.
	Consequently, minimisers of $\GF^{\hom}_{\mathrm{Er},\rho}$ over $H^1(\omega;[s_0,s_1])\times H^1(\omega;\mathbb P)$
	are identified, through the map
	\[
	(s,P)\mapsto s\left(P-\frac13I\right),
	\]
	with minimisers of $\GF^{\hom,\gamma}_\rho$ over
	$H^1(\omega;\mathscr A)$.
\end{theorem}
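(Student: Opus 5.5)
The argument has four steps: geometry of $\mathscr A$, Sobolev lifting of the representation, a chain-rule computation, and the transfer of minimisers.

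\textbf{Step 1: geometry of $\mathscr A$.} Since $\mathcal A(\nu)=\mathscr A$ on both faces, the intersection in \cref{eq:Astar} equals $\mathscr A$. It is non-empty and closed.

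Consider the map $\Phi:(s,P)\mapsto s(P-\frac13 I)$ on $[s_0,s_1]\times\mathbb P$. The eigenvalues of $\Phi(s,P)$ are $\frac23 s$ (simple) and $-\frac13 s$ (double). Because $s\geq s_0>0$, the pair $(s,P)$ is recovered uniquely:
\[
s=\tfrac32\lambda_{\max}(\Qc)=\sqrt{\tfrac32}\,|\Qc|,
\qquad
P=\Qc/s+\tfrac13 I .
\]
This inverse is smooth on $\mathscr A$, since $|\Qc|\ge s_0\sqrt{2/3}>0$ there. Hence $\Phi$ is a diffeomorphism onto $\mathscr A\cong[s_0,s_1]\times\mathbb{RP}^2$. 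This is a compact smooth submanifold with boundary, so case (iii) holds.

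For $Q^*\in H^1(\omega;\mathscr A)$, take any constant $s_0(P_0-\frac13I)$. The weight lower bound is an assumption of Theorem~\ref{thm:gamma}. Together these give Assumption~\ref{asm:compatible}.

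\textbf{Step 2: the representation.} Given $\Qc\in H^1(\omega;\mathscr A)$, set $s:=\sqrt{3/2}\,|\Qc|$ and $P:=\Qc/s+\frac13 I$. These are compositions of $\Qc$ with the maps $Q\mapsto\sqrt{3/2}|Q|$ and $Q\mapsto Q/(\sqrt{3/2}|Q|)+\frac13 I$. Both maps are smooth and globally Lipschitz on a neighbourhood of $\mathscr A$ that avoids $0$, and can be extended to Lipschitz maps on all of $\Szero$.

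The chain rule for Lipschitz maps composed with $H^1$ functions then gives:
\begin{itemize}
\item $s\in H^1(\omega;[s_0,s_1])$;
\item $P\in H^1(\omega;\mathbb P)$, using that $\omega$ is bounded and the values are bounded.
\end{itemize}
Uniqueness holds pointwise from Step 1. The identification of $P$ with $[\bn]$ follows from $\mathbb P\cong\mathbb{RP}^2$ via $P=\bn\otimes\bn$.

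\textbf{Step 3: the energy identity.} The product rule in $H^1\cap L^\infty$ gives
\[
\partial_\alpha\Qc=\partial_\alpha s\,(P-\tfrac13I)+s\,\partial_\alpha P=\Mc(s,P)_\alpha ,
\]
so the elastic terms coincide.

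For the bulk term, $\tr(Q^2)$ and $\tr(Q^3)$ depend only on the spectrum of $Q$, namely $(\frac23 s,-\frac13 s,-\frac13 s)$. Therefore
\[
f_b\big(s(P-\tfrac13 I)\big)=\tfrac23 a s^2-\tfrac2{9}b s^3+\tfrac49 c s^4 ,
\]
which is independent of $P$.

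The surface term $E^\gamma_\surf$ vanishes because $\gamma<1$. This gives \cref{eq:Er}.

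\textbf{Step 4: minimisers.} By Steps 1–2, $\Phi$ induces a bijection between $H^1(\omega;[s_0,s_1])\times H^1(\omega;\mathbb P)$ and $H^1(\omega;\mathscr A)$. The two energies agree under this bijection, so minimisers correspond. Existence of a minimiser comes from Theorem~\ref{thm:hom-energy}.

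\textbf{Main obstacle.} The only delicate point is the Sobolev regularity of $s$ and $P$ in Step 2. It rests on the spectral gap: $s\ge s_0>0$ keeps the map away from the isotropic point $Q=0$, where the eigenprojection is singular. This is precisely what Lipschitz extension off $\mathscr A$ needs.
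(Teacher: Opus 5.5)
Your proposal is correct and follows essentially the same route as the paper. The shared steps are: the pointwise inversion $s=\sqrt{3/2}\,|\Qc|$, $P=\Qc/s+\frac13 I$, made Sobolev by the chain rule using $s\geq s_0>0$; the product rule giving $\nabla'\Qc=\Mc(s,P)$; the spectral computation showing $f_b$ depends only on $s$; the vanishing of $E^\gamma_{\mathrm{surf}}$ for $\gamma<1$; and the transfer of minimisers through the bijection $(s,P)\mapsto s(P-\frac13 I)$.

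The differences are minor. You check the manifold structure in Assumption~\ref{asm:compatible}(iii) directly via the explicit smooth inverse, whereas the paper points to Lemma~\ref{lem:density-Astar}. You should also state explicitly that $s(P-\frac13 I)\in H^1(\omega;\mathscr A)$ for every admissible pair; your Step~3 product rule already gives this.
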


\begin{theorem}[Homogenised anisotropic Oseen--Frank energy]
	\label{thm:OF}
	Let the hypotheses of Theorem~\ref{thm:gamma} hold with
	$0\leq\gamma<1$, and let $s_0\in\mathbb R\setminus\{0\}$ be fixed.
	Suppose that the anchoring sets are given by
	\[
	\mathcal A(\nu)
	=
	\mathscr A
	=
	\left\{
	s_0\left(P-\frac13 I\right):
	P\in\mathbb P
	\right\}.
	\]
	independently of $y$ and $\nu$ on both faces $y_3=\pm\frac12$.
	Then $\mathscr A$ is a compact smooth submanifold of $\Szero$,
	diffeomorphic to $\mathbb{RP}^2$, and
	Assumption~\ref{asm:compatible}(ii) holds.
	
	Moreover, every $\Qc\in H^1(\omega;\mathscr A)$ admits the unique
	representation
	\[
	\Qc=s_0\left(P-\frac13 I\right)
	\quad\text{a.e. in }\omega,\quad\text{with}\quad
	P\in H^1(\omega;\mathbb P).
	\]
	Equivalently, $P$ represents an unoriented director field
	$[\bn]\in H^1(\omega;\mathbb S^2/\{\pm1\})$.
	
	The homogenised energy $\GF^{\hom,\gamma}_\rho$ restricted to
	$H^1(\omega;\mathscr A)$ is equivalently written as the homogenised
	anisotropic Oseen--Frank energy
	\begin{equation}\label{eq:OF}
		\GF^{\hom}_{\mathrm{OF},\rho}(P)
		:=
		{s_0^2\over 2}\int_\omega
		\bA^{\hom}_\rho\nabla'P:\nabla'P\,dx'
		+
		c_b|\omega|=\GF^{\hom,\gamma}_\rho(\Qc),
	\end{equation}
	where
	\[
	c_b
	:=
	f_b\left(s_0\left(P-\frac13 I\right)\right).
	\]
	The constant $c_b$ is independent of $P\in\mathbb P$. 
	Consequently, minimisers of $\GF^{\hom}_{\mathrm{OF},\rho}$ over
	$H^1(\omega;\mathbb P)$
	are identified, through the map
	\[
	P\mapsto s_0\left(P-\frac13I\right),
	\]
	with minimisers of $\GF^{\hom,\gamma}_\rho$ over
	$H^1(\omega;\mathscr A)$.
\end{theorem}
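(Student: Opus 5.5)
The plan is to verify the three claims in turn: the geometric structure of $\mathscr A$, the lifting of $\Qc$ to $P\in H^1(\omega;\mathbb P)$, and the energy identity. The energy identity then gives the identification of minimisers.

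\emph{Step 1: geometry of $\mathscr A$.} For $P\in\mathbb P$ we have $P=\bn\otimes\bn$ with $\bn\in\mathbb S^2$ determined up to sign. Hence $\mathbb P\cong\mathbb{RP}^2$ is a compact smooth embedded submanifold of $\R^{3\times3}_{\mathrm{sym}}$: it is the orbit of $e_1\otimes e_1$ under the conjugation action of $SO(3)$, and compact orbits are embedded. The map
\[
\Phi(P)=s_0\bigl(P-\tfrac13I\bigr)
\]
is an affine injective map into $\Szero$, since $\operatorname{tr}P=1$ and $s_0\neq0$. Thus $\mathscr A=\Phi(\mathbb P)$ is a compact smooth submanifold without boundary, diffeomorphic to $\mathbb{RP}^2$. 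Because $\mathcal A(\pm e_3)=\mathscr A$ on both faces, the intersection in \cref{eq:Astar} equals $\mathscr A$. Any constant $Q^*\in\mathscr A$ lies in $H^1(\omega;\mathscr A)$, and the lower bound on $W$ is part of the standing hypotheses. Together these give Assumption~\ref{asm:compatible}(ii).

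\emph{Step 2: representation.} Given $\Qc\in H^1(\omega;\mathscr A)$, set
\[
P:=s_0^{-1}\Qc+\tfrac13I.
\]
This is an affine image of an $H^1$ map, so $P\in H^1(\omega;\R^{3\times3}_{\mathrm{sym}})$, and $P(x')\in\mathbb P$ a.e. because $\Phi$ is a bijection onto $\mathscr A$. Uniqueness follows from the injectivity of $\Phi$. Moreover $\nabla'\Qc=s_0\nabla'P$ exactly. The equivalence with an unoriented director field $[\bn]$ is the identification $\mathbb P\cong\mathbb S^2/\{\pm1\}$. I will state it only at the level of $P$ and not claim an orientable $H^1$ lift $\bn$.

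\emph{Step 3: energy identity.} Substituting $\nabla'\Qc=s_0\nabla'P$ into the elastic part of \cref{eq:hom-energy} gives
\[
\frac{s_0^2}{2}\int_\omega\bA^{\hom}_\rho\nabla'P:\nabla'P\,dx'
\]
by bilinearity. Since $0\le\gamma<1$, the surface term $E^\gamma_\surf$ vanishes by Theorem~\ref{thm:gamma}.

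For the bulk term, take $P=\bn\otimes\bn$. Then $Q=\Phi(P)$ has eigenvalues $\tfrac23s_0,-\tfrac13s_0,-\tfrac13s_0$, so
\[
\operatorname{tr}Q^2=\tfrac23s_0^2,\qquad \operatorname{tr}Q^3=\tfrac29s_0^3.
\]
Hence $f_b(\Phi(P))=c_b$ is a constant independent of $P$, and the bulk energy equals $c_b|\omega|$. This proves \cref{eq:OF}.

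\emph{Step 4: minimisers.} Since $\Phi$ induces a bijection $H^1(\omega;\mathbb P)\to H^1(\omega;\mathscr A)$ that preserves energy values, minimisers correspond to each other. Existence on the $\mathscr A$ side is provided by Theorem~\ref{thm:hom-energy}.

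\textbf{Main obstacle.} The only step that needs care is the smooth-submanifold claim in Step 1, specifically that $\mathbb P$ is embedded and not merely immersed. I would settle this through compactness of the orbit, or directly by noting that the map $\mathbb S^2\to\mathbb P$, $\bn\mapsto\bn\otimes\bn$, is a $2$-to-$1$ immersive covering whose quotient is an injective immersion of a compact manifold.
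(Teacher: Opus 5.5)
Your proposal is correct and follows the paper's proof essentially step by step: the affine injection $P\mapsto s_0\bigl(P-\frac13I\bigr)$ gives both the submanifold structure of $\mathscr A$ and the $H^1$ lifting with $\nabla'\Qc=s_0\nabla'P$; the eigenvalue computation shows that $f_b$ is constant on $\mathscr A$; and the surface term vanishes for $0\le\gamma<1$. Where you differ is in being more explicit than the paper on two points: you justify that $\mathbb P$ is embedded via the compact $SO(3)$-orbit argument, which the paper simply asserts, and you check Assumption~\ref{asm:compatible} directly, whereas the paper somewhat loosely attributes~(ii) to Lemma~\ref{lem:density-Astar}.
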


\begin{remark}[Orientable director representation]
	If the line field $P\in H^1(\omega;\mathbb P)$ is orientable, i.e.~if
	$P=\bn\otimes\bn$ a.e.~in $\omega$ for some
	$\bn\in H^1(\omega;\mathbb S^2)$, then
	\[
	\Qc=s_0\left(\bn\otimes\bn-\frac13 I\right),
	\qquad
	\nabla'P=\nabla'\bn\otimes\bn+\bn\otimes\nabla'\bn .
	\]
	Accordingly, the homogenised Oseen--Frank energy may be written as
	\[
	\GF^{\hom}_{\mathrm{OF},\rho}(\bn)
	=
	{s_0^2\over 2}\int_\omega
	\bA^{\hom}_\rho
	\bigl(\nabla'\bn\otimes\bn+\bn\otimes\nabla'\bn\bigr)
	:
	\bigl(\nabla'\bn\otimes\bn+\bn\otimes\nabla'\bn\bigr)
	\,dx'
	+
	c_b|\omega|.
	\]
	This expression is invariant under $\bn\sim-\bn$. In general, a global
	orientation need not exist, so the intrinsic formulation is in terms of
	$P\in H^1(\omega;\mathbb P)$.
\end{remark}
	
	\section{Preliminaries: Unfolding operators and thickness decomposition}
	\label{sec:pre}
	
	\subsection{Rescaling and unfolding operators}
	\label{sec:unfolding}
	
	The passage to the limit is carried out on fixed reference domains by means of two operators from periodic homogenisation: the unfolding operator $\Te$ on the mid-surface $\o$, and its thin-film counterpart, the rescaling unfolding operator $\Peh$ on the three-dimensional thin domain $\O_h$. The key idea is that these operators rewrite oscillatory quantities depending on the fast variables $x'/\e$ and $x_3/h$ as functions of an independent microscopic variable
	\[
	y=(y',y_3)\in Y:=Y'\times I,
	\qquad
	Y':=(0,1)^2,\quad I:=\left(-\frac12,\frac12\right),
	\]
	while keeping the slow in-plane variable $x'\in\o$.
	As a consequence, integrals over the varying thin domains $\O_h$ are transformed into integrals over the fixed set $\o\times Y$, which is the natural framework for simultaneous homogenisation and dimension reduction.
	
	Recall the decomposition of $\o$ into complete $\e$-cells and the boundary remainder:
	\[
	\Xi_\e=\{\xi\in\mathbb Z^2:\ \e\xi+\e Y'\subset \o\},
	\qquad
	\wh{\o}_\e:=\operatorname{int}\Big(\bigcup_{\xi\in\Xi_\e}(\e\xi+\e Y')\Big),
	\qquad
	\Lambda_\e:=\o\setminus\wh{\o}_\e,
	\]
	and set $\wh \Omega_{h,\e}:=\wh{\o}_\e\times hI$.
	Thus $\wh{\o}_\e$ is the union of the complete periodicity cells contained in $\o$, while $\Lambda_\e$ is the thin boundary strip made of the incomplete cells.
	
	\begin{definition}[Unfolding and rescaling--unfolding operators]
		For every measurable function $\psi$ on $\O_h$, the rescaling--unfolding operator $\Peh$ is defined by
		\begin{equation}\label{eq:Peh-def}
			\Peh(\psi)(x',y)\doteq
			\begin{cases}
				\psi\!\left(\e\left[\dfrac{x'}{\e}\right]+\e y',\,hy_3\right)
				& \text{for a.e. }(x',y)\in \wh{\o}_\e\times Y,\\[1ex]
				0
				& \text{for a.e. }(x',y)\in \Lambda_\e\times Y.
			\end{cases}
		\end{equation}
		For every measurable function $\phi$ on $\o$, the unfolding operator $\Te$ is defined by
		\begin{equation}\label{eq:Te-def}
			\Te(\phi)(x',y')\doteq
			\begin{cases}
				\phi\!\left(\e\left[\dfrac{x'}{\e}\right]+\e y'\right)
				& \text{for a.e. }(x',y')\in \wh{\o}_\e\times Y',\\[1ex]
				0
				& \text{for a.e. }(x',y')\in \Lambda_\e\times Y'.
			\end{cases}
		\end{equation}
	\end{definition}
%	The operator $\Te$ is the standard periodic unfolding on $\o$, whereas $\Peh$ is adapted to the thin geometry: it unfolds the in-plane oscillations at scale $\e$ and simultaneously rescales the thickness variable $x_3\in hI$ to the fixed interval $I$.
	
	\begin{proposition}[Basic properties of $\Te$ and $\Peh$]\label{prop:unfolding-basic}
		The following statements are standard; see, for instance,
		\cite[Proposition 1.12, Proposition 1.35, Theorem 1.36, Corollary 1.37, Proposition 1.39]{CDG}.
		
		\begin{enumerate}
			\item[(i)] \textbf{Linearity and continuity for $p>1$. }
			The operator $\Te$ is linear and continuous from $L^p(\o)$ into $L^p(\o\times Y')$, and $\Peh$ is linear and continuous from $L^p(\O_h)$ into $L^p(\o\times Y)$.
			More precisely, for every $\phi\in L^p(\o)$ and every $\psi\in L^p(\O_h)$,
			\begin{equation}\label{eq:L2-bound}
				\|\Te(\phi)\|_{L^p(\o\times Y')}
				\le \|\phi\|_{L^p(\o)},
				\|\Peh(\psi)\|_{L^p(\o\times Y)}
				\le \frac{1}{ h^{1/p}}\,\|\psi\|_{L^p(\O_h)}.
			\end{equation}
			
			\item[(ii)] \textbf{Exact integral identities on the complete cells.}
			For every $\phi\in L^1(\o)$,
			\begin{equation}\label{eq:Te-integral-exact}
				\int_{\o\times Y'} \Te(\phi)(x',y')\,dx'dy'
				=
				\int_{\wh{\o}_\e}\phi(x')\,dx'.
			\end{equation}
			For every $\psi\in L^1(\O_h)$,
			\begin{equation}\label{eq:Peh-integral-exact}
				\int_{\o\times Y}\Peh(\psi)(x',y)\,dx'dy
				=
				\frac{1}{h}\int_{\wh\Omega_{h,\e}}\psi(x)\,dx.
			\end{equation}
			Consequently,
			\begin{equation}\label{eq:Te-integral-remainder}
				\left|
				\int_{\o}\phi(x')\,dx'
				-
				\int_{\o\times Y'} \Te(\phi)(x',y')\,dx'dy'
				\right|
				\le
				\int_{\Lambda_\e}|\phi(x')|\,dx',
			\end{equation}
			and
			\begin{equation}\label{eq:Peh-integral-remainder}
				\left|
				\frac{1}{h}\int_{\O_h}\psi(x)\,dx
				-
				\int_{\o\times Y}\Peh(\psi)(x',y)\,dx'dy
				\right|
				\le
				\frac{1}{h}\int_{\Lambda_\e\times hI} |\psi(x)|\,dx.
			\end{equation}
			In particular, since $|\Lambda_\e|\to 0$ as $\e\to0$, the unfolded integrals are asymptotically equivalent to the original ones.
			
			\item[(iii)] \textbf{Exact $L^p$-norm identities on the complete cells.}
			For every $1\le p<\infty$ and every $\phi\in L^p(\o)$,
			\begin{equation}\label{eq:Te-Lp-exact}
				\|\Te(\phi)\|_{L^p(\o\times Y')}^p
				=
				\|\phi\|_{L^p(\wh{\o}_\e)}^p
				\le
				\|\phi\|_{L^p(\o)}^p.
			\end{equation}
			For every $\psi\in L^p(\O_h)$,
			\begin{equation}\label{eq:Peh-Lp-exact}
				\|\Peh(\psi)\|_{L^p(\o\times Y)}^p
				=
				\frac{1}{h}\|\psi\|_{L^p(\wh\Omega_{h,\e})}^p
				\le
				\frac{1}{h}\|\psi\|_{L^p(\O_h)}^p.
			\end{equation}
			
			\item[(iv)] \textbf{Compatibility with products.}
			If the products are integrable, then
			\begin{equation}\label{eq:Te-product}
				\Te(\phi_1\phi_2)=\Te(\phi_1)\Te(\phi_2)
				\qquad\text{a.e. in }\o\times Y',\quad
				\Peh(\psi_1\psi_2)=\Peh(\psi_1)\Peh(\psi_2)
				\qquad\text{a.e. in }\o\times Y.
			\end{equation}
			
			\item[(v)] \textbf{Derivative identities.}
			Let $\psi\in H^1(\O_h)$. Then
			\begin{equation}\label{eq:Peh-grad-yprime}
				\nabla_{y'}\Peh(\psi)
				=
				\e\,\Peh(\nabla_{x'}\psi),\quad 
				\partial_{y_3}\Peh(\psi)
				=
				h\,\Peh(\partial_{x_3}\psi)
				\qquad\text{a.e. in }\o\times Y.
			\end{equation}
			Equivalently,
			\begin{equation}\label{eq:Peh-grad-inverse}
				\Peh(\nabla_{x'}\psi)=\frac1\e\nabla_{y'}\Peh(\psi),
				\qquad
				\Peh(\partial_{x_3}\psi)=\frac1h\partial_{y_3}\Peh(\psi).
			\end{equation}
			Similarly, for every $\phi\in H^1(\o)$,
			\begin{equation}\label{eq:Te-grad}
				\nabla_{y'}\Te(\phi)
				=
				\e\,\Te(\nabla_{x'}\phi)
				\qquad\text{a.e. in }\o\times Y'.
			\end{equation}
		\end{enumerate}
	\end{proposition}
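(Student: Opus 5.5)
The plan is to reduce every item to elementary change-of-variables computations on the complete cells $\varepsilon\xi+\varepsilon Y'$, $\xi\in\Xi_\varepsilon$. The basic observation is this. For fixed $\xi$ and for $x'\in\varepsilon\xi+\varepsilon Y'$ we have $[x'/\varepsilon]=\xi$. Hence on $(\varepsilon\xi+\varepsilon Y')\times Y$ the function $\Peh(\psi)(x',y)=\psi(\varepsilon\xi+\varepsilon y',hy_3)$ does not depend on $x'$. It is the pullback of $\psi|_{(\varepsilon\xi+\varepsilon Y')\times hI}$ under the affine bijection $T_\xi:Y\to(\varepsilon\xi+\varepsilon Y')\times hI$, $y\mapsto(\varepsilon\xi+\varepsilon y',hy_3)$, whose Jacobian is $\varepsilon^2h$.

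\textbf{Integral identities (ii).} Integrate first in $x'$ over the cell, which gives a factor $\varepsilon^2$. Then change variables with $T_\xi$:
\[
\int_{(\varepsilon\xi+\varepsilon Y')\times Y}\Peh(\psi)\,dx'dy
=\varepsilon^2\int_Y\psi(T_\xi y)\,dy
=\frac1h\int_{(\varepsilon\xi+\varepsilon Y')\times hI}\psi\,dx .
\]
Summing over $\xi\in\Xi_\varepsilon$ gives \cref{eq:Peh-integral-exact}, because $\Peh(\psi)=0$ on $\Lambda_\varepsilon\times Y$. The same computation without the $x_3$ direction, with Jacobian $\varepsilon^2$, gives \cref{eq:Te-integral-exact}. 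For the remainder estimates, split $\int_{\Omega_h}=\int_{\widehat\Omega_{h,\varepsilon}}+\int_{\Lambda_\varepsilon\times hI}$ and bound the second term by its absolute value. This gives \cref{eq:Te-integral-remainder} and \cref{eq:Peh-integral-remainder}.

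\textbf{Products, norms and continuity (iii), (iv), (i).} Property (iv) holds pointwise, since both operators are compositions with the same measurable map, followed by extension by zero. This also gives $|\Peh(\psi)|^p=\Peh(|\psi|^p)$. Applying (ii) to $|\psi|^p\in L^1$ then yields \cref{eq:Peh-Lp-exact} and \cref{eq:Te-Lp-exact}. Taking $p$-th roots gives the bounds in \cref{eq:L2-bound}. Together with linearity, which is evident from the definition, this gives continuity for every $p\ge1$, and in particular for $p>1$.

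\textbf{Derivative identities (v).} For $\psi\in C^1(\overline{\Omega_h})$ the chain rule on each cell gives
\[
\nabla_{y'}\bigl(\psi\circ T_\xi\bigr)=\varepsilon\,(\nabla_{x'}\psi)\circ T_\xi,
\qquad
\partial_{y_3}\bigl(\psi\circ T_\xi\bigr)=h\,(\partial_{x_3}\psi)\circ T_\xi .
\]
These identities hold for a.e.\ $x'$ as identities in $y$; on $\Lambda_\varepsilon$ both sides vanish.

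For general $\psi\in H^1(\Omega_h)$, I would take smooth approximations $\psi_n\to\psi$ in $H^1(\Omega_h)$, which is possible because $\Omega_h$ is Lipschitz. By (i), for fixed $(h,\varepsilon)$, $\Peh(\psi_n)\to\Peh(\psi)$ and $\Peh(\nabla\psi_n)\to\Peh(\nabla\psi)$ in $L^2(\omega\times Y)$. Distributional derivatives in $y$ are continuous with respect to $L^2$ convergence, so the identities pass to the limit. Alternatively, one can argue directly that $\psi\circ T_\xi\in H^1(Y)$, since affine changes of variables preserve $H^1$ and obey the chain rule.

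\textbf{Main obstacle.} The only point needing care is the meaning of $\nabla_{y'}\Peh(\psi)$ in (v). It is a derivative in $y$ only, taken for a.e.\ fixed $x'$. As a function of $x'$, $\Peh(\psi)$ is piecewise constant and jumps across cell boundaries, so no $x'$-derivative is claimed. I would therefore state (v) on $L^2(\omega;H^1(Y))$ and verify it cell by cell. The case of $\Te$ in \cref{eq:Te-grad} is identical with $h$ removed.
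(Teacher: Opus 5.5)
Your argument is correct and is essentially the standard one. The paper gives no proof of its own and defers to \cite{CDG}, where these properties are obtained by exactly your cell-by-cell affine change of variables $T_\xi$: (iv) and (iii) follow from (ii) applied to products and to $|\psi|^p$, (v) follows from the chain rule on each complete cell, and your reading that $\nabla_{y'}$ acts only in $y$ (since $\Peh(\psi)$ is piecewise constant in $x'$) is the right interpretation of (v). The case $p=\infty$ in (i), if it is intended, is not covered by your route through (iii), but it is immediate from the definition.
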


\begin{remark}[Interpretation in the present thin-film LdG setting]
	\label{rem:unfolding-interpretation}
%	The operator $\Peh$ transforms functions on the thin domain
%	$\O_h=\o\times hI$ into functions on the fixed domain $\o\times Y$, with
%	slow variable $x'\in\o$ and microscopic variable
%	\[
%	y=\left(y',y_3\right)
%	=
%	\left(\left\{\frac{x'}{\e}\right\},\frac{x_3}{h}\right)\in Y .
%	\]
%	Thus the in-plane oscillations and the thickness rescaling are treated
%	simultaneously. 
	For a $\phi$ is defined only on $\o$, then
	\[
	\Peh(\phi)(x',y)=\Te(\phi)(x',y')
	\qquad\text{for a.e. }(x',y)\in\o\times Y,
	\]
	which is the form used for the mean fields obtained from the thickness
	decomposition. Moreover, since $f_b$ is a polynomial in $Q$ with $f_b(0)=0$, the product
	property gives
	\begin{equation}\label{eq:Peh-bulk-potential}
	\Peh(f_b(Q))=f_b(\Peh(Q))
	\qquad\text{a.e. in }\o\times Y.
	\end{equation}
	Similarly, if $g\in L^\infty(Y)$ is $Y'$-periodic in $y'$ and
	$g_{h,\e}(x):=g(x'/\e,x_3/h)$, then
	\[
	\Peh(g_{h,\e})(x',y)=g(y)
	\qquad\text{for a.e. }(x',y)\in\wh{\o}_\e\times Y.
	\]
	In particular, the oscillatory elastic tensor
	$\bA(x'/\e,x_3/h)$ becomes $\bA(y)$ after unfolding. Finally, for
	$\psi\in H^1(\O_h)$ and traces $\psi^\pm(x'):=\psi(x',\pm h/2)$,
	\[
	\Peh(\psi)(x',y',\pm\tfrac12)=\Te(\psi^\pm)(x',y')
	\qquad\text{for a.e. }(x',y')\in\wh{\o}_\e\times Y',
	\]
	which links the bulk unfolding with the surface anchoring terms on
	$\Gamma_h^\pm$.
\end{remark}
	
	To formulate the compactness statements in a concise way, we use the cell-average operators
	\begin{equation}\label{eq:cell-average}
		\Mc_{Y'}(g)(x'):=\int_{Y'} g(x',y')\,dy',
		\qquad
		\Mc_Y(G)(x'):=\int_Y G(x',y)\,dy.
	\end{equation}
	Since $|Y'|=|Y|=1$, these are both integrals and averages.
	We also write
	\[
	H^1_{per,0}(Y'):=\left\{\zeta\in H^1_{per}(Y'):\ \int_{Y'}\zeta(y')\,dy'=0\right\}.
	\]
	
	The next lemma is the compactness input that will be used later, in particular for quantities defined on the mid-surface $\o$.
	
	\begin{lemma}\label{A11}
		The following statements are standard consequences of the unfolding compactness theory; see
		\cite[Theorem 1.36, Corollary 1.37, Proposition 1.39]{CDG}.
		
		\begin{enumerate}
			\item Let $\{w_{h,\e}\}_{h,\e}$ be a sequence weakly convergent in $L^2(\o)$, with weak limit $w\in L^2(\o)$.
			Then there exists $\wh{w}\in L^2(\o\times Y')$ with
			\[
			\Mc_{Y'}(\wh{w})=0
			\qquad\text{a.e. in }\o,
			\]
			such that, up to a subsequence,
			\begin{equation}\label{eq:Te-L2-compact}
				\Te(w_{h,\e})\rightharpoonup w+\wh{w}
				\qquad\text{weakly in }L^2(\o\times Y').
			\end{equation}
			
			\item Let $\{v_{h,\e}\}_{h,\e}$ be a sequence weakly convergent in $H^1(\o)$, with weak limit $v\in H^1(\o)$.
			Then there exists
			\[
			\wh{v}\in L^2(\o;H^1_{per,0}(Y'))
			\]
			such that, up to a subsequence,
			\begin{equation}\label{eq:Te-H1-strong}
				\Te(v_{h,\e})\to v
				\qquad\text{strongly in }L^2(\o\times Y'),
			\end{equation}
			and
			\begin{equation}\label{eq:Te-grad-compact}
				\Te(\nabla v_{h,\e})
				\rightharpoonup
				\nabla v+\nabla_{y'}\wh{v}
				\qquad\text{weakly in }L^2(\o\times Y').
			\end{equation}
		\end{enumerate}
	\end{lemma}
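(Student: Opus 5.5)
The plan is to deduce both parts of Lemma~\ref{A11} from the basic properties in Proposition~\ref{prop:unfolding-basic}, using the standard unfolding compactness arguments of \cite{CDG}. The sequences are indexed by $(h,\varepsilon)$, but only $\varepsilon\to0$ matters here, since $\Te$ does not involve $h$.

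\textbf{Part 1.} The argument runs in three steps.
\begin{itemize}
\item By \cref{eq:Te-Lp-exact} with $p=2$, $\|\Te(w_{h,\e})\|_{L^2(\o\times Y')}\le\|w_{h,\e}\|_{L^2(\o)}$. This is uniformly bounded because weakly convergent sequences are bounded. Hence a subsequence converges weakly in $L^2(\o\times Y')$ to some $\widetilde w$.
\item To identify the cell average, take $\varphi\in L^2(\o)$ and note that $\Te(\varphi)\to\varphi$ strongly in $L^2(\o\times Y')$. This holds by density of continuous functions together with the uniform bound \cref{eq:L2-bound}.
\item By \cref{eq:Te-product} and \cref{eq:Te-integral-exact},
\[
\int_{\o\times Y'}\Te(w_{h,\e})\varphi\,dx'dy'=\int_{\o\times Y'}\Te(w_{h,\e})\bigl(\varphi-\Te\varphi\bigr)+\int_{\wh\o_\e}w_{h,\e}\varphi\,dx' .
\]
The first term tends to $0$, and the second tends to $\int_\o w\varphi$ because $|\Lambda_\e|\to0$ and $w_{h,\e}\varphi$ is equi-integrable on $\Lambda_\e$ (Cauchy--Schwarz). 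Therefore $\Mc_{Y'}(\widetilde w)=w$, and we set $\wh w:=\widetilde w-w$.
\end{itemize}

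\textbf{Part 2, strong convergence.} For $v\in H^1(\o)$, the standard estimate $\|\Te(v)-\Mc^\e(v)\|_{L^2(\o\times Y')}\le C\e\|\nabla v\|_{L^2(\o)}$ holds, where $\Mc^\e$ is the local cell average. It follows from Poincar\'e--Wirtinger in $Y'$ applied to $\Te(v)(x',\cdot)$ together with \cref{eq:Te-grad}. Rellich on the Lipschitz domain $\o$ gives $v_{h,\e}\to v$ strongly in $L^2(\o)$. Since $\Te$ is linear with norm at most one,
\[
\|\Te(v_{h,\e})-v\|\le\|v_{h,\e}-v\|_{L^2(\o)}+\|\Te(v)-v\|_{L^2(\o\times Y')},
\]
and the last term tends to zero by the density argument above. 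This proves \cref{eq:Te-H1-strong}.

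\textbf{Part 2, gradient (main step).} Define the zero-mean oscillation
\[
Z_\e:=\frac1\e\bigl(\Te(v_{h,\e})-\Mc_{Y'}(\Te(v_{h,\e}))-y'_c\cdot\Mc^\e(\nabla v_{h,\e})\bigr),\qquad y'_c:=y'-(\tfrac12,\tfrac12),
\]
where $\Mc^\e(\nabla v_{h,\e})(x')$ is the average of $\nabla v_{h,\e}$ over the $\e$-cell containing $x'$. Then $\nabla_{y'}Z_\e=\Te(\nabla v_{h,\e})-\Mc^\e(\nabla v_{h,\e})$. Poincar\'e--Wirtinger gives a uniform $L^2(\o;H^1(Y'))$ bound, so up to a subsequence $Z_\e\rightharpoonup\wh v$ in $L^2(\o;H^1(Y'))$. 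Since $\Mc^\e(\nabla v_{h,\e})\rightharpoonup\nabla v$ weakly in $L^2(\o)$, we obtain
\[
\Te(\nabla v_{h,\e})\rightharpoonup\nabla v+\nabla_{y'}\wh v .
\]

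The main obstacle is proving that $\wh v$ is $Y'$-periodic. Compare the traces of $Z_\e$ on opposite faces of $Y'$. The difference $Z_\e(x'+\e e_1,0,y_2)-Z_\e(x',1,y_2)$ reduces to $\Mc_{Y'}$-difference quotients of $v_{h,\e}$ divided by $\e$, minus the jump in the averaged gradient term. Test against $\psi(x')\phi(y_2)$ with $\psi\in C_c^\infty(\o)$ and shift the translation onto $\psi$; the resulting difference quotients of $\psi$ are of order $\e$ after division. In the limit this difference converges to $\partial_1 v-\partial_1 v=0$, and the same argument applies in the $e_2$ direction. Finally, $\wh v$ has zero cell mean by construction, so $\wh v\in L^2(\o;H^1_{per,0}(Y'))$. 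This is exactly \cite[Theorem 1.36, Corollary 1.37]{CDG}, which I would cite rather than reproduce in full.
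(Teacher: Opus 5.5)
Your proposal is correct and is essentially the standard unfolding argument of \cite{CDG}: weak compactness with identification of the cell mean for part 1, Rellich for the strong convergence, and the rescaled zero-mean oscillation with a face-to-face periodicity check for the gradient. This is exactly what the paper relies on, since it gives no proof beyond that citation.

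One slip to fix: as written, your $Z_\varepsilon$ also divides the term $y'_c\cdot\mathcal{M}^\varepsilon(\nabla v_{h,\varepsilon})$ by $\varepsilon$, which would give $\nabla_{y'}Z_\varepsilon=\mathcal{T}_\varepsilon(\nabla v_{h,\varepsilon})-\varepsilon^{-1}\mathcal{M}^\varepsilon(\nabla v_{h,\varepsilon})$. That linear term must sit outside the factor $1/\varepsilon$, which is the scaling your gradient identity and your face-jump computation actually use.
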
	
	
	\subsection{The decomposition of $Q$-tensor on thin domains}\label{sec:decomposition}
	
	\begin{definition}[Thickness decomposition]
		\label{def:decomp}
		For $Q \in H^1(\Omega_h;\Szero)$ define the mean field
		$\mathcal{Q} \in H^1(\omega;\Szero)$ and the remainder
		$\mathcal{R} \in H^1(\Omega_h;\Szero)$ by
		\[
		\mathcal{Q}(x') := \frac{1}{h}\int_{hI} Q(x',x_3)\,\mathrm{d}x_3,
		\qquad
		\mathcal{R}(x',x_3) := Q(x',x_3) - \mathcal{Q}(x').
		\]
	\end{definition}

	\begin{proposition}[Thickness decomposition estimates]
		\label{prop:thickness-decomposition}
		Let $Q\in H^1(\Omega_h;\Szero)$, and let
		$\mathcal Q$ and $\mathcal R$ be defined as in
		Definition~\ref{def:decomp}. Then, for a.e.\ $x'\in\omega$,
		\begin{equation}
			\label{Ortho01}
			\int_{hI}\mathcal R(x',x_3)\,\mathrm dx_3=0.
		\end{equation}
		Moreover,
		\begin{align}
			h\|\mathcal Q\|_{L^2(\omega)}^2
			+\|\mathcal R\|_{L^2(\Omega_h)}^2
			&=
			\|Q\|_{L^2(\Omega_h)}^2,
			\label{Es01a}\\
			h\|\nabla'\mathcal Q\|_{L^2(\omega)}^2
			+\|\nabla'\mathcal R\|_{L^2(\Omega_h)}^2
			&=
			\|\nabla'Q\|_{L^2(\Omega_h)}^2.
			\label{Es01b}
		\end{align}
		There exists $C>0$, independent of $h$ and $Q$, such that
		\begin{align}
			\|\mathcal R\|_{L^2(\Omega_h)}
			&\leq
			Ch\|\partial_3\mathcal R\|_{L^2(\Omega_h)}
			=
			Ch\|\partial_3Q\|_{L^2(\Omega_h)},
			\label{Es04}\\
			\|Q-\mathcal Q\|_{L^2(\Omega_h)}
			+\|\mathcal R\|_{L^2(\Omega_h)}
			&\leq
			Ch\|\partial_3Q\|_{L^2(\Omega_h)}.
			\label{Es03}
		\end{align}
	\end{proposition}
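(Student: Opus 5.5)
The orthogonality relation \cref{Ortho01} comes directly from Definition~\ref{def:decomp}. Since $\mathcal Q(x')$ is the $x_3$-average of $Q(x',\cdot)$ over $hI$, integrating $\mathcal R=Q-\mathcal Q$ over $hI$ gives $\int_{hI}Q\,dx_3-h\mathcal Q(x')=0$ for a.e.\ $x'$. We use Fubini on $\Omega_h=\omega\times hI$ and the fact that $Q(x',\cdot)\in H^1(hI)$ for a.e.\ $x'$. This also shows $\mathcal Q\in L^2(\omega)$, and by Cauchy--Schwarz $h|\mathcal Q(x')|^2\le\int_{hI}|Q(x',x_3)|^2dx_3$.

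For the Pythagorean identities, expand $|Q|^2=|\mathcal Q|^2+2\,\mathcal Q:\mathcal R+|\mathcal R|^2$ pointwise and integrate over $\Omega_h$. The cross term is $\int_\omega\mathcal Q(x'):\bigl(\int_{hI}\mathcal R\,dx_3\bigr)dx'=0$ by \cref{Ortho01}. This yields \cref{Es01a}.

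For \cref{Es01b}, first show that $\mathcal Q\in H^1(\omega;\Szero)$ with $\nabla'\mathcal Q=\frac1h\int_{hI}\nabla'Q\,dx_3$. To do this, test against $\varphi\in C_c^\infty(\omega)$: the function $\varphi(x')$ is an admissible test function on $\Omega_h$ that vanishes near the lateral boundary, so differentiation commutes with the $x_3$-average. Consequently $\nabla'\mathcal R=\nabla'Q-\nabla'\mathcal Q$ also has zero $x_3$-average. The same orthogonality expansion, applied to $\nabla'Q=\nabla'\mathcal Q+\nabla'\mathcal R$, then gives \cref{Es01b}.

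For \cref{Es04}, note that $\partial_3\mathcal R=\partial_3 Q$ because $\mathcal Q$ does not depend on $x_3$. For a.e.\ $x'$, the slice $\mathcal R(x',\cdot)$ lies in $H^1(hI)$ and has zero mean. The one-dimensional Poincar\'e--Wirtinger inequality on $I$, rescaled to $hI$ via $x_3=hy_3$, gives
\[
\int_{hI}|\mathcal R(x',x_3)|^2dx_3\le C_P^2h^2\int_{hI}|\partial_3Q(x',x_3)|^2dx_3,
\]
where $C_P$ is the Poincar\'e constant of $I$ (one may take $C_P=1/\pi$). Integrating over $\omega$ yields \cref{Es04} with $C=C_P$, and this constant is independent of $h$ and $Q$. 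Finally, \cref{Es03} is immediate: $Q-\mathcal Q=\mathcal R$, so the left-hand side equals $2\|\mathcal R\|_{L^2(\Omega_h)}$, and we may take $C=2C_P$.

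The only point needing care is the regularity $\mathcal Q\in H^1(\omega)$ and the commutation of $\nabla'$ with the thickness average. This is handled by the distributional argument above together with Fubini, so no real obstacle is expected. The scaling bookkeeping in the Poincar\'e step is the main place where the factor $h$ must be tracked correctly.
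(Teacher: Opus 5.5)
Your proof is correct and follows the same route as the paper. Both use the orthogonality of $\mathcal Q$ and $\mathcal R$ (and of their in-plane gradients) in $L^2(\Omega_h)$, which comes from the zero thickness-average. Both then apply the rescaled one-dimensional Poincar\'e--Wirtinger inequality slice by slice, together with $\partial_3\mathcal R=\partial_3Q$.

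Your distributional justification that $\nabla'$ commutes with the thickness average is a useful addition, since the paper takes this for granted. One small point: $\varphi(x')$ is not compactly supported in $\Omega_h$, because it does not vanish near $x_3=\pm h/2$. To make the step rigorous, either multiply by a cutoff $\eta_k(x_3)\uparrow 1$ on $hI$ and pass to the limit by dominated convergence, or use Green's formula, noting that $\nu_\alpha=0$ on $\Gamma_h^\pm$.
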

	
	\begin{proof}
		The zero-average property~\eqref{Ortho01} follows from the definition of
		$\mathcal Q$. Consequently, $\int_{\Omega_h}\mathcal Q:\mathcal R\,\mathrm dx=0$.
		Moreover,
		\[
		\int_{hI}\nabla'\mathcal R(x',x_3)\,\mathrm dx_3
		=
		\nabla'\int_{hI}\mathcal R(x',x_3)\,\mathrm dx_3
		=0
		\]
		for a.e.\ $x'\in\omega$. Applying these orthogonality relations to
		$Q=\mathcal Q+\mathcal R$ and
		$\nabla'Q=\nabla'\mathcal Q+\nabla'\mathcal R$ gives
		\eqref{Es01a} and~\eqref{Es01b}.
		
		For a.e.\ $x'\in\omega$, the Poincaré--Wirtinger inequality on $hI$ and
		\eqref{Ortho01} yield
		\[
		\|\mathcal R(x',\cdot)\|_{L^2(hI)}
		\leq
		Ch\|\partial_3\mathcal R(x',\cdot)\|_{L^2(hI)}.
		\]
		Integration over $\omega$, together with
		$\partial_3\mathcal R=\partial_3Q$, proves~\eqref{Es04}.
		Since $Q-\mathcal Q=\mathcal R$, estimate~\eqref{Es03} follows.
	\end{proof}
	
	\begin{lemma}[Trace estimates]
		\label{lem:trace-estimates}
		Let $Q\in H^1(\Omega_h;\Szero)$, and define
		\[
		Q^\pm(x'):=Q(x',\pm h/2),
		\qquad
		\mathcal R^\pm(x'):=\mathcal R(x',\pm h/2),
		\]
		where $\mathcal R$ is given by Definition~\ref{def:decomp}. Then
		$Q^\pm,\mathcal R^\pm\in L^2(\omega;\Szero)$, and there exists
		$C>0$, independent of $h$ and $Q$, such that
		\begin{align}
			\|\mathcal R^\pm\|_{L^2(\omega)}^2
			&\leq
			Ch\|\partial_3Q\|_{L^2(\Omega_h)}^2,
			\label{eq:Rtrace}\\
			\|Q^\pm\|_{L^2(\omega)}^2
			&\leq
			C\left(
			\frac1h\|Q\|_{L^2(\Omega_h)}^2
			+h\|\partial_3Q\|_{L^2(\Omega_h)}^2
			\right).
			\label{eq:Qtrace}
		\end{align}
	\end{lemma}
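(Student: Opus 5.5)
The plan is to reduce both estimates to a one-dimensional trace inequality on the interval $hI$, applied for a.e.\ $x'\in\omega$ and then integrated over $\omega$. Since $Q\in H^1(\Omega_h;\Szero)$, for a.e.\ $x'\in\omega$ the map $x_3\mapsto Q(x',x_3)$ belongs to $H^1(hI;\Szero)$. It is therefore absolutely continuous, and its pointwise values at $x_3=\pm h/2$ coincide with the traces $Q^\pm(x')$. The same holds for $\mathcal R(x',\cdot)=Q(x',\cdot)-\mathcal Q(x')$, with $\partial_3\mathcal R=\partial_3 Q$. Measurability and square integrability of $Q^\pm,\mathcal R^\pm$ in $x'$ will follow from the bounds themselves, combined with the standard trace theorem on $\Omega_h$ for fixed $h$.

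For \eqref{eq:Rtrace}, I use that $\mathcal R(x',\cdot)$ has zero mean on $hI$ by \eqref{Ortho01}. Hence there is a point $t_0\in hI$ with $\mathcal R(x',t_0)=0$; alternatively, one can write $\mathcal R(x',\pm h/2)=\frac1h\int_{hI}\bigl(\mathcal R(x',\pm h/2)-\mathcal R(x',t)\bigr)\,dt$. The fundamental theorem of calculus and Cauchy--Schwarz then give
\[
|\mathcal R^\pm(x')|\le\int_{hI}|\partial_3 Q(x',t)|\,dt\le h^{1/2}\|\partial_3Q(x',\cdot)\|_{L^2(hI)} .
\]
Squaring and integrating over $\omega$ yields $\|\mathcal R^\pm\|^2_{L^2(\omega)}\le h\|\partial_3Q\|^2_{L^2(\Omega_h)}$, which is \eqref{eq:Rtrace} with $C=1$.

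For \eqref{eq:Qtrace}, I write $Q^\pm=\mathcal Q+\mathcal R^\pm$, so that $|Q^\pm|^2\le 2|\mathcal Q|^2+2|\mathcal R^\pm|^2$. Jensen's inequality gives $|\mathcal Q(x')|^2\le\frac1h\int_{hI}|Q(x',x_3)|^2\,dx_3$, and integrating over $\omega$ gives $\|\mathcal Q\|^2_{L^2(\omega)}\le\frac1h\|Q\|^2_{L^2(\Omega_h)}$; this is also immediate from \eqref{Es01a}. Combining this with \eqref{eq:Rtrace} gives \eqref{eq:Qtrace} with $C=2$.

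The only delicate point is justifying the pointwise slice argument for $H^1$ functions. I would handle it by a density argument: first prove both inequalities for $Q\in C^1(\overline{\Omega_h};\Szero)$, which is dense in $H^1$ because $\Omega_h$ is Lipschitz. Then pass to the limit, using continuity of the trace operator $H^1(\Omega_h)\to L^2(\Gamma_h^\pm)$ for fixed $h$. The constants obtained above do not depend on $h$, so they survive the limit.
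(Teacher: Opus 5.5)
Your proof is correct, but it runs in the opposite order from the paper's. The paper proves a single rescaled one-dimensional trace inequality, \eqref{eq:gen-trace}, valid for every $v\in H^1(\Omega_h;\Szero)$.
\begin{itemize}
\item Applying it to $v=Q$ gives \eqref{eq:Qtrace} directly.
\item Applying it to $v=\mathcal R$, and absorbing the term $\frac1h\|\mathcal R\|^2_{L^2(\Omega_h)}$ by the Poincar\'e--Wirtinger estimate \eqref{Es04}, gives \eqref{eq:Rtrace}.
\end{itemize}
You instead prove \eqref{eq:Rtrace} first and in one step. You use the zero mean \eqref{Ortho01} inside the fundamental theorem of calculus, which merges the trace inequality and Poincar\'e--Wirtinger into a single estimate. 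You then obtain \eqref{eq:Qtrace} from $Q^\pm=\mathcal Q+\mathcal R^\pm$ and Jensen.

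What each route buys:
\begin{itemize}
\item \emph{Your route} is self-contained and gives explicit constants, $C=1$ and $C=2$. Your density argument also supplies the justification, left implicit in the paper, that the slice values coincide with the traces.
\item \emph{The paper's route} is shorter and more modular. Its inequality \eqref{eq:gen-trace} needs no mean-zero structure, so it applies to arbitrary $H^1$ fields.
\end{itemize}

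One small correction. Your remark that zero mean yields a point $t_0\in hI$ with $\mathcal R(x',t_0)=0$ is false for matrix-valued functions. Each component of $\mathcal R(x',\cdot)$ vanishes somewhere, but not necessarily at a common point; for instance, $(\cos t,\sin t)$ on $(0,2\pi)$ has zero mean and never vanishes. Either use the averaged identity you give as the alternative, or run the zero-point argument componentwise and sum the squares of the entries. Both yield $|\mathcal R^\pm(x')|^2\le h\,\|\partial_3 Q(x',\cdot)\|^2_{L^2(hI)}$, so your constants are unaffected.
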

	
	\begin{proof}
		Rescaling the one-dimensional trace inequality from $I$ to $hI$ gives
		\begin{equation}
			\label{eq:gen-trace}
			\|v^\pm\|_{L^2(\omega)}^2
			\leq
			C\left(
			\frac1h\|v\|_{L^2(\Omega_h)}^2
			+h\|\partial_3v\|_{L^2(\Omega_h)}^2
			\right)
		\end{equation}
		for every $v\in H^1(\Omega_h;\Szero)$. Taking $v=Q$ proves
		\eqref{eq:Qtrace}. Taking $v=\mathcal R$ and using
		Proposition~\ref{prop:thickness-decomposition}, namely
		\[
		\|\mathcal R\|_{L^2(\Omega_h)}
		\leq
		Ch\|\partial_3Q\|_{L^2(\Omega_h)},
		\qquad
		\partial_3\mathcal R=\partial_3Q,
		\]
		gives~\eqref{eq:Rtrace}.
	\end{proof}
	%%%%%%%%%%%%%%%%%%%%%%%%%%%%%%%%%%%%%%%%%%%%%%%%%%%%%%%%%%%%%%%%%%%%%%%%
	\section{Asymptotic analysis of the $Q$-tensors}
	\label{sec:asymptotic}
	
\subsection{Uniform energy  bounds}
\label{sec:bounds}
In this section, we derive a few properties of each term of the total LdG energy functional needed to establish that $\mhe$ is of order $h$, bounded below and above, uniformly in $h$ and $\varepsilon$.

The \textbf{elastic energy} satisfies the two-sided bound using  Assumption~\ref{asm:elastic}\ref{A3}, 
\begin{equation}
	\label{eq:elastic-coercive}
	\frac{\lambda}{2}\int_{\Ohmh}\!\abs{\nabla Q}^{2}\,dx
	\;\leq\; E^{h,\e}_{\mathrm{el}}(Q)
	\;\leq\; \frac{\Lambda}{2}\int_{\Ohmh}\!\abs{\nabla Q}^{2}\,dx
	\qquad\forall\, Q\in H^{1}(\Ohmh;\Szero).
\end{equation}
In particular, $E^{h,\e}_{\mathrm{el}}(Q)\geq 0$ and the lower bound is
coercive in $\nabla Q$. The following two lemmas record the growth
properties of the \textbf{bulk} and \textbf{surface} terms needed throughout.

\begin{lemma}[Properties of $f_b$]
	\label{lem:fB}
	Under Assumption~\ref{asm:bulk}, there exist constants
	$c_1,c_2,c_3>0$ depending only on $a,b,c$ such that for all
	$Q\in\Szero$:
	\begin{equation}
		\label{B3}
		f_b(Q) \;\geq\; -c_2,\qquad
		f_b(Q) \;\geq\; c_1\abs{Q}^{4} - c_2,\qquad
		\abs{f_b(Q)} \;\leq\; c_3\bigl(1+\abs{Q}^{4}\bigr).
	\end{equation}
\end{lemma}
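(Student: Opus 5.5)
The plan is to prove the three bounds in \cref{B3} by reducing everything to the scalar variable $t:=\abs{Q}=(\tr Q^2)^{1/2}$. The only term of $f_b$ that is not already a function of $t$ is the cubic one, so the first step is to bound it: $\abs{\tr(Q^3)}\leq \abs{Q}^3$. This follows from the eigenvalues $\lambda_1,\lambda_2,\lambda_3$ of the symmetric matrix $Q$, since
\[
\abs{\textstyle\sum_i\lambda_i^3}\leq \max_i\abs{\lambda_i}\,\textstyle\sum_i\lambda_i^2\leq \abs{Q}^3.
\]
A sharper constant ($1/\sqrt6$, using $\tr Q=0$) is available but is not needed.

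From this, for every $Q\in\Szero$ we get the two-sided estimate
\[
c\,t^4-\abs{b}\,t^3-\abs{a}\,t^2\;\leq\; f_b(Q)\;\leq\; c\,t^4+\abs{b}\,t^3+\abs{a}\,t^2 .
\]

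The upper bound in \cref{B3} then follows from Young's inequality. We have $t^3\leq \tfrac34 t^4+\tfrac14$ and $t^2\leq \tfrac12 t^4+\tfrac12$. Hence $\abs{f_b(Q)}\leq c_3(1+t^4)$ with $c_3:=c+\abs{b}+\abs{a}$; this also controls the lower side of the estimate above.

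For the coercive lower bound, we absorb the lower-order terms into half of the quartic term. Young's inequality gives $\abs{b}t^3\leq \tfrac c4 t^4+C_b$ and $\abs{a}t^2\leq \tfrac c4 t^4+C_a$, with explicit constants $C_b=C(b^4/c^3)$ and $C_a=a^2/c$. Therefore $f_b(Q)\geq \tfrac c2 t^4-(C_a+C_b)$. We set $c_1:=c/2$ and take $c_2:=C_a+C_b+1$, which is strictly positive. The first inequality $f_b\geq -c_2$ is then immediate, since $c_1t^4\geq0$.

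None of these steps is a real obstacle. The only point needing care is the cubic estimate, which relies on the spectral theorem for symmetric matrices. All constants depend only on $a,b,c$, as required.
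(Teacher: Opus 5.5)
Your proof is correct and is essentially the paper's argument: you reduce to $t=\abs{Q}$ via $\abs{\tr(Q^3)}\le\abs{Q}^3$ and then use Young's inequality to absorb the lower-order terms into the quartic one. The one difference is that you also absorb the quadratic term by Young, getting $f_b(Q)\ge\frac c2 t^4-(C_a+C_b)$ in one step, whereas the paper handles it with a separate boundedness and case-split argument on $\abs{Q}\ge R$; your constant $C_b\sim b^4/c^3$ is also the correctly scaled one (the paper's explicit Young constant is not).
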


\begin{proof}
	For all $Q\in\Szero$, the definition of the Frobenius norm gives
	\begin{equation}
		\label{eq:trQ3}
		\tr(Q^{2}) = \abs{Q}^{2},\qquad
		\abs{\tr(Q^{3})} \leq \abs{Q}^{3}.
	\end{equation}
	Using \cref{eq:trQ3},
	\[
	f_b(Q) = a\abs{Q}^{2} - b\,\tr(Q^3) + c\abs{Q}^{4}.
	\]
	By \cref{eq:trQ3} and Young's inequality with exponents $(4,4/3)$:
	\[
	\abs{b}\abs{Q}^{3}
	\;\leq\; \frac{c}{2}\abs{Q}^{4} + K(b,c),
	\qquad
	K(b,c) := \frac{3}{4}\Bigl(\frac{2\abs{b}}{c}\Bigr)^{4/3} > 0,
	\]
	which gives the lower bound
	\begin{equation}
		\label{eq:fB-lower}
		f_b(Q)
		\;\geq\; a\abs{Q}^{2} + \frac{c}{2}\abs{Q}^{4} - K(b,c).
	\end{equation}
	The polynomial $s\mapsto as^2+\frac{c}{2}s^4$ is bounded below,
	giving $f_b(Q)\geq -c_2$ for $c_2>0$ depending on $a,b,c$.
	This proves \cref{B3}$_1$.
	
	Set $R:=\sqrt{2\abs{a}/c}$. For $\abs{Q}\geq R$,
	$a\abs{Q}^2\geq -\frac{c}{4}\abs{Q}^4$, so
	$f_b(Q)\geq\frac{c}{4}\abs{Q}^{4} - K(b,c)$,
	giving \cref{B3}$_2$ with $c_1:=c/4$.
	
	For the upper bound, Young's inequality with exponents $(4/3,4)$ gives
	$\abs{Q}^3\leq\tfrac{3}{4}\abs{Q}^4+\tfrac{1}{4}\leq\tfrac{3}{4}(1+\abs{Q}^4)$,
	and $(\abs{Q}^2-\tfrac{1}{2})^2\geq 0$ gives
	$\abs{Q}^2\leq\abs{Q}^4+\tfrac{1}{4}\leq 1+\abs{Q}^4$. Therefore
	\[
	f_b(Q)
	\;\leq\;
	\abs{a}\abs{Q}^{2} + \abs{b}\abs{Q}^{3} + c\abs{Q}^{4}
	\;\leq\;
	\Bigl(\abs{a} + \frac{3\abs{b}}{4} + c\Bigr)\bigl(1+\abs{Q}^{4}\bigr),
	\]
	proving \cref{B3}$_3$ with $c_3:=\abs{a}+\frac{3\abs{b}}{4}+c$.
\end{proof}

\begin{lemma}[Properties of $E_{\mathrm{surf}}^{h,\varepsilon}$]
	\label{lem:surf-properties}
	Under Assumption~\ref{asm:anchoring} and
	Definition~\ref{def:anchoring-data}, the following hold for each fixed
	$h,\varepsilon>0$.
	\begin{enumerate}[label=\textup{(\alph*)}]
		\item\label{surf-normal-integrand}
		\textup{(Normal integrand.)}
		$f_s^{h,\varepsilon}$ is a normal integrand on
		$\Szero\times\Gamma_h^{w}\times\mathbb{S}^2$: for every fixed
		$Q\in\Szero$, the map $x\mapsto f_s^{h,\varepsilon}(Q,x,\nu(x))$
		is $\mathcal{H}^2$-measurable on $\Gamma_h^{w}$; and for
		$\mathcal{H}^2$-a.e.\ $x\in\Gamma_h^{w}$, the map
		$Q\mapsto f_s^{h,\varepsilon}(Q,x,\nu(x))$ is continuous on $\Szero$.
		
		\item\label{surf-growth}
		\textup{(Quadratic growth.)}
		There exists $C_s>0$ depending only on $C_{\mathcal{A}}$ and
		$\|W\|_{L^\infty}$ such that
		\[
		0 \;\leq\; f_s^{h,\varepsilon}(Q,x,\nu)
		\;\leq\; C_s\,h^\gamma\,(1+|Q|^2)
		\]
		for all $Q\in\Szero$, $x\in\Gamma_h^{w}$, $\nu\in\mathbb{S}^2$.
		
		\item\label{surf-meas-finite}
		\textup{(Finiteness.)}
		For every $Q\in H^1(\Omega_h;\Szero)$,
		$E_{\mathrm{surf}}^{h,\varepsilon}(Q)<\infty$.
	\end{enumerate}
\end{lemma}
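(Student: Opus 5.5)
The three properties follow directly from the definition $f_s^{h,\varepsilon}(Q,x,\nu)=W^{h,\varepsilon}(x,\nu)\operatorname{dist}^2(Q,\mathcal A^{h,\varepsilon}(x,\nu))$. They should be checked in the order (a), (b), (c), since (c) is a consequence of (b) together with the trace theorem.

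\emph{Normal integrand (a).} On $\Gamma_h^\pm$ the normal is constant, $\nu=\pm e_3$. Hence the map $x\mapsto f_s^{h,\varepsilon}(Q,x,\nu(x))$ is the composition of the affine map $x\mapsto (x'/\varepsilon,\pm\tfrac12)\in Y'\times\partial I$ with
\[
y\mapsto h^\gamma W(y,\pm e_3)\operatorname{dist}^2(Q,\mathcal A(y,\pm e_3)),
\]
extended $Y'$-periodically. The weight $W$ is Borel by Definition~\ref{def:anchoring-data}, and $y\mapsto\operatorname{dist}(Q,\mathcal A(y,\nu))$ is Borel by \ref{SS1}. Products and compositions of Borel maps with continuous maps are Borel, so the map is $\mathcal H^2$-measurable on each face. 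For continuity in $Q$, recall that for any nonempty set $S$ the function $Q\mapsto\operatorname{dist}(Q,S)$ is $1$-Lipschitz. Nonemptiness holds for a.e.\ $y$ by \ref{SS1}, and the null exceptional set in $y'$ pulls back to an $\mathcal H^2$-null set on the faces. Squaring and multiplying by the scalar $W^{h,\varepsilon}(x,\nu)$ preserves continuity, so $f_s^{h,\varepsilon}$ is Carath\'eodory and in particular a normal integrand.

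\emph{Quadratic growth (b).} Nonnegativity is immediate from $W\ge0$. For the upper bound, use the selection $A_0\in\mathcal A(y,\nu)$ with $|A_0|\le C_{\mathcal A}$ from \ref{SS2}. This gives
\[
\operatorname{dist}(Q,\mathcal A)\le |Q-A_0|\le |Q|+C_{\mathcal A},
\]
hence $\operatorname{dist}^2\le 2|Q|^2+2C_{\mathcal A}^2$. Therefore
\[
f_s^{h,\varepsilon}\le h^\gamma\|W\|_{L^\infty}\,2\max(1,C_{\mathcal A}^2)(1+|Q|^2),
\]
so $C_s:=2\|W\|_{L^\infty}\max(1,C_{\mathcal A}^2)$ works. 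On $\Gamma_h^{\mathrm{lat}}$ the weight vanishes, so the bound holds trivially there. The only subtlety is that the bound holds for a.e.\ $y$; I will state it $\mathcal H^2$-a.e., which is all that is needed.

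\emph{Finiteness (c).} By the trace theorem on the Lipschitz domain $\Omega_h$ (for fixed $h$), $Q|_{\Gamma_h^w}\in L^2(\Gamma_h^w)$. Combining (a) and (b), the composite function $x\mapsto f_s^{h,\varepsilon}(Q(x),x,\nu(x))$ is measurable, because Carath\'eodory integrands composed with measurable maps are measurable. It is also bounded by $C_sh^\gamma(1+|Q|^2)$, so
\[
E^{h,\varepsilon}_{\mathrm{surf}}(Q)\le C_sh^\gamma\bigl(2|\omega|+\|Q\|^2_{L^2(\Gamma_h^w)}\bigr)<\infty.
\]
Alternatively, \eqref{eq:Qtrace} gives this trace bound with an explicit $h$-dependence. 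I expect no real obstacle. The only point requiring care is the measurability of the composition in (c), which the Carath\'eodory property from (a) settles.
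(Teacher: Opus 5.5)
Your proposal is correct and follows the paper's proof essentially step by step. For (a) you get Borel measurability by composing with $x'\mapsto(x'/\varepsilon,\pm\tfrac12)$ and continuity from the $1$-Lipschitz distance function; for (b) you use the bounded selection $A_0$ from \ref{SS2}; for (c) you combine the Carath\'eodory composition with the $L^2$ trace bound, and your constant $2\|W\|_{L^\infty}\max(1,C_{\mathcal A}^2)$, the $Y'$-periodic extension and the $\mathcal H^2$-a.e.\ qualification are harmless refinements of the paper's version.
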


\begin{proof}
	On each face $\Gamma_h^\pm$ the outer unit normal is the constant
	$\pm e_3$, so it suffices to work with $\nu=\pm e_3$ fixed throughout.
		
	\textbf{Measurability in $x$.}
	Fix $Q\in\Szero$. By~\ref{SS1}, the map
	$y\mapsto\operatorname{dist}(Q,\mathcal{A}(y,\pm e_3))$ is Borel
	measurable on $\partial Y$ for every fixed $Q$. The map
	$x'\mapsto(x'/\varepsilon,\pm\tfrac{1}{2})$ is Borel measurable from
	$\omega$ to $\partial Y$, so
	$x'\mapsto\operatorname{dist}^2(Q,\mathcal{A}^{h,\varepsilon}(x,\pm e_3))$
	is Borel measurable. Since $W^{h,\varepsilon}(\cdot,\pm e_3)$ is Borel
	measurable by definition,
	$x\mapsto f_s^{h,\varepsilon}(Q,x,\pm e_3)$ is
	$\mathcal{H}^2$-measurable on $\Gamma_h^\pm$.
	
	\textbf{Continuity in $Q$.}
	Fix $x\in\Gamma_h^\pm$. By~\ref{SS1},
	$\mathcal{A}^{h,\varepsilon}(x,\pm e_3)
	=\mathcal{A}(x'/\varepsilon,\pm\tfrac{1}{2},\pm e_3)$
	is a non-empty closed subset of $\Szero$. The map
	$Q\mapsto\operatorname{dist}^2(Q,\mathcal{A}^{h,\varepsilon}(x,\pm e_3))$
	is continuous on the finite-dimensional space $\Szero$ (it is
	$1$-Lipschitz before squaring), and $W^{h,\varepsilon}(x,\pm e_3)\geq 0$
	is a finite constant for fixed $x$. Hence
	$Q\mapsto f_s^{h,\varepsilon}(Q,x,\pm e_3)$ is continuous for every
	$x\in\Gamma_h^\pm$, completing the normal-integrand verification. This completes 	Part~\ref{surf-normal-integrand}. 
	
	For Part~\ref{surf-growth}, we fix $Q\in\Szero$ and $x\in\Gamma_h^w$. By~\ref{SS2}, there exists
	$A_0\in\mathcal{A}^{h,\varepsilon}(x,\nu)$ with $|A_0|\leq C_{\mathcal{A}}$.
	Then
	\[
	\operatorname{dist}^2\!\bigl(Q,\mathcal{A}^{h,\varepsilon}(x,\nu)\bigr)
	\;\leq\; |Q-A_0|^2
	\;\leq\; 2|Q|^2+2C_{\mathcal{A}}^2
	\;\leq\; 2(1+C_{\mathcal{A}}^2)(1+|Q|^2).
	\]
	Multiplying by
	$W^{h,\varepsilon}(x,\nu)\leq h^\gamma\|W\|_{L^\infty}$
	gives the upper bound with
	$C_s:=2(1+C_{\mathcal{A}}^2)\|W\|_{L^\infty}$,
	independent of $h$ and $\varepsilon$. The lower bound is immediate
	from $W^{h,\varepsilon}\geq 0$ and $\operatorname{dist}^2\geq 0$.
	
	The map $x\mapsto f_s^{h,\varepsilon}(Q(x),x,\nu(x))$ is
	$\mathcal{H}^2$-measurable on $\Gamma_h^w$ by
	Part~\ref{surf-normal-integrand} and the Carathéodory measurability
	criterion, since the trace $x\mapsto Q(x)$ is a measurable
	$\Szero$-valued map on $\Gamma_h^w$. Finiteness then follows from
	the quadratic bound in Part~\ref{surf-growth} together with the $L^2$
	estimate on the trace of $Q\in H^1(\Omega_h;\Szero)$. This completes 	Part~\ref{surf-meas-finite} and the proof.
\end{proof}

\begin{proposition}[Uniform bounds on $\mhe$]
	\label{prop:inf-bounds}
	Under Assumptions~\ref{asm:elastic}, \ref{asm:bulk},
	and~\ref{asm:anchoring}, for all $\gamma\geq 0$ there exist constants
	$C_\ell,C_u>0$ independent of $h$ and $\varepsilon$ such that
	\begin{equation}
		\label{eq:inf-bounds}
		-C_\ell\,h \;\leq\; \mhe \;\leq\; C_u\,h^\gamma.
	\end{equation}
	If in addition Assumption~\ref{asm:compatible} holds \textup{(}the case
	$0\leq\gamma<1$\textup{)}, then the upper bound reads $C_u\,h$.
\end{proposition}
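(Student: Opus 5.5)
The lower bound is pointwise and needs no structure; the upper bound comes from testing $\mhe$ with an $x_3$-independent competitor.

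\emph{Lower bound.} For every $Q\in H^1(\Ohmh;\Szero)$ each term of $\Fhe(Q)$ is controlled separately.
\begin{itemize}
\item By \cref{eq:elastic-coercive}, $E^{h,\e}_{\mathrm{el}}(Q)\geq0$.
\item By Lemma~\ref{lem:surf-properties}\ref{surf-growth}, $E^{h,\e}_{\mathrm{surf}}(Q)\geq0$.
\item By Lemma~\ref{lem:fB}, $f_b(Q)\geq -c_2$ pointwise, so $E^{h,\e}_{\mathrm{bulk}}(Q)\geq -c_2|\Ohmh| = -c_2|\omega|\,h$.
\end{itemize}
Taking the infimum over $Q$ gives $\mhe\geq -C_\ell h$ with $C_\ell:=c_2|\omega|$, independent of $h$ and $\e$.

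\emph{Upper bound, general case.} Test with $Q\equiv0$. Then $E^{h,\e}_{\mathrm{el}}(0)=0$ and $E^{h,\e}_{\mathrm{bulk}}(0)=0$ since $f_b(0)=0$. By Lemma~\ref{lem:surf-properties}\ref{surf-growth},
\[
E^{h,\e}_{\mathrm{surf}}(0)\leq C_s h^\gamma\,\Hc^2(\Ghw)=2C_s|\omega|\,h^\gamma .
\]
Hence $\mhe\leq \Fhe(0)\leq C_u h^\gamma$ with $C_u:=2C_s|\omega|$. This is the stated bound.

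\emph{Upper bound under Assumption~\ref{asm:compatible}.} Take the competitor $Q(x',x_3):=Q^*(x')$, with $Q^*\in H^1(\omega;\mathscr A)$ as provided by the assumption.
\begin{itemize}
\item The elastic term is at most $\frac{\Lambda}{2}h\|\nabla'Q^*\|^2_{L^2(\omega)}$, since $\partial_3Q=0$.
\item The trace of $Q$ on each face $\Gamma_h^\pm$ equals $Q^*(x')$, which lies in $\mathscr A\subset\mathcal A(\pm\frac12,\pm e_3)$ a.e. Assumption~\ref{asm:compatible} makes $\mathcal A$ independent of $y'$, so the distance term vanishes and the surface energy is exactly zero for every $\gamma$.
\item The bulk term equals $h\int_\omega f_b(Q^*)\,dx'$.
\end{itemize}
This gives $\mhe\leq C_u h$ with $C_u:=\frac{\Lambda}{2}\|\nabla'Q^*\|^2_{L^2}+\|f_b(Q^*)\|_{L^1(\omega)}$, provided the bulk integral is finite.

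\emph{Main obstacle.} The only delicate point is finiteness of $\int_\omega f_b(Q^*)$. By Lemma~\ref{lem:fB} it suffices that $Q^*\in L^4(\omega)$. This follows from the Sobolev embedding $H^1(\omega)\hookrightarrow L^4(\omega)$, valid for bounded Lipschitz $\omega\subset\R^2$. In cases (ii) and (iii) it is even more direct, since $\mathscr A$ is compact and $Q^*$ is bounded. The resulting constant depends only on $Q^*$, $\Lambda$ and $f_b$, and not on $h$ or $\e$.
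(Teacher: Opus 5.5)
Your proposal is correct and follows the paper's proof essentially step for step. The lower bound uses $E_{\mathrm{el}}\geq0$, $E_{\mathrm{surf}}\geq0$ and $f_b\geq -c_2$; the $h^\gamma$ upper bound uses the competitor $Q\equiv0$; and the $Ch$ bound uses the $x_3$-independent extension of $Q^*$, whose surface energy vanishes. Your explicit check that $\int_\omega f_b(Q^*)\,dx'<\infty$ via $H^1(\omega)\hookrightarrow L^4(\omega)$ is exactly what the paper uses implicitly when it writes $c_3h\int_\omega(1+|Q^*|^4)\,dx'\leq Ch$.
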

\begin{proof}
	\textbf{Lower bound.}
	Let $Q\in H^{1}(\Ohmh;\Szero)$ be arbitrary.
	By \cref{eq:elastic-coercive}, $E_{\mathrm{el}}(Q)\geq 0$.
	By Assumption~\ref{asm:anchoring}-\ref{SS1} with $W^{h,\varepsilon}\geq 0$,
	$E_{\mathrm{surf}}(Q)\geq 0$.
	By \cref{B3}$_1$ of Lemma~\ref{lem:fB}, $f_b(Q)\geq -c_2$ pointwise.
	Therefore
	\[
	\Fhe(Q)
	\;\geq\; \int_{\Ohmh}\!(-c_2)\,dx
	= -c_2\,\abs{\Ohmh}
	= -c_2\,h\,\abs{\omg}.
	\]
	Taking the infimum gives $\mhe\geq -c_2\,h\,\abs{\omg}=:-C_\ell h$.
	
	\textbf{Upper bound for $\gamma\geq 1$.}
	We test with the admissible field $Q\equiv 0\in H^{1}(\Ohmh;\Szero)$.
	The elastic term vanishes since $\nabla Q=0$, and $f_b(0)=0$
	from \cref{eq:fb}, so the bulk term also vanishes. By
	Lemma~\ref{lem:surf-properties}\ref{surf-growth} and
	$\mathcal{H}^2(\Gamma_h^w)=2|\omega|$,
	\[
	\Fhe(0) = E_{\mathrm{surf}}(0)\leq 2C_s|\omega|h^\gamma.
	\]
	Hence $\mhe\leq\Fhe(0)\leq C_u h^\gamma$.
	
	\textbf{Upper bound for $0\leq\gamma<1$.}
	Under Assumption~\ref{asm:compatible}, let $Q^*\in H^1(\omega;\mathscr{A})$
	be the field provided there, and test with the $x_3$-independent extension
	$Q(x',x_3):=Q^*(x')$. Since $\partial_3 Q^*=0$, \cref{eq:elastic-coercive}
	gives
	\[
	E^{h,\e}_{\mathrm{el}}(Q^*)
	\leq\frac{\Lambda}{2}\|\nabla Q^*\|^2_{L^2(\Ohmh)}
	=\frac{\Lambda h}{2}\|\nabla'Q^*\|^2_{L^2(\omega)}\leq Ch.
	\]
	By \cref{B3}$_3$, the bulk term satisfies
	\[
	E^{h,\e}_{\mathrm{bulk}}(Q^*)
	\leq c_3\int_{\Ohmh}\bigl(1+|Q^*(x')|^4\bigr)\,dx
	= c_3 h\int_{\omega}\bigl(1+|Q^*(x')|^4\bigr)\,dx'\leq Ch.
	\]
	Since $Q^*(x')\in\mathscr{A}$ for a.e.\ $x'\in\omega$, we have
	$\operatorname{dist}(Q^*(x'),\mathcal{A}(x'/\varepsilon,\pm\tfrac{1}{2},
	\pm e_3))=0$ pointwise, so $E^{h,\varepsilon}_{\mathrm{surf}}(Q^*)=0$.
	Hence $\mhe\leq\Fhe(Q^*)\leq C_u h$.
\end{proof}

\begin{theorem}[Existence of minimisers and uniform energy bounds]
	\label{prop:existence}
		Under Assumptions~\ref{asm:elastic}, \ref{asm:bulk}, and~\ref{asm:anchoring}
	for $\gamma\geq 1$, and additionally Assumption~\ref{asm:compatible} for
	$0\leq\gamma<1$ and every fixed
	$h,\varepsilon>0$ there exists a minimiser
	$Q_{h,\varepsilon}\in H^1(\Ohmh;\Szero)$ of $\Fhe$. Moreover the
	minimiser satisfies the uniform energy bound
	\begin{equation}
		\label{eq:energy-bound}
		\|Q_{h,\varepsilon}\|^{2}_{H^1(\Ohmh)}
		+\|Q_{h,\varepsilon}\|^{4}_{L^{4}(\Ohmh)}
		\leq Ch,
	\end{equation}
	with $C$ independent of $\varepsilon$ and $h$.
	
	Furthermore, any sequence
	$\{Q_{h,\varepsilon}\}\subset H^1(\Ohmh;\Szero)$ satisfying
	\begin{equation}
		\label{eq:ub}
		\Fhe(Q_{h,\varepsilon})\leq Ch
	\end{equation}
	with $C$ independent of $\varepsilon$ and $h$ also
	satisfies~\cref{eq:energy-bound}.
\end{theorem}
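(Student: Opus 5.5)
The plan is to apply the direct method of the calculus of variations on $H^1(\Ohmh;\Szero)$ for fixed $h,\varepsilon$. The uniform bound then comes from combining the coercivity estimates of Lemma~\ref{lem:fB} and \cref{eq:elastic-coercive} with the upper bound of Proposition~\ref{prop:inf-bounds}.

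\textbf{Step 1: the energy bound.} This part is elementary, so I treat it first. Suppose $\Fhe(Q)\leq Ch$. The surface term is nonnegative, so dropping it and using \cref{eq:elastic-coercive} together with \cref{B3}$_2$ gives
\[
\frac{\lambda}{2}\|\nabla Q\|_{L^2(\Ohmh)}^2+c_1\|Q\|_{L^4(\Ohmh)}^4-c_2h|\omega|\leq Ch .
\]
This bounds $\|\nabla Q\|_{L^2}^2$ and $\|Q\|_{L^4}^4$ by $Ch$. For the $L^2$ norm I use Hölder on $\Ohmh$, which has measure $h|\omega|$:
\[
\|Q\|_{L^2}^2\leq |\Ohmh|^{1/2}\|Q\|_{L^4}^2\leq (h|\omega|)^{1/2}(Ch)^{1/2}=C'h .
\]
This gives \cref{eq:energy-bound} for any sequence satisfying \cref{eq:ub}. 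For a minimiser, Proposition~\ref{prop:inf-bounds} gives $\mhe\leq C_u h$. For $\gamma\geq1$ this follows from $h^\gamma\leq h$ once $h\leq1$; for $\gamma<1$ it follows from Assumption~\ref{asm:compatible}. Hence $\Fhe(Q_{h,\varepsilon})=\mhe\leq Ch$, and the first part applies.

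\textbf{Step 2: existence.} Fix $h$ and $\varepsilon$ and take a minimising sequence $Q_n$. By Proposition~\ref{prop:inf-bounds}, $\mhe$ is finite. The same estimate as in Step~1 shows that $Q_n$ is bounded in $H^1(\Ohmh)\cap L^4(\Ohmh)$. Passing to a subsequence, $Q_n\rightharpoonup Q$ weakly in $H^1$ and in $L^4$. By Rellich, $Q_n\to Q$ strongly in $L^2(\Ohmh)$ and a.e., and the traces satisfy $Q_n\to Q$ strongly in $L^2(\Gamma_h^w)$ by compactness of the trace map $H^1\to L^2(\partial\Ohmh)$ on the Lipschitz domain. $\Szero$ is a linear space, so $Q$ is admissible.

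\textbf{Step 3: lower semicontinuity, term by term.} The elastic term is a quadratic form that is convex and nonnegative by \ref{A3}, and continuous on $L^2$, so it is weakly lower semicontinuous. For the bulk term, $f_b+c_2\geq0$, $f_b$ is continuous, and $Q_n\to Q$ a.e.; Fatou's lemma then gives $\int f_b(Q)\leq\liminf\int f_b(Q_n)$. For the surface term, Lemma~\ref{lem:surf-properties} shows that the integrand is nonnegative, normal and continuous in $Q$. Along a further subsequence the traces converge $\mathcal H^2$-a.e., so Fatou applies again. In fact the quadratic growth bound gives dominated convergence along an $L^2$-convergent subsequence, so this term even converges. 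Summing the three gives $\Fhe(Q)\leq\liminf\Fhe(Q_n)=\mhe$, so $Q$ is a minimiser.

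The main point requiring care is the surface term: one has to extract an a.e.-convergent subsequence of the traces and make sure that measurability of the composed integrand holds. Lemma~\ref{lem:surf-properties}\ref{surf-normal-integrand} covers the measurability. Everything else is routine.
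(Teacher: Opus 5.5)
Your proof is correct and follows the paper's argument. The bound \cref{eq:energy-bound} comes from the same coercivity estimate together with H\"older's inequality $\|Q\|_{L^2}^2\leq|\Ohmh|^{1/2}\|Q\|_{L^4}^2$, and existence comes from the direct method with weak lower semicontinuity of the elastic term and compactness for the bulk and trace terms. The differences are only technical. You use Fatou's lemma, via $f_b\geq -c_2$ and a.e.\ convergence, for the bulk and surface terms, which yields lower semicontinuity. The paper instead obtains actual convergence of those terms: for the bulk term it uses the compact embedding $H^1(\Ohmh)\hookrightarrow L^4(\Ohmh)$ with Vitali's theorem, and for the surface term a Lipschitz estimate on $\operatorname{dist}^2$ combined with strong $L^2$ convergence of the traces.
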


\begin{proof}
	We first show that any sequence satisfying~\cref{eq:ub} satisfies~\cref{eq:energy-bound}.
	Assume $\{Q_{h,\varepsilon}\}$ satisfies $\Fhe(Q_{h,\varepsilon})\leq Ch$.
	Using the coercivity of the elastic energy~\cref{eq:elastic-coercive},
	the growth estimate~\cref{B3}$_2$ for $E_{\mathrm{bulk}}$, and the
	non-negativity of $E_{\mathrm{surf}}^{h,\varepsilon}$ from
	Lemma~\ref{lem:surf-properties}\ref{surf-growth}, we obtain
	\begin{align*}
		\frac{\lambda}{2}\|\nabla Q_{h,\varepsilon}\|^{2}_{L^{2}(\Ohmh)}
		+c_1\|Q_{h,\varepsilon}\|^{4}_{L^{4}(\Ohmh)}
		&\leq E_{\mathrm{el}}(Q_{h,\varepsilon})
		+\int_{\Ohmh}\!\bigl(f_b(Q_{h,\varepsilon})+c_2\bigr)\,dx\\
		&\leq\Fhe(Q_{h,\varepsilon})+c_2|\Ohmh|
		\leq Ch+c_2|\omega|h\leq Ch.
	\end{align*}
	Since $\|Q_{h,\varepsilon}\|^2_{L^2(\Ohmh)}
	\leq Ch^{1/2}\|Q_{h,\varepsilon}\|^2_{L^4(\Ohmh)}$,
	we immediately deduce~\cref{eq:energy-bound}. This proves the second
	claim and supplies the compactness needed for the existence part.
	
	Now we prove existence of a minimiser for fixed $h,\varepsilon$.
	Proposition~\ref{prop:inf-bounds} yields
	$-\infty<-C_\ell h\leq\mhe\leq C_u h<+\infty$.
	Choose a minimising sequence $\{Q_n\}_n\subset H^{1}(\Ohmh;\Szero)$ such
	that
	\[
	\liminf_{n\to\infty}\Fhe(Q_n)=\mhe,\qquad
	\Fhe(Q_n)\leq C_u h^{\min(\gamma,1)}\leq Ch
	\]
	for all sufficiently large $n$. The coercivity argument above gives the
	uniform bound
	\begin{equation}
		\label{eq:min-seq-bound}
		\|Q_n\|^{2}_{H^1(\Ohmh)}+\|Q_n\|^{4}_{L^{4}(\Ohmh)}\leq Ch,
	\end{equation}
	with $C$ independent of $n$, $\varepsilon$, and $h$.
	
	Since $H^1(\Ohmh;\Szero)$ is reflexive, we extract a subsequence
	(not relabelled) such that
	\[
	Q_n\rightharpoonup Q_{h,\varepsilon}\quad
	\text{weakly in }H^1(\Ohmh;\Szero).
	\]
	
	\textbf{Elastic term.}
	The map $Q\mapsto E^{h,\varepsilon}_{\mathrm{el}}(Q)
	=\frac{1}{2}\int_{\Ohmh}A(\cdot)\,\nabla Q:\nabla Q\,dx$
	is convex and continuous in $\nabla Q$, hence weakly lower
	semicontinuous in $H^1(\Ohmh;\Szero)$. Consequently,
	\[
	E^{h,\varepsilon}_{\mathrm{el}}(Q_{h,\varepsilon})
	\leq\liminf_{n\to\infty}E^{h,\varepsilon}_{\mathrm{el}}(Q_n).
	\]
	
	\textbf{Bulk term.}
	By the Rellich--Kondrachov theorem on the bounded Lipschitz domain
	$\Ohmh\subset\mathbb{R}^3$, the embedding
	$H^1(\Ohmh;\Szero)\hookrightarrow\hookrightarrow L^4(\Ohmh;\Szero)$
	is compact, so $Q_n\to Q_{h,\varepsilon}$ strongly in $L^4(\Ohmh;\Szero)$.
	Passing to a subsequence (not relabelled), we may assume
	$Q_n(x)\to Q_{h,\varepsilon}(x)$ for a.e.\ $x\in\Ohmh$. By continuity
	of $f_b$,
	\[
	f_b(Q_n(x))\to f_b(Q_{h,\varepsilon}(x))\quad\text{for a.e.\ }x\in\Ohmh.
	\]
	The growth bound \cref{B3}$_3$ gives $|f_b(Q_n)|\leq c_3(1+|Q_n|^4)$.
	Since $Q_n\to Q_{h,\varepsilon}$ strongly in $L^4(\Ohmh;\Szero)$, the
	sequence $\{|Q_n|^4\}$ is uniformly integrable in $L^1(\Ohmh)$, and
	hence so is $\{f_b(Q_n)\}$. Vitali's convergence theorem therefore yields
	\[
	E_{\mathrm{bulk}}^{h,\varepsilon}(Q_n)
	\to E_{\mathrm{bulk}}^{h,\varepsilon}(Q_{h,\varepsilon}).
	\]
	Since the limit is the same along every subsequence, the convergence
	holds for the original sequence.     
	
	\textbf{Surface term.}
	The trace operator $H^1(\Ohmh;\Szero)\to L^2(\Gamma_h^w;\Szero)$
	is compact, so
	$Q_n|_{\Gamma_h^w}\to Q_{h,\varepsilon}|_{\Gamma_h^w}$
	strongly in $L^2(\Gamma_h^w;\Szero)$.
	Since $\mathcal{A}^{h,\varepsilon}(x,\nu)$ is closed for every $x$
	by~Assumption \ref{asm:anchoring}~\ref{SS1}, the map
	$Q\mapsto\operatorname{dist}(Q,\mathcal{A}^{h,\varepsilon}(x,\nu))$
	is $1$-Lipschitz on $\Szero$, uniformly in $x$. Therefore
	\begin{align*}
		\bigl|f_s^{h,\varepsilon}(Q_n,x,\nu)
		-f_s^{h,\varepsilon}(Q_{h,\varepsilon},x,\nu)\bigr|
		&\leq W^{h,\varepsilon}(x,\nu)
		\bigl|\operatorname{dist}^2(Q_n,\mathcal{A}^{h,\varepsilon})
		-\operatorname{dist}^2(Q_{h,\varepsilon},\mathcal{A}^{h,\varepsilon})\bigr|\\
		&\leq h^\gamma\|W\|_{L^\infty}
		|Q_n-Q_{h,\varepsilon}|
		\bigl(|Q_n|+|Q_{h,\varepsilon}|+2C_{\mathcal{A}}\bigr),
	\end{align*}
	where we used $|a^2-b^2|=|a-b||a+b|$ and the $1$-Lipschitz bound on
	$\operatorname{dist}$. Integrating and applying the Cauchy--Schwarz
	inequality on $\Gamma_h^w$ gives
	\[
	\bigl|E_{\mathrm{surf}}^{h,\varepsilon}(Q_n)
	-E_{\mathrm{surf}}^{h,\varepsilon}(Q_{h,\varepsilon})\bigr|
	\leq
	h^\gamma\|W\|_{L^\infty}
	\|Q_n-Q_{h,\varepsilon}\|_{L^2(\Gamma_h^w)}
	\Bigl(\|Q_n\|_{L^2(\Gamma_h^w)}
	+\|Q_{h,\varepsilon}\|_{L^2(\Gamma_h^w)}
	+2C_{\mathcal{A}}\mathcal{H}^2(\Gamma_h^w)^{1/2}\Bigr).
	\]
	The first factor tends to zero by strong $L^2$ convergence of the
	traces; the second factor is bounded since $\{Q_n\}$ is bounded in
	$H^1(\Ohmh;\Szero)$. Hence
	$E_{\mathrm{surf}}^{h,\varepsilon}(Q_n)\to
	E_{\mathrm{surf}}^{h,\varepsilon}(Q_{h,\varepsilon})$.
		
	Combining the three terms,
	\[
	\Fhe(Q_{h,\varepsilon})
	\leq\liminf_{n\to\infty}\Fhe(Q_n)=\mhe
	\leq\Fhe(Q_{h,\varepsilon}),
	\]
	so $Q_{h,\varepsilon}$ is a minimiser. Finally, because the minimiser
	is the weak limit of a sequence satisfying~\cref{eq:min-seq-bound} and
	the norms are weakly lower semicontinuous, $Q_{h,\varepsilon}$
	inherits~\cref{eq:energy-bound}. 
%	(This also follows directly from the
%	first part of the proof, since
%	$\Fhe(Q_{h,\varepsilon})=\mhe\leq Ch$.)
\end{proof}

	%%%%%%%%%%%%%%%%%%%%%%%%%%%%%%%%
	
	\subsection{Compactness of the sequence $Q$-tensors}
	\label{sec:compactness}
	
	The goal of this section is to extract, from every energy bounded sequence, the weak limit objects
	that serve as trial fields in the sequential $\Gamma$-limits.  The argument proceeds
	in two stages: we first derive uniform bounds on the components of the thickness decomposition of
	Section~\ref{sec:decomposition}, and then apply the unfolding compactness of Lemma~\ref{A11} to
	pass to two-scale limits.
	
	%\subsection{Two‑scale limit $Q$-tensors}
	Let $\{Q_{h,\varepsilon}\}_{h,\varepsilon}\subset H^1(\Omega_h;\Szero)$ be a sequence satisfying the
	uniform energy bound~\cref{eq:energy-bound} of Theorem~\ref{prop:existence}. Then, using
	Proposition~\ref{prop:thickness-decomposition} gives the existence of $\Qc_{h,\e}\in H^1(\o;\Szero)$
	and $\Rc_{h,\e}\in H^1(\O_h;\Szero)$ such that
	\[
	Q_{h,\e}(x)=\Qc_{h,\e}(x')+\Rc_{h,\e}(x),\quad (x',x_3)\in \O_h.
	\]
	Moreover, we have the following estimates:
	\begin{equation}\label{es:01}
		\begin{aligned}
			\|\Qc_{h,\e}\|_{H^1(\o)}\leq C,\quad
			\|Q_{h,\e}-\Qc_{h,\e}\|_{L^2(\O_h)} &\le Ch^{3/2},\\
			\|\Rc_{h,\e}\|_{L^2(\O_h)}+h\|\nabla \Rc_{h,\e}\|_{L^2(\O_h)} &\leq Ch^{3/2}.
		\end{aligned}
	\end{equation}
	The above estimates together with the properties~\cref{eq:L2-bound}, \cref{eq:Peh-grad-yprime}
	of the rescaling unfolding operator give
	\begin{equation}\label{es:02}
		\begin{aligned}
			\frac{1}{h}\|\Peh(\Rc_{h,\e})\|_{L^2(\o\X Y)}
			+\frac{1}{h}\|\partial_{y_3}\Peh(\Rc_{h,\e})\|_{L^2(\o\X Y)} &\le C,\\
			\frac{1}{h}\|\nabla_{y'}\Peh(\Rc_{h,\e})\|_{L^2(\o\X Y)} &\le C\frac{\e}{h}.
		\end{aligned}
	\end{equation}
	The constants are independent of $\e$ and $h$.
	
	\begin{lemma}\label{lem:71}
		There exist $\Qc\in H^1(\o;\Szero)$ and $\wh Q\in L^2(\o;H^1_{\pe,0}(Y';\Szero))$ such that,
		up to a subsequence,
		\begin{equation}\label{con:01}
			\begin{aligned}
				\Qc_{h,\e} &\rightharpoonup \Qc, \quad &&\text{weakly in $H^1(\o;\Szero)$},\\
				\Te(\Qc_{h,\e}) &\to \Qc, \quad &&\text{strongly in $L^2(\o\X Y';\Szero)$},\\
				\Te(\nabla'\Qc_{h,\e}) &\rightharpoonup \nabla'\Qc+\nabla_{y'}\wh Q, \quad
				&&\text{weakly in $L^2(\o\X Y';\Szero\otimes\mathbb{R}^2)$}.
			\end{aligned}
		\end{equation}
		Moreover, there exists $\wh R\in L^2(\o\X Y';H^1(I;\Szero))$ satisfying
		\begin{equation}\label{es:ort}
			\int_{I} \wh R(\cdot,y_3)\,\dd y_3 = 0,
			\quad \text{a.e.\ in $\omega\X Y'$},
		\end{equation}
		such that
		\begin{equation}\label{con:03}
			\frac{1}{h}\Peh(\Rc_{h,\e}) \;\rightharpoonup\; \wh R,
			\qquad\text{weakly in $L^2(\o\X Y';H^1(I;\Szero))$}.
		\end{equation}
		For the in‑plane gradient we distinguish three regimes depending on
		$\rho$ from \cref{eq:rho}.
		\begin{enumerate}
			\item[(R3)] For $\rho=0$: $\wh R\in L^2(\o\X Y';H^1(I;\Szero))$ and
			\begin{equation}\label{es:03}
				\Peh(\nabla'\Rc_{h,\e}) \rightharpoonup 0
				\quad\text{weakly in $L^2(\o\X Y;\Szero\otimes\R^2)$}.
			\end{equation}
			\item[(R2)] For $\rho\in(0,\infty)$: $\wh R\in L^2(\o;H_{\pe}^1(Y;\Szero))$ and
			\begin{equation}\label{es:04}
				\Peh(\nabla'\Rc_{h,\e}) \rightharpoonup \rho \nabla_{y'}\wh R
				\quad\text{weakly in $L^2(\o\X Y;\Szero\otimes\R^2)$}.
			\end{equation}
			\item[(R1)] For $\rho=\infty$: $\wh R\in L^2(\o;H^1(I;\Szero))$,
			and there exists $\wh R_1\in L^2(\o\X I;H^1_{\pe,0}(Y';\Szero))$ satisfying~\cref{es:ort}
			such that
			\begin{equation}\label{es:05}
				\Peh(\nabla'\Rc_{h,\e}) \rightharpoonup \nabla_{y'}\wh R_1
				\quad\text{weakly in $L^2(\o\X Y;\Szero\otimes\R^2)$}.
			\end{equation}
		\end{enumerate}
		Finally, we have the following convergences of the unfolded $Q$-tensor sequence:
		\begin{align}
			\label{con:79}
			\Peh(Q_{h,\e}) \;&\to\; \Qc,
			\quad\text{strongly in $L^2(\o\X Y;\Szero)$},\\
			\label{con55}
			\Pi_{h,\e}(Q_{h,\e})&\to \Qc
			\qquad\text{strongly in }L^p(\omega\times Y;\Szero)
			\quad\text{for every }2< p<4.\\
			\label{con56}
			\Pi_{h,\e}(Q_{h,\e})&\rightharpoonup \Qc
			\qquad\text{weakly in }L^4(\omega\times Y;\Szero).
		\end{align}
	\end{lemma}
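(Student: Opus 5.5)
The plan is to work entirely from the uniform bounds \cref{es:01}--\cref{es:02} and the unfolding compactness of Lemma~\ref{A11}, treating the mean field $\Qc_{h,\e}$ and the remainder $\Rc_{h,\e}$ separately.

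\textbf{Mean field.} Since $\|\Qc_{h,\e}\|_{H^1(\o)}\le C$, a subsequence converges weakly in $H^1(\o;\Szero)$ to some $\Qc$. Lemma~\ref{A11}(2), applied componentwise in the five-dimensional space $\Szero$, then gives $\wh Q\in L^2(\o;H^1_{\pe,0}(Y';\Szero))$ together with the strong and weak unfolded convergences in \cref{con:01}.

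\textbf{Remainder.} Set $r_{h,\e}:=\frac1h\Peh(\Rc_{h,\e})$. By \cref{es:02}, $r_{h,\e}$ and $\partial_{y_3}r_{h,\e}$ are bounded in $L^2(\o\X Y)$. Hence $r_{h,\e}$ is bounded in $L^2(\o\X Y';H^1(I))$, and a subsequence converges weakly to some $\wh R$. The zero-mean property \cref{es:ort} passes to the limit from \cref{Ortho01}, since $\int_I\Peh(\Rc_{h,\e})\,dy_3=0$ is a linear constraint preserved under weak convergence.

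\textbf{In-plane gradient, by regime.} By \cref{eq:Peh-grad-inverse},
\[
\Peh(\nabla'\Rc_{h,\e})=\frac{h}{\e}\nabla_{y'}r_{h,\e},
\]
and this quantity is bounded in $L^2$ by $\|\nabla'\Rc_{h,\e}\|_{L^2(\O_h)}/\sqrt h\le C$, from \cref{Es01b}.
\begin{itemize}
\item \emph{Regime $\rho\in(0,\infty)$.} Here $\nabla_{y'}r_{h,\e}$ is bounded. So $\wh R\in L^2(\o;H^1(Y))$, and $\Peh(\nabla'\Rc_{h,\e})\rightharpoonup\rho\nabla_{y'}\wh R$. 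For the $Y'$-periodicity of $\wh R$ I would use the standard unfolding argument: compare the traces of $r_{h,\e}$ on opposite faces of $Y'$ and test against $\phi(x')$ using a shift of $x'$ by $\e e_\alpha$. The difference is $O(\e)$ in a weak sense by translation of test functions (as in \cite[Theorem 1.36]{CDG}).
\item \emph{Regime $\rho=0$.} We get $\nabla_{y'}r_{h,\e}=\frac{\e}{h}\Peh(\nabla'\Rc_{h,\e})$, which is bounded only by $C\e/h\to\infty$. This does not prevent weak convergence of $\Peh(\nabla'\Rc_{h,\e})$ itself, which is bounded. To show its limit $G$ is zero, pair it with $\Phi\in C_c^\infty(\o\X Y)$ and integrate by parts in $y'$:
\[
\int \Peh(\nabla'\Rc_{h,\e})\Phi=-\frac{h}{\e}\int r_{h,\e}\nabla_{y'}\Phi\to 0,
\]
since $r_{h,\e}$ is bounded in $L^2$ and $h/\e\to0$.
\item \emph{Regime $\rho=\infty$.} We get $\nabla_{y'}r_{h,\e}=\frac{\e}{h}\Peh(\nabla'\Rc_{h,\e})\to0$ strongly. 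So $\wh R$ is independent of $y'$, that is, $\wh R\in L^2(\o;H^1(I))$. For the corrector $\wh R_1$, apply the unfolding compactness at each thickness level. The fields $\Rc_{h,\e}(\cdot,hy_3)$ have in-plane gradients bounded in $L^2(\o\X I)$, and their $L^2$ norms tend to zero. Lemma~\ref{A11}(2), applied in $x'$ with $y_3$ as a parameter (a version with values in $L^2(I)$, cf.\ \cite[Prop.~1.39]{CDG}), then gives $\Peh(\nabla'\Rc_{h,\e})\rightharpoonup\nabla_{y'}\wh R_1$, with $\wh R_1$ periodic and of zero $Y'$-mean. The condition \cref{es:ort} for $\wh R_1$ follows because $\int_I\nabla'\Rc_{h,\e}\,dx_3=0$.
\end{itemize}
I expect this last regime, which needs a parameter-dependent unfolding compactness with the limit of the macroscopic part equal to zero, to be the main technical step.

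\textbf{Strong convergences.} Write $\Peh(Q_{h,\e})=\Te(\Qc_{h,\e})+h\,r_{h,\e}$. The first term converges strongly in $L^2$ by \cref{con:01}. The second has $L^2$ norm bounded by $Ch$, and this gives \cref{con:79}. From \cref{eq:Peh-Lp-exact} and \cref{eq:energy-bound}, $\|\Peh(Q_{h,\e})\|_{L^4}^4\le h^{-1}\|Q_{h,\e}\|_{L^4(\O_h)}^4\le C$. A subsequence therefore converges weakly in $L^4$, and the limit must be $\Qc$ by \cref{con:79}, which gives \cref{con56}. Finally, for $2<p<4$, interpolation
\[
\|f\|_{L^p}\le\|f\|_{L^2}^{\theta}\|f\|_{L^4}^{1-\theta},\qquad \tfrac1p=\tfrac\theta2+\tfrac{1-\theta}4,
\]
applied to $f=\Peh(Q_{h,\e})-\Qc$ yields \cref{con55}.
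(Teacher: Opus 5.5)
Your proposal is correct and follows the paper's proof essentially step by step. Both arguments use the same rescaled remainder $\frac1h\Peh(\Rc_{h,\e})$ and the same identity $\Peh(\nabla'\Rc_{h,\e})=\frac{h}{\e}\nabla_{y'}\bigl(\frac1h\Peh(\Rc_{h,\e})\bigr)$ to drive the three regimes: boundedness for $\rho\in(0,\infty)$, integration by parts for $\rho=0$, and $y'$-independence for $\rho=\infty$, followed by the same $L^2$--$L^4$ interpolation. The only difference is in the $\rho=\infty$ case. There you obtain $\wh R_1$ by citing the unfolding compactness theorem with $y_3$ as a parameter, whereas the paper constructs it by hand from $\e^{-1}\Peh(\Rc_{h,\e})$ minus its $Y'$-average together with Poincar\'e--Wirtinger, which is the core of that same theorem.

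Two of your choices are in fact more careful than the paper. You prove the $Y'$-periodicity of the limits $\wh R$ and $\wh R_1$ by the translation argument, while the paper treats the unfolded sequences, which are not $y'$-periodic, as if they already lay in periodic spaces. You also use compactly supported test functions for $\rho=0$, which avoids the boundary terms that the paper's periodic test functions would produce.
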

	
	\begin{proof}
		The macroscopic field convergence~\cref{con:01}$_1$ is a direct consequence of
		\cref{es:01}$_{1}$ and the Banach-Alaoglu theorem.  Combining this with the properties of the
		periodic unfolding operator, Lemma~\ref{A11} gives~\cref{con:01}$_{2,3}$.

		Set $\wh{R}_{h,\varepsilon} := \frac{1}{h}\Peh(\Rc_{h,\varepsilon})$.
		From~\cref{es:02}$_{1,2}$ the sequences $\{\wh{R}_{h,\varepsilon}\}$ and
		$\{\partial_{y_3}\wh{R}_{h,\varepsilon}\}$ are bounded in $L^2(\omega\times Y;\Szero)$,
		hence~\cref{con:03} holds for some $\wh R\in L^2(\o\X Y';H^1(I;\Szero))$.
		The orthogonality~\cref{Ortho01} translates to $\int_I \wh{R}_{h,\varepsilon}\,\dd y_3 = 0$ a.e.
		and passes to the weak limit, giving~\cref{es:ort}.
		
		The in‑plane gradient satisfies
		\begin{equation}\label{eq:grad-rel}
			\Peh(\nabla'\Rc_{h,\e})
			= \frac1\e\nabla_{y'}\Peh(\Rc_{h,\e})
			= \frac{h}{\e}\nabla_{y'}\wh{R}_{h,\e}.
		\end{equation}
		By~\cref{es:02}$_{2}$ we have $\|\nabla_{y'}\wh{R}_{h,\e}\|_{L^2}
		= \frac1h\|\nabla_{y'}\Peh(\Rc_{h,\e})\|_{L^2} \le C\frac{\e}{h} $.
		For $\rho\in(0,\infty)$ this implies $\nabla_{y'}\wh{R}_{h,\e}$ is bounded, so together with
		the bounds on $\wh{R}_{h,\e}$ and $\partial_{y_3}\wh{R}_{h,\e}$ we obtain
		$\wh{R}_{h,\e} \rightharpoonup \wh R$ weakly in $L^2(\o;H^1_{\pe}(Y;\Szero))$; in particular
		$\nabla_{y'}\wh{R}_{h,\e} \rightharpoonup \nabla_{y'}\wh R$ weakly.  Multiplying by $\rho$
		yields~\cref{es:04}.
		
		For $\rho = 0$ (i.e.\ $h/\e\to 0$), we use an integration by parts.
		For every smooth $Y$-periodic test function $\psi\in H^1(\o\X Y;\Szero\otimes\R^2)$,
		\[
		\int_{\o\X Y} \Peh(\nabla'\Rc_{h,\e})\cdot\psi
		= -\frac{h}{\e}\int_{\o\X Y} \wh{R}_{h,\e}\,\operatorname{div}_{y'}\psi
		\le \frac{h}{\e} \|\wh{R}_{h,\e}\|_{L^2}\|\psi\|_{H^1} \le C\frac{h}{\e}\to 0.
		\]
		Hence $\Peh(\nabla'\Rc_{h,\e})\rightharpoonup 0$, which is~\cref{es:03}.
		
		For $\rho = \infty$ (i.e.\ $\e/h\to 0$), the bound $\|\nabla_{y'}\wh{R}_{h,\e}\|_{L^2}
		\le C\frac{\e}{h}\to 0$ forces the weak limit $\wh R$ to be independent of $y'$.
		
		Now set $U_{h,\e} := \frac1\e\Peh(\Rc_{h,\e})$.
		Because $\nabla_{y'}U_{h,\e} = \Peh(\nabla'\Rc_{h,\e})$ is bounded in
		$L^2(\o\X Y;\Szero\otimes\R^2)$ by~\cref{es:02}, we can extract a weak limit for the
		gradients.  Let $\widetilde U_{h,\e}$ be the function obtained from $U_{h,\e}$ by subtracting
		its $Y'$-average for a.e.\ $(x',y_3)$.  Then $\widetilde U_{h,\e}$ has zero mean on $Y'$ and
		$\nabla_{y'}\widetilde U_{h,\e} = \nabla_{y'}U_{h,\e}$.  The Poincaré-Wirtinger inequality on
		$Y'$ gives
		$\|\widetilde U_{h,\e}\|_{L^2(\o\X Y)}\le C\|\nabla_{y'}U_{h,\e}\|_{L^2(\o\X Y)}\le C$.
		Thus $\widetilde U_{h,\e}$ is bounded in $L^2(\o\X I;H^1_{\per,0}(Y';\Szero))$.  Up to a
		subsequence, $\widetilde U_{h,\e}\rightharpoonup \wh R_1$ weakly in that space.
		Consequently $\Peh(\nabla'\Rc_{h,\e}) = \nabla_{y'}U_{h,\e} = \nabla_{y'}\widetilde U_{h,\e}
		\rightharpoonup \nabla_{y'}\wh R_1$, which is~\cref{es:05}.  The orthogonality
		$\int_I \wh R_1\,\dd y_3 = 0$ follows from the same property for $\frac1\e\Peh(\Rc_{h,\e})$.
		
		Finally, the strong convergence~\cref{con:79} follows from
		$\|Q_{h,\e}-\Qc_{h,\e}\|_{L^2(\O_h)}\le Ch^{3/2}$ and the strong convergence
		$\Te(\Qc_{h,\e})\to\Qc$, because
		$\Peh(Q_{h,\e}) = \Te(\Qc_{h,\e}) + \Peh(\Rc_{h,\e})$ and
		$\|\Peh(\Rc_{h,\e})\|_{L^2(\o\X Y)} \leq  Ch^{-1/2}\|\Rc_{h,\e}\|_{L^2(\O_h)}\le Ch\to 0$.
		
				By the coercive growth \cref{B3} of the Landau--de Gennes bulk potential and the
		energy bound \cref{eq:energy-bound}, the sequence $\{\Pi_{h,\e}(Q_{h,\e})\}$ is bounded in
		$L^4(\omega\times Y;\Szero)$ by \cref{eq:L2-bound}. Since we have the strong convergence \cref{con:79}, using
		interpolation gives the strong convergences \cref{con55} and the weak convergence \cref{con56}.

This completes the proof.
	\end{proof}
	
	Now we identify the two‑-scale limit of the full gradient.
	
	\begin{lemma}[Admissible corrector spaces for the full rescaled gradient]
		\label{prop:admissible-correctors}
		Assume the conclusions of Lemma~\ref{lem:71} hold.  Then, up to the same subsequence,
		\begin{equation}
			\label{con:80}
			\Peh(\nabla Q_{h,\e})\rightharpoonup \mathcal G_\rho=(G_\rho',G_3)
			\qquad\text{weakly in }L^2(\omega\times Y;\Szero\otimes\R^3),
		\end{equation}
		and the limit pair admits the following representation in each regime.
		
		\textbf{(R3) The regime \(\rho=0\).}
		There exists $\wh Q_0\in L^2(\omega;H^1_{\rm per,0}(Y';\Szero))$ such that
		\[
		G_0'=\nabla'\Qc+\nabla_{y'}\wh Q_0,
		\qquad
		G_3=\partial_{y_3}\wh R,
		\]
		and one may take $\wh Q_0(x',y')=\wh Q(x',y')$.
		
		\textbf{(R2) The regime \(\rho\in(0,\infty)\).}
		There exists $\wh Q_\rho\in L^2(\omega;H^1_{\rm per,0}(Y;\Szero))$ such that
		\[
		G'_\rho=\nabla'\Qc+\nabla_{y'}\wh Q_\rho,
		\qquad
		G_3=\partial_{y_3}\wh R,
		\]
		and one may take $\wh Q_\rho=\wh Q+\rho\,\wh R$.
		
		\textbf{(R1) The regime \(\rho=\infty\).}
		There exists $\wh Q_\infty\in L^2(\omega\times I;H^1_{\rm per,0}(Y';\Szero))$ such that
		\[
		G_\infty'=\nabla'\Qc+\nabla_{y'}\wh Q_\infty,
		\qquad
		G_3=\partial_{y_3}\wh R,
		\]
		and one may take $\wh Q_\infty=\wh Q+\wh R_1$.
	\end{lemma}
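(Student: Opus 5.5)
The plan is to split the gradient with the thickness decomposition of Definition~\ref{def:decomp}. Write $Q_{h,\e}=\Qc_{h,\e}+\Rc_{h,\e}$. Since $\Qc_{h,\e}$ does not depend on $x_3$, we have
\[
\nabla' Q_{h,\e}=\nabla'\Qc_{h,\e}+\nabla'\Rc_{h,\e},\qquad \partial_3 Q_{h,\e}=\partial_3\Rc_{h,\e}.
\]
Unfolding, and using Remark~\ref{rem:unfolding-interpretation} ($\Peh$ of an $x'$-only function equals $\Te$ of it), gives
\[
\Peh(\nabla'Q_{h,\e})=\Te(\nabla'\Qc_{h,\e})+\Peh(\nabla'\Rc_{h,\e}).
\]
The transverse part follows from \cref{eq:Peh-grad-inverse}:
\[
\Peh(\partial_3 Q_{h,\e})=\tfrac1h\partial_{y_3}\Peh(\Rc_{h,\e})=\partial_{y_3}\wh R_{h,\e}.
\]
Boundedness of $\Peh(\nabla Q_{h,\e})$ in $L^2(\o\times Y)$ comes from \cref{eq:L2-bound} and the energy bound \cref{eq:energy-bound}, since $h^{-1}\|\nabla Q_{h,\e}\|^2_{L^2(\O_h)}\le C$. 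This gives a weak limit $\mathcal G_\rho$, along a further subsequence if needed. Its components are identified from Lemma~\ref{lem:71}.

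\textbf{Transverse component.} By \cref{con:03}, $\wh R_{h,\e}\rightharpoonup\wh R$ weakly in $L^2(\o\times Y';H^1(I))$. Hence $\partial_{y_3}\wh R_{h,\e}\rightharpoonup\partial_{y_3}\wh R$, so $G_3=\partial_{y_3}\wh R$ in every regime.

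\textbf{In-plane component.} The first term converges by \cref{con:01}$_3$ to $\nabla'\Qc+\nabla_{y'}\wh Q$ in all regimes. The second term is handled regime by regime.
\begin{itemize}
\item (R3): by \cref{es:03} it tends to $0$, so $\wh Q_0=\wh Q$, which is $y_3$-independent with zero $Y'$-mean.
\item (R2): by \cref{es:04} it tends to $\rho\nabla_{y'}\wh R$, so $G'_\rho=\nabla'\Qc+\nabla_{y'}(\wh Q+\rho\wh R)$.
\item (R1): by \cref{es:05} it tends to $\nabla_{y'}\wh R_1$, so $\wh Q_\infty=\wh Q+\wh R_1$.
\end{itemize}

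\textbf{Membership in the corrector spaces.} This is the only point needing care, mainly in (R2). Here $\wh Q$ lies in $L^2(\o;H^1_{\per,0}(Y'))$ and $\wh R$ lies in $L^2(\o;H^1_\per(Y))$, the latter by Lemma~\ref{lem:71}. The sum is therefore $Y'$-periodic and in $H^1(Y)$. Its mean over $Y$ vanishes: $\wh Q$ has zero $Y'$-mean, and $\int_I\wh R\,dy_3=0$ by \cref{es:ort}. Thus $\wh Q_\rho\in L^2(\o;H^1_{\per,0}(Y))$, and $\wh R\in L^2(\o;H^1_{\per,\sharp}(Y))$.

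In (R1), $\wh R_1$ has zero $Y'$-mean for a.e.\ $(x',y_3)$ by construction, as the $Y'$-average was subtracted. So $\wh Q+\wh R_1\in L^2(\o\times I;H^1_{\per,0}(Y'))$. Also $\wh R$ is $y'$-independent with zero $I$-mean, as required for $\C_\infty$.

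In (R3), $\wh R\in L^2(\o\times Y';H^1_\sharp(I))$ directly.

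\textbf{Uniqueness of the limit.} Weak limits are unique, so $\mathcal G_\rho$ is the sum of the limits identified above along the common subsequence. No separate argument is needed there.
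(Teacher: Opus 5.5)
Your proposal is correct and follows the paper's proof. In both, you split $Q_{h,\e}=\Qc_{h,\e}+\Rc_{h,\e}$, unfold, and identify each piece from \cref{con:01}, \cref{con:03} and \cref{es:03}--\cref{es:05}. Your additional checks are details the paper leaves implicit, and they are right: the zero $Y$-mean of $\wh Q+\rho\wh R$ via \cref{es:ort}, and the mean-zero properties of $\wh R_1$ and $\wh R$ needed for membership in $\C_\rho$.
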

	
	\begin{proof}
		By the thickness decomposition,
		$Q_{h,\e}(x',x_3)=\Qc_{h,\e}(x')+\Rc_{h,\e}(x',x_3)$,
		hence after applying $\Peh$,
		\[
		\Peh(Q_{h,\e})(x',y)
		= \Te(\Qc_{h,\e})(x',y')
		+ \Peh(\Rc_{h,\e})(x',y).
		\]
		Differentiating and using the scaling properties of $\Peh$,
		\begin{align*}
			\frac1\e\nabla_{y'}\Peh(Q_{h,\e})
			= \Te(\nabla'\Qc_{h,\e}) + \Peh(\nabla'\Rc_{h,\e}),\quad
			\frac1h\partial_{y_3}\Peh(Q_{h,\e})= \partial_{y_3}\Bigl(\frac1h\Peh(\Rc_{h,\e})\Bigr).
		\end{align*}
		The first term converges to $\nabla'\Qc+\nabla_{y'}\wh Q$ by~\cref{con:01}$_3$.
		The second term converges to $\partial_{y_3}\wh R$ by~\cref{con:03}.
		For the in‑plane remainder gradient we insert the limits from
		\cref{es:03}--\cref{es:05} according to the regime.
		Collecting everything yields exactly the stated formulas, with the identifications
		$\wh Q_0:=\wh Q$, $\wh Q_\rho:=\wh Q+\rho\wh R$, $\wh Q_\infty:=\wh Q+\wh R_1$.
	\end{proof}
	
	\paragraph{Traces on the top and bottom faces}
	Denote the traces on the top and bottom faces by
	$$Q^\pm_{h,\varepsilon}(x') := Q_{h,\varepsilon}(x', \pm h/2).$$
	Then, using the estimates \cref{eq:Qtrace} and the uniform bound \cref{eq:energy-bound}, there exists a constant $C > 0$, independent of $h$ and $\varepsilon$, such that
	\begin{equation}
		\|Q^\pm_{h,\varepsilon}\|^2_{L^2(\omega;\Szero)} \le C.
		\label{eq:trace-bound}
	\end{equation}
	Moreover, the remainder contribution satisfies
	\begin{equation}
		\|\Rc^\pm_{h,\varepsilon}\|^2_{L^2(\omega;\Szero)} \le Ch^2,
		\label{eq:trace-remainder-vanish}
	\end{equation}
	where $\Rc^\pm_{h,\e}(x')=\Rc_{h,\e}(x',\pm h/2)$ for $x'\in\o$. 
	
	Finally, we have the strong convergence for the trace $Q$-tensor sequence.
	\begin{lemma}
		\label{lem:trace-bound}
		For the same subsequence as in Lemma \ref{lem:71}, we have the strong convergence
		\begin{equation}\label{conN}
			\Te(Q^\pm_{h,\varepsilon}) \to \Qc
			\quad\text{strongly in }L^2(\omega\times Y';\Szero).
		\end{equation}
	\end{lemma}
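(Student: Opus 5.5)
The plan is to split the trace as $Q^\pm_{h,\e}=\Qc_{h,\e}+\Rc^\pm_{h,\e}$ on $\omega$, using Definition~\ref{def:decomp}. The mean field $\Qc_{h,\e}$ is independent of $x_3$, so its trace on $\Gamma^\pm_h$ is $\Qc_{h,\e}$ itself. Since $\Te$ is linear,
\[
\Te(Q^\pm_{h,\e})=\Te(\Qc_{h,\e})+\Te(\Rc^\pm_{h,\e})
\qquad\text{a.e. in }\omega\times Y'.
\]
It then suffices to show that the first term converges strongly to $\Qc$ and the second tends to zero in $L^2(\omega\times Y';\Szero)$.

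For the first term, nothing new is needed: \cref{con:01}$_2$ of Lemma~\ref{lem:71} already gives $\Te(\Qc_{h,\e})\to\Qc$ strongly in $L^2(\omega\times Y';\Szero)$ along the same subsequence. That result rests on the weak $H^1(\omega)$ convergence of $\Qc_{h,\e}$ and Lemma~\ref{A11}(2).

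For the second term, I would combine the $L^2$-continuity of $\Te$ with the trace estimate. By \cref{eq:L2-bound} or \cref{eq:Te-Lp-exact} we have $\|\Te(\Rc^\pm_{h,\e})\|_{L^2(\omega\times Y')}\le\|\Rc^\pm_{h,\e}\|_{L^2(\omega)}$. Lemma~\ref{lem:trace-estimates}, inequality \cref{eq:Rtrace}, together with the energy bound \cref{eq:energy-bound}, then gives
\[
\|\Rc^\pm_{h,\e}\|^2_{L^2(\omega)}\le Ch\|\partial_3Q_{h,\e}\|^2_{L^2(\Omega_h)}\le Ch^2,
\]
which is \cref{eq:trace-remainder-vanish}. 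Hence $\|\Te(\Rc^\pm_{h,\e})\|_{L^2}\le Ch\to0$, and the triangle inequality yields \cref{conN}.

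The only point needing care is the identification of traces, namely that the trace of $Q_{h,\e}$ on $\Gamma_h^\pm$ equals $\Qc_{h,\e}+\Rc_{h,\e}(\cdot,\pm h/2)$ as elements of $L^2(\omega)$. This follows because the trace operator is linear on $H^1(\Omega_h)$ and the $x_3$-independent extension of $\Qc_{h,\e}\in H^1(\omega)$ lies in $H^1(\Omega_h)$ with trace $\Qc_{h,\e}$. I do not expect any genuine obstacle, since the statement is a direct consequence of the estimates already established.
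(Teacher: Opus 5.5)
Your proposal is correct and follows essentially the same route as the paper: you split $\Te(Q^\pm_{h,\e})=\Te(\Qc_{h,\e})+\Te(\Rc^\pm_{h,\e})$, invoke \cref{con:01}$_2$ for the first term, and bound the second by $\|\Rc^\pm_{h,\e}\|_{L^2(\omega)}\le Ch$ via \cref{eq:Rtrace} and the energy bound. Your remark on identifying the trace of $Q_{h,\e}$ as $\Qc_{h,\e}+\Rc^\pm_{h,\e}$ is a small detail that the paper uses implicitly.
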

	
	\begin{proof}	
		For the strong convergence of the unfolded traces we use the decomposition Definition~\ref{def:decomp} again.
		Because $\Te$ is a continuous operator from $L^2(\omega)$ to $L^2(\omega\times Y')$ (up to a
		scale factor that does not affect strong convergence),
		\[
		\Te(Q^\pm_{h,\varepsilon}) = \Te(\Qc_{h,\varepsilon}) + \Te\bigl(\Rc^\pm_{h,\varepsilon}\bigr).
		\]
		From~\cref{con:01}$_2$ we have $\Te(\Qc_{h,\varepsilon})\to\Qc$ strongly in
		$L^2(\omega\times Y')$.
		The remainder term satisfies
		$$\|\Te(\Rc^\pm_{h,\varepsilon})\|_{L^2(\omega\times Y')}
		\leq \|\Rc^\pm_{h,\varepsilon}\|_{L^2(\omega)}\le Ch\to 0,$$
		by~\cref{eq:trace-remainder-vanish}. Thus the sum converges strongly to $\Qc$,
		which is exactly~\cref{conN}.
	\end{proof}
	
	\subsection{Two-scale limit LdG energy and recovery sequence}
	
	\label{sec:limitspace}
	In this section, we derive the two-scale limit energy, using the liminf inequality, establish the existence of a minimiser, and end with the construction of the recovery sequence. 
	
	\begin{proposition}\label{prop:liminf}
		Let $\{Q_{h,\e}\}_{h,\e}\subset H^1(\O_h;\Szero)$ be the sequence satisfying the energy bound \cref{eq:ub}. Then there exists $(\Qc,\wh Q_\rho,\wh R)\in\A^\gamma_\rho$ such that (up to a subsequence for $\gamma\geq 0$ and $\rho\in[0,\infty]$)
		\begin{equation}\label{limE}
			\GF^\gamma_\rho(\Qc,\wh Q_\rho,\wh R)\leq\liminf_{(h,\e)\to(0,0)}{\Fhe(Q_{h,\e})\over h},
		\end{equation}
		where $\Gc_\rho$ is defined using $(\Qc,\wh Q_\rho,\wh R)\in\A^\gamma_\rho$ as in Lemma \ref{lem:71}--\ref{prop:admissible-correctors}.
	\end{proposition}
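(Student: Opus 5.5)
The plan is to extract a subsequence realising the $\liminf$ and apply the compactness results of Lemmas~\ref{lem:71}, \ref{prop:admissible-correctors} and~\ref{lem:trace-bound}. By the energy bound~\cref{eq:ub} and Theorem~\ref{prop:existence}, estimate~\cref{eq:energy-bound} holds. We therefore obtain $(\Qc,\wh Q_\rho,\wh R)$ with $\Peh(\nabla Q_{h,\e})\rightharpoonup\Gc_\rho$, $\Peh(Q_{h,\e})\to\Qc$ strongly in $L^p$ for $2\le p<4$, and $\Te(Q^\pm_{h,\e})\to\Qc$ strongly in $L^2(\omega\times Y')$. Membership in $\C_\rho$ follows from the explicit identifications of the correctors in Lemma~\ref{prop:admissible-correctors}. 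The remaining task is to bound each of the three energy contributions separately from below, since the $\liminf$ of a sum dominates the sum of the $\liminf$'s.

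\textbf{Elastic term.} By~\cref{eq:Peh-integral-exact}, $E^{h,\e}_{\el}\ge0$ and $\Peh(\bA(x'/\e,x_3/h))=\bA(y)$ on $\wh\omega_\e\times Y$, so
\[
\tfrac1h E^{h,\e}_{\el}(Q_{h,\e})\ge \tfrac12\int_{\omega\times Y}\bA(y)\Peh(\nabla Q_{h,\e}):\Peh(\nabla Q_{h,\e})\,dx'dy .
\]
The right-hand side is a convex continuous quadratic form on $L^2$, hence weakly lower semicontinuous. This gives $E^\rho_{\el}$ in the limit.

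\textbf{Bulk term.} Write $f_b=f_b+c_2-c_2$. The part $f_b+c_2$ is nonnegative, so dropping $\Lambda_\e\times hI$ and using~\cref{eq:Peh-bulk-potential} yields the lower bound $\int_{\omega\times Y}(f_b(\Peh Q_{h,\e})+c_2)$, up to the error $c_2|\Lambda_\e|\to0$. Since $\Peh(Q_{h,\e})\to\Qc$ a.e. along a further subsequence, Fatou's lemma gives $\int_\omega f_b(\Qc)$, because $\Qc$ is independent of $y$. (Alternatively, the quadratic and cubic terms converge by the strong $L^p$ convergence, $p<4$, and the quartic term is weakly lsc in $L^4$.)

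\textbf{Surface term.} The surface term splits into three cases according to $\gamma$. Unfolding on faces via Remark~\ref{rem:unfolding-interpretation} gives
\[
\tfrac1h E^{h,\e}_\surf\ge h^{\gamma-1}\sum_\pm\int_{\omega\times Y'}W(y',\pm\tfrac12,\pm e_3)\operatorname{dist}^2\bigl(\Te(Q^\pm_{h,\e}),\Ac(y',\pm\tfrac12,\pm e_3)\bigr).
\]
For $\gamma>1$ we use only nonnegativity. For $\gamma=1$ the $1$-Lipschitz property of $\operatorname{dist}$, the strong $L^2$ convergence of the unfolded traces and the $L^2$ bound~\cref{eq:trace-bound} give convergence of the integrand to $\bar f_s(\Qc)$, exactly as in the surface step of Theorem~\ref{prop:existence}.

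For $0\le\gamma<1$ the main obstacle is to show $\Qc\in H^1(\omega;\mathscr A)$, which is needed for the triplet to lie in $\A^\gamma_\rho$. The surface energy is at most $Ch$, so the integral above is at most $Ch^{1-\gamma}\to0$. By the lower bound $W\ge w_0$ and the $y'$-independence of $\Ac$, this forces $\int_{\omega\times Y'}\operatorname{dist}^2(\Te(Q^\pm_{h,\e}),\Ac(\pm\tfrac12,\pm e_3))\to0$. Passing to the limit via the Lipschitz continuity of $\operatorname{dist}$ and the strong convergence gives $\operatorname{dist}(\Qc,\Ac(\pm\frac12,\pm e_3))=0$ a.e. Closedness of these sets then places $\Qc\in\mathscr A$ a.e., and the surface limit term is $0$. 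Summing the three bounds yields~\cref{limE}.
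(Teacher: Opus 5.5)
Your proposal is correct and follows essentially the paper's route: compactness via Lemmas~\ref{lem:71}--\ref{lem:trace-bound}, weak lower semicontinuity of the unfolded elastic form after discarding the boundary strip, a lower bound for the bulk term, and the case split in $\gamma$, with $\Qc\in H^1(\omega;\mathscr A)$ extracted from the $O(h^{1-\gamma})$ bound on the unfolded anchoring integral. Your Fatou argument on $f_b+c_2\ge0$ is a slightly more elementary variant of the paper's term-wise strong $L^p$/weak $L^4$ argument, which you also list as an alternative.

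One point needs fixing: your displayed unfolded surface inequality must carry the indicator $\chi_\e=\mathbf 1_{\wh\omega_\e}$, as in the paper. On $\Lambda_\e\times Y'$ one has $\Te(Q^\pm_{h,\e})=0$, while $\operatorname{dist}^2(0,\Ac)$ need not vanish, so without the indicator the inequality can fail. The correction only adds $C|\Lambda_\e|\to0$ to the unscaled integral you bound by $Ch^{1-\gamma}$, so all your conclusions stand.
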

	\begin{proof}
		Since $\{Q_{h,\e}\}_{h,\e}\subset H^1(\O_h;\Szero)$ satisfy the energy bound \cref{eq:ub}, using Theorem \ref{prop:existence}, we have that the sequence of Q-tensors satisfies the energy estimates \cref{eq:energy-bound}. Then, using the decomposition \ref{def:decomp}, we have the estimates \cref{es:01}--\cref{es:02} and \cref{eq:trace-bound}--\cref{eq:trace-remainder-vanish}. So, we obtain convergences as in Lemmas \ref{lem:71}--\ref{lem:trace-bound}. So, there exists $(\Qc,\wh Q_\rho,\wh R)\in \A^\gamma_\rho$, with the convergences \cref{con:79}, \cref{con:80} and \cref{conN}.
		
		It remains to prove the right-hand inequality \cref{limE}. We treat the three contributions to $\GF^\gamma_\rho$ separately.
		
		\textbf{Elastic term.}
		Split the physical domain into interior and boundary strip,
		\[
		\frac{2}{h}E^{h,\varepsilon}_{\mathrm{el}}(Q_{h,\varepsilon})
		\;=\; \underbrace{\frac{1}{h}\int_{\wh{\Omega}_{h,\varepsilon}}\!\!
			\bA\!\left(\frac{x'}{\varepsilon},\frac{x_3}{h}\right)\nabla Q_{h,\varepsilon} : \nabla Q_{h,\varepsilon}\,dx}_{=:I^{\mathrm{el,int}}_{h,\varepsilon}}
		\;+\; \underbrace{\frac{1}{h}\int_{\Omega_h\setminus\wh{\Omega}_{h,\varepsilon}}\!\!
			\bA\!\left(\frac{x'}{\varepsilon},\frac{x_3}{h}\right)\nabla Q_{h,\varepsilon} : \nabla Q_{h,\varepsilon}\,dx}_{=:I^{\mathrm{el,strip}}_{h,\varepsilon}}.
		\]
		On the complete cells, the $L^p$-identity~\cref{eq:Peh-Lp-exact} with $p=2$ 
		together with the unfolding of the coefficient~\cref{eq:Peh-integral-exact} gives
		\[
		I^{\mathrm{el,int}}_{h,\varepsilon}
		\;=\; \int_{\omega\times Y} \bA(y)\,\Pi_{h,\varepsilon}(\nabla Q_{h,\varepsilon}) 
		: \Pi_{h,\varepsilon}(\nabla Q_{h,\varepsilon})\,dy\,dx'.
		\]
		By the uniform ellipticity of $\bA$(see Assumption \ref{asm:elastic}\ref{A3}), we have
		\begin{equation}\label{eq:el-strip-liminf}
			I^{\mathrm{el,strip}}_{h,\varepsilon} \;\geq\; 0.
		\end{equation}
%		Note that the strip term is handled by the non-negativity of the integrand alone; 
%		no equi-integrability of $|\nabla Q_{h,\varepsilon}|^2$ on the strip is required, 
%		which is what would have been needed for a full limit.
		
		By Lemma~\ref{prop:admissible-correctors}, 
		$\Pi_{h,\varepsilon}(\nabla Q_{h,\varepsilon})\rightharpoonup \Gc_\rho$ weakly 
		in $L^2(\omega\times Y;\Szero\otimes\mathbb{R}^3)$, with $\Gc_\rho$ defined from 
		$(\Qc,\wh Q_\rho,\wh R)\in\A^\gamma_\rho$. The functional 
		$F\mapsto\int_{\omega\times Y}\bA(y)\,F:F\,dy\,dx'$ is convex and strongly continuous 
		on $L^2(\omega\times Y;\Szero\otimes\mathbb{R}^3)$, hence weakly lower semicontinuous. 
		Therefore
		\begin{equation}\label{eq:el-int-liminf}
			\int_{\omega\times Y} \bA(y)\,\Gc_\rho:\Gc_\rho\,dy\,dx'
			\;\leq\; \liminf_{(h,\varepsilon)\to(0,0)} I^{\mathrm{el,int}}_{h,\varepsilon}.
		\end{equation}
		Combining~\cref{eq:el-strip-liminf} and~\cref{eq:el-int-liminf},
		\begin{multline}\label{eq:liminfE}
			E^\rho_{\mathrm{el}}(\Qc,\wh{Q}_\rho,\wh{R})
			\;=\;{1\over 2} \int_{\omega\times Y} \bA(y)\,\Gc_\rho:\Gc_\rho\,dy\,dx'
			\;\leq\; {1\over 2}\liminf_{(h,\varepsilon)\to(0,0)} I^{\mathrm{el,int}}_{h,\varepsilon}
			\;+\;{1\over 2} \liminf_{(h,\varepsilon)\to(0,0)} I^{\mathrm{el,strip}}_{h,\varepsilon}\\
			\;\leq\; \liminf_{(h,\varepsilon)\to(0,0)}\frac{1}{h}E^{h,\varepsilon}_{\mathrm{el}}(Q_{h,\varepsilon}).
		\end{multline}
		This completes the elastic term.
	
		\textbf{Bulk term.}
		Split the physical domain into interior and boundary strip,
		\[
		\frac{1}{h}\int_{\Omega_h} f_b(Q_{h,\varepsilon})\,dx
		\;=\; \underbrace{\frac{1}{h}\int_{\widehat\Omega_{h,\varepsilon}}\!\! f_b(Q_{h,\varepsilon})\,dx}_{=:I_{h,\varepsilon}^{\mathrm{int}}}
		\;+\; \underbrace{\frac{1}{h}\int_{\Lambda_\varepsilon\times hI}\!\! f_b(Q_{h,\varepsilon})\,dx}_{=:I_{h,\varepsilon}^{\mathrm{strip}}}.
		\]
		By the unfolding identity~\cref{eq:Peh-integral-remainder} (in its exact form on
		$\widehat\Omega_{h,\varepsilon}$) together with the polynomial
		property~\cref{eq:Peh-bulk-potential},
		\[
		I_{h,\varepsilon}^{\mathrm{int}}
		\;=\; \int_{\omega\times Y}\Pi_{h,\varepsilon}\bigl(f_b(Q_{h,\varepsilon})\bigr)\,dx'\,dy
		\;=\; \int_{\omega\times Y} f_b\bigl(\Pi_{h,\varepsilon}(Q_{h,\varepsilon})(x',y)\bigr)\,dx'\,dy.
		\]
		For the strip term, the lower bound $f_b(Q)\geq -c_2$ from~\cref{B3} gives
		\begin{equation}\label{eq:strip-liminf}
			I_{h,\varepsilon}^{\mathrm{strip}}
			\;\geq\; -c_2\,\frac{|\Lambda_\varepsilon\times hI|}{h}
			\;=\; -c_2\,|\Lambda_\varepsilon| \;\longrightarrow\; 0.
		\end{equation}
		It remains to prove the lower bound for the unfolded interior term. 
		
		Using the form  \cref{eq:fb} of the Landau--de Gennes bulk potential,
		\[
		f_b(Q)
		=
		a |Q|^2
		-
		b \tr(Q^3)
		+
		c |Q|^4,
		\qquad a,b\in\R,\quad c>0,
		\]
		we treat the three terms separately. The strong $L^2$ convergence gives
		\[
		\int_{\omega\times Y}|\Pi_{h,\e}(Q_{h,\e})|^2\,dx'\,dy
		\to
		\int_{\omega\times Y}|\Qc|^2\,dx'\,dy .
		\]
		The strong $L^3$ convergence gives
		\[
		\int_{\omega\times Y}\tr(\Pi_{h,\e}(Q_{h,\e})^3)\,dx'\,dy
		\to
		\int_{\omega\times Y}\tr(\Qc^3)\,dx'\,dy .
		\]
		Finally, by weak lower semicontinuity of the convex functional
		$U\mapsto\int_{\omega\times Y}|U|^4\,dx'\,dy$ on $L^4$,
		\[
		\int_{\omega\times Y}|\Qc|^4\,dx'\,dy
		\leq
		\liminf_{(h,\varepsilon)\to(0,0)}
		\int_{\omega\times Y}|\Pi_{h,\e}(Q_{h,\e})|^4\,dx'\,dy .
		\]
		Combining these three convergences yields
		\[
		\int_{\omega\times Y} f_b(\Qc)\,dx'\,dy
		\leq
		\liminf_{(h,\varepsilon)\to(0,0)}
		\int_{\omega\times Y} f_b(\Pi_{h,\e}(Q_{h,\e}))\,dx'\,dy .
		\]
		Since $\Qc=\Qc(x')$ is independent of $y$ and $|Y|=1$,
		\[
		\int_{\omega\times Y} f_b(\Qc)\,dx'\,dy
		=
		\int_\omega f_b(\Qc)\,dx' .
		\]
		Combining this with the boundary-strip lower bound \cref{eq:strip-liminf} gives
		\begin{equation}\label{eq:liminfS}
				\int_\omega f_b(\Qc)\,dx'
			\leq
			\liminf_{(h,\varepsilon)\to(0,0)}
			\frac1h E^{h,\varepsilon}_{\mathrm{bulk}}(Q_{h,\varepsilon}).
		\end{equation}
		This completes the bulk term.

	\noindent\textbf{Surface term for $\gamma\geq1$.}
	For $\sigma\in\{-1,1\}$, set
	\[
	Q^\sigma_{h,\varepsilon}(x')
	:=
	Q_{h,\varepsilon}\left(x',\frac{\sigma h}{2}\right),
	\qquad
	W_\sigma(y')
	:=
	W\left(y',\frac{\sigma}{2},\sigma e_3\right),\quad\text{and}\quad
	\mathcal A_\sigma(y')
	:=
	\mathcal A\left(y',\frac{\sigma}{2},\sigma e_3\right).
	\]
	
	Suppose first that $\gamma=1$. Then
	\begin{equation}
		\label{eq:surface-gamma-one}
		\frac1hE_{\mathrm{surf}}^{h,\varepsilon}
		(Q_{h,\varepsilon})
		=
		\sum_{\sigma\in\{-1,1\}}
		\int_\omega
		W_\sigma\left(\frac{x'}{\varepsilon}\right)
		\operatorname{dist}^2
		\left(
		Q^\sigma_{h,\varepsilon}(x'),
		\mathcal A_\sigma\left(\frac{x'}{\varepsilon}\right)
		\right)
		\,\mathrm dx'.
	\end{equation}
	For each $\sigma\in\{-1,1\}$, decompose the corresponding integral as
	\[
	J^\sigma_{\mathrm{int}}
	+
	J^\sigma_{\mathrm{strip}},
	\]
	where the two terms are obtained by integrating over
	$\widehat\omega_\varepsilon$ and $\Lambda_\varepsilon$, respectively.
	By the unfolding and product identities,
	\[
	J^\sigma_{\mathrm{int}}
	=
	\int_{\omega\times Y'}
	W_\sigma(y')
	\operatorname{dist}^2
	\left(
	\mathcal T_\varepsilon
	(Q^\sigma_{h,\varepsilon})(x',y'),
	\mathcal A_\sigma(y')
	\right)
	\,\mathrm dx'\,\mathrm dy'.
	\]
	Moreover, $J^\sigma_{\mathrm{strip}}\geq0$.
	
	By~\cref{conN},
	\[
	\mathcal T_\varepsilon(Q^\sigma_{h,\varepsilon})
	\longrightarrow
	\Qc
	\qquad\text{strongly in }
	L^2(\omega\times Y';\Szero).
	\]
	Since the distance from a nonempty closed set is $1$-Lipschitz and
	$W_\sigma\in L^\infty(Y')$, it follows that
	\[
	\sqrt{W_\sigma}\,
	\operatorname{dist}
	\left(
	\mathcal T_\varepsilon(Q^\sigma_{h,\varepsilon}),
	\mathcal A_\sigma
	\right)
	\longrightarrow
	\sqrt{W_\sigma}\,
	\operatorname{dist}(\Qc,\mathcal A_\sigma),\quad\text{	strongly in $L^2(\omega\times Y')$}.
	\]
Hence
	\begin{equation}
		\label{eq:surf-int-liminf}
		\lim_{(h,\varepsilon)\to(0,0)}
		J^\sigma_{\mathrm{int}}
		=
		\int_{\omega\times Y'}
		W_\sigma(y')
		\operatorname{dist}^2
		\bigl(\Qc(x'),\mathcal A_\sigma(y')\bigr)
		\,\mathrm dx'\,\mathrm dy'.
	\end{equation}
	Combining this convergence with the non-negativity of the boundary-strip
	terms gives
	\begin{equation}
		\label{Sur01}
		\liminf_{(h,\varepsilon)\to(0,0)}
		\frac1hE_{\mathrm{surf}}^{h,\varepsilon}
		(Q_{h,\varepsilon})
		\geq
		\sum_{\sigma\in\{-1,1\}}
		\int_{\omega\times Y'}
		W_\sigma(y')
		\operatorname{dist}^2
		\bigl(\Qc(x'),\mathcal A_\sigma(y')\bigr)
		\,\mathrm dx'\,\mathrm dy'.
	\end{equation}
	
	Define
	\begin{equation}
		\label{eq:hom-surf-density-pm}
		\overline f_s^\sigma(q)
		:=
		\int_{Y'}
		W_\sigma(y')
		\operatorname{dist}^2
		\bigl(q,\mathcal A_\sigma(y')\bigr)
		\,\mathrm dy',
		\qquad q\in\Szero,
	\end{equation}
	and
	\[
	\overline f_s(q)
	:=
	\sum_{\sigma\in\{-1,1\}}
	\overline f_s^\sigma(q).
	\]
	The quadratic growth estimate in
	Lemma~\ref{lem:surf-properties}\ref{surf-growth} implies
	\begin{equation}
		\label{eq:LimG}
		0\leq\overline f_s(q)
		\leq
		2C_s(1+|q|^2)
		\qquad\text{for all }q\in\Szero,
	\end{equation}
	where $|Y'|=1$. Therefore,~\eqref{Sur01} becomes
	\begin{equation}
		\label{eq:surf-limit-compact}
		\liminf_{(h,\varepsilon)\to(0,0)}
		\frac1hE_{\mathrm{surf}}^{h,\varepsilon}
		(Q_{h,\varepsilon})
		\geq
		\int_\omega\overline f_s(\Qc(x'))\,\mathrm dx'.
	\end{equation}
	
	Suppose now that $\gamma>1$. With the preceding notation,
	\[
	\frac1hE_{\mathrm{surf}}^{h,\varepsilon}
	(Q_{h,\varepsilon})
	=
	h^{\gamma-1}
	\sum_{\sigma\in\{-1,1\}}
	\int_\omega
	W_\sigma\left(\frac{x'}{\varepsilon}\right)
	\operatorname{dist}^2
	\left(
	Q^\sigma_{h,\varepsilon},
	\mathcal A_\sigma\left(\frac{x'}{\varepsilon}\right)
	\right)
	\,\mathrm dx'.
	\]
	By the quadratic growth of the surface density and the uniform trace
	bounds,
	\[
	\int_\omega
	W_\sigma\left(\frac{x'}{\varepsilon}\right)
	\operatorname{dist}^2
	\left(
	Q^\sigma_{h,\varepsilon},
	\mathcal A_\sigma\left(\frac{x'}{\varepsilon}\right)
	\right)
	\,\mathrm dx'
	\leq
	C\left(
	1+\|Q^\sigma_{h,\varepsilon}\|_{L^2(\omega)}^2
	\right)
	\leq C.
	\]
	Since $h^{\gamma-1}\to0$, we conclude that
	\begin{equation}
		\label{eq:surf-vanish}
		\frac1hE_{\mathrm{surf}}^{h,\varepsilon}
		(Q_{h,\varepsilon})
		\longrightarrow0.
	\end{equation}
		
\noindent\textbf{Surface term for $0\leq\gamma<1$.}
By the non-negativity of the elastic energy, the lower bound
$f_b\geq-c_2$, and the uniform energy bound,
\[
0
\leq
E_{\mathrm{surf}}^{h,\varepsilon}(Q_{h,\varepsilon})
\leq
F_{h,\varepsilon}(Q_{h,\varepsilon})
+c_2|\Omega_h|
\leq
Ch.
\]
Using the notation introduced above, this gives
\[
h^\gamma
\sum_{\sigma\in\{-1,1\}}
\int_\omega
W_\sigma\left(\frac{x'}{\varepsilon}\right)
\operatorname{dist}^2
\left(
Q^\sigma_{h,\varepsilon}(x'),
\mathcal A_\sigma\left(\frac{x'}{\varepsilon}\right)
\right)
\,\mathrm dx'
\leq
Ch.
\]
Since $W_\sigma\geq w_0>0$ a.e.\ by
Assumption~\ref{asm:compatible}, we obtain
\begin{equation}
	\label{eq:strong-anchoring-estimate}
	\sum_{\sigma\in\{-1,1\}}
	\int_\omega
	\operatorname{dist}^2
	\left(
	Q^\sigma_{h,\varepsilon}(x'),
	\mathcal A_\sigma\left(\frac{x'}{\varepsilon}\right)
	\right)
	\,\mathrm dx'
	\leq
	Ch^{1-\gamma}
	\longrightarrow0.
\end{equation}
Let $\chi_\varepsilon
:=
\mathbf{1}_{\widehat\omega_\varepsilon}$.
Applying the unfolding identity to the restriction of
\eqref{eq:strong-anchoring-estimate} to the complete cells yields
\begin{equation}
	\label{eq:unfolded-strong-anchoring}
	\sum_{\sigma\in\{-1,1\}}
	\int_{\omega\times Y'}
	\chi_\varepsilon(x')
	\operatorname{dist}^2
	\left(
	\mathcal T_\varepsilon
	(Q^\sigma_{h,\varepsilon})(x',y'),
	\mathcal A_\sigma(y')
	\right)
	\,\mathrm dx'\,\mathrm dy'
	\leq
	Ch^{1-\gamma}.
\end{equation}
Moreover, $\chi_\varepsilon\longrightarrow1
\quad\text{in }L^p(\omega)$ for every $1\leq p<\infty$,
because $|\Lambda_\varepsilon|\to0$.

By Lemma~\ref{lem:trace-bound},
\[
\mathcal T_\varepsilon(Q^\sigma_{h,\varepsilon})
\longrightarrow
\Qc
\qquad\text{strongly in }
L^2(\omega\times Y';\Szero).
\]
Set
\[
d^\sigma_{h,\varepsilon}(x',y')
:=
\operatorname{dist}
\left(
\mathcal T_\varepsilon(Q^\sigma_{h,\varepsilon})(x',y'),
\mathcal A_\sigma(y')
\right),\quad\text{and}\quad
d^\sigma(x',y')
:=
\operatorname{dist}
\bigl(\Qc(x'),\mathcal A_\sigma(y')\bigr).
\]
Since the distance from a nonempty closed set is $1$-Lipschitz,
\[
d^\sigma_{h,\varepsilon}
\longrightarrow
d^\sigma
\qquad\text{strongly in }L^2(\omega\times Y'),\quad\text{consequently},\quad
\chi_\varepsilon d^\sigma_{h,\varepsilon}
\longrightarrow
d^\sigma
\qquad\text{strongly in }L^2(\omega\times Y').
\]
On the other hand, \eqref{eq:unfolded-strong-anchoring} gives
\[
\|\chi_\varepsilon
d^\sigma_{h,\varepsilon}\|_{L^2(\omega\times Y')}^2
\leq
Ch^{1-\gamma}
\longrightarrow0.
\]
It follows that $\operatorname{dist}
\bigl(\Qc(x'),\mathcal A_\sigma(y')\bigr)
=0$
for a.e.\ $(x',y')\in\omega\times Y'$ and each
$\sigma\in\{-1,1\}$. Thus,
\[
\Qc(x')\in\mathcal A_\sigma(y')
\quad\text{for a.e.\ }y'\in Y'
\quad\text{and each }\sigma\in\{-1,1\},\quad\text{for a.e.\ $x'\in\omega$}.
\]
By the definition of $\mathscr A$, this means $\Qc(x')\in\mathscr A$ for a.e.~$x'\in\omega$.
Together with the previously established regularity
$\Qc\in H^1(\omega;\Szero)$, we conclude that $\Qc\in H^1(\omega;\mathscr A)$.

Combining the elastic, bulk, and surface estimates proves~\cref{limE}
and completes the proof.
	\end{proof}

\begin{theorem}\label{thm:existence-limit}
	For every $\gamma\geq0$ and $\rho\in[0,\infty]$, the two-scale
	limit energy~\cref{eq:limE} admits a minimiser in
	$\A^\gamma_\rho$.
\end{theorem}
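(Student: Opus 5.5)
We prove existence by the direct method of the calculus of variations on $\A^\gamma_\rho=H^1(\omega;\mathscr E_\gamma)\times\C_\rho$, regarded as a closed subset of the Hilbert space $H^1(\omega;\Szero)\times\C_\rho$.

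\textbf{Step 1: finite infimum.} The infimum $\Gm^\gamma_\rho$ is finite. For $\gamma\geq1$ we test with $(0,0,0)$, which gives $\GF^\gamma_\rho(0,0,0)=E^\gamma_{\mathrm{surf}}(0)\leq 2C_s|\omega|$ by \cref{eq:LimG}. For $0\leq\gamma<1$ we test with $(Q^*,0,0)$, where $Q^*$ is the field of Assumption~\ref{asm:compatible}. Its energy is finite, since $\Gc_\rho=(\nabla'Q^*,0)$ and $f_b(Q^*)\in L^1$ by \cref{B3}$_3$ together with $H^1(\omega)\hookrightarrow L^4(\omega)$. For the lower bound we use $\bA\geq\lambda$, $f_b\geq -c_2$ and $E^\gamma_{\mathrm{surf}}\geq0$, which give $\GF^\gamma_\rho\geq -c_2|\omega|$.

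\textbf{Step 2: coercivity.} Take a minimising sequence $(\Qc_n,\wh Q_n,\wh R_n)$. We have
\[
\frac{\lambda}{2}\|\Gc_{\rho,n}\|^2_{L^2(\omega\times Y)}+c_1\|\Qc_n\|^4_{L^4(\omega)}\leq C .
\]
We then use the orthogonality built into the corrector spaces: the maps $\nabla_{y'}\wh Q_n$ and $\partial_{y_3}\wh R_n$ have zero average in $y'$ and in $y_3$ respectively, because of periodicity and because the traces are compared at the same levels. Hence
\[
\|\Gc_{\rho,n}\|^2=\|\nabla'\Qc_n\|^2+\|\nabla_{y'}\wh Q_{\rho,n}\|^2+\|\partial_{y_3}\wh R_n\|^2 ,
\]
where the $y$-integrated cross term with the $y$-independent field $\nabla'\Qc_n$ vanishes. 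The zero-mean constraints and the Poincar\'e--Wirtinger inequality in $Y'$, $I$ and $Y$ then bound $\wh Q_{\rho,n}$ and $\wh R_n$ in their spaces, and $\Qc_n$ is bounded in $H^1(\omega)$, with the $L^2$ part controlled by the $L^4$ bound. The coupled regime $\rho\in(0,\infty)$ needs a further check. The space $\C_\rho$ is defined through $\wh Q_\rho=\wh Q+\rho\wh R$, and we must make sure the energy only involves $\wh Q_\rho$ and $\wh R$; this is the case. The paper states that $\C_\rho$ is closed, so weak limits remain admissible.

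\textbf{Step 3: lower semicontinuity.} Extract weakly convergent subsequences: $\Qc_n\rightharpoonup\Qc$ in $H^1$ and strongly in $L^p(\omega)$ for $p<\infty$ by Rellich in two dimensions, while $(\wh Q_n,\wh R_n)\rightharpoonup(\wh Q,\wh R)$ in $\C_\rho$. Since $\C_\rho$ is a closed subspace, it is weakly closed. For $\gamma<1$ we pass to an a.e.\ convergent subsequence, and closedness of $\mathscr A$ gives $\Qc\in H^1(\omega;\mathscr A)$. The map $\Gc_\rho$ is linear and continuous in the triplet, so $\Gc_{\rho,n}\rightharpoonup\Gc_\rho$, and the convex quadratic elastic energy is weakly lower semicontinuous. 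The bulk term converges, since $\Qc_n\to\Qc$ strongly in $L^4(\omega)$ and \cref{B3}$_3$ holds. The surface density $\bar f_s$ is continuous with quadratic growth, as $\operatorname{dist}$ is $1$-Lipschitz, so the surface term converges by strong $L^2$ convergence. The limit is therefore a minimiser.

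\textbf{Main obstacle.} The main obstacle is the coercivity in Step~2, namely the orthogonal splitting of $\|\Gc_\rho\|^2$ in each regime. This is easiest in R1 and R3. In R2 the cross term $\int\nabla_{y'}\wh Q_\rho:\nabla'\Qc$ vanishes by $y'$-periodicity of $\wh Q_\rho$, which is all that is needed. A fallback, in case the orthogonality fails, is to use $\|a+b\|^2\geq\tfrac12\|a\|^2-\|b\|^2$ together with averaging in $y$ to recover $\nabla'\Qc=\int_Y G'_\rho\,dy$.
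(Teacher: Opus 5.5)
Your proposal is correct and is essentially the paper's proof: finiteness of the infimum from an admissible test triple, then coercivity from the splitting $\|\Gc_\rho\|^2=\|\nabla'\Qc\|^2+\|\nabla_{y'}\wh Q_\rho\|^2+\|\partial_{y_3}\wh R\|^2$ together with Poincar\'e--Wirtinger (the paper's Lemma~\ref{lem:coercivity}), then weak closedness of $\A^\gamma_\rho$, weak lower semicontinuity of the elastic term, the compact embedding into $L^4(\omega)$ for the bulk term, and continuity of $\bar f_s$ for $\gamma=1$. Two small corrections: you do not need $\partial_{y_3}\wh R$ to have zero $y_3$-average, and this is false in general since $\int_I\partial_{y_3}\wh R\,dy_3=\wh R(\cdot,\tfrac12)-\wh R(\cdot,-\tfrac12)$, because that term sits in the orthogonal transverse component; and in R2 the check you actually need is that the coupling supplies the missing control, namely $\partial_{y_3}\wh Q_\rho=\rho\,\partial_{y_3}\wh R$ (needed for Poincar\'e on $Y$) and $\wh Q=\int_I\wh Q_\rho\,dy_3$, giving $\rho\nabla_{y'}\wh R=\nabla_{y'}\wh Q_\rho-\nabla_{y'}\wh Q$, which bounds $\wh R$ in $L^2(\omega;H^1_{\mathrm{per},\sharp}(Y;\Szero))$ where Poincar\'e alone would not.
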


\begin{proof}
	Fix $\gamma\geq0$ and $\rho\in[0,\infty]$, and set
	\begin{equation}\label{eq:limmin}
		\Gm^\gamma_\rho
		:=
		\inf_{(\Qc,\wh Q_\rho,\wh R)\in\A^\gamma_\rho}
		\GF^\gamma_\rho(\Qc,\wh Q_\rho,\wh R).
	\end{equation}
	By the compatibility assumptions \cref{asm:compatible}, $\A^\gamma_\rho$ contains a
	finite-energy admissible triple. Hence
	$\Gm^\gamma_\rho<+\infty$.
	
	Let $(\Qc,\wh Q_\rho,\wh R)\in\A^\gamma_\rho$. The uniform
	ellipticity~\cref{eq:elastic-coercive} and
	Lemma~\ref{lem:coercivity} give
	\begin{align*}
		E^\rho_{\mathrm{el}}(\Qc,\wh Q_\rho,\wh R)
		&\geq
		\lambda\|\Gc_\rho\|_{L^2(\omega\times Y)}^2\geq
		\lambda c_\rho
		\Bigl(
		\|\nabla'\Qc\|_{L^2(\omega)}^2
		+\|\wh Q_\rho\|_
		{L^2(\omega\times I;H^1(Y'))}^2
		+\|\wh R\|_
		{L^2(\omega\times Y';H^1(I))}^2
		\Bigr),
	\end{align*}
	where $\Gc_\rho$ is the generalized gradient associated with
	$(\Qc,\wh Q_\rho,\wh R)$. Moreover, the bulk lower bound
	\cref{B3} yields $E_{\mathrm{bulk}}(\Qc)
	\geq
	c_1\|\Qc\|_{L^4(\omega)}^4-c_2|\omega|$,
	while the surface contribution is non-negative. Therefore
	\begin{multline}
		\GF^\gamma_\rho(\Qc,\wh Q_\rho,\wh R)
		\geq{}
		\lambda c_\rho\|\nabla'\Qc\|_{L^2(\omega)}^2
		+c_1\|\Qc\|_{L^4(\omega)}^4
		-c_2|\omega|\\
		+
		\lambda c_\rho
		\Bigl(
		\|\wh Q_\rho\|_
		{L^2(\omega\times I;H^1(Y'))}^2
		+\|\wh R\|_
		{L^2(\omega\times Y';H^1(I))}^2
		\Bigr).
		\label{eq:limit-coercivity}
	\end{multline}
	In particular, $-\infty<\Gm^\gamma_\rho<+\infty$.	
	Let $\{(\Qc_n,\wh Q_{\rho,n},\wh R_n)\}_n
	\subset\A^\gamma_\rho$
	be a minimizing sequence such that
	\[
	\GF^\gamma_\rho(\Qc_n,\wh Q_{\rho,n},\wh R_n)
	\longrightarrow
	\Gm^\gamma_\rho.
	\]
	Estimate~\cref{eq:limit-coercivity}, together with the boundedness
	of $\omega$, implies
	\begin{equation}\label{EsLim}
		\|\Qc_n\|_{H^1(\omega)}^2
		+\|\wh Q_{\rho,n}\|_
		{L^2(\omega\times I;H^1(Y'))}^2
		+\|\wh R_n\|_
		{L^2(\omega\times Y';H^1(I))}^2
		\leq C,
	\end{equation}
	where $C$ is independent of $n$ but may depend on $\rho$.
	
By the definition of $\mathscr E_\gamma$, the admissible class
$\A^\gamma_\rho$ is weakly sequentially closed in the product Hilbert space $H^1(\omega;\Szero)\times\C_\rho$.
Therefore, the uniform bound~\cref{EsLim} and weak sequential compactness
give, after passing to a subsequence,
\[
\Qc_n\rightharpoonup\Qc^\ast
\quad\text{in }H^1(\omega;\Szero),\quad\text{and}\quad
(\wh Q_{\rho,n},\wh R_n)
\rightharpoonup
(\wh Q^\ast_\rho,\wh R^\ast)
\quad\text{in }\C_\rho,
\]
for some $(\Qc^\ast,\wh Q^\ast_\rho,\wh R^\ast)\in\A^\gamma_\rho$.
In particular, $\Qc^\ast\in H^1(\omega;\mathscr E_\gamma)$.

The generalized gradient depends linearly and continuously on the
admissible triple. Hence
\[
\Gc_{\rho,n}\rightharpoonup\Gc^\ast_\rho
\quad\text{in }
L^2(\omega\times Y;\Szero\otimes\mathbb R^3),
\]
where $\Gc_{\rho,n}$ and $\Gc^\ast_\rho$ are associated with
$(\Qc_n,\wh Q_{\rho,n},\wh R_n)$ and
$(\Qc^\ast,\wh Q^\ast_\rho,\wh R^\ast)$, respectively.
Since the elastic energy is a continuous convex quadratic functional of
the generalized gradient, it is weakly lower semicontinuous. Therefore,
\[
E^\rho_{\mathrm{el}}
(\Qc^\ast,\wh Q^\ast_\rho,\wh R^\ast)
\leq
\liminf_{n\to\infty}
E^\rho_{\mathrm{el}}
(\Qc_n,\wh Q_{\rho,n},\wh R_n).
\]

Since $\omega\subset\mathbb R^2$ is bounded, the embedding $H^1(\omega;\Szero)
\hookrightarrow\hookrightarrow
L^4(\omega;\Szero)$
is compact. Thus, after passing to a further subsequence,
\[
\Qc_n\longrightarrow\Qc^\ast
\quad\text{strongly in }L^4(\omega;\Szero).
\]
By the continuity and quartic growth of $f_b$, the Vitali convergence
theorem gives
\[
E_{\mathrm{bulk}}(\Qc_n)
\longrightarrow
E_{\mathrm{bulk}}(\Qc^\ast).
\]

If $\gamma=1$, the continuity and quadratic growth of $\overline f_s$,
together with the strong convergence
\[
\Qc_n\longrightarrow\Qc^\ast
\quad\text{in }L^2(\omega;\Szero),\quad\text{give}\quad
E^1_{\mathrm{surf}}(\Qc_n)
\longrightarrow
E^1_{\mathrm{surf}}(\Qc^\ast).
\]
If $\gamma>1$, the limit surface energy vanishes identically. If
$0\leq\gamma<1$, the anchoring constraint is incorporated into
$H^1(\omega;\mathscr E_\gamma)$, and the limit surface energy again
vanishes on the admissible class.

Combining these facts, we obtain
\[
\GF^\gamma_\rho
(\Qc^\ast,\wh Q^\ast_\rho,\wh R^\ast)
\leq
\liminf_{n\to\infty}
\GF^\gamma_\rho
(\Qc_n,\wh Q_{\rho,n},\wh R_n)
=
\Gm^\gamma_\rho.
\]
On the other hand, since
$(\Qc^\ast,\wh Q^\ast_\rho,\wh R^\ast)\in\A^\gamma_\rho$, the definition
of $\Gm^\gamma_\rho$ gives
\[
\Gm^\gamma_\rho
\leq
\GF^\gamma_\rho
(\Qc^\ast,\wh Q^\ast_\rho,\wh R^\ast)\implies
\GF^\gamma_\rho
(\Qc^\ast,\wh Q^\ast_\rho,\wh R^\ast)
=
\Gm^\gamma_\rho,
\]
and $(\Qc^\ast,\wh Q^\ast_\rho,\wh R^\ast)$ is a minimiser of
$\GF^\gamma_\rho$ in $\A^\gamma_\rho$.
\end{proof}
	
	\begin{theorem}[Recovery sequence]
		\label{thm:reco}
		Let $(\Qc^*, \widehat{Q}^*_\rho, \widehat{R}^*) \in \A^\gamma_\rho$. There exist a sequence $\{Q^\ast_{h,\e}\}_{h,\e}\subset H^1(\O_h;\Szero)$ such that 
		\begin{equation}
			\label{th:recM}
			\begin{aligned}
				\Peh(Q^\ast_{h,\e}) &\to \Qc^\ast,\quad &&\text{strongly in $L^p(\o\X Y;\Szero)$, for $2\leq p\leq 4$},\\
				\Te({Q^\ast_{h,\varepsilon}}_{|x_3=\pm h/2}) &\to \Qc^\ast 
				\quad&&\text{strongly in }L^2(\omega\times Y';\Szero),\\
				\Pi_{h,\varepsilon}(\nabla Q^\ast_{h,\varepsilon}) 
				&\to \Gc^\ast_\rho 
				\quad &&\text{strongly in } L^2(\omega \times Y; \Szero \otimes \mathbb{R}^3),
			\end{aligned}
		\end{equation}
		where $\Gc^\ast_\rho$ is defined from $(\Qc^\ast,\wh Q^\ast_\rho,\wh R^\ast)\in \A^\gamma_\rho$ as in Lemma \ref{prop:admissible-correctors}. Moreover, we have
		\begin{equation}
			\label{eq:limsupR}
			\limsup_{(h,\e)\to(0,0)}{\mhe\over h}\leq 	\limsup_{(h,\e)\to(0,0)}{\Fhe(Q^\ast_{h,\e})\over h} = \GF^\gamma_\rho(\Qc^\ast,\wh Q^\ast_\rho,\wh R^\ast),
		\end{equation}
		where $F^\gamma_\rho$ is defined in \cref{eq:limE}.	
	\end{theorem}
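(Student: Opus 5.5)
First reduce to smooth data by density, then build an explicit two-scale ansatz and pass to the limit term by term, finishing with a diagonal argument. By Lemma~\ref{lem:density-Astar} for $0\le\gamma<1$, and by standard density for $\gamma\ge1$, approximate $\Qc^*$ in $H^1(\omega)$ by maps $\Qc\in C^\infty(\overline\omega;\mathscr E_\gamma)$. Since $\omega$ is bounded and two-dimensional, this approximation also holds in $L^4$. Approximate the correctors in $\C_\rho$ by smooth functions with compact support in $x'$ and the same structural constraints (zero cell mean, zero $I$-mean, $Y'$-periodicity). In regime R2 this means approximating $\wh Q$ and $\wh R$ separately and setting $\wh Q_\rho=\wh Q+\rho\wh R$.

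The functional $\GF^\gamma_\rho$ is continuous along these approximations. The elastic part is quadratic in $\Gc_\rho$, the bulk part is continuous in $L^4$, and $\bar f_s$ is continuous with quadratic growth. Hence a diagonal argument (Attouch's lemma) reduces the construction to smooth triples. The strong convergences \cref{th:recM} are metrisable, so they survive the diagonalisation.

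For smooth data, set $\rho_{h,\e}:=h/\e$ and define the ansatz regime by regime:
\begin{align*}
\text{(R2)}\quad & Q_{h,\e}(x)=\Qc(x')+\e\,\wh Q\bigl(x',\tfrac{x'}{\e}\bigr)+h\,\wh R\bigl(x',\tfrac{x'}{\e},\tfrac{x_3}{h}\bigr),\\
\text{(R3)}\quad & Q_{h,\e}(x)=\Qc(x')+\e\,\wh Q_0\bigl(x',\tfrac{x'}{\e}\bigr)+h\,\wh R\bigl(x',\tfrac{x'}{\e},\tfrac{x_3}{h}\bigr),\\
\text{(R1)}\quad & Q_{h,\e}(x)=\Qc(x')+\e\,\wh Q_\infty\bigl(x',\tfrac{x'}{\e},\tfrac{x_3}{h}\bigr)+h\,\wh R\bigl(x',\tfrac{x_3}{h}\bigr).
\end{align*}
Differentiating the R2 ansatz gives the in-plane gradient
\[
\nabla'\Qc+\nabla_{y'}\wh Q+\tfrac h\e\nabla_{y'}\wh R+O(\e+h),
\]
and the transverse derivative $\partial_{y_3}\wh R+O(h)$.

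After unfolding, each gradient converges strongly to the corresponding limit:
\begin{itemize}
\item in R2, $\rho_{h,\e}\to\rho$ gives $\nabla'\Qc+\nabla_{y'}(\wh Q+\rho\wh R)$;
\item in R3, the term $\tfrac h\e\nabla_{y'}\wh R\to0$, but only in $L^\infty$ with a bounded $y'$-gradient, so $\wh R$ must be smooth in $y'$;
\item in R1, $\partial_{x_3}(\e\wh Q_\infty)=\tfrac\e h\partial_{y_3}\wh Q_\infty\to0$.
\end{itemize}
The errors come from oscillating smooth functions: unfolding a function $g(x',x'/\e,x_3/h)$ gives $g(\e[x'/\e]+\e y',y)$, which converges uniformly to $g(x',y)$. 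Compact support in $x'$ removes any boundary-strip contribution. Hence $\Peh(\nabla Q_{h,\e})\to\Gc_\rho$ strongly in $L^2$, and also $\Peh(Q_{h,\e})\to\Qc$ uniformly, hence in every $L^p$ with $p\le4$. For the traces, the correctors are $O(\e+h)$ in $L^\infty$, giving \cref{th:recM}$_2$.

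The energy passes to the limit term by term:
\begin{itemize}
\item \textbf{Elastic.} Strong convergence of the unfolded gradient gives the elastic limit. The strip term is bounded by $C|\Lambda_\e|$, since the gradients are uniformly bounded.
\item \textbf{Bulk.} Uniform convergence handles the bulk term.
\item \textbf{Surface, $\gamma=1$.} The unfolded traces converge strongly, and $\operatorname{dist}$ is $1$-Lipschitz, as in \cref{eq:surf-int-liminf}; the strip contribution is $O(|\Lambda_\e|)$. This yields $\int_\omega\bar f_s(\Qc)$.
\item \textbf{Surface, $\gamma>1$.} The term is $O(h^{\gamma-1})\to0$.
\end{itemize}

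The main obstacle is the anchoring-dominated case $0\le\gamma<1$. There the surface term is multiplied by $h^{\gamma-1}\to\infty$, while the trace of the ansatz differs from $\Qc\in\mathscr A$ by $\e\wh Q+h\wh R(\cdot,\pm\frac12)$. Since $\mathscr A$ may be nonconvex, this perturbation need not keep the trace in $\mathscr A$. The cost is of order
\[
h^{\gamma-1}\bigl(\e^2+h^2\bigr).
\]

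The term $h^{\gamma-1}h^2=h^{1+\gamma}\to0$ is harmless. The term $h^{\gamma-1}\e^2$ is harmless when $\rho>0$, since then $\e\lesssim h$. It is also harmless when $\rho=0$ under \cref{eq:subcritical-scale}, because $\e^2/h\to0$ implies $h^{\gamma-1}\e^2\le\e^2/h\to0$ (as $h^\gamma\le1$ for small $h$). This is exactly why that hypothesis is imposed. I would check this estimate carefully. As a safer alternative, I would multiply the correctors by a cutoff in $x_3$ that vanishes in a layer of width $\delta h$ near $\pm h/2$, so that the traces lie exactly in $\mathscr A$ and the surface energy is zero. The price is an $O(\delta)$ change in the elastic energy, which is then removed by a further diagonalisation in $\delta\to0$.

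Finally, \cref{eq:limsupR} follows because $\mhe\le\Fhe(Q^*_{h,\e})$.
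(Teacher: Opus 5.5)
Your main argument is correct and is essentially the paper's proof. It uses the same density reduction to smooth data followed by diagonalisation, the same ansatz $\Qc+\e\wh Q+h\wh R$ (with $\wh Q_\infty$ depending on $x_3/h$ and $\wh R$ independent of $y'$ when $\rho=\infty$), the same unfolded-gradient limits and term-by-term energy passage, and the same surface bound $h^{\gamma-1}(h^2+\e^2)$ for $0\le\gamma<1$, which is exactly where \cref{eq:subcritical-scale} enters when $\rho=0$.

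Drop the cutoff fallback, however, because it does not work. Cutting off $h\wh R$ by $\theta_\delta(x_3/h)$ in a layer of width $\delta h$ produces, after unfolding, the transverse term $\theta_\delta'(y_3)\wh R$, with $|\theta_\delta'|\sim\delta^{-1}$ on a set of measure $\sim\delta$. This costs order $\delta^{-1}$, not $\delta$, since $\wh R$ generally has nonzero trace at $y_3=\pm\frac12$. Cutting off $\e\wh Q$ produces $\frac{\e}{h}\theta_\delta'(y_3)\wh Q$, which diverges precisely in the regime $\rho=0$ that the fallback was meant to rescue.
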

	\begin{proof}
		It suffices to show \cref{eq:limsupR} for $(\Qc^\ast,\wh Q_\rho^\ast,\wh R^\ast)\in \B^\gamma_\rho$, where
		$$\B^\gamma_\rho=\left\{\begin{aligned}
			&C^\infty(\wo{\o};\Szero)\X\D_\rho,\quad &&\text{for $\gamma\geq1$},\\
			&C^\infty(\wo{\o};\mathscr{A})\X\D_\rho,\quad 	&&\text{for $0\leq \gamma<1$},
		\end{aligned}\right.$$ 
		For each regime, denote by $\D_\rho$ the smooth dense subspace of microscopic $Q$-tensor spaces $\C_\rho$ from the Lemma \ref{lem:dense-subspaces}.
		Since $\B^\gamma_\rho$ is dense 
		in each component of $\A^\gamma_\rho$ as 		$C^\infty(\bar\omega;\mathscr{A})$ (resp.~$C^\infty(\o;\Szero)$) is dense in $H^1(\omega;\mathscr{A})$ (resp.~$H^1(\o;\Szero)$)
		by Lemma~\ref{lem:density-Astar}, for every $\eta > 0$ there exists smooth data
		$(\Qc^\eta, \widehat{Q}^\eta_\rho, \widehat{R}^\eta)$ approximating 
		$(\Qc^*, \widehat{Q}^*_\rho, \widehat{R}^*)$ in $\A^\gamma_\rho$ such that
		\[
		\bigl|\GF^\gamma_\rho(\Qc^\eta, \widehat{Q}^\eta_\rho, \widehat{R}^\eta) 
		- \GF^\gamma_\rho(\Qc^*, \widehat{Q}^*_\rho, \widehat{R}^*)\bigr| \leq \eta.
		\]
		It therefore suffices to prove~\cref{eq:limsupR} for smooth data and conclude by a 
		diagonal argument in $\eta \to 0$. In what follows we drop the superscript $\eta$ 
		and work with fixed smooth $(\Qc^\ast, \widehat{Q}^\ast_\rho, \widehat{R}^\ast)$.

		\noindent\textbf{Step 1: Construction of the recovery sequence.}
		We construct an explicit recovery sequence 
		$\{Q^\ast_{h,\varepsilon}\}_{h,\varepsilon} \subset H^1(\Omega_h; \Szero)$
		in each regime. Recall that $x = (x', x_3) \in \Omega_h = \omega \times hI$.
		
		We treat the cases  $\varrho \in [0,\infty)$ and $\varrho = \infty$ separately.
		
		\noindent\textit{Regime $\varrho \in [0,\infty)$.}
		Here $\widehat{Q}^\ast_\rho = \widehat{Q}^\ast + \rho\widehat{R}^\ast$, 
		where $\widehat{Q}^\ast = \widehat{Q}^\ast(x', y')$ is $y_3$-independent and 
		$\widehat{R}^\ast = \widehat{R}^\ast(x', y', y_3)$ depends on the full cell. Define
		\begin{equation}
			\label{eq:recovery-rho}
			Q^\ast_{h,\varepsilon}(x', x_3) 
			:= \Qc^\ast(x') 
			+ h\,\widehat{R}^\ast\!\left(x', \frac{x'}{\varepsilon}, \frac{x_3}{h}\right) 
			+ \varepsilon\,\widehat{Q}^\ast\!\left(x', \frac{x'}{\varepsilon}\right).
		\end{equation}
		Since $\Qc^\ast \in C^\infty(\wo{\omega}; \Szero)$ or $C^\infty(\o;\mathscr{A})$, $\widehat{Q}^\ast \in C^\infty(\bar\omega; 
		C^\infty_{\mathrm{per}}(Y'; \Szero))$, and 
		$\widehat{R}^\ast \in C^\infty(\bar\omega; C^\infty_{\mathrm{per}}(Y; \Szero))$ 
		with $\int_I \widehat{R}^\ast\,dy_3 = 0$, the field $Q^\ast_{h,\varepsilon}$ is admissible in 
		$H^1(\Omega_h; \Szero)$.
		
		Using the derivative identities~\cref{eq:Peh-grad-yprime} of Proposition~\ref{prop:unfolding-basic}, we have the following expression for the in-plane gradient
		\begin{equation*}
			\Pi_{h,\varepsilon}(\nabla' Q^\ast_{h,\varepsilon})(x', y)
			= \nabla' \Qc^\ast(x')+h\,\nabla'\widehat{R}^\ast\!\left(x', y\right)
			+ {h\over\e}\,\nabla_{y'}\! \widehat{R}^\ast\!(x', y) 
			+ \e\nabla'\wh Q^\ast(x',y)+\,\nabla_{y'} \widehat{Q}^\ast\!(x', y'),
		\end{equation*}
		which give
		\begin{equation}
			\label{eq:inplane-cv}
			\Pi_{h,\varepsilon}(\nabla' Q^\ast_{h,\varepsilon}) 
			\;\longrightarrow\; 
			\nabla' \Qc^\ast + \nabla_{y'}\widehat{Q}^\ast + \rho\,\nabla_{y'}\widehat{R}^\ast 
			\quad \text{strongly in } L^2(\omega \times Y; \Szero \otimes \mathbb{R}^2).
		\end{equation}
		Differentiating $Q^\ast_{h,\varepsilon}$ in $x_3$:
		\[
		\Peh(\partial_{x_3} Q^\ast_{h,\varepsilon})(x', y) 
		= \partial_{y_3}\widehat{R}^\ast(x', y).
		\]
		Since $\widehat{Q}^\ast$ is $y_3$-independent, $\partial_{y_3}\widehat{Q}^\ast = 0$. After unfolding,
		\begin{equation}
			\label{eq:thickness-cv}
			\Pi_{h,\varepsilon}(\partial_{x_3} Q^\ast_{h,\varepsilon})
			= (\partial_{y_3}\widehat{R}^\ast)(x', y)
			\;\longrightarrow\; 
			\partial_{y_3}\widehat{R}^\ast 
			\quad \text{strongly in } L^2(\omega \times Y; \Szero),
		\end{equation}
		where the convergence is in fact an equality for each $(h,\varepsilon)$, since after 
		unfolding the oscillatory argument $(x'/\varepsilon, x_3/h)$ maps exactly to $(y', y_3)$.
		
		Combining~\cref{eq:inplane-cv} and~\cref{eq:thickness-cv},
		\begin{equation}
			\label{eq:strong-grad}
			\Pi_{h,\varepsilon}(\nabla Q^\ast_{h,\varepsilon}) 
			\;\longrightarrow\; 
			(\nabla' \Qc^\ast + \nabla_{y'}\widehat{Q}_\rho^\ast , 		\partial_{y_3}\widehat{R}^\ast )=\Gc^\ast_\rho 
			\quad \text{strongly in } L^2(\omega \times Y; \Szero \otimes \mathbb{R}^3).
		\end{equation}
		
		\textit{Regime $\varrho = \infty$.}
		Here $\widehat{Q}^\ast_\infty \in C^\infty(\bar\omega \times I; C^\infty_{\mathrm{per}}(Y'; \Szero))$ 
		and $\widehat{R}^\ast$ is $y'$-independent. Define
		\begin{equation}
			\label{eq:recovery-infty}
			Q^\ast_{h,\varepsilon}(x', x_3) 
			:= \Qc^\ast(x') 
			+ h\,\widehat{R}^\ast\!\left(x', \frac{x_3}{h}\right) 
			+ \varepsilon\,\widehat{Q}_\infty^\ast\!\left(x', \frac{x'}{\varepsilon}, \frac{x_3}{h}\right),
		\end{equation}
		which give
		$$ 	
		\begin{aligned}
			\Pi_{h,\varepsilon}(\nabla' Q^\ast_{h,\varepsilon})(x', y)
			&= \nabla' \Qc^\ast(x')+h\nabla'\wh R^\ast(x',y_3)+\e\nabla'\widehat{Q}_\infty^\ast\!\left(x', y\right)
			+ \nabla_{y'}\widehat{Q}_\infty^\ast\!(x', y) ,\\
			\Peh(\partial_{x_3} Q^\ast_{h,\varepsilon})(x', y) 
			&= \partial_{y_3}\widehat{R}^\ast(x', y_3)+{\e\over h}\p_{y_3}\wh Q^\ast_\infty(x',y).
		\end{aligned}$$
		Since $\rho=\infty$, we have ${\e\over h}\to 0$, we get the convergences 
		\begin{align*}
			\Pi_{h,\varepsilon}(\nabla' Q^\ast_{h,\varepsilon}) 
			&\;\longrightarrow\; 
			\nabla' \Qc^\ast + \,\nabla_{y'}\widehat{Q}_\infty^\ast 
			\quad \text{strongly in } L^2(\omega \times Y; \Szero \otimes \mathbb{R}^2),\\
			\Pi_{h,\varepsilon}(\partial_{x_3} Q^\ast_{h,\varepsilon})
			&\;\longrightarrow\; 
			\partial_{y_3}\widehat{R}^\ast 
			\quad \text{strongly in } L^2(\omega \times Y; \Szero),
		\end{align*}
		combined
		\begin{equation}\label{Lims1}
			\Pi_{h,\varepsilon}(\nabla Q^\ast_{h,\varepsilon}) 
			\;\longrightarrow\; 
			(\nabla' \Qc^\ast + \nabla_{y'}\widehat{Q}_\infty^\ast , 		\partial_{y_3}\widehat{R}^\ast )=\Gc^\ast_\infty
			\quad \text{strongly in } L^2(\omega \times Y; \Szero \otimes \mathbb{R}^3).
		\end{equation}
		From the above definitions, we directly have the following strong convergences (for $\rho\in[0,\infty]$)
		\begin{equation}\label{Lims2}
			\begin{aligned}
				\Peh(Q^\ast_{h,\e}) &\to \Qc^\ast,\quad \text{strongly in $L^p(\o\X Y;\Szero)$, for $2\leq p\leq 4$},\\
				\Te({Q^\ast_{h,\varepsilon}}_{|x_3=\pm h/2}) &\to \Qc^\ast 
				\quad\text{strongly in }L^2(\omega\times Y';\Szero).
			\end{aligned}
		\end{equation}
		
		\textbf{Step 2: Convergence of the energy.}
		
		\textbf{Elastic term.}
		Split into interior and boundary strip,
		\[
		\frac{1}{h}E^{h,\varepsilon}_{\mathrm{el}}(Q^*_{h,\varepsilon})
		= \frac{1}{2h}\!\int_{\wh\Omega_{h,\varepsilon}}\!\!
		\bA\!\left(\frac{x'}{\varepsilon},\frac{x_3}{h}\right)\nabla Q^*_{h,\varepsilon}:\nabla Q^*_{h,\varepsilon}\,dx
		+ \frac{1}{2h}\!\int_{\Omega_h\setminus\wh\Omega_{h,\varepsilon}}\!\!
		\bA\!\left(\frac{x'}{\varepsilon},\frac{x_3}{h}\right)\nabla Q^*_{h,\varepsilon}:\nabla Q^*_{h,\varepsilon}\,dx.
		\]
		On the complete cells, the $L^2$-identity~\cref{eq:Peh-Lp-exact} and the unfolding 
		of the coefficient~\cref{eq:Peh-integral-exact} give
		\[
		\frac{1}{2h}\!\int_{\wh\Omega_{h,\varepsilon}}\!\!
		\bA\!\left(\frac{x'}{\varepsilon},\frac{x_3}{h}\right)\nabla Q^*_{h,\varepsilon}:\nabla Q^*_{h,\varepsilon}\,dx
		= \frac{1}{2}\!\int_{\omega\times Y}\!\bA(y)\,\Pi_{h,\varepsilon}(\nabla Q^*_{h,\varepsilon})
		:\Pi_{h,\varepsilon}(\nabla Q^*_{h,\varepsilon})\,dy\,dx'.
		\]
		Since $\bA\in L^\infty(Y;\mathcal{L}(\mathbb{T},\mathbb{T}))$ and 
		$\Pi_{h,\varepsilon}(\nabla Q^*_{h,\varepsilon}) \to \Gc^*_\rho$ strongly in 
		$L^2(\omega\times Y;\Szero\otimes\mathbb{R}^3)$ by~\cref{eq:strong-grad}, the 
		integrand on the right converges as
		\[
		\frac{1}{2}\!\int_{\omega\times Y}\!\bA(y)\,\Pi_{h,\varepsilon}(\nabla Q^*_{h,\varepsilon})
		:\Pi_{h,\varepsilon}(\nabla Q^*_{h,\varepsilon})\,dy\,dx'
		\;\longrightarrow\;
		\frac{1}{2}\!\int_{\omega\times Y}\!\bA(y)\,\Gc^*_\rho:\Gc^*_\rho\,dy\,dx'.
		\]
		For the strip remainder, the smoothness of the correctors gives 
		$|\nabla Q^*_{h,\varepsilon}|\leq M_1$ uniformly, so using the upper bound \cref{eq:elastic-coercive}, we obtain
		\[
		\frac{1}{2h}\!\int_{\Omega_h\setminus\wh\Omega_{h,\varepsilon}}\!\!
		\bA\!\left(\frac{x'}{\varepsilon},\frac{x_3}{h}\right)\nabla Q^*_{h,\varepsilon}:\nabla Q^*_{h,\varepsilon}\,dx
		\;\leq\; C\,|\Lambda_\varepsilon|\;\longrightarrow\;0.
		\]
		Combining,
		\[
		\frac{1}{h}E^{h,\varepsilon}_{\mathrm{el}}(Q^*_{h,\varepsilon})
		\;\longrightarrow\;
		\frac{1}{2}\!\int_{\omega\times Y}\!\bA(y)\,\Gc^*_\rho:\Gc^*_\rho\,dy\,dx'
		= E^\rho_{\mathrm{el}}(\Qc^*,\widehat{Q}^*_\rho,\widehat{R}^*).
		\]
		\textbf{Bulk term.}
		The $L^\infty$-bound 
		$\|Q^*_{h,\varepsilon}\|_\infty\leq M$ together with the growth 
		bound~\cref{B3} gives $|f_b(Q^*_{h,\varepsilon})|\leq c_3(1+M^4)$, so
		\[
		\frac{1}{h}\int_{\Omega_h\setminus\wh\Omega_{h,\varepsilon}}|f_b(Q^*_{h,\varepsilon})|\,dx
		\;\leq\; c_3(1+M^4)\,|\Lambda_\varepsilon|\;\longrightarrow\;0.
		\]
		On the interior, by the unfolding identity~\cref{eq:Peh-integral-remainder} 
		and the polynomial property~\cref{eq:Peh-bulk-potential},
		\[
		\frac{1}{h}\int_{\wh\Omega_{h,\varepsilon}}f_b(Q^*_{h,\varepsilon})\,dx
		= \int_{\omega\times Y}f_b\bigl(\Pi_{h,\varepsilon}(Q^*_{h,\varepsilon})\bigr)\,dx'\,dy.
		\]
		By~\cref{Lims2},
		\[
		\Pi_{h,\varepsilon}(Q^*_{h,\varepsilon})\to\Qc^*
		\quad\text{strongly in }L^2(\omega\times Y;\Szero).
		\]
		Moreover, the family is uniformly bounded in \(L^\infty\). Since \(f_b\) is a
		polynomial, it is Lipschitz on bounded subsets of \(\Szero\). Hence there
		exists \(C_M>0\) such that
		\[
		|f_b(P)-f_b(Q)|\le C_M|P-Q|
		\qquad\text{whenever } |P|,|Q|\le M.
		\]
		Therefore
		\[
		\begin{aligned}
			\left\|
			f_b\bigl(\Pi_{h,\varepsilon}(Q^*_{h,\varepsilon})\bigr)
			-
			f_b(\Qc^*)
			\right\|_{L^1(\omega\times Y)}
			&\le
			C_M
			\left\|
			\Pi_{h,\varepsilon}(Q^*_{h,\varepsilon})-\Qc^*
			\right\|_{L^1(\omega\times Y)}
			\\
			&\le
			C_M|\omega\times Y|^{1/2}
			\left\|
			\Pi_{h,\varepsilon}(Q^*_{h,\varepsilon})-\Qc^*
			\right\|_{L^2(\omega\times Y)}
			\to0.
		\end{aligned}
		\]
		Thus
		\[
		\int_{\omega\times Y}
		f_b\bigl(\Pi_{h,\varepsilon}(Q^*_{h,\varepsilon})\bigr)\,dx'\,dy
		\to
		\int_{\omega\times Y}f_b(\Qc^*)\,dx'\,dy.
		\]
		Hence
		\[
		\frac{1}{h}E^{h,\varepsilon}_{\mathrm{bulk}}(Q^*_{h,\varepsilon})
		\;\longrightarrow\;
		\int_{\omega\times Y}f_b(\Qc^*)\,dx'\,dy
		= \int_\omega f_b(\Qc^*(x'))\,dx'
		= E_{\mathrm{bulk}}(\Qc^*).
		\]
\noindent\textbf{Surface term for $\gamma\geq1$.}
For $\sigma\in\{-1,1\}$, set
\[
W_\sigma(y')
:=
W\left(y',\frac{\sigma}{2},\sigma e_3\right),
\qquad
\mathcal A_\sigma(y')
:=
\mathcal A\left(y',\frac{\sigma}{2},\sigma e_3\right),
\]
and denote the traces by
\[
Q^{*,\sigma}_{h,\varepsilon}(x')
:=
Q^\ast_{h,\varepsilon}
\left(x',\frac{\sigma h}{2}\right).
\]
Then
\begin{equation}
	\label{eq:recovery-surface-scaled}
	\frac1h
	E_{\mathrm{surf}}^{h,\varepsilon}
	(Q^\ast_{h,\varepsilon})
	=
	h^{\gamma-1}
	\sum_{\sigma\in\{-1,1\}}
	\int_\omega
	W_\sigma\left(\frac{x'}{\varepsilon}\right)
	\operatorname{dist}^2
	\left(
	Q^{*,\sigma}_{h,\varepsilon}(x'),
	\mathcal A_\sigma\left(\frac{x'}{\varepsilon}\right)
	\right)
	\,\mathrm dx'.
\end{equation}

By the quadratic growth of the surface density and the uniform
$L^\infty$-bound on the traces,
\begin{align*}
	&\int_{\Lambda_\varepsilon}
	W_\sigma\left(\frac{x'}{\varepsilon}\right)
	\operatorname{dist}^2
	\left(
	Q^{*,\sigma}_{h,\varepsilon}(x'),
	\mathcal A_\sigma\left(\frac{x'}{\varepsilon}\right)
	\right)
	\,\mathrm dx'
	\leq
	C_s(1+M^2)|\Lambda_\varepsilon|
	\longrightarrow0.
\end{align*}
Recall from Proposition \ref{prop:liminf}, $\chi_\varepsilon=
\mathbf 1_{\widehat\omega_\varepsilon}$.
The unfolding identity on the complete cells gives
\begin{align*}
	&\int_{\widehat\omega_\varepsilon}
	W_\sigma\left(\frac{x'}{\varepsilon}\right)
	\operatorname{dist}^2
	\left(
	Q^{*,\sigma}_{h,\varepsilon}(x'),
	\mathcal A_\sigma\left(\frac{x'}{\varepsilon}\right)
	\right)
	\,\mathrm dx'
	=
	\int_{\omega\times Y'}
	\chi_\varepsilon(x')
	W_\sigma(y')
	\operatorname{dist}^2
	\left(
	\mathcal T_\varepsilon
	(Q^{*,\sigma}_{h,\varepsilon})(x',y'),
	\mathcal A_\sigma(y')
	\right)
	\,\mathrm dx'\,\mathrm dy'.
\end{align*}
%By~\cref{Lims2},
%\[
%\mathcal T_\varepsilon
%(Q^{*,\sigma}_{h,\varepsilon})
%\longrightarrow
%\Qc^\ast
%\quad\text{strongly in }
%L^2(\omega\times Y';\Szero).
%\]
Since the distance from a nonempty closed set is $1$-Lipschitz, using the strong convergence \cref{Lims2}$_2$, we obtain
\[
\operatorname{dist}
\left(
\mathcal T_\varepsilon
(Q^{*,\sigma}_{h,\varepsilon}),
\mathcal A_\sigma(y')
\right)
\longrightarrow
\operatorname{dist}
\bigl(\Qc^\ast,\mathcal A_\sigma(y')\bigr)
\]
strongly in $L^2(\omega\times Y')$. The boundedness of $W_\sigma$ yield
\begin{align*}
	&\int_{\widehat\omega_\varepsilon}
	W_\sigma\left(\frac{x'}{\varepsilon}\right)
	\operatorname{dist}^2
	\left(
	Q^{*,\sigma}_{h,\varepsilon}(x'),
	\mathcal A_\sigma\left(\frac{x'}{\varepsilon}\right)
	\right)
	\,\mathrm dx'
	\longrightarrow
	\int_{\omega\times Y'}
	W_\sigma(y')
	\operatorname{dist}^2
	\bigl(\Qc^\ast(x'),\mathcal A_\sigma(y')\bigr)
	\,\mathrm dx'\,\mathrm dy'.
\end{align*}
Combining the complete-cell and boundary-strip contributions and
summing over $\sigma\in\{-1,1\}$, we obtain
\begin{equation}
	\label{eq:recovery-surface-unscaled-limit}
	\begin{aligned}
		&\sum_{\sigma\in\{-1,1\}}
		\int_\omega
		W_\sigma\left(\frac{x'}{\varepsilon}\right)
		\operatorname{dist}^2
		\left(
		Q^{*,\sigma}_{h,\varepsilon}(x'),
		\mathcal A_\sigma\left(\frac{x'}{\varepsilon}\right)
		\right)
		\,\mathrm dx'
		\longrightarrow
		\int_\omega
		\overline f_s(\Qc^\ast(x'))
		\,\mathrm dx'.
	\end{aligned}
\end{equation}

If $\gamma=1$, \cref{eq:recovery-surface-scaled} and
\cref{eq:recovery-surface-unscaled-limit} give
\[
\frac1h
E_{\mathrm{surf}}^{h,\varepsilon}
(Q^\ast_{h,\varepsilon})
\longrightarrow
\int_\omega
\overline f_s(\Qc^\ast(x'))
\,\mathrm dx'
=
E^1_{\mathrm{surf}}(\Qc^\ast).
\]
If $\gamma>1$, the integral on the right-hand side of
\cref{eq:recovery-surface-scaled} is uniformly bounded. Hence, the surface energy limit vanishes.
%\[
%\frac1h
%E_{\mathrm{surf}}^{h,\varepsilon}
%(Q^\ast_{h,\varepsilon})
%\leq
%Ch^{\gamma-1}
%\longrightarrow0
%=
%E^\gamma_{\mathrm{surf}}(\Qc^\ast).
%\]

\textbf{Surface term ($0\leq\gamma<1$).}
Recall that, by Assumption~\ref{asm:compatible}, the sets $\mathscr A_\sigma
:=
\mathscr A\left(\frac{\sigma}{2},\sigma e_3\right),
\qquad \sigma\in\{-1,1\}$,
are independent of $y'$, and $\mathscr A=\mathscr A_+\cap\mathscr A_-$.
Since $Q^\ast(x')\in\mathscr A$ for every $x'\in\overline\omega$, we have
\[
Q^\ast(x')\in\mathscr A_\sigma
\qquad
\text{for every }x'\in\overline\omega
\text{ and }\sigma\in\{-1,1\}.
\]
Hence, by the $1$-Lipschitz property of the distance function,
\[
\operatorname{dist}
\left(
Q^\ast_{h,\varepsilon,\sigma}(x'),
\mathscr A_\sigma
\right)
\leq
\left|
Q^\ast_{h,\varepsilon,\sigma}(x')-Q^\ast(x')
\right|,\quad\text{where}\quad
Q^\ast_{h,\varepsilon,\sigma}(x')
:=
Q^\ast_{h,\varepsilon}\left(x',\frac{\sigma h}{2}\right).
\]

Suppose first that $\rho\in[0,\infty)$. From \eqref{eq:recovery-rho},
\[
\begin{aligned}
	Q^\ast_{h,\varepsilon,\sigma}(x')-Q^\ast(x')
	={}&
	h\,\widehat R^\ast
	\left(
	x',\frac{x'}{\varepsilon},\frac{\sigma}{2}
	\right)
	+
	\varepsilon\,\widehat Q^\ast
	\left(
	x',\frac{x'}{\varepsilon}
	\right).
\end{aligned}
\]
The smoothness and periodicity of the correctors therefore give
\[
\sup_{x'\in\omega}
\left|
Q^\ast_{h,\varepsilon,\sigma}(x')-Q^\ast(x')
\right|
\leq C(h+\varepsilon),
\qquad \sigma\in\{-1,1\}.
\]
Consequently,
\[
\begin{aligned}
	\frac{1}{h}
	E^{\rm surf}_{h,\varepsilon}
	(Q^\ast_{h,\varepsilon})
	&=
	h^{\gamma-1}
	\sum_{\sigma\in\{-1,1\}}
	\int_\omega
	W_\sigma\left(\frac{x'}{\varepsilon}\right)
	\operatorname{dist}^2
	\left(
	Q^\ast_{h,\varepsilon,\sigma}(x'),
	\mathscr A_\sigma
	\right)
	\,dx'
	\leq
	C\left(
	h^{\gamma+1}
	+
	\frac{\varepsilon^2}{h^{1-\gamma}}
	\right).
\end{aligned}
\]
If $\rho\in(0,\infty)$, then
$\varepsilon\leq Ch$ for all sufficiently small $h$ and $\varepsilon$, and If $\rho=0$, with the additional scale condition \cref{eq:subcritical-scale}, we obtain
\[
\frac{1}{h}
E^{\rm surf}_{h,\varepsilon}
(Q^\ast_{h,\varepsilon})
\longrightarrow0.
\]
For $\rho=\infty$ ($\varepsilon/h\to0$), from \eqref{eq:recovery-infty},
%\[
%\begin{aligned}
%	Q^\ast_{h,\varepsilon,\sigma}(x')-Q^\ast(x')
%	={}&
%	h\,\widehat R^\ast
%	\left(x',\frac{\sigma}{2}\right)
%	+
%	\varepsilon\,\widehat Q^\ast_\infty
%	\left(
%	x',\frac{x'}{\varepsilon},\frac{\sigma}{2}
%	\right).
%\end{aligned}
%\]
%Since $\varepsilon/h\to0$, 
the smoothness of the correctors yields
\[
\sup_{x'\in\omega}
\left|
Q^\ast_{h,\varepsilon,\sigma}(x')-Q^\ast(x')
\right|
\leq C(h+\varepsilon)\leq Ch
\]
for all sufficiently small $h$ and $\varepsilon$. Therefore,
\[
\frac{1}{h}
E^{\rm surf}_{h,\varepsilon}
(Q^\ast_{h,\varepsilon})
\leq
C h^{\gamma-1}h^2
=
C h^{\gamma+1}
\longrightarrow0.
\]
Thus, in every regime $\rho\in[0,\infty]$, the rescaled surface enrgy vanishes.

\noindent\textbf{Conclusion.}
Combining the convergence of the elastic, bulk, and surface terms, we obtain
\[
\frac1h
\Fhe(Q^\ast_{h,\varepsilon})
\longrightarrow
E^\rho_{\mathrm{el}}
(\Qc^\ast,\wh Q^\ast_\rho,\wh R^\ast)
+
E_{\mathrm{bulk}}(\Qc^\ast)
+
E^\gamma_{\mathrm{surf}}(\Qc^\ast)
=
\GF^\gamma_\rho
(\Qc^\ast,\wh Q^\ast_\rho,\wh R^\ast).
\]
Since $\Gm_{h,\varepsilon}
\leq
\Fhe(Q^\ast_{h,\varepsilon})$,
we obtain~\cref{eq:limsupR}, which completes the proof.
%
%\noindent\textbf{Conclusion.}
%Summing the three contributions,
%\[
%\frac{1}{h}\Fhe(Q^*_{h,\varepsilon})
%\;\longrightarrow\;
%E^\rho_{\mathrm{el}}(\Qc^*,\widehat{Q}^*_\rho,\widehat{R}^*)
%+ E_{\mathrm{bulk}}(\Qc^*)
%+ E^\gamma_{\mathrm{surf}}(\Qc^*)
%= \GF^\gamma_\rho(\Qc^*,\widehat{Q}^*_\rho,\widehat{R}^*).
%\]
%Since $\Gm_{h,\e}\leq \Fhe(Q^\ast_{h,\e})$, we obtain \cref{eq:limsupR}. This completes the proof.
\end{proof}
		
		\section{Proof of the main results}
	\label{sec:proofs}
	
		We now collect the compactness, lower-bound, recovery-sequence, and minimisation
	consequences from Lemmas \ref{lem:71}--\ref{lem:trace-bound}, Proposition \ref{prop:liminf}, and Theorem \ref{thm:reco} into a single frame to present the proof of the main sequential $\Gamma$-convergence statement of Theorem \ref{thm:gamma}. 
	
	\subsection{Derivation of the two-scale limit energy}		
%	\paragraph{Sequential $\Gamma$-convergence.}
%	\begin{proof}[Proof of Theorem \ref{thm:gamma}]
%		The proof is a direct consequence of 
%		
%			 \textbf{(i) Compactness.} Theorem \ref{prop:existence} and Lemmas \ref{lem:71}--\ref{lem:trace-bound}.
%			 
%			\textbf{(ii) Liminf inequality.} Proposition \ref{prop:liminf}.
%			
%			\textbf{(iii) Limsup inequality.} Theorem \ref{thm:reco} with the strong convergences \cref{th:recM}, more specifically \cref{eq:strong-grad}, \cref{Lims1}--\cref{Lims2}.
%			
%			\textbf{(iv) Convergence of minima and minimisers.} 	We show that	\begin{equation*}
%				\Gm^\gamma_\rho= \GF^\gamma_\rho(\Qc,\wh Q_\rho,\wh R)= \lim_{(h,\e)\to(0,0)}{\mhe\over h}=\lim_{(h,\e)\to(0,0)}{\Fhe(Q_{h,\e})\over h},
%			\end{equation*}
%			where $(\Qc,\wh Q_\rho,\wh R)\in\A^\gamma_\rho$ is a minimiser of the two-scale limit energy.
%			
%
%		Hence
%		\[
%		\GF^\gamma_\rho(\Qc,\wh Q_\rho,\wh R)
%		=\Gm^\gamma_\rho,
%		\]
%		so $(\Qc,\wh Q_\rho,\wh R)\in\A^\gamma_\rho$ is a minimiser of the two-scale limit energy $\GF^\gamma_\rho$. This completes the proof.
%	\end{proof}
	\paragraph{Sequential $\Gamma$-convergence.}
	
	\begin{proof}[Proof of Theorem~\ref{thm:gamma}]
		The sequential $\Gamma$-convergence follows from:
		\begin{enumerate}
			\item[(i)] the compactness result given by
			Lemmas~\ref{lem:71}--\ref{lem:trace-bound};
			
			\item[(ii)] the liminf inequality established in
			Proposition~\ref{prop:liminf};
			
			\item[(iii)] the recovery-sequence construction of
			Theorem~\ref{thm:reco}, together with the strong convergences
			in~\cref{th:recM}, and in particular
			\cref{eq:strong-grad,Lims1,Lims2}.
		\end{enumerate}
					The proof proceeds using a liminf-limsup argument.
		
		\noindent\textbf{Lower bound.}
		The lower bound
		\begin{equation}\label{Ginf}
			\Gm^\gamma_\rho\leq \GF^\gamma_\rho(\Qc,\wh Q_\rho,\wh R)\leq \liminf_{(h,\e)\to(0,0)}{\mhe\over h}=\liminf_{(h,\e)\to(0,0)}{\Fhe(Q_{h,\e})\over h}.
		\end{equation}
		is the content of Proposition~\ref{prop:liminf}.
		
		\noindent\textbf{Upper bound.}
		Let $(\Qc^*, \widehat{Q}^*_\rho, \widehat{R}^*) \in \A^\gamma_\rho$ be a minimiser of 
		$\GF^\gamma_\rho$, which exists by Theorem~\ref{thm:existence-limit}. Then, using Theorem \ref{thm:reco}, there exists a sequence $\{Q^\ast_{h,\e}\}_{h,\e}\subset H^1(\O_h,\Szero)$ such that
		\[
		\limsup_{(h,\varepsilon)\to(0,0)}\frac{\Gm_{h,\varepsilon}}{h}
		\;\leq\;
		\lim_{(h,\varepsilon)\to(0,0)}\frac{1}{h}\Fhe(Q^*_{h,\varepsilon})
		= \GF^\gamma_\rho(\Qc^*,\widehat{Q}^*_\rho,\widehat{R}^*)
		= \Gm^\gamma_\rho.
		\]
		Combined with the lower bound~\cref{Ginf}, this gives 
		$\lim_{(h,\varepsilon)\to(0,0)} \Gm_{h,\varepsilon}/h = \Gm^\gamma_\rho$, 
		and that $(\Qc,\wh Q_\rho,\wh R)\in\A^\gamma_\rho$ is a minimiser of $\GF^\gamma_\rho$.
		
		Finally, let $Q_{h,\varepsilon}$ be any sequence of minimisers and suppose that
		\[
		Q_{h,\varepsilon}\rightharpoonup^{\mathrm{Qhom}}(\Qc,\wh Q_\rho,\wh R)\in\A^\gamma_\rho
		\]
		along a subsequence. By parts \emph{(ii)--(iv)}, we obtain
		\[
		\GF^\gamma_\rho(\Qc,\wh Q_\rho,\wh R)
		\leq
		\liminf_{(h,\varepsilon)\to(0,0)}
		\frac{\Gm_{h,\varepsilon}}{h}
		\leq \Gm^\gamma_\rho,\quad\text{which imply}\quad
		\GF^\gamma_\rho(\Qc,\wh Q_\rho,\wh R)
		=\Gm^\gamma_\rho,
		\]
		so $(\Qc,\wh Q_\rho,\wh R)\in\A^\gamma_\rho$ is a minimiser of the two-scale limit energy $\GF^\gamma_\rho$. This completes the proof.
	\end{proof}
	
	\paragraph{Separate energy convergence and strong convergence of minimisers.}
	\begin{proof}[Proof of Theorem~\ref{cor:strong-conv-minimisers}]
		 We have 	the sequence $\{Q_{h,\varepsilon}\}_{h,\e}\subset H^1(\Omega_h;\Szero)$ satisfy
		\[
		Q_{h,\varepsilon}\rightharpoonup^{\mathrm{Qhom}}(\Qc,\wh Q_\rho,\wh R),\quad \text{and}\quad
		\lim_{(h,\varepsilon)\to(0,0)}
		\frac1h\GF_{h,\varepsilon}(Q_{h,\varepsilon})
		=
		\GF^\gamma_\rho(\Qc,\wh Q_\rho,\wh R),
		\]
		with $(\Qc,\hQ_\rho,\hR)\in\A^\gamma_\rho$.
		That is,
		\begin{equation*}
			\lim_{(h,\varepsilon)\to(0,0)}
			\Bigg[
			\frac1h E^{h,\varepsilon}_{\el}(Q_{h,\varepsilon})
			+
			\frac1h E^{h,\e}_{\bulk}(Q_{h,\varepsilon})
			+
			\frac1h E^{h,\varepsilon}_{\surf}(Q_{h,\varepsilon})
			\Bigg]
			=
			E^\rho_{\el}(\Qc,\hQ_\rho,\hR)
			+
			\Ebulk(\Qc)
			+
			E^\gamma_\surf(\Qc).
		\end{equation*}
		On the other hand, the separate lower-bound results for the elastic, bulk,
		and surface contributions give (see Proposition \ref{prop:liminf}) the liminf inequalities \cref{eq:liminfE}, \cref{eq:liminfS} and \cref{Sur01}.
		Since the sum of the three sequences converges to the sum of the three
		lower bounds, each contribution must converge separately. This proves
		\cref{eq:elastic-conv-min}--\cref{eq:surf-conv-min}.
		
		We now prove the strong convergence \cref{eq:strong-grad-min} along the original sequence of Q-tensors $\{Q_{h,\varepsilon}\}$.
		
		From Lemma~\ref{prop:admissible-correctors}, we already have 
		$\Pi_{h,\varepsilon}(\nabla Q_{h,\varepsilon}) \rightharpoonup \Gc_\rho$ weakly in 
		$L^2(\omega \times Y; \Szero \otimes \mathbb{R}^3)$.
		By the elastic term convergence \cref{eq:elastic-conv-min}, we have
		\[
		\int_{\omega \times Y} \bA(y)\,\Pi_{h,\varepsilon}(\nabla Q_{h,\varepsilon}) 
		: \Pi_{h,\varepsilon}(\nabla Q_{h,\varepsilon})\,dy\,dx'
		\;\longrightarrow\; 
		\int_{\omega \times Y} \bA(y)\,\Gc_\rho : \Gc_\rho\,dy\,dx'.
		\]
		Expanding the quadratic form using uniform ellipticity and the symmetry assumption~\ref{asm:elastic}:
		\begin{align*}
			C\,\|\Pi_{h,\varepsilon}(\nabla Q_{h,\varepsilon}) - \Gc_\rho\|^2_{L^2(\omega\times Y)}
			&\leq 
			\int_{\omega\times Y} \bA(y)\,\bigl(\Pi_{h,\varepsilon}(\nabla Q_{h,\varepsilon}) - \Gc_\rho\bigr) 
			: \bigl(\Pi_{h,\varepsilon}(\nabla Q_{h,\varepsilon}) - \Gc_\rho\bigr)\,dy\,dx' \\
			&= \int_{\omega\times Y} \bA(y)\,\Pi_{h,\varepsilon}(\nabla Q_{h,\varepsilon}) 
			: \Pi_{h,\varepsilon}(\nabla Q_{h,\varepsilon})\,dy\,dx' \\
			&\quad - 2\int_{\omega\times Y} \bA(y)\,\Pi_{h,\varepsilon}(\nabla Q_{h,\varepsilon}) 
			: \Gc_\rho\,dy\,dx' 
			+ \int_{\omega\times Y} \bA(y)\,\Gc_\rho : \Gc_\rho\,dy\,dx'.
		\end{align*}
		The first integral converges to $\int \bA\,\Gc_\rho:\Gc_\rho$; the second converges to 
		$2\int \bA\,\Gc_\rho:\Gc_\rho$ by weak convergence of $\Pi_{h,\varepsilon}(\nabla Q_{h,\varepsilon})$ 
		and the fact that $\bA(\cdot)\Gc_\rho \in L^2(\omega \times Y; \Szero\otimes\mathbb{R}^3)$ 
		is an admissible test function. The right-hand side therefore tends to zero, giving the strong convergence \cref{eq:strong-grad-min}.
		
		It remains to prove the strong $L^4$ convergence. By compactness, we already have the strong convergence \cref{con55} in $L^p$ with $2\leq p<4$  and the weak convergence \cref{con56} in $L^4$.
		The bulk energy convergence \cref{eq:bulk-conv-min} gives
		\[
		\int_{\omega\times Y}
		f_b(\Pi_{h,\varepsilon}Q_{h,\varepsilon})\,dy\,dx'
		\to
		\int_{\omega\times Y}
		f_b(\Qc)\,dy\,dx'.
		\]
		Using the Landau--de Gennes form \cref{eq:fb} and the convergence \cref{con55}, we have that the quadratic and cubic terms converge, which, combined the convergence of the bulk energy \cref{eq:bulk-conv-min}, gives
		\[
		\int_{\omega\times Y}
		|\Pi_{h,\varepsilon}Q_{h,\varepsilon}|^4\,dy\,dx'
		\to
		\int_{\omega\times Y}
		|\Qc|^4\,dy\,dx'.
		\]
		The convergence \cref{con56} implies
		\[
		\|\Pi_{h,\varepsilon}Q_{h,\varepsilon}\|_{L^4(\omega\times Y)}
		\to
		\|\Qc\|_{L^4(\omega\times Y)}.
		\]
		The uniform convexity of $L^4$ therefore implies the strong convergence \cref{eq:strong-L4-min}.
	\end{proof}
	
	%\section{Convergence of minima and minimisers}
	
%	\section{Derivation of the Homogenised Energy}
%	\label{sec:Gamma}
%	In this section, we derive the homogenised energy and prove the existence of minimisers when $\gamma\geq 1$ and $\rho\in[0,\infty]$. To this end, we first prove the convergence of the energy using a form of $\Gamma$-convergence, and then we give the expression of the homogenised energy using correctors and cell problems.
	
%	\subsection{Energy convergence along minimisers}
%	Below is the main energy convergence result for $\gamma\geq 1$ and $\rho\in[0,\infty]$.
%	\begin{theorem}\label{thm:mainC}
%		Under Assumptions~\ref{asm:elastic}, \ref{asm:bulk}, and \ref{asm:anchoring} with
%		$\gamma\geq 1$, for each $\rho\in[0,\infty]$, we have, up to a subsequence
%	\end{theorem}
%	\begin{proof}
%		
%		This completes the proof.
%	\end{proof}

	\subsection{Derivation of the homogenised energy}
	\label{sec:hom-energy}
	
	Having established the full energy convergence in Theorem~\ref{thm:gamma}, we now identify the
	effective two-dimensional energy. The bulk and surface terms depend only on the macroscopic
	field, while the elastic term still contains the microscopic correctors. Therefore the derivation of
	the homogenised energy reduces to a cell-wise minimisation of the two-scale elastic energy.

	{\bf Step 1: Variational derivation of the cell problems}
	
	Fix $\Qc\in H^1(\omega;\mathscr{E}_\gamma)$, where $\mathscr{E}_\gamma$ is defined in Section \ref{sec:main-results}. Since $\Ebulk(\Qc)$ and $E^\gamma_\surf(\Qc)$ are independent
	of the microscopic $Q$-tensors, minimising $\GF^\gamma_\rho$ over $\A^\gamma_\rho$ with $\Qc$
	fixed is equivalent to
	\begin{equation}
		\label{eq:micro-min}
		\inf_{(\hQ_\rho,\,\hR)\,\in\, \C_\rho}
		E^\rho_\el(\Qc,\hQ_\rho,\hR),
		\qquad
		E^\rho_\el(\Qc,\hQ_\rho,\hR)
		\;=\;
		\frac12\int_{\omega\times Y} \bA(y)\,\Gc_\rho : \Gc_\rho\;dy\,dx',
	\end{equation}
	where $\Gc_\rho$ is defined from $(\Qc,\hQ_\rho,\hR)$ as in
	Lemma~\ref{prop:admissible-correctors}. By the uniform ellipticity of $\bA$
	and the Poincare-type coercivity estimate of Lemma~\ref{lem:coercivity}, the functional
	$E^\rho_\el(\Qc,\cdot,\cdot)$ is coercive on $\C_\rho$. Since it is quadratic and strictly
	convex on the quotient fixed by the imposed zero-mean normalisations, it has a unique minimiser.
	
	Let $(\hQ_\rho,\hR)\in \C_\rho$ be the minimiser of \cref{eq:micro-min}. Since $\C_\rho$ is a
	linear subspace of its ambient Hilbert space, for every perturbation direction
	$(\hQ^\dagger_\rho,\hR^\dagger)\in \C_\rho$ and every $t\in\mathbb{R}$, the pair
	$(\hQ_\rho+t\hQ^\dagger_\rho,\,\hR+t\hR^\dagger)$ remains admissible in $\C_\rho$. By minimality,
	\begin{equation}
		\label{eq:minimality-ineq}
		\GF^\gamma_\rho\bigl(\Qc,\,\hQ_\rho+t\hQ^\dagger_\rho,\,\hR+t\hR^\dagger\bigr)
		\;\geq\;
		\GF^\gamma_\rho(\Qc,\hQ_\rho,\hR)
		\qquad\forall\,t\in\mathbb{R}.
	\end{equation}
	Denote by $\Gc^\dagger_\rho$ the two-scale gradient corresponding to the perturbation direction
	$(0,\hQ^\dagger_\rho,\hR^\dagger)$ with zero macroscopic loading. Since $\Gc_\rho$ depends
	affine-linearly on $(\Qc,\hQ_\rho,\hR)$, the perturbed gradient decomposes as
	\[
	\Gc_\rho\bigl[\Qc,\hQ_\rho+t\hQ^\dagger_\rho,\hR+t\hR^\dagger\bigr]
	\;=\; \Gc_\rho + t\,\Gc^\dagger_\rho .
	\]
	Expanding the quadratic elastic energy gives
	\begin{equation}
		\label{eq:quad-expand}
		E^\rho_\el(\Qc,\hQ_\rho+t\hQ^\dagger_\rho,\hR+t\hR^\dagger)
		=
		E^\rho_\el(\Qc,\hQ_\rho,\hR)
		+
		t\!\int_{\omega\times Y}\!\bA(y)\,\Gc_\rho:\Gc^\dagger_\rho\;dy\,dx'
		+
		t^2 E^\rho_\el(0,\hQ^\dagger_\rho,\hR^\dagger).
	\end{equation}
	Since the bulk and surface terms are unaffected by the perturbation, substituting
	\cref{eq:quad-expand} into \cref{eq:minimality-ineq} gives
	\begin{equation}
		\label{eq:perturb-inequality}
		t\!\int_{\omega\times Y}\!\bA(y)\,\Gc_\rho:\Gc^\dagger_\rho\;dy\,dx'
		+
		t^2 E^\rho_\el(0,\hQ^\dagger_\rho,\hR^\dagger)
		\;\geq\;0
		\qquad\forall\,t\in\mathbb{R}.
	\end{equation}
	Dividing by $t>0$, sending $t\to0^+$, and repeating with $t<0$ and $t\to0^-$ yields
	the Euler--Lagrange equation
	\begin{equation}
		\label{eq:EL-global}
		\int_{\omega\times Y} \bA(y)\,\Gc_\rho:\Gc^\dagger_\rho\;dy\,dx' \;=\; 0
		\qquad\forall\,(\hQ^\dagger_\rho,\hR^\dagger)\in \C_\rho.
	\end{equation}
	By the density of tensor-product perturbations in the admissible spaces and a standard localisation
	argument in $x'$, we obtain the corresponding cell equations: for a.e.\ $x'\in\omega$,
	\begin{equation}
		\label{eq:EL-local}
		\int_Y \bA(y)\,\Gc_\rho(x',y):\Gc^\dagger_\rho(y)\;dy \;=\; 0
		\qquad\forall\;\text{admissible cell-level perturbation gradients }\Gc^\dagger_\rho .
	\end{equation}
	Thus the macroscopic loading
	\[
	M:=\nabla'\Qc(x')\in \Szero\otimes\mathbb{R}^2
	\]
	enters the cell problem as a fixed parameter. We now state the space of correctors and weak cell problems in the three
	regimes. 	
	
	For $\rho\in[0,\infty]$, define the cell-level corrector spaces $\C^0_\rho$ as follows:
	\begin{itemize}
		\item \textit{Regime $\rho=0$:}
		\[
		\C^0_0
		:=
		H^1_{\mathrm{per},0}(Y';\Szero)
		\times
		L^2(Y';H^1_\sharp(I;\Szero)).
		\]
		
		\item \textit{Regime $\rho\in(0,\infty)$:}
		\begin{equation*}
			\C^0_\rho
			:=		H^1_{\mathrm{per},0}(Y';\Szero)
			\times
			H^1_{\mathrm{per},\sharp}(Y;\Szero).
		\end{equation*}		
		\item \textit{Regime $\rho=\infty$:}
		\[
		\C^0_\infty
		:=
		L^2(I;H^1_{\mathrm{per},0}(Y';\Szero))
		\times
		H^1_\sharp(I;\Szero).
		\]
	\end{itemize}

	{\bf (R3) Regime $\rho=0$.}
	For a fixed loading $M\in\Szero\otimes\mathbb{R}^2$, define
	\[
	\Gc_0^M
	:=
	\bigl(M+\nabla_{y'}\hQ_0,\;\partial_{y_3}\hR\bigr).
	\]
	The cell problem is: find $(\hQ_0,\hR)\in\C^0_0$
	such that
	\begin{align}
		&\int_Y \bA(y)\,\Gc_0^M:\bigl(\nabla_{y'}\phi,\;0\bigr)\;dy
		=0
		\qquad
		\forall\,\phi\in H^1_{\mathrm{per},0}(Y';\Szero),
		\label{eq:CP0a}\tag{CP$0$a}\\[4pt]
		&\int_Y \bA(y)\,\Gc_0^M:\bigl(0,\;\partial_{y_3}\psi\bigr)\;dy
		=0
		\qquad
		\forall\,\psi\in L^2(Y';H^1_\sharp(I;\Szero)).
		\label{eq:CP0b}\tag{CP$0$b}
	\end{align}
	Equation~\cref{eq:CP0a} is the weak in-plane equilibrium equation averaged through the thickness.
	Equation~\cref{eq:CP0b} determines the transverse corrector by relaxation in the thickness
	direction. These weak equations are the cell problem used below; no classical strong form is needed
	under the present measurability assumptions on $\bA$.

	{\bf (R2) Regime $\rho\in(0,\infty)$.}
	For a fixed loading $M\in\Szero\otimes\mathbb{R}^2$, the independent microscopic variables are
	$\hQ$ and $\hR$, with $\hQ_\rho:=\hQ+\rho\hR$.
	Thus
	\[
	\Gc_\rho^M
	:=
	\bigl(M+\nabla_{y'}\hQ+\rho\nabla_{y'}\hR,\;\partial_{y_3}\hR\bigr).
	\]
	The cell problem is: find
	$(\hQ,\hR)\in\C^0_\rho$
	such that
	\begin{align}
		&\int_Y \bA(y)\,\Gc_\rho^M:\bigl(\nabla_{y'}\phi,\;0\bigr)\;dy
		=0
		\qquad
		\forall\,\phi\in H^1_{\mathrm{per},0}(Y';\Szero),
		\label{eq:CPrho-a}\tag{CP$\rho$a}\\[4pt]
		&\int_Y \bA(y)\,\Gc_\rho^M:
		\bigl(\rho\nabla_{y'}\psi,\;\partial_{y_3}\psi\bigr)\;dy
		=0
		\qquad
		\forall\,\psi\in H^1_{\mathrm{per},\sharp}(Y;\Szero).
		\label{eq:CPrho-b}\tag{CP$\rho$b}
	\end{align}
	Equation~\cref{eq:CPrho-a} is the weak equilibrium equation associated with the purely in-plane
	corrector $\hQ$. Equation~\cref{eq:CPrho-b} is the coupled weak equilibrium equation associated
	with the thickness corrector $\hR$. The coupling is caused by the term
	$\rho\nabla_{y'}\hR$ in the in-plane part of $\Gc_\rho^M$.

	{\bf (R1) Regime $\rho=\infty$.}
	For a fixed loading $M\in\Szero\otimes\mathbb{R}^2$, define
	\[
	\Gc_\infty^M
	:=
	\bigl(M+\nabla_{y'}\hQ_\infty,\;\partial_{y_3}\hR\bigr).
	\]
	The cell problem is: find
	$(\hQ_\infty,\hR)\in\C^0_\infty$
	where $\hR$ is $y'$-independent, such that
	\begin{align}
		&\int_{Y} \bA(y)\,\Gc_\infty^M:
		\bigl(\nabla_{y'}\eta,\;0\bigr)\;dy
		=0\quad
		\forall\,\eta\in L^2(I;H^1_{\mathrm{per},0}(Y';\Szero)),
		\label{eq:CPinf-a}\tag{CP$\infty$a}\\[4pt]
		&\int_Y \bA(y)\,\Gc_\infty^M:
		\bigl(0,\;\partial_{y_3}\psi\bigr)\;dy
		=0
		\qquad
		\forall\,\psi\in H^1_\sharp(I;\Szero).
		\label{eq:CPinf-b}\tag{CP$\infty$b}
	\end{align}
	Equation~\cref{eq:CPinf-a} is the slice-wise in-plane cell problem at fixed $y_3$. Equation
	\cref{eq:CPinf-b} determines the thickness corrector through the $Y'$-averaged transverse
	stress.

	{\bf Step 2: Homogenised elastic tensor}
	
	\begin{lemma}[Linearity of correctors]
		\label{lem:linearity-correctors}
		For each $\rho\in[0,\infty]$ and each $M\in\Szero\otimes\mathbb{R}^2$, the cell problems 
		in the corresponding regime admit a unique solution
		$(\chi_\rho(\cdot;M),\varPhi_\rho(\cdot;M))\in\C^0_\rho$.
		In the intermediate regime $\rho\in(0,\infty)$, $\chi_\rho$ denotes the independent
		in-plane corrector $\hQ$, and the corrector entering the in-plane gradient is
		$\chi_\rho+\rho\varPhi_\rho$. Moreover, the solution maps
		\[
		M\mapsto \chi_\rho(\cdot;M),
		\qquad
		M\mapsto \varPhi_\rho(\cdot;M)
		\]
		are linear.
	\end{lemma}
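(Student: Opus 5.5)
The plan is to treat each regime as a Lax--Milgram problem on the Hilbert space $\C^0_\rho$. First I will observe that $\C^0_\rho$ is a closed subspace of a product of Hilbert spaces (see the Notation section), hence a Hilbert space. On it I define the bilinear form
\[
a_\rho\bigl((\phi_1,\psi_1),(\phi_2,\psi_2)\bigr):=\int_Y \bA(y)\,\Gc_\rho^{0}[\phi_1,\psi_1]:\Gc_\rho^{0}[\phi_2,\psi_2]\,dy .
\]
Here $\Gc_\rho^{0}[\phi,\psi]$ denotes the cell gradient with zero loading. For $\rho\in(0,\infty)$ it is $(\nabla_{y'}\phi+\rho\nabla_{y'}\psi,\partial_{y_3}\psi)$. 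For $\rho=0$ and $\rho=\infty$ it is $(\nabla_{y'}\phi,\partial_{y_3}\psi)$. I also define the linear functional $\ell_M(\phi,\psi):=-\int_Y\bA(y)(M,0):\Gc^0_\rho[\phi,\psi]\,dy$. Adding the two cell equations in each regime, and using that \cref{eq:CPrho-a}--\cref{eq:CPrho-b} are the variations in $\hQ$ and $\hR$ separately, shows that the cell problem is equivalent to
\[
a_\rho(U,V)=\ell_M(V)\qquad\text{for all }V\in\C^0_\rho .
\]
Symmetry of $a_\rho$ comes from the major symmetry \ref{A2}. Boundedness of $a_\rho$ and of $\ell_M$ (with $|\ell_M(V)|\le\Lambda|M|\|V\|$) comes from \ref{A3}.

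The main step is coercivity, namely $a_\rho(U,U)\ge c_\rho\|U\|^2_{\C^0_\rho}$. By \ref{A3} it suffices to show $\|\Gc^0_\rho[\phi,\psi]\|_{L^2(Y)}^2\ge c\|(\phi,\psi)\|^2$. This is the cell-level version of Lemma~\ref{lem:coercivity}, which I will invoke, or else reprove as follows.

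For $\rho=0$ and $\rho=\infty$ the two components decouple. Poincar\'e--Wirtinger on $Y'$ (zero $Y'$-mean, sliced in $y_3$ when $\rho=\infty$) controls $\phi$. Poincar\'e--Wirtinger on $I$ (zero $I$-mean, for a.e. $y'$ when $\rho=0$) controls $\psi$.

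For $\rho\in(0,\infty)$ the coupling is the obstacle, since cancellation between $\nabla_{y'}\phi$ and $\rho\nabla_{y'}\psi$ has to be excluded. I will use the orthogonality of the in-plane gradient. Because $\phi$ is independent of $y_3$ and $\int_I\psi\,dy_3=0$, the functions $\nabla_{y'}\phi$ and $\nabla_{y'}\psi$ are $L^2(Y)$-orthogonal. Therefore
\[
\|\nabla_{y'}\phi+\rho\nabla_{y'}\psi\|^2=\|\nabla_{y'}\phi\|^2+\rho^2\|\nabla_{y'}\psi\|^2 .
\]
Together with $\|\partial_{y_3}\psi\|^2$ and the two Poincar\'e inequalities, this gives control of the full $H^1$ norms of $\phi$ and $\psi$, with a constant of order $\min(1,\rho^2)$.

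Lax--Milgram then gives a unique $U=(\chi_\rho(\cdot;M),\varPhi_\rho(\cdot;M))\in\C^0_\rho$. Uniqueness must hold in $\C^0_\rho$ itself, and the zero-mean normalisations built into the spaces are exactly what rule out additive constants.

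Linearity in $M$ is then immediate. The form $a_\rho$ does not depend on $M$, and $M\mapsto\ell_M$ is linear. So if $U_1$ solves the problem for $M_1$ and $U_2$ for $M_2$, then $\alpha U_1+\beta U_2$ solves it for $\alpha M_1+\beta M_2$, and uniqueness identifies it with the solution for that loading. In the intermediate regime this also shows that the combined corrector $\chi_\rho+\rho\varPhi_\rho$ is linear in $M$. The only point needing care is the equivalence of the split equations with the single variational identity in the R2 regime, which holds because $\C^0_\rho$ is a product space and a general test pair $(\phi,\psi)$ splits as $(\phi,0)+(0,\psi)$.
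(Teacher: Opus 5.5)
Your proposal is correct and follows the paper's route: unique solvability from coercivity of the quadratic cell form on $\C^0_\rho$ (via Lax--Milgram in your version, via strict convexity from Step~1 in the paper), then linearity because $M$ enters only as an additive loading, combined with uniqueness. Your identity $\|\nabla_{y'}\phi+\rho\nabla_{y'}\psi\|^2=\|\nabla_{y'}\phi\|^2+\rho^2\|\nabla_{y'}\psi\|^2$ gives coercivity in the full norm of $\C^0_\rho$ for $\rho\in(0,\infty)$ more directly than Lemma~\ref{lem:coercivity}, which controls $\wh Q_\rho$ and $\wh R$ only in weaker norms; note also that your bound on $\ell_M$ should carry a factor $\max(1,\rho)$.
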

	
	\begin{proof}
		Uniqueness follows from the strict convexity and coercivity established in Step~1.
		For linearity, let $M_1,M_2\in\Szero\otimes\mathbb{R}^2$ and $\alpha,\beta\in\mathbb{R}$.
		Each cell problem is affine in the loading $M$: the corrector space $\C^0_\rho$ is
		independent of $M$, and $M$ enters only as a fixed additive term in the cell gradient
		$\Gc^M_\rho$. Therefore
		$(\alpha\chi_\rho(\cdot;M_1)+\beta\chi_\rho(\cdot;M_2),\,
		\alpha\varPhi_\rho(\cdot;M_1)+\beta\varPhi_\rho(\cdot;M_2))$
		solves the cell problem for loading $\alpha M_1+\beta M_2$, and by uniqueness it
		is the solution.
	\end{proof}
	
	\begin{definition}[Homogenised elastic tensor]
		\label{def:hom-tensor}
		For each $\rho\in[0,\infty]$ and $M\in\Szero\otimes\mathbb{R}^2$, define
		$\bA^{\mathrm{hom}}_\rho M:M$ by the cell formula
		\begin{equation}
			\label{eq:hom-tensor-def}
			\bA^{\mathrm{hom}}_\rho M:M
			:=
			\min_{(\wh{Q},\wh{R})\in\C^0_\rho}
			\int_Y \bA(y)\,\Gc^M_\rho:\Gc^M_\rho\,dy,
		\end{equation}
		where the minimising cell gradient $\Gc^M_\rho$ is given in each regime by
		\begin{alignat}{2}
			\Gc^M_0
			&:=
			\bigl(M+\nabla_{y'}\wh{Q}_0,\;\partial_{y_3}\wh{R}\bigr),
			&\qquad &\rho=0,
			\label{eq:cell-grad-0}\\
			\Gc^M_\rho
			&:=
			\bigl(M+\nabla_{y'}\wh{Q}+\rho\nabla_{y'}\wh{R},\;\partial_{y_3}\wh{R}\bigr),
			&\qquad &\rho\in(0,\infty),
			\label{eq:cell-grad-rho}\\
			\Gc^M_\infty
			&:=
			\bigl(M+\nabla_{y'}\wh{Q}_\infty,\;\partial_{y_3}\wh{R}\bigr),
			&\qquad &\rho=\infty.
			\label{eq:cell-grad-inf}
		\end{alignat}
		The minimum is attained at the corrector pair $(\chi_\rho(\cdot;M),\varPhi_\rho(\cdot;M))$
		from Lemma~\ref{lem:linearity-correctors}, and the associated symmetric bilinear form
		$\bA^{\mathrm{hom}}_\rho M:M'$ is obtained by polarisation.
	\end{definition}
	
	\begin{lemma}[Reduced formula for the homogenised tensor]
		\label{lem:hom-tensor-reduced}
		For each $\rho\in[0,\infty]$ and $M\in\Szero\otimes\mathbb{R}^2$,
		\begin{equation}
			\label{eq:hom-tensor-reduced}
			\bA^{\mathrm{hom}}_\rho M:M
			=
			\int_Y \bA(y)\,\Gc^M_\rho:(M,0)\,dy,
		\end{equation}
		where $\Gc^M_\rho$ denotes the minimising cell gradient evaluated at the corrector
		$(\chi_\rho(\cdot;M),\varPhi_\rho(\cdot;M))$.
	\end{lemma}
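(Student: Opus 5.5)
The identity follows by testing the Euler--Lagrange equations of the cell problem with the minimising correctors themselves. Write the minimising cell gradient as
\[
\Gc^M_\rho=(M,0)+\Gc^{0}_\rho ,
\]
where $\Gc^{0}_\rho$ is the cell gradient built from the corrector pair $(\chi_\rho(\cdot;M),\varPhi_\rho(\cdot;M))$ with zero loading. In each regime $\Gc^{0}_\rho$ is an admissible cell-level perturbation gradient:
\begin{itemize}
\item for $\rho=0$ it is $(\nabla_{y'}\chi_0,\partial_{y_3}\varPhi_0)$;
\item for $\rho\in(0,\infty)$ it is $(\nabla_{y'}\chi_\rho+\rho\nabla_{y'}\varPhi_\rho,\partial_{y_3}\varPhi_\rho)$;
\item for $\rho=\infty$ it is $(\nabla_{y'}\chi_\infty,\partial_{y_3}\varPhi_\infty)$.
\end{itemize}
Then
\[
\bA^{\mathrm{hom}}_\rho M:M=\int_Y\bA\Gc^M_\rho:(M,0)\,dy+\int_Y\bA\Gc^M_\rho:\Gc^0_\rho\,dy,
\]
and it suffices to show that the second integral vanishes.

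\textbf{Regime $\rho=0$.} Add \cref{eq:CP0a} with $\phi=\chi_0(\cdot;M)\in H^1_{\mathrm{per},0}(Y';\Szero)$ to \cref{eq:CP0b} with $\psi=\varPhi_0(\cdot;M)\in L^2(Y';H^1_\sharp(I;\Szero))$. The sum is exactly $\int_Y\bA\Gc^M_0:\Gc^0_0\,dy=0$.

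\textbf{Regime $\rho\in(0,\infty)$.} Take $\phi=\chi_\rho$ in \cref{eq:CPrho-a} and $\psi=\varPhi_\rho\in H^1_{\mathrm{per},\sharp}(Y;\Szero)$ in \cref{eq:CPrho-b}. The test gradients are $(\nabla_{y'}\chi_\rho,0)$ and $(\rho\nabla_{y'}\varPhi_\rho,\partial_{y_3}\varPhi_\rho)$. Their sum is precisely $\Gc^0_\rho$, because the coupled term $\rho\nabla_{y'}\varPhi_\rho$ is built into the test functional of \cref{eq:CPrho-b}.

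\textbf{Regime $\rho=\infty$.} Use $\eta=\chi_\infty\in L^2(I;H^1_{\mathrm{per},0}(Y';\Szero))$ in \cref{eq:CPinf-a} and $\psi=\varPhi_\infty\in H^1_\sharp(I;\Szero)$ in \cref{eq:CPinf-b}, and sum as before.

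The equations \cref{eq:CP0a}--\cref{eq:CPinf-b} hold at the minimiser. This is because the minimum in \cref{eq:hom-tensor-def} is a quadratic minimisation over the linear space $\C^0_\rho$, and its first variation is computed exactly as in \cref{eq:quad-expand}--\cref{eq:EL-global} at the cell level. Lemma~\ref{lem:linearity-correctors} guarantees that the minimiser exists and coincides with the solution of the weak cell problem.

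The major symmetry \ref{A2} is what makes $\int_Y\bA\Gc^M_\rho:\Gc^0_\rho\,dy$ the first variation of the energy; it is also needed when expanding the square. The only point needing care is checking, regime by regime, that the corrector pair is an admissible test pair, so that the sum of the two weak equations reproduces exactly the perturbation gradient. This matters especially in the coupled regime $\rho\in(0,\infty)$, where the decomposition $\hQ_\rho=\chi_\rho+\rho\varPhi_\rho$ must be matched against the test structure of \cref{eq:CPrho-b}.
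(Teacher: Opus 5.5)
Your proposal is correct and is essentially the paper's argument. The paper tests the weak cell problem with the corrector itself to obtain the Galerkin orthogonality $\int_Y \bA(y)\,\Gc^M_\rho:\bigl(\Gc^M_\rho-(M,0)\bigr)\,dy=0$ and then rearranges; this is exactly your identity $\int_Y\bA\,\Gc^M_\rho:\Gc^0_\rho\,dy=0$. Your regime-by-regime check that the corrector pair is an admissible test pair, in particular that the coupled term $\rho\nabla_{y'}\varPhi_\rho$ is reproduced by the test structure of \cref{eq:CPrho-b}, makes explicit what the paper leaves implicit.
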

	
	\begin{proof}
		Testing the weak cell problem with the corrector itself gives the Galerkin orthogonality
		\begin{equation}
			\label{eq:Galerkin}
			\int_Y \bA(y)\,\Gc^M_\rho:\bigl(\Gc^M_\rho-(M,0)\bigr)\,dy = 0.
		\end{equation}
		Rearranging,
		\[
		\int_Y \bA(y)\,\Gc^M_\rho:\Gc^M_\rho\,dy
		=
		\int_Y \bA(y)\,\Gc^M_\rho:(M,0)\,dy,
		\]
		and the left-hand side equals $\bA^{\mathrm{hom}}_\rho M:M$ by definition.
	\end{proof}

	\begin{proposition}
		\label{prop:hom-props}
		For each $\rho\in[0,\infty]$, the tensor
		$\bA^{\mathrm{hom}}_\rho\in\mathcal{L}(\Szero\otimes\mathbb{R}^2,\Szero\otimes\mathbb{R}^2)$
		is symmetric and satisfies
		\begin{equation}
			\label{eq:hom-bounds}
			\lambda \,|M|^2
			\leq
			\bA^{\mathrm{hom}}_\rho M:M
			\leq
			\Lambda\,|M|^2
			\qquad
			\forall\,M\in\Szero\otimes\mathbb{R}^2,
		\end{equation}
		with $\lambda,\Lambda>0$ the constants from Assumption~\ref{asm:elastic}\ref{A3}.
	\end{proposition}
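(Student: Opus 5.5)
The plan is to read off all three properties (symmetry, upper bound, lower bound) directly from the variational cell formula \cref{eq:hom-tensor-def}. The only inputs needed are the linearity of correctors (Lemma~\ref{lem:linearity-correctors}) and the properties \ref{A2}--\ref{A3} of $\bA$. No regime-specific analysis is required beyond checking that, in every regime, the in-plane part of the corrector gradient has zero $Y'$-average.

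\textbf{Symmetry.} By Lemma~\ref{lem:linearity-correctors}, the map $M\mapsto\Gc^M_\rho$ (evaluated at the correctors $(\chi_\rho(\cdot;M),\varPhi_\rho(\cdot;M))$) is linear from $\Szero\otimes\R^2$ into $L^2(Y;\Szero\otimes\R^3)$. Hence $M\mapsto\bA^{\mathrm{hom}}_\rho M:M$ is the quadratic form of the bilinear form
\[
B_\rho(M,M'):=\int_Y\bA(y)\,\Gc^M_\rho:\Gc^{M'}_\rho\,dy .
\]
By the major symmetry in \ref{A2}, $B_\rho(M,M')=B_\rho(M',M)$. The polarisation in Definition~\ref{def:hom-tensor} therefore produces exactly $B_\rho$, which identifies $\bA^{\mathrm{hom}}_\rho$ with a symmetric element of $\mathcal L(\Szero\otimes\R^2,\Szero\otimes\R^2)$. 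Testing the cell equations with the corrector for $M'$, as in Lemma~\ref{lem:hom-tensor-reduced}, also gives $B_\rho(M,M')=\int_Y\bA\,\Gc^M_\rho:(M',0)\,dy$. This is consistent with the above, though it is not needed.

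\textbf{Upper bound.} The pair $(0,0)$ belongs to $\C^0_\rho$ in every regime. For this choice the cell gradient is $(M,0)$, so by minimality and \ref{A3},
\[
\bA^{\mathrm{hom}}_\rho M:M\le\int_Y\bA(y)(M,0):(M,0)\,dy\le\Lambda|M|^2,
\]
using $|Y|=1$.

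\textbf{Lower bound.} At the minimiser, \ref{A3} gives
\[
\bA^{\mathrm{hom}}_\rho M:M\ge\lambda\int_Y|\Gc^M_\rho|^2\,dy\ge\lambda\int_Y|M+\nabla_{y'}w|^2\,dy,
\]
where we discard the transverse component $\partial_{y_3}\wh R$. Here $w$ is the in-plane corrector in the given regime:
\begin{itemize}
\item $w=\wh Q_0$ for $\rho=0$;
\item $w=\wh Q+\rho\wh R$ for $\rho\in(0,\infty)$;
\item $w=\wh Q_\infty$ for $\rho=\infty$.
\end{itemize}
In each case, for a.e.\ $y_3\in I$, the slice $w(\cdot,y_3)$ lies in $H^1_{\per}(Y';\Szero)$. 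For $\rho\in(0,\infty)$ this holds because both $\wh Q\in H^1_{\per,0}(Y')$ and $\wh R\in H^1_{\per,\sharp}(Y)\subset L^2(I;H^1_{\per}(Y'))$. Periodicity then gives $\int_{Y'}\nabla_{y'}w(y',y_3)\,dy'=0$, so the cross term $2\int_Y M:\nabla_{y'}w\,dy$ vanishes. Consequently
\[
\int_Y|M+\nabla_{y'}w|^2\,dy=|M|^2+\int_Y|\nabla_{y'}w|^2\,dy\ge|M|^2,
\]
which yields $\bA^{\mathrm{hom}}_\rho M:M\ge\lambda|M|^2$.

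The only point needing care is this zero-average property in the coupled regime, since $\wh R$ is merely in $L^2$ in $y_3$ with values in $H^1_{\per}(Y')$. It suffices to apply the divergence theorem slice-wise on the torus $Y'$ for a.e.\ $y_3$ and then integrate over $I$.
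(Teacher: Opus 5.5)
Your proposal is correct and matches the paper's proof: the upper bound comes from testing with the zero corrector pair, and the lower bound from $\|\Gc^M_\rho\|^2_{L^2(Y)}=|M|^2+\|\nabla_{y'}w\|^2_{L^2(Y)}+\|\partial_{y_3}\wh R\|^2_{L^2(Y)}$, which uses the vanishing $Y'$-average of periodic gradients (the paper cites Lemma~\ref{lem:coercivity} for this). Symmetry comes from linearity of the correctors together with the major symmetry of $\bA$; your explicit bilinear form $B_\rho(M,M')=\int_Y\bA\,\Gc^M_\rho:\Gc^{M'}_\rho\,dy$ just makes the paper's polarisation/Galerkin step $\bA^{\mathrm{hom}}_\rho M:N=\int_Y\bA\,\Gc^M_\rho:(N,0)\,dy$ more explicit.
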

	
	\begin{proof}
		\textbf{Upper bound.}
		Testing the cell formula with the zero corrector pair $(\wh{Q},\wh{R})=(0,0)$,
		for which $G = (M,0)$, gives
		\[
		\bA^{\mathrm{hom}}_\rho M:M
		\leq \int_Y \bA(y)(M,0):(M,0)\,dy \leq \Lambda|M|^2.
		\]
		
		\textbf{Lower bound.}
		In all three regimes the minimising gradient $\Gc^M_\rho$ defined in \cref{eq:cell-grad-0}--\cref{eq:cell-grad-inf} satisfy the following $L^2$-norm identity using Lemma \ref{lem:coercivity}
		\[
		\|\Gc^M_\rho\|^2_{L^2(Y)}
		= |M|^2 + \|\nabla_{y'}\wh Q_\rho\|^2_{L^2(Y)} + \|\partial_{y_3}\wh R_\rho\|^2_{L^2(Y)}
		\geq |M|^2.
		\]
		By uniform ellipticity and the fact that the minimum is attained:
		\[
		\bA^{\mathrm{hom}}_\rho M:M
		= \int_Y \bA(y)\,\Gc^M_\rho : \Gc^M_\rho\,dy
		\geq \lambda\,\|\Gc^M_\rho\|^2_{L^2(Y)}
		\geq \lambda\,|M|^2.
		\]
		
		\textbf{Symmetry.}
		Polarisation of $M \mapsto \bA^{\mathrm{hom}}_\rho M:M$ defines a bilinear form.
		Using the Galerkin orthogonality~\cref{eq:Galerkin}, one identifies
		\[
		\bA^{\mathrm{hom}}_\rho M : N
		= \int_Y \bA(y)\,G^M_\rho : (N,0)\,dy.
		\]
		The major symmetry of $\bA$ (Assumption~\ref{asm:elastic}\ref{A2} then gives
		$\bA^{\mathrm{hom}}_\rho M:N = \bA^{\mathrm{hom}}_\rho N:M$.
	\end{proof}

	{\bf Step 3: Homogenised energy}
	
	\begin{proof}[Proof of Theorem \ref{thm:hom-energy}]
		The functional $\GF^{\mathrm{hom},\gamma}_\rho$ is coercive and weakly lower semicontinuous
		on $H^1(\omega;\mathscr{E}_\gamma)$. Indeed, coercivity of the elastic term follows from
		Proposition~\ref{prop:hom-props}, coercivity of the bulk term in $L^4$ follows from
		Lemma~\ref{lem:fB}, and the surface term is continuous along strongly convergent sequences
		in $L^2(\omega;\Szero)$ by the quadratic growth estimate \cref{eq:LimG}. Since
		$H^1(\omega;\mathscr{E}_\gamma)\hookrightarrow\hookrightarrow L^2(\omega;\mathscr{E}_\gamma)$, the direct method
		gives a minimiser of $\GF^{\mathrm{hom},\gamma}_\rho$.
		
		By Theorem~\ref{thm:gamma}, the rescaled infima converge and, up to a subsequence, the
		microscopic minimisers $\{Q_{h,\varepsilon}\}$ generate a limit
		$(\Qc,\hQ_\rho,\hR)\in\A^\gamma_\rho$ which minimises the two-scale limit energy
		$\GF^\gamma_\rho$ over $\A^\gamma_\rho$. Thus
		\begin{equation}
			\label{eq:twoscale-lim}
			\lim_{(h,\varepsilon)\to(0,0)}
			\frac{\Gm_{h,\varepsilon}}{h}
			=
			\GF^\gamma_\rho(\Qc,\hQ_\rho,\hR)
			=
			\min_{(\Qc^\dagger,\hQ^\dagger_\rho,\hR^\dagger)\in\A^\gamma_\rho}
			\GF^\gamma_\rho(\Qc^\dagger,\hQ^\dagger_\rho,\hR^\dagger).
		\end{equation}
		
		Since the bulk and surface terms are independent of the correctors, the minimisation
		separates as
		\[
		\min_{(\Qc^\dagger,\hQ^\dagger_\rho,\hR^\dagger)\in\A^\gamma_\rho}
		\GF^\gamma_\rho(\Qc^\dagger,\hQ^\dagger_\rho,\hR^\dagger)
		=
		\min_{\Qc^\dagger\in H^1(\omega;\mathscr{E}_\gamma)}
		\left[
		\min_{(\hQ^\dagger_\rho,\hR^\dagger)\in\C_\rho}
		E^\rho_{\el}(\Qc^\dagger,\hQ^\dagger_\rho,\hR^\dagger)
		+
		\Ebulk(\Qc^\dagger)
		+
		E^\gamma_\surf(\Qc^\dagger)
		\right].
		\]
		For fixed $\Qc^\dagger$, the inner minimisation is local in $x'$ and, by
		Definition~\ref{def:hom-tensor}, gives
		\begin{equation}
			\label{eq:inner-min-hom}
			\min_{(\hQ^\dagger_\rho,\hR^\dagger)\in\C_\rho}
			E^\rho_{\el}(\Qc^\dagger,\hQ^\dagger_\rho,\hR^\dagger)
			=
			\frac12
			\int_\omega
			\bA^{\mathrm{hom}}_\rho\nabla'\Qc^\dagger:
			\nabla'\Qc^\dagger\,dx'.
		\end{equation}
		Therefore
		\begin{equation}
			\label{eq:twoscale-to-hom-min}
			\min_{(\Qc^\dagger,\hQ^\dagger_\rho,\hR^\dagger)\in\A^\gamma_\rho}
			\GF^\gamma_\rho(\Qc^\dagger,\hQ^\dagger_\rho,\hR^\dagger)
			=
			\min_{\Qc^\dagger\in H^1(\omega;\mathscr{E}_\gamma)}
			\GF^{\mathrm{hom},\gamma}_\rho(\Qc^\dagger).
		\end{equation}
		
		It remains to identify the value of the two-scale minimiser
		$(\Qc,\hQ_\rho,\hR)$ with the homogenised energy evaluated at $\Qc$. Since
		$(\Qc,\hQ_\rho,\hR)$ is a global minimiser of $\GF^\gamma_\rho$ over $\A^\gamma_\rho$, the pair
		$(\hQ_\rho,\hR)$ must minimise the elastic corrector problem for the fixed macroscopic field
		$\Qc$. Otherwise, replacing only the correctors while keeping $\Qc$ fixed would strictly
		decrease $\GF^\gamma_\rho$, because $\Ebulk(\Qc)$ and $E^\gamma_\surf(\Qc)$ do not depend
		on the correctors. Hence, using \cref{eq:inner-min-hom} with
		$\Qc^\dagger=\Qc$, we obtain
		\[
		E^\rho_{\el}(\Qc,\hQ_\rho,\hR)
		=
		\frac12
		\int_\omega
		\bA^{\mathrm{hom}}_\rho\nabla'\Qc:\nabla'\Qc\,dx'.
		\]
		Consequently,
		\[
		\GF^\gamma_\rho(\Qc,\hQ_\rho,\hR)
		=
		\GF^{\mathrm{hom},\gamma}_\rho(\Qc).
		\]
		
		Combining this identity with \cref{eq:twoscale-lim} and
		\cref{eq:twoscale-to-hom-min} gives
		\[
		\lim_{(h,\varepsilon)\to(0,0)}
		\frac{\Gm_{h,\varepsilon}}{h}
		=
		\GF^\gamma_\rho(\Qc,\hQ_\rho,\hR)
		=
		\GF^{\mathrm{hom},\gamma}_\rho(\Qc)
		=
		\min_{\Qc^\dagger\in H^1(\omega;\mathscr{E}_\gamma)}
		\GF^{\mathrm{hom},\gamma}_\rho(\Qc^\dagger).
		\]
		In particular, $\Qc$ is a minimiser of the homogenised energy. This completes the proof.
	\end{proof}

	% -------------------------------------------------------
	%  Continuity of the homogenised elastic tensor in rho
	% -------------------------------------------------------
\textbf{Continuity of the homogenised elastic tensor in $\rho$.}

\begin{proof}[Proof of Theorem \ref{thm:hom-tensor-continuity}]
	Fix $M \in \Szero \otimes \mathbb{R}^2$ throughout.
	Let $\{\rho_n\}_{n \geq 1} \subset [0,\infty]$ with $\rho_n \to \rho \in [0,\infty]$,
	and write $F_{\rho'} := \bA^{\mathrm{hom}}_{\rho'} M:M$ for every $\rho' \in [0,\infty]$.
	Recall from~\cref{eq:hom-tensor-reduced} that
	\begin{equation}
		\label{eq:Fform}
		F_{\rho'} = \int_Y \bA(y)\, \Gc^M_{\rho'} : (M,0)\,dy,
	\end{equation}
	where $\Gc^M_{\rho'}$ is the minimising cell gradient for parameter $\rho'$.

	\textbf{Step 1: Uniform bound and weak extraction.}
	Testing the cell minimisation with the zero corrector pair $(\hQ,\hR)=(0,0)$
	gives $F_{\rho_n} \leq \Lambda|M|^2$ uniformly in $n$,
	where $\Lambda := \|\bA\|_{L^\infty}$.
	Combined with the Galerkin orthogonality~\cref{eq:Galerkin} and the
	uniform ellipticity of $\bA$ (constant $\lambda > 0$), this yields (see \cref{eq:hom-bounds})
	\begin{equation}
		\label{eq:Gbound}
		\sup_{n}\, \|\Gc^M_{\rho_n}\|_{L^2(Y)}^{\,2}
		\;\leq\;
		\frac{\Lambda}{\lambda}\,|M|^2.
	\end{equation}
	After passing to a subsequence (not relabelled), there exists
	$\Gc^\ast \in L^2(Y;\Szero\otimes\mathbb{R}^3)$ such that
	\begin{equation}
		\label{eq:weak-conv}
		\Gc^M_{\rho_n} \;\rightharpoonup\; \Gc^*
		\quad\text{weakly in }L^2(Y).
	\end{equation}
	Once we identify $\Gc^\ast = \Gc^M_\rho$ in each regime, the conclusion
	\[
	F_{\rho_n}
	= \int_Y \bA(y)\,\Gc^M_{\rho_n}:(M,0)\,dy
	\;\xrightarrow{n\to\infty}\;
	\int_Y \bA(y)\,\Gc^M_\rho:(M,0)\,dy
	= F_\rho
	\]
	follows from~\cref{eq:Fform}--\cref{eq:weak-conv}, since
	$G \mapsto \int_Y \bA(y)\,G:(M,0)\,dy$ is a bounded linear functional on $L^2(Y)$.
	The minimiser $\Gc^M_\rho$ is unique, so the convergence holds for the full
	sequence; it therefore suffices to identify $\Gc^\ast$ in each of the three cases.

	\textbf{Step 2: Auxiliary bounds on the correctors.}
	For each $n$, let $(\hQ_n, \hR_n) \in \Cc^0_{\rho_n}$ denote the R2 minimising
	corrector pair for parameter $\rho_n$, so that
	\[
	\Gc^M_{\rho_n}
	= \bigl(M + \nabla_{y'}\hQ_n + \rho_n\nabla_{y'}\hR_n,\;
	\partial_{y_3}\hR_n\bigr)
	= \bigl(M + \nabla_{y'}\hQ_{\rho_n},\;
	\partial_{y_3}\hR_n\bigr),
	\]
	where $\hQ_{\rho_n} := \hQ_n + \rho_n\hR_n \in H^1_{\mathrm{per},0}(Y;\Szero)$
	is the combined in-plane corrector.
	The equality in Lemma~\ref{lem:coercivity} and~\cref{eq:Gbound} give
	\begin{equation}
		\label{eq:aux-combined}
		\sup_n \|\nabla_{y'}\hQ_{\rho_n}\|_{L^2(Y)} \leq C,
		\qquad
		\sup_n \|\partial_{y_3}\hR_n\|_{L^2(Y)} \leq C.
	\end{equation}
	Since $\hQ_n$ is $y_3$-independent and the zero-mean condition gives
	$\int_I \hR_n(\cdot,y_3)\,dy_3 = 0$ a.e.\ in $Y'$, we have
	\begin{equation}
		\label{eq:Qn-mean}
		\hQ_n(y') = \int_I \hQ_{\rho_n}(y',y_3)\,dy_3
		\qquad\text{a.e.\ in }Y'.
	\end{equation}
	Since $\hQ_n$ is independent of $y_3$, differentiating the 
	identity~\cref{eq:Qn-mean} in $y'$ and commuting the derivative 
	with the $y_3$-integral via Fubini gives
	$\nabla_{y'}\hQ_n(y') = \int_I \nabla_{y'}\hQ_{\rho_n}(y',y_3)\,dy_3$
	a.e.\ in $Y'$.
	Applying Jensen's inequality to this average, using the convexity 
	of $|\cdot|^2$ and the fact that $|I|=1$, yields
	$$|\nabla_{y'}\hQ_n(y')|^2 \leq \int_I |\nabla_{y'}\hQ_{\rho_n}(y',y_3)|^2\,dy_3$$
	pointwise a.e.\ in $Y'$.
	Integrating over $Y'$ and using that $\hQ_n$ is $y_3$-independent 
	(so that $\|\nabla_{y'}\hQ_n\|_{L^2(Y')}
	= \|\nabla_{y'}\hQ_n\|_{L^2(Y)}$, since $|I|=1$) gives 
	the bound
	\begin{equation}
		\label{eq:aux-Qn}
		\|\nabla_{y'}\hQ_n\|_{L^2(Y')}^2
		\;\leq\; \|\nabla_{y'}\hQ_{\rho_n}\|_{L^2(Y)}^2
		\;\leq\; C.
	\end{equation}
	Writing $\rho_n\nabla_{y'}\hR_n = \nabla_{y'}\hQ_{\rho_n} - \nabla_{y'}\hQ_n$
	and applying the triangle inequality with~\cref{eq:aux-combined}--\cref{eq:aux-Qn}:
	\begin{equation}
		\label{eq:aux-rhoR}
		\sup_n\,\|\rho_n\nabla_{y'}\hR_n\|_{L^2(Y)}
		\;\leq\;
		\sup_n\,\|\nabla_{y'}\hQ_{\rho_n}\|_{L^2(Y)}
		+ \sup_n\,\|\nabla_{y'}\hQ_n\|_{L^2(Y)}
		\;\leq\; C.
	\end{equation}
	The Poincar\'{e}--Wirtinger inequality in $y_3$ (using $\int_I\hR_n\,dy_3 = 0$)
	combined with~\cref{eq:aux-combined} gives
	$$\sup_n\|\hR_n\|_{L^2(Y';H^1_\sharp(I;\Szero))} \leq C.$$
	
	\textbf{Step 3: Interior regime, $\rho \in (0,\infty)$.}
	Since $\rho_n \to \rho > 0$, there exists $c > 0$ such that
	$\rho_n \geq c$ for all $n$ sufficiently large.
	From~\cref{eq:aux-rhoR} and the lower bound $\rho_n \geq c$,
	\begin{equation}
		\label{eq:corrector-interior}
		\sup_n\,\|\nabla_{y'}\hR_n\|_{L^2(Y)}
		\;\leq\; \frac{1}{c}\sup_n\,\|\rho_n\nabla_{y'}\hR_n\|_{L^2(Y)}
		\;\leq\; \frac{C}{c}.
	\end{equation}
	Together with~\cref{eq:aux-Qn},~\cref{eq:corrector-interior},
	and~\cref{eq:aux-combined}, the sequences $\{\hQ_n\}$ and $\{\hR_n\}$
	are bounded in $H^1_{\mathrm{per},0}(Y';\Szero)$ and
	$H^1_{\mathrm{per},\sharp}(Y;\Szero)$ respectively.
	Extracting a further subsequence,
	\[
	\hQ_n \rightharpoonup \hQ^*
	\;\text{ in }H^1_{\mathrm{per},0}(Y';\Szero),
	\qquad
	\hR_n \rightharpoonup \hR^*
	\;\text{ in }H^1_{\mathrm{per},\sharp}(Y;\Szero).
	\]
	Since $\rho_n \to \rho$, we have
	$\rho_n\nabla_{y'}\hR_n \rightharpoonup \rho\nabla_{y'}\hR^*$
	weakly in $L^2(Y)$, and hence
	\[
	\Gc^M_{\rho_n}
	= \bigl(M + \nabla_{y'}\hQ_n + \rho_n\nabla_{y'}\hR_n,\;
	\partial_{y_3}\hR_n\bigr)
	\;\rightharpoonup\;
	\bigl(M + \nabla_{y'}\hQ^* + \rho\nabla_{y'}\hR^*,\;
	\partial_{y_3}\hR^*\bigr)
	=: \Gc^\ast
	\]
	weakly in $L^2(Y)$.
	Passing $n\to\infty$ in the cell
	equations~\cref{eq:CPrho-a}--\cref{eq:CPrho-b} and using $\rho_n\to\rho$
	shows that $(\hQ^*,\hR^*)\in\C^0_\rho$ satisfies the R2 cell problem for
	parameter $\rho$.  By uniqueness of the minimiser,
	$(\hQ^*,\hR^*) = (\hQ_\rho,\hR_\rho)$ and $\Gc^\ast = \Gc^M_\rho$.

	\textbf{Step 4: Endpoint $\rho = 0$, i.e.\ $\rho_n \to 0^+$.}
	From~\cref{eq:aux-Qn},~\cref{eq:aux-rhoR}, and~\cref{eq:aux-combined},
	\begin{equation}
		\label{eq:rho0-bounds}
		\sup_n\,\|\nabla_{y'}\hQ_n\|_{L^2(Y)} \leq C,
		\qquad
		\sup_n\,\|\rho_n\nabla_{y'}\hR_n\|_{L^2(Y)} \leq C,
		\qquad
		\sup_n\,\|\partial_{y_3}\hR_n\|_{L^2(Y)} \leq C,
	\end{equation}
	with $\sup_n\|\hR_n\|_{L^2(Y';H^1_\sharp(I;\Szero))} \leq C$
	from the Poincar\'{e}--Wirtinger inequality in $y_3$.
	We claim that
	\begin{equation}
		\label{eq:rho0-claim}
		\rho_n\nabla_{y'}\hR_n \;\rightharpoonup\; 0
		\quad\text{weakly in }L^2(Y).
	\end{equation}
	Indeed, for any $\Phi \in L^2(Y;\Szero\otimes\mathbb{R}^2)$ and
	$\delta > 0$, choose smooth periodic $\Phi_\delta$ with
	$\|\Phi - \Phi_\delta\|_{L^2} < \delta$.
	Integration by parts in $y'$ and the uniform $L^2$-bound on $\hR_n$
	then give
	\begin{align*}
		\Bigl|\int_Y \rho_n\nabla_{y'}\hR_n : \Phi\,dy\Bigr|
		&\leq
		\rho_n\|\hR_n\|_{L^2(Y)}
		\|\mathrm{div}_{y'}\Phi_\delta\|_{L^2(Y)}
		+ C\,\|\Phi - \Phi_\delta\|_{L^2}
		\\
		&\leq
		C\rho_n\|\mathrm{div}_{y'}\Phi_\delta\|_{L^2}
		+ C\delta
		\;\xrightarrow{n\to\infty}\; C\delta,
	\end{align*}
	since $\rho_n \to 0$.  As $\delta > 0$ was arbitrary,
	\cref{eq:rho0-claim} follows.
	
	Extracting a further subsequence from~\cref{eq:rho0-bounds}
	and using~\cref{eq:rho0-claim},
	\begin{equation}
		\label{eq:Gstar0}
		\Gc^\ast = \bigl(M + \nabla_{y'}\hQ^*,\;\partial_{y_3}\hR^*\bigr),
	\end{equation}
	where $\hQ^* \in H^1_{\mathrm{per},0}(Y';\Szero)$ is the weak
	$H^1_{\mathrm{per},0}$-limit of $\hQ_n$, and
	$\hR^* \in L^2(Y';H^1_\sharp(I;\Szero))$ is the weak limit of $\hR_n$.
	In particular, $(\hQ^*,\hR^*) \in \C^0_0$.
	
	It remains to verify the R3 cell equations for $(\hQ^*,\hR^*)$.
	Equation~\cref{eq:CP0a} follows by passing $n\to\infty$ directly
	in~\cref{eq:CPrho-a}, using weak convergence of $\Gc^M_{\rho_n}$ to $\Gc^\ast$.
	For~\cref{eq:CP0b}, rewrite~\cref{eq:CPrho-b} as
	\[
	\rho_n\!\int_Y \bA(y)\,\Gc^M_{\rho_n}:(\nabla_{y'}\psi,0)\,dy
	\;+\;
	\int_Y \bA(y)\,\Gc^M_{\rho_n}:(0,\partial_{y_3}\psi)\,dy
	\;=\; 0.
	\]
	The first term vanishes as $n\to\infty$: the integral is bounded
	by~\cref{eq:Gbound} and $\|\bA\|_{L^\infty}$, and $\rho_n\to 0$.
	The second term converges to
	$\int_Y\bA(y)\,\Gc^\ast:(0,\partial_{y_3}\psi)\,dy$
	by weak convergence~\cref{eq:weak-conv}.
	Hence
	\[
	\int_Y\bA(y)\,\Gc^\ast:(0,\partial_{y_3}\psi)\,dy = 0
	\qquad
	\forall\,\psi \in H^1_{\mathrm{per},\sharp}(Y;\Szero).
	\]
	Since smooth functions $C^\infty_{\mathrm{per},\sharp}(Y;\Szero)$ are
	dense in both $H^1_{\mathrm{per},\sharp}(Y;\Szero)$ and
	$L^2(Y';H^1_\sharp(I;\Szero))$ (Lemma~\ref{lem:dense-subspaces}), the identity extends to all
	$\psi \in L^2(Y';H^1_\sharp(I;\Szero))$, which is precisely
	the test space for~\cref{eq:CP0b}.
	By uniqueness of the R3 minimiser, $\Gc^\ast = \Gc^M_0$.

	\textbf{Step 5: Endpoint $\rho = \infty$,
		i.e.\ $\rho_n \to +\infty$.}
	From~\cref{eq:aux-combined} and~\cref{eq:aux-rhoR},
	\begin{equation}
		\label{eq:infty-bounds}
		\sup_n\,\|\nabla_{y'}\hQ_n
		+ \rho_n\nabla_{y'}\hR_n\|_{L^2(Y)} \leq C,
		\quad
		\sup_n\,\|\partial_{y_3}\hR_n\|_{L^2(Y)} \leq C,
		\quad
		\sup_n\,\|\rho_n\nabla_{y'}\hR_n\|_{L^2(Y)} \leq C.
	\end{equation}
	
	% -------------------------------------------------------
	% Key coercive estimate at rho = infty
	% -------------------------------------------------------
	We now show that $\nabla_{y'}\hR_n \to 0$ strongly in $L^2(Y)$.
	Testing the cell equation~\cref{eq:CPrho-b} with
	$\psi = \hR_n \in H^1_{\mathrm{per},\sharp}(Y;\Szero)$ gives
	\begin{equation}
		\label{eq:Rn-test}
		\rho_n \int_Y \bA(y)\,\Gc^M_{\rho_n}:(\nabla_{y'}\hR_n,0)\,dy
		\;+\;
		\int_Y \bA(y)\,\Gc^M_{\rho_n}:(0,\partial_{y_3}\hR_n)\,dy
		\;=\; 0,
	\end{equation}
	where
	$\Gc^M_{\rho_n}
	= (M + \nabla_{y'}\hQ_n + \rho_n\nabla_{y'}\hR_n,\;\partial_{y_3}\hR_n)$.
	We expand the first integral by splitting the tensor $\bA$ into
	in-plane/in-plane, in-plane/transverse, and transverse/in-plane blocks:
	\begin{multline}
				\label{eq:Rn-expand}
		\rho_n \int_Y \bA(y)\,\Gc^M_{\rho_n}:(\nabla_{y'}\hR_n,0)\,dy
		= \rho_n^2 \int_Y \bA''(y)\,\nabla_{y'}\hR_n:\nabla_{y'}\hR_n\,dy\\
		\quad + \rho_n \int_Y \bA''(y)\,(M+\nabla_{y'}\hQ_n):\nabla_{y'}\hR_n\,dy + \rho_n \int_Y \bA_{13}(y)\,\partial_{y_3}\hR_n:\nabla_{y'}\hR_n\,dy,
	\end{multline}
	where $\bA''(y)$ denotes the in-plane/in-plane block and $\bA_{13}(y)$
	the transverse/in-plane coupling block of $\bA(y)$.

	For any in-plane matrix $F' \in \Szero\otimes\mathbb{R}^2$, the full
	uniform ellipticity assumption gives
	\[
	\bA''(y)\,F':F'
	= \bA(y)\,(F',0):(F',0)
	\;\geq\; \lambda\,|F'|^2,
	\]
	so the first term in~\cref{eq:Rn-expand} satisfies
	\begin{equation}
		\label{eq:block-coercive}
		\rho_n^2 \int_Y \bA''(y)\,\nabla_{y'}\hR_n:\nabla_{y'}\hR_n\,dy
		\;\geq\; \lambda\,\rho_n^2\,\|\nabla_{y'}\hR_n\|_{L^2(Y)}^2.
	\end{equation}
	
	By Cauchy--Schwarz and~\cref{eq:aux-Qn},
	\begin{multline}
		\label{eq:cross-Q}
		\Bigl|\rho_n \int_Y \bA''(y)\,(M+\nabla_{y'}\hQ_n):\nabla_{y'}\hR_n\,dy\Bigr|
		\;\leq\; \Lambda\,\rho_n\,\|M+\nabla_{y'}\hQ_n\|_{L^2(Y)}\,
		\|\nabla_{y'}\hR_n\|_{L^2(Y)}\\
		\;\leq\; C\,\rho_n\,\|\nabla_{y'}\hR_n\|_{L^2(Y)}.
	\end{multline}
	
	By Cauchy--Schwarz and~\cref{eq:infty-bounds},
	\begin{equation}
		\label{eq:cross-13}
		\Bigl|\rho_n \int_Y \bA_{13}(y)\,\partial_{y_3}\hR_n:\nabla_{y'}\hR_n\,dy\Bigr|
		\;\leq\; \Lambda\,\rho_n\,\|\partial_{y_3}\hR_n\|_{L^2(Y)}\,
		\|\nabla_{y'}\hR_n\|_{L^2(Y)}
		\;\leq\; C\,\rho_n\,\|\nabla_{y'}\hR_n\|_{L^2(Y)}.
	\end{equation}
	
	By~\cref{eq:Gbound},~\cref{eq:infty-bounds}, and $\|\bA\|_{L^\infty}\leq\Lambda$,
	\begin{equation}
		\label{eq:Rn-II}
		\Bigl|\int_Y \bA(y)\,\Gc^M_{\rho_n}:(0,\partial_{y_3}\hR_n)\,dy\Bigr|
		\;\leq\; \Lambda\,\|\Gc^M_{\rho_n}\|_{L^2(Y)}\,\|\partial_{y_3}\hR_n\|_{L^2(Y)}
		\;\leq\; C.
	\end{equation}
	
	Setting $z_n := \|\nabla_{y'}\hR_n\|_{L^2(Y)} \geq 0$ and
	combining~\cref{eq:Rn-test,eq:Rn-expand,eq:block-coercive,eq:cross-Q,%
		eq:cross-13,eq:Rn-II}:
	\[
	\lambda\,\rho_n^2\,z_n^2 - C\,\rho_n\,z_n - C \;\leq\; 0.
	\]
	The quadratic formula in $z_n \geq 0$ gives
	\[
	z_n
	\;\leq\;
	\frac{C\,\rho_n + \sqrt{C^2\rho_n^2 + 4\lambda\,\rho_n^2\,C}}{2\lambda\,\rho_n^2}
	\;=\;
	\frac{C + \sqrt{C^2 + 4\lambda\,C}}{2\lambda\,\rho_n},
	\]
	and hence
	\begin{equation}
		\label{eq:Rn-decay}
		\|\nabla_{y'}\hR_n\|_{L^2(Y)}
		\;=\; z_n
		\;\leq\; \frac{C}{\rho_n}
		\;\to\; 0,
	\end{equation}
	using $\rho_n \to +\infty$.
	Together with the Poincar\'{e}--Wirtinger inequality in $y_3$ and
	$\sup_n\|\partial_{y_3}\hR_n\|_{L^2(Y)} \leq C$, the sequence
	$\{\hR_n\}$ is bounded in $L^2(Y';\,H^1_\sharp(I;\Szero))$,
	and every weak $L^2$-limit $\hR^*$ satisfies
	$\nabla_{y'}\hR^* = 0$, i.e.\
	$\hR^* = \hR^*(y_3) \in H^1_\sharp(I;\Szero)$.
	
	% -------------------------------------------------------
	% Build F* as an explicit in-plane gradient
	% -------------------------------------------------------
	
	For a.e.\ $y_3 \in I$, define
	\begin{equation}
		\label{eq:Vn-def}
		V_n(y',y_3)
		:= \rho_n\hR_n(y',y_3)
		- \int_{Y'}\rho_n\hR_n(\tilde{y}',y_3)\,d\tilde{y}',
	\end{equation}
	so that $\int_{Y'}V_n(\cdot,y_3)\,dy' = 0$ a.e.\ and
	\[
	\nabla_{y'}V_n = \rho_n\nabla_{y'}\hR_n
	\quad\text{in }L^2(Y).
	\]
	By the Poincar\'{e}--Wirtinger inequality on $Y'$,
	\[
	\|V_n(\cdot,y_3)\|_{H^1_{\mathrm{per},0}(Y';\Szero)}
	\;\leq\; C_{\mathrm{PW}}\,\|\nabla_{y'}V_n(\cdot,y_3)\|_{L^2(Y')}
	= C_{\mathrm{PW}}\,\|\rho_n\nabla_{y'}\hR_n(\cdot,y_3)\|_{L^2(Y')},
	\]
	so the third bound in~\cref{eq:infty-bounds} gives
	\[
	\sup_n\,\|V_n\|_{L^2(I;\,H^1_{\mathrm{per},0}(Y';\Szero))} \leq C.
	\]
	 There exists a subsequence (not relabelled) and
	\begin{equation}
		\label{eq:Vstar}
		V^* \;\in\; L^2\bigl(I;\,H^1_{\mathrm{per},0}(Y';\Szero)\bigr),
		\quad
		V_n \rightharpoonup V^* \text{ weakly in }
		L^2\bigl(I;\,H^1_{\mathrm{per},0}(Y';\Szero)\bigr).
	\end{equation}
	Setting $F^* := \nabla_{y'}V^* \in L^2(Y;\Szero\otimes\mathbb{R}^2)$,
	we have
	\[
	\rho_n\nabla_{y'}\hR_n = \nabla_{y'}V_n
	\;\rightharpoonup\; \nabla_{y'}V^* = F^*
	\quad\text{weakly in }L^2(Y),
	\]
	so $F^*$ is an in-plane gradient field with zero $Y'$-mean
	a.e.\ in $I$.  From the first bound in~\cref{eq:infty-bounds}
	and the weak $H^1_{\mathrm{per},0}$-convergence $\hQ_n\rightharpoonup\hQ^*$,
	the in-plane part of $\Gc^M_{\rho_n}$ converges weakly in $L^2(Y)$ to
	$M + \nabla_{y'}\hQ^* + F^* = M + \nabla_{y'}(\hQ^* + V^*)$.
	
	Define
	\begin{equation}
		\label{eq:Qinfty}
		\hQ^*_\infty(y',y_3)
		:= \hQ^*(y') + V^*(y',y_3)
		\;\in\; L^2\bigl(I;\,H^1_{\mathrm{per},0}(Y';\Szero)\bigr).
	\end{equation}
	Since $\hQ^* \in H^1_{\mathrm{per},0}(Y';\Szero)
	\subset L^2(I;\,H^1_{\mathrm{per},0}(Y';\Szero))$
	and $V^* \in L^2(I;\,H^1_{\mathrm{per},0}(Y';\Szero))$,
	both having zero $Y'$-mean a.e.\ in $I$ by construction,
	we have $\hQ^*_\infty \in L^2(I;\,H^1_{\mathrm{per},0}(Y';\Szero))$.
	Together with $\hR^* \in H^1_\sharp(I;\Szero)$, this gives
	$(\hQ^*_\infty,\hR^*) \in \C^0_\infty$,
	and the weak limit of $\Gc^M_{\rho_n}$ is
	\[
	\Gc^* = \bigl(M + \nabla_{y'}\hQ^*_\infty,\;\partial_{y_3}\hR^*\bigr).
	\]
	
	We verify the R1 cell equations.
	
	\textbf{For~\cref{eq:CPinf-a}}: we proceed in two steps.
	
	\textit{Step A ($F_\phi$ is constant in $y_3$).}
	Take a test function of the form
	$\psi = \phi(y')\tilde\eta(y_3)$ with
	$\phi \in H^1_{\mathrm{per},0}(Y';\Szero)$
	and $\tilde\eta \in H^1_\sharp(I)$
	(so that $\int_I \tilde\eta\,dy_3 = 0$, making $\psi$ admissible
	in $H^1_{\mathrm{per},\sharp}(Y;\Szero)$)
	in~\cref{eq:CPrho-b}, and divide by $\rho_n$:
	\[
	\int_Y \bA(y)\,\Gc^M_{\rho_n}:(\nabla_{y'}\phi\cdot\tilde\eta,\,0)\,dy
	\;+\;
	\frac{1}{\rho_n}
	\int_Y \bA(y)\,\Gc^M_{\rho_n}:(0,\,\partial_{y_3}(\phi\tilde\eta))\,dy
	= 0.
	\]
	The second term tends to zero since $\rho_n\to\infty$ and the integral
	is bounded by
	$\|\bA\|_{L^\infty}\|\Gc^M_{\rho_n}\|_{L^2}
	\|\phi\|_{H^1(Y')}\|\tilde\eta\|_{H^1(I)}\leq C$.
	Passing $n\to\infty$ in the first term gives
	\[
	\int_Y \bA(y)\,\Gc^*:(\nabla_{y'}\phi\cdot\tilde\eta,\,0)\,dy = 0
	\qquad
	\forall\,\phi\in H^1_{\mathrm{per},0}(Y';\Szero),\;
	\forall\,\tilde\eta\in H^1_\sharp(I).
	\]
	By density of $H^1_\sharp(I)$ in $\{f\in L^2(I):\int_I f\,dy_3=0\}$,
	this extends to all zero-mean $\tilde\eta\in L^2(I)$.
	Defining $F_\phi(y_3) := \int_{Y'}\bA(y)\,\Gc^*:(\nabla_{y'}\phi,0)\,dy'$,
	the identity says $F_\phi$ is orthogonal in $L^2(I)$ to every
	zero-mean function, hence $F_\phi$ is \emph{constant} a.e.\ in $I$.
	
	\textit{Step B (the constant is zero).}
	Passing $n\to\infty$ directly in~\cref{eq:CPrho-a}
	and using weak convergence~\cref{eq:weak-conv} gives
	\[
	\int_Y \bA(y)\,\Gc^*:(\nabla_{y'}\phi,0)\,dy = 0
	\qquad
	\forall\,\phi\in H^1_{\mathrm{per},0}(Y';\Szero),
	\]
	that is, $\int_I F_\phi(y_3)\,dy_3 = 0$.
	Since $F_\phi$ is constant, it follows that $F_\phi = 0$ a.e.\ in $I$, i.e.\
	\[
	\int_{Y'}\bA(y)\,\Gc^*:(\nabla_{y'}\phi,0)\,dy' = 0
	\quad\text{for a.e.\ }y_3\in I
	\text{ and all }\phi\in H^1_{\mathrm{per},0}(Y';\Szero).
	\]
	Multiplying by an arbitrary $\eta\in L^2(I)$ and integrating over $I$
	yields~\cref{eq:CPinf-a}.
	
	\textbf{For~\cref{eq:CPinf-b}}: take any $\psi = \psi(y_3)
	\in H^1_\sharp(I;\Szero)$ independent of $y'$.
	Then $\nabla_{y'}\psi = 0$, so~\cref{eq:CPrho-b} reduces to
	\[
	\int_Y \bA(y)\,\Gc^M_{\rho_n}:(0,\partial_{y_3}\psi)\,dy = 0,
	\]
	which passes directly to the limit to give~\cref{eq:CPinf-b}.
	
	By uniqueness of the R1 minimiser, $\Gc^* = \Gc^M_\infty$,
	and the proof is complete.
\end{proof}
%\begin{remark}[Structure of the continuity proof at $\rho = \infty$]
%	\label{rem:continuity-structure}
%	Two features of Step~5 are worth highlighting. First, the bound
%	$\|\nabla_{y'}\hR_n\|_{L^2(Y)} \leq C/\rho_n$ is obtained by testing
%	the cell equation with the corrector $\hR_n$ itself and exploiting
%	the coercivity of the in-plane block of $\bA$; the resulting quadratic
%	inequality in $z_n = \|\nabla_{y'}\hR_n\|_{L^2(Y)}$ forces the
%	in-plane gradient of the transverse corrector to vanish at rate
%	$1/\rho_n$, which is the mechanism by which $\hR^*$ loses its
%	$y'$-dependence in the limit. Second, the verification
%	of~\cref{eq:CPinf-a} proceeds by first testing~\cref{eq:CPrho-b} with
%	separated variables $\phi(y')\tilde\eta(y_3)$ and dividing by $\rho_n$
%	to show that the $y_3$-dependent function $F_\phi$ is constant, and
%	then passing to the limit in~\cref{eq:CPrho-a} to show that this
%	constant is zero. This two-step argument extracts slice-wise
%	equilibrium from the full-cell equation.
%\end{remark}

\subsection{Homogenised Oseen--Frank and Ericksen energy}

\paragraph{Derivation of Homogenised Ericksen energy.}
\begin{proof}[Proof of Theorem \ref{thm:Er}]
	The anchoring sets are independent of $y$ and $\nu$ and coincide
	on the two faces, the effective constraint set is
	\[
	\mathscr A
	=
	\mathcal A(+e_3)\cap\mathcal A(-e_3)
	=
	\mathcal A(\nu).
	\]
	By Lemma~\ref{lem:density-Astar}, $C^\infty(\bar\omega;\mathscr A)$ is dense in
	$H^1(\omega;\mathscr A)$.
	Thus Assumption~\ref{asm:compatible}(iii) is satisfied.
	
	Let $\Qc\in H^1(\omega;\mathscr A)$. Define
	\[
	s(x')
	:=
	\left(\frac32\operatorname{tr}(\Qc(x')^2)\right)^{1/2}
	=
	\sqrt{\frac32}\,|\Qc(x')|,
	\qquad
	P(x')
	:=
	\frac1{s(x')}\Qc(x')+\frac13I .
	\]
	Since $\Qc(x')\in\mathscr A$ a.e. and $s_0>0$, we have
	\[
	s_0\leq s(x')\leq s_1,
	\qquad
	P(x')\in\mathbb P
	\quad\text{for a.e. }x'\in\omega.
	\]
	The Sobolev chain rule gives $s\in H^1(\omega;[s_0,s_1])$, $P\in H^1(\omega;\mathbb P)$,
	and hence
	\begin{equation}\label{eq:Er01}
		\Qc=s\left(P-\frac13I\right)
		\quad\text{a.e. in }\omega .
	\end{equation}
	The representation is unique, because $s$ is recovered from $\Qc$ by
	the preceding formula and then $P=s^{-1}\Qc+\frac13I$.
	
	Conversely, if $s\in H^1(\omega;[s_0,s_1])$ and $P\in H^1(\omega;\mathbb P)$,
	then $\Qc=s\left(P-\frac13I\right)$
	belongs to $H^1(\omega;\mathscr A)$. Therefore
	\[
	(s,P)\longmapsto s\left(P-\frac13I\right)
	\]
	identifies the admissible class $H^1(\omega;[s_0,s_1])\times H^1(\omega;\mathbb P)$
	with $H^1(\omega;\mathscr A)$.
	
	From \cref{eq:Er01},
	\[
	\partial_\alpha \Qc
	=
	\partial_\alpha s\left(P-\frac13I\right)
	+
	s\,\partial_\alpha P,
	\qquad \alpha=1,2.
	\]
	Thus, with the notation introduced in the statement,
	\[
	\nabla'\Qc=\Mc(s,P)
	\quad\text{in }L^2(\omega;\Szero\otimes\mathbb R^2).
	\]
	By Proposition~\ref{prop:hom-props}, $\bA^{\hom}_\rho$ is a symmetric
	coercive quadratic form on $\Szero\otimes\mathbb R^2$. Hence the
	elastic part of $\GF^{\hom,\gamma}_\rho$ satisfies
	\[
	\int_\omega
	\bA^{\hom}_\rho\nabla'\Qc:\nabla'\Qc\,dx'
	=
	\int_\omega
	\bA^{\hom}_\rho\Mc(s,P):\Mc(s,P)\,dx'.
	\]
	This keeps the full anisotropy of the homogenised tensor.
	
	Since $P\in\mathbb P$, the
	eigenvalues of $P-\frac13I$ are $\frac23, -\frac13, -\frac13$.
	Consequently, $\operatorname{tr}(\Qc^2)=\frac23s^2$,	$\operatorname{tr}(\Qc^3)=\frac29s^3$.
	Therefore the Landau-de Gennes bulk density restricted to $\mathscr A$
	depends only on $s$, and $f_b(\Qc)=f^{\mathrm{Er}}_b(s)$ a.e.~in $\omega$.
	Hence
	\[
	\int_\omega f_b(\Qc)\,dx'
	=
	\int_\omega f^{\mathrm{Er}}_b(s)\,dx'.
	\]
	Finally, since $\Qc(x')\in\mathscr A$ for a.e. $x'\in\omega$, the
	limiting anchoring contribution in $\GF^{\hom,\gamma}_\rho$ vanishes
	in the regime $0\leq\gamma<1$. Combining the preceding identities gives
	\[
	\GF^{\hom,\gamma}_\rho(\Qc)
	=
	\GF^{\hom}_{\mathrm{Er},\rho}(s,P).
	\]
	The equivalence of the two minimisation problems follows from the
	bijection between the admissible classes described above.
\end{proof}

\paragraph{Derivation of Homogenised Oseen--Frank energy.}
\begin{proof}[Proof of Theorem \ref{thm:OF}]
	The anchoring sets are independent of $y$ and $\nu$ and coincide
	on the two faces, the effective constraint set is $\mathscr A
	=
	\mathcal A(+e_3)\cap\mathcal A(-e_3)
	=
	\mathcal A(\nu)$.
	Because $s_0\neq0$, the map
	\[
	P\mapsto s_0\left(P-\frac13I\right)
	\]
	is a smooth embedding of $\mathbb P$ into $\Szero$. Hence $\mathscr A$
	is a compact smooth submanifold of $\Szero$, diffeomorphic to
	$\mathbb P\cong\mathbb{RP}^2$. Therefore
	Assumption~\ref{asm:compatible}(ii) follows from
	Lemma~\ref{lem:density-Astar}.
	
	Let $\Qc\in H^1(\omega;\mathscr A)$. Since $\Qc(x')\in\mathscr A$ for
	a.e. $x'\in\omega$, there exists a unique
	\[
	P(x')=\frac1{s_0}\Qc(x')+\frac13I
	\]
	with $P(x')\in\mathbb P$ a.e. The affine formula above gives $P\in H^1(\omega;\mathbb P)$,
	and $\Qc=s_0\left(P-\frac13I\right)$ a.e.~in $\omega$.
	Conversely, every $P\in H^1(\omega;\mathbb P)$ defines an element
	$\Qc\in H^1(\omega;\mathscr A)$ by this formula. Hence the two
	admissible classes are identified.
	
	The identity for the energy now follows directly from
	Theorem~\ref{thm:Er} by taking the scalar order parameter to be the
	constant $s=s_0$. Indeed, $\nabla'\Qc=s_0\nabla'P$.
	Therefore the homogenised elastic part is
	\[
	\int_\omega
	\bA^{\hom}_\rho\nabla'\Qc:\nabla'\Qc\,dx'
	=
	s_0^2
	\int_\omega
	\bA^{\hom}_\rho\nabla'P:\nabla'P\,dx'.
	\]
	This expression retains the full anisotropy of the tensor
	$\bA^{\hom}_\rho$ from Proposition~\ref{prop:hom-props}.
	
	For the bulk term, since $P\in\mathbb P$, the eigenvalues of
	$P-\frac13I$ are $\frac23, -\frac13, -\frac13$.
	Hence $\operatorname{tr}(\Qc^2)=\frac23s_0^2$, $\operatorname{tr}(\Qc^3)=\frac29s_0^3$.
	Thus $f_b$ is constant on $\mathscr A$, equal to $c_b$, and the bulk
	contribution is
	\[
	\int_\omega f_b(\Qc)\,dx'=c_b|\omega|.
	\]
	Finally, since $\Qc(x')\in\mathscr A$ a.e.~in $\omega$, the limiting
	anchoring contribution vanishes in the regime $0\leq\gamma<1$.
	Combining these identities gives
	\[
	\GF^{\hom,\gamma}_\rho(\Qc)
	=
	\GF^{\hom}_{\mathrm{OF},\rho}(P).
	\]
	The equivalence of the two minimisation problems follows from the
	bijection between $H^1(\omega;\mathbb P)$ and
	$H^1(\omega;\mathscr A)$ described above.
\end{proof}

\section*{Acknowledgements}
The author thanks Apala Majumdar (University of Manchester, UK) and Georges Griso (LJLL, Sorbonne Université) for valuable comments and suggestions during the preparation of this work.

	\section*{Data availability }
No datasets were generated or analysed during the current study.

\section*{Declarations and Competing interests}
The authors declare no competing interests.

	\appendix

	\section{Appendix}
	\label{app:den}
	
	\subsection{Dense subspaces}
	For the smooth dense subspaces, we also use the notation
	\[
	C^\infty_\sharp(\overline I;\Szero)
	:=
	\left\{
	f\in C^\infty(\overline I;\Szero):
	\int_I f(y_3)\,dy_3=0
	\right\},
	\]
	and
	\[
	C^\infty_{\mathrm{per},\sharp}(Y;\Szero)
	:=
	\left\{
	f\in C^\infty_{\mathrm{per}}(Y;\Szero):
	\int_I f(y',y_3)\,dy_3=0
	\ \text{for every }y'\in Y'
	\right\}.
	\]
	Similarly, \(C^\infty_{\mathrm{per},0}(Y';\Szero)\) and
	\(C^\infty_{\mathrm{per},0}(Y;\Szero)\) denote smooth periodic functions with
	zero average over \(Y'\) and \(Y\), respectively.
	\begin{lemma}[Smooth dense subspaces of \(\mathcal C_\rho\)]
		\label{lem:dense-subspaces}
		For each regime \(\rho\in[0,\infty]\), define
		\(\mathcal D_\rho\subset\mathcal C_\rho\) as follows (for the definition $\Cc_\rho$ see Section \ref{sec:main-results})
		
		\textit{Regime \(\rho=0\).}
		\[
		\mathcal D_0
		:=
		C^\infty_c\!\bigl(\omega;
		C^\infty_{\mathrm{per},0}(Y';\Szero)\bigr)
		\times
		C^\infty_c\!\bigl(\omega;
		C^\infty_{\mathrm{per}}(Y';
		C^\infty_\sharp(\overline I;\Szero))\bigr).
		\]

		\noindent
		\textit{Regime \(\rho\in(0,\infty)\).}
		\[
		\mathcal D_\rho
		:=
		\left\{
		(\rho\,\widehat R+\widehat Q,\widehat R):
		\begin{array}{l}
			\widehat R\in
			C^\infty_c\!\bigl(\omega;
			C^\infty_{\mathrm{per},\sharp}(Y;\Szero)\bigr),\\[2mm]
			\widehat Q\in
			C^\infty_c\!\bigl(\omega;
			C^\infty_{\mathrm{per},0}(Y';\Szero)\bigr)
		\end{array}
		\right\},
		\]
		where \(\widehat Q(x',y')\) is identified with its constant extension in the
		\(y_3\)-variable.

		\noindent
		\textit{Regime \(\rho=\infty\).}
		\[
		\mathcal D_\infty
		:=
		C^\infty_c\!\bigl(\omega\times I;
		C^\infty_{\mathrm{per},0}(Y';\Szero)\bigr)
		\times
		C^\infty_c\!\bigl(\omega;
		C^\infty_\sharp(\overline I;\Szero)\bigr).
		\]
		
		Then \(\mathcal D_\rho\) is dense in \(\mathcal C_\rho\) with respect to the
		natural Hilbert norm of \(\mathcal C_\rho\).
	\end{lemma}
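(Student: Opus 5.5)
The plan is to prove density regime by regime by reducing each factor of $\mathcal C_\rho$ to a standard density statement for Bochner spaces. I would use two elementary facts. First, $C^\infty_c(\omega)\otimes X$ is dense in $L^2(\omega;X)$ for any separable Hilbert space $X$: simple functions are dense, and each indicator can be approximated in $L^2(\omega)$ by $C^\infty_c(\omega)$ functions. Second, density of smooth periodic functions in the cell spaces. For $H^1_{\mathrm{per},0}(Y';\Szero)$ I would truncate the Fourier series and drop the zero mode. For $H^1_\sharp(I;\Szero)$ I would take a smooth approximant $f_k\in C^\infty(\overline I)$ and replace it by $f_k-\int_I f_k$. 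This correction is continuous in $H^1$, so the mean constraint is preserved.

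For $\rho=0$ the two factors are treated independently. The first factor is $L^2(\omega;H^1_{\mathrm{per},0}(Y'))$, which needs only the two facts above. The second factor is $L^2(\omega\times Y';H^1_\sharp(I))\cong L^2(\omega;L^2(Y';H^1_\sharp(I)))$. Here I would approximate in $y'$ by smooth periodic functions, for instance by mollifying periodically in $y'$ only. This operation commutes with $\int_I\cdot\,dy_3$ and with $\partial_{y_3}$, so it preserves $H^1_\sharp$ in $y_3$. I would then approximate in $y_3$ by smooth zero-mean functions and in $x'$ by $C^\infty_c$. The case $\rho=\infty$ is symmetric: the first factor lives on $\omega\times I$ with values in $H^1_{\mathrm{per},0}(Y')$, and the second is $L^2(\omega;H^1_\sharp(I))$.

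For $\rho\in(0,\infty)$, note that $\mathcal C_\rho$ is the image of the product $L^2(\omega;H^1_{\mathrm{per},0}(Y'))\times L^2(\omega;H^1_{\mathrm{per},\sharp}(Y))$ under the map $(\widehat Q,\widehat R)\mapsto(\widehat Q+\rho\widehat R,\widehat R)$. This map is a linear bijection, bounded in the natural norms, and its inverse $(\widehat Q_\rho,\widehat R)\mapsto(\widehat Q_\rho-\rho\widehat R,\widehat R)$ is also bounded. Dense sets are therefore mapped onto dense sets, and it suffices to approximate $\widehat Q$ and $\widehat R$ separately. For $\widehat R\in H^1_{\mathrm{per},\sharp}(Y)$, i.e.\ periodic in $y'$ with zero $y_3$-average for a.e.\ $y'$, I would proceed in three steps:
\begin{enumerate}
\item approximate $\widehat R$ by a smooth $g\in C^\infty_{\mathrm{per}}(Y)$, using periodic mollification in $y'$ combined with extension and mollification in $y_3$;
\item project, setting $\tilde g(y',y_3):=g(y',y_3)-\int_I g(y',t)\,dt$;
\item approximate in $x'$ by $C^\infty_c(\omega)$, as above.
\end{enumerate}
The projection in step 2 is bounded on $H^1_{\mathrm{per}}(Y)$, fixes every element of $H^1_{\mathrm{per},\sharp}$, and preserves smoothness. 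Hence $\|\tilde g-\widehat R\|\le C\|g-\widehat R\|$.

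The main obstacle is making sure each approximation respects the integral constraints simultaneously: zero $Y'$-mean, zero $y_3$-mean a.e.\ in $Y'$, and zero $Y$-mean for $\widehat Q_\rho$. The bounded projections above handle this. For the zero $Y$-mean of $\widehat Q_\rho$ one checks that $\widehat Q$ has zero $Y'$-mean and $\widehat R$ has zero $y_3$-mean, so the combination automatically has zero $Y$-mean. A second, more delicate point is that the first factor in $\mathcal D_\infty$ requires compact support in $\omega\times I$, including in $y_3$. Since $C^\infty_c(\omega\times I)$ is dense in $L^2(\omega\times I)$, the first fact still applies, with $\omega\times I$ in place of $\omega$.
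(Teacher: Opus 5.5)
Your proposal is correct and follows essentially the same route as the paper. Both argue regime by regime, reducing each factor to density of smooth $C^\infty_c$-in-$x'$ functions in the Bochner spaces and restoring the zero-mean constraints with bounded projections; for $\rho\in(0,\infty)$, both transfer density through the bounded isomorphism $(\widehat Q,\widehat R)\mapsto(\widehat Q+\rho\widehat R,\widehat R)$, whose inverse recovers $\widehat Q$ from $\widehat Q_\rho-\rho\widehat R$. Your additional details (Fourier truncation, periodic mollification in $y'$ commuting with $\int_I$ and $\partial_{y_3}$, and the check that $\widehat Q+\rho\widehat R$ has zero $Y$-mean) just fill in steps the paper states as standard.
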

	
	\begin{proof}
		We first recall the elementary density facts used below.
		
		Let \(X\) be a separable Hilbert space. Then
		\(C^\infty_c(\omega;X)\) is dense in \(L^2(\omega;X)\). Similarly,
		\(C^\infty_c(\omega\times I;X)\) is dense in \(L^2(\omega\times I;X)\).
		Moreover, smooth periodic functions are dense in the corresponding periodic
		Sobolev spaces. The zero-mean conditions are preserved by applying the bounded
		linear projections
		\[
		P_0 f := f-\int_{Y'} f(y')\,dy',\quad\text{on \(H^1_{\mathrm{per}}(Y';\Szero)\)},\quad
		P_\sharp f(y_3)
		:=
		f(y_3)-\int_I f(\tau)\,d\tau,\quad\text{on \(H^1(I;\Szero)\)}.
		\] 
		Likewise, on \(H^1_{\mathrm{per}}(Y;\Szero)\) we use
		\[
		P_\sharp^3 f(y',y_3)
		:=
		f(y',y_3)
		-
		\int_I f(y',\tau)\,d\tau .
		\]
		This projection maps \(H^1_{\mathrm{per}}(Y;\Szero)\) continuously onto
		\(H^1_{\mathrm{per},\sharp}(Y;\Szero)\) and preserves smoothness.

		\noindent
		\textit{Regime \(\rho=0\).}
		The space \(\mathcal C_0\) is the product
		\[
		L^2\!\bigl(\omega;H^1_{\mathrm{per},0}(Y';\Szero)\bigr)
		\times
		L^2\!\bigl(\omega\times Y';H^1_\sharp(I;\Szero)\bigr).
		\]
		By the density facts recalled above,
		$C^\infty_c\!\bigl(\omega;
		C^\infty_{\mathrm{per},0}(Y';\Szero)\bigr)$
		is dense in
		$L^2\!\bigl(\omega;H^1_{\mathrm{per},0}(Y';\Szero)\bigr)$,
		and\\
		$C^\infty_c\!\bigl(\omega;
		C^\infty_{\mathrm{per}}(Y';
		C^\infty_\sharp(\overline I;\Szero))\bigr)$
		is dense in
		$L^2\!\bigl(\omega\times Y';H^1_\sharp(I;\Szero)\bigr)$.
		Therefore \(\mathcal D_0\) is dense in \(\mathcal C_0\).

		\noindent
		\textit{Regime \(\rho=\infty\).}
		The space \(\mathcal C_\infty\) is the product
		\[
		L^2\!\bigl(\omega\times I;
		H^1_{\mathrm{per},0}(Y';\Szero)\bigr)
		\times
		L^2\!\bigl(\omega;H^1_\sharp(I;\Szero)\bigr).
		\]
		Again by the same density facts,
		$C^\infty_c\!\bigl(\omega\times I;
		C^\infty_{\mathrm{per},0}(Y';\Szero)\bigr)$
		is dense in
		$L^2\!\bigl(\omega\times I;
		H^1_{\mathrm{per},0}(Y';\Szero)\bigr)$,
		and
		$C^\infty_c\!\bigl(\omega;
		C^\infty_\sharp(\overline I;\Szero)\bigr)$
		is dense in
		$L^2\!\bigl(\omega;H^1_\sharp(I;\Szero)\bigr)$.
		Hence \(\mathcal D_\infty\) is dense in \(\mathcal C_\infty\).

		\noindent
		\textit{Regime \(\rho\in(0,\infty)\).}
		Let
		\[
		X_\rho
		:=
		L^2\!\bigl(\omega;
		H^1_{\mathrm{per},\sharp}(Y;\Szero)\bigr)
		\times
		L^2\!\bigl(\omega;
		H^1_{\mathrm{per},0}(Y';\Szero)\bigr).
		\]
		Define
		\[
		\Psi_\rho:X_\rho\to\mathcal C_\rho,
		\qquad
		\Psi_\rho(\widehat R,\widehat Q)
		:=
		(\rho\,\widehat R+\widehat Q,\widehat R),
		\]
		where \(\widehat Q(x',y')\) is extended constantly in \(y_3\).
		
		The map \(\Psi_\rho\) is linear and bounded. Moreover, by the definition of
		\(\mathcal C_\rho\), every element of \(\mathcal C_\rho\) is of the form
		\[
		(\widehat Q_\rho,\widehat R)
		=
		(\widehat Q+\rho\,\widehat R,\widehat R)
		\]
		for a unique
		\[
		\widehat Q
		\in
		L^2\!\bigl(\omega;
		H^1_{\mathrm{per},0}(Y';\Szero)\bigr).
		\]
		Indeed, one recovers \(\widehat Q\) by
		\[
		\widehat Q(x',y')
		=
		\int_I
		\bigl(
		\widehat Q_\rho-\rho\,\widehat R
		\bigr)(x',y',y_3)\,dy_3.
		\]
		Thus \(\Psi_\rho\) is a Hilbert-space isomorphism from \(X_\rho\) onto
		\(\mathcal C_\rho\), with bounded inverse.
		
		Now
		$C^\infty_c\!\bigl(\omega;
		C^\infty_{\mathrm{per},\sharp}(Y;\Szero)\bigr)$
		is dense in
		$L^2\!\bigl(\omega;
		H^1_{\mathrm{per},\sharp}(Y;\Szero)\bigr)$,
		and
		$C^\infty_c\!\bigl(\omega;
		C^\infty_{\mathrm{per},0}(Y';\Szero)\bigr)$
		is dense in
		$L^2\!\bigl(\omega;
		H^1_{\mathrm{per},0}(Y';\Szero)\bigr)$.
		Therefore the product of these two smooth spaces is dense in \(X_\rho\).
		Applying the continuous isomorphism \(\Psi_\rho\), we obtain the density of
		\(\mathcal D_\rho\) in \(\mathcal C_\rho\).
		
		The proof is complete.
	\end{proof}
	
	\begin{lemma}[Density of smooth constrained maps]
		\label{lem:density-Astar}
		Under Assumption~\ref{asm:compatible}, the space
		$C^\infty(\bar\omega; \mathscr{A})$ is dense in
		$H^1(\omega; \mathscr{A})$ with respect to the $H^1(\omega; \Szero)$
		norm.
	\end{lemma}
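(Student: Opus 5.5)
The plan is to argue case by case according to the three geometric alternatives (i)--(iii) in Assumption~\ref{asm:compatible}. Throughout, $\omega\subset\R^2$ is a bounded Lipschitz domain, so we may use an extension across $\partial\omega$ followed by mollification. The common mechanism is: approximate $\Qc\in H^1(\omega;\mathscr A)$ by smooth maps $\Qc_\delta$ with values in $\Szero$ that converge in $H^1$ and lie uniformly close to $\mathscr A$ in a suitable sense, then compose with a retraction onto $\mathscr A$.

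\emph{Case (i), $\mathscr A$ closed convex.} First extend $\Qc$ to a slightly larger domain by reflection in local Lipschitz charts. Convexity is preserved under averaging, so the extension, or more simply the approximation obtained by a partition of unity with interior translations in the charts, still takes values in $\mathscr A$. Mollification produces convex combinations of values of $\Qc$, which therefore lie in $\mathscr A$. Hence $\Qc*\eta_\delta\in C^\infty(\bar\omega;\mathscr A)$ converges to $\Qc$ in $H^1$. No projection is needed. Alternatively, one may apply the $1$-Lipschitz nearest-point projection $\Pi_{\mathscr A}$ to any $H^1$-approximation; by itself this gives only Lipschitz maps, which is why the averaging argument is preferable.

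\emph{Case (ii), $\mathscr A$ a compact smooth manifold without boundary.} This is the main obstacle, since $\dim\omega=2$ is the critical exponent for $H^1$ maps. We use the Schoen--Uhlenbeck argument. Let $\Qc_\delta:=\Qc*\eta_\delta$ after extension. Because $H^1\subset \mathrm{VMO}$ in two dimensions,
\[
\operatorname{dist}(\Qc_\delta(x'),\mathscr A)\le \fint_{B_\delta(x')}\fint_{B_\delta(x')}|\Qc(z)-\Qc(w)|\,dz\,dw\le C\Bigl(\int_{B_{2\delta}(x')}|\nabla'\Qc|^2\Bigr)^{1/2}\to0
\]
uniformly in $x'\in\bar\omega$, by absolute continuity of the integral. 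Hence for small $\delta$ the range of $\Qc_\delta$ lies in a tubular neighbourhood of $\mathscr A$ on which the nearest-point projection $\Pi$ is smooth. Then $\Pi\circ\Qc_\delta\in C^\infty(\bar\omega;\mathscr A)$, and $\Pi\circ\Qc_\delta\to\Pi\circ\Qc=\Qc$ in $H^1$ by the chain rule, using that $\nabla\Pi$ is bounded and continuous, together with dominated convergence. The delicate point is the uniform VMO estimate up to the boundary. It is handled by the Lipschitz extension to a neighbourhood of $\bar\omega$; the extension need not be $\mathscr A$-valued, only $H^1$, since the VMO bound near $\partial\omega$ is then controlled by the small energy of the extension on small balls.

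\emph{Case (iii), $\mathscr A=\{s(P-\frac13I)\}$ with boundary.} Here the nearest-point projection is smooth near $\mathscr A$ only away from its boundary strata. Instead we use the product structure and write $\Qc=s(P-\frac13I)$ with $s=\sqrt{3/2}\,|\Qc|\in H^1(\omega;[s_0,s_1])$ and $P=\Qc/s+\frac13I\in H^1(\omega;\mathbb P)$; this is valid since $s\ge s_0>0$, by the chain rule. The scalar field $s$ is approximated by mollification, which preserves $[s_0,s_1]$ by convexity as in case (i). The field $P$ is approximated by case (ii) applied to the compact boundaryless manifold $\mathbb P\cong\mathbb{RP}^2$. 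The product of the two approximations converges in $H^1$: $s_\delta$ is uniformly bounded, and $\nabla s_\delta\cdot P_\delta$ converges in $L^2$ because $|P_\delta|$ is bounded, so we can pass to a further subsequence with a.e.\ convergence and apply dominated convergence. This gives smooth $\mathscr A$-valued approximants.
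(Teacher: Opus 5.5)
Your proposal is correct and follows essentially the paper's route: convexity-preserving mollification in case (i), the Schoen--Uhlenbeck VMO-plus-nearest-point-projection argument in case (ii), and the factorisation $\Qc=s(P-\frac13 I)$ with separate approximation of $s$ and $P$ in case (iii). In case (i) you extend by Lipschitz reflection glued with a partition of unity, whereas the paper uses a linear extension operator followed by the metric projection $\Pi_{\mathscr A}$; in case (ii) you sketch the argument, whereas the paper simply cites it.

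The one step to tighten is the boundary estimate in case (ii). Because the extension is not $\mathscr A$-valued, the displayed inequality only holds if you average over $B_\delta(x')\cap\omega$. You then need the uniform lower density $|B_\delta(x')\cap\omega|\ge c\,\delta^2$ for $x'\in\bar\omega$, which holds for Lipschitz $\omega$. Smallness of the energy on small balls alone does not control $\operatorname{dist}(\Qc_\delta(x'),\mathscr A)$.
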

		\begin{proof}
		\textbf{Case (i):} $\mathscr{A}\subset \Szero$ is non-empty, closed, and convex.
		
		Let $Q\in H^1(\omega;\mathscr{A})$.  Since
		$\omega$ is a bounded Lipschitz domain, there exists a bounded linear
		extension operator
		\[
		E:H^1(\omega;\Szero)\to H^1(\mathbb R^2;\Szero).
		\]
		Set $\widetilde Q:=EQ$. The function $\widetilde Q$ need not be
		$\mathscr{A}$-valued outside $\omega$. Let
		\[
		\Pi_{\mathscr{A}}:\Szero\to\mathscr{A}
		\]
		denote the metric projection onto the closed convex set $\mathscr{A}$.
		The map $\Pi_{\mathscr{A}}$ is $1$-Lipschitz. Hence
		\[
		\widehat Q:=\Pi_{\mathscr{A}}(\widetilde Q)
		\in H^1(\mathbb R^2;\mathscr{A}).
		\]
		Moreover, since $Q(x')\in\mathscr{A}$ for a.e. $x'\in\omega$, we have
		\[
		\widehat Q=Q
		\quad\text{a.e. in }\omega.
		\]
		
		Let $\{\rho_\eta\}_{\eta>0}$ be a standard family of mollifiers on
		$\mathbb R^2$ and define
		\[
		Q^\eta:=\rho_\eta * \widehat Q .
		\]
		Then $Q^\eta\in C^\infty(\mathbb R^2;\Szero)$. Since
		$\widehat Q(y)\in\mathscr{A}$ for a.e. $y\in\mathbb R^2$ and
		$\mathscr{A}$ is convex, Jensen's inequality gives
		\[
		Q^\eta(x')
		=
		\int_{\mathbb R^2}\rho_\eta(x'-y)\widehat Q(y)\,dy
		\in\mathscr{A}
		\quad\text{for every }x'\in\mathbb R^2.
		\]
		Therefore
		\[
		Q^\eta|_{\bar\omega}\in C^\infty(\bar\omega;\mathscr{A}).
		\]
		Finally, by the standard approximation property of mollifiers,
		\[
		Q^\eta\to \widehat Q
		\quad\text{in }H^1_{\mathrm{loc}}(\mathbb R^2;\Szero).
		\]
		In particular, since $\widehat Q=Q$ a.e. in $\omega$,
		\[
		Q^\eta\to Q
		\quad\text{in }H^1(\omega;\Szero).
		\]
		Thus $C^\infty(\bar\omega;\mathscr{A})$ is dense in
		$H^1(\omega;\mathscr{A})$ in this case.
		
		\textbf{Case (ii):} $\mathscr{A}$ is a compact smooth submanifold.
		Since $\omega \subset \mathbb{R}^2$ is a bounded Lipschitz domain,
		we have $p = 2 = \dim\omega$. At the critical exponent $p = n$,
		the density of $C^\infty(\bar\omega; \mathscr{A})$ in
		$W^{1,2}(\omega; \mathscr{A})$ holds for any compact smooth
		target manifold without boundary $\mathscr{A}$, with no topological condition
		required. This is the content of the approximation theorem of
		Schoen and Uhlenbeck~\cite{schoen1983boundary} (see also \cite{bethuel1991approximation}), which covers the
		case $p = \dim M$ for compact targets.
		
		\textbf{Case (iii):} $\mathscr{A}$ is given by \cref{eq:csmwb+} with
		$0<s_0<s_1<1$.
		
		We identify the quotient manifold $\mathbb S^2/\{\pm 1\}$ with
		\[
		\mathbb P
		:=
		\{P\in\mathbb R^{3\times 3}_{\rm sym}: P^2=P,\ \operatorname{tr}P=1\}
		\]
		through the smooth map $[\bn]\mapsto \bn\otimes \bn$.
		Thus $\mathbb P$ is a compact smooth manifold without boundary, naturally
		diffeomorphic to $\mathbb{RP}^2$, and the Sobolev space of unoriented
		director fields is understood as
		\[
		H^1(\omega;\mathbb S^2/\{\pm1\})
		\cong
		H^1(\omega;\mathbb P).
		\]
		This formulation does not require the existence of a global
		$H^1$-director lifting $\bn\in H^1(\omega;\mathbb S^2)$.
		
		Since $\omega\subset\mathbb R^2$ is a bounded Lipschitz domain and
		$\mathbb P$ is a compact smooth manifold without boundary, the critical
		case $p=\dim\omega=2$ of the Schoen--Uhlenbeck approximation theorem
		\cite{schoen1983boundary} gives
		$C^\infty(\bar\omega;\mathbb P)$ dense in
		$H^1(\omega;\mathbb P)$.		
		Equivalently,
		\[
		C^\infty(\bar\omega;\mathbb S^2/\{\pm1\})
		\quad\text{is dense in}\quad
		H^1(\omega;\mathbb S^2/\{\pm1\}).
		\]
		
		With this notation,
		\[
		\mathscr{A}
		=
		\left\{
		s\left(P-\frac13 I\right):
		P\in\mathbb P,\ s\in[s_0,s_1]
		\right\}.
		\]
		
		Let $\Qc\in H^1(\omega;\mathscr{A})$. Define
		\[
		s(x'):=\sqrt{\frac32}\,|\Qc(x')|,
		\qquad
		P(x'):=\frac{1}{s(x')}\Qc(x')+\frac13 I.
		\]
		Since $s_0\leq s(x')\leq s_1$ a.e. in $\omega$, the map
		$t\mapsto 1/t$ is smooth and bounded on the range of $s$. Hence, by the
		Sobolev chain rule, $s\in H^1(\omega;[s_0,s_1])$,
		$P\in H^1(\omega;\mathbb P)$,
		and
		\[
		\Qc=s\left(P-\frac13 I\right)
		\quad\text{a.e. in }\omega.
		\]
		We now approximate the two factors separately. Since $[s_0,s_1]$ is a
		closed convex interval, a standard mollification argument combined with
		projection onto $[s_0,s_1]$ gives a sequence
		\[
		s_k\in C^\infty(\bar\omega),
		\qquad
		s_k(x')\in[s_0,s_1]\ \text{for every }x'\in\bar\omega,
		\qquad
		s_k\to s\quad\text{in }H^1(\omega).
		\]
		Moreover, by the density result above, there exists a sequence
		\[
		P_k\in C^\infty(\bar\omega;\mathbb P),
		\qquad
		P_k\to P\quad\text{in }H^1(\omega;\mathbb R^{3\times3}).
		\]
		After passing to a subsequence, not relabelled, we may also assume that
		\[
		s_k\to s
		\quad\text{and}\quad
		P_k\to P
		\quad\text{a.e. in }\omega.
		\]
		
		Define
		\[
		\Qc_k(x')
		:=
		s_k(x')\left(P_k(x')-\frac13 I\right).
		\]
		Then $\Qc_k\in C^\infty(\bar\omega;\mathscr{A})$.
		
		It remains to show that $\Qc_k\to\Qc$ in $H^1(\omega;\Szero)$. First,
		\[
		\Qc_k-\Qc
		=
		(s_k-s)\left(P_k-\frac13 I\right)
		+
		s(P_k-P).
		\]
		Since $P_k$ and $P$ take values in the compact set $\mathbb P$, they are
		uniformly bounded. Therefore
		\[
		\|\Qc_k-\Qc\|_{L^2(\omega)}
		\leq
		C\|s_k-s\|_{L^2(\omega)}
		+
		s_1\|P_k-P\|_{L^2(\omega)}
		\to0.
		\]
		
		For the gradients, we write
		\[
		\nabla'(\Qc_k-\Qc)
		=
		(\nabla' s_k-\nabla' s)\left(P_k-\frac13 I\right)
		+
		\nabla' s\,(P_k-P)
		+
		s_k(\nabla'P_k-\nabla'P)
		+
		(s_k-s)\nabla'P .
		\]
		The first term converges to zero in $L^2$ because $P_k$ is uniformly
		bounded:
		\[
		\left\|
		(\nabla' s_k-\nabla' s)\left(P_k-\frac13 I\right)
		\right\|_{L^2}
		\leq
		C\|\nabla' s_k-\nabla' s\|_{L^2}
		\to0.
		\]
		For the second term, since $P_k\to P$ a.e. in $\omega$ and
		$|P_k-P|\leq C$, dominated convergence gives
		\[
		\|\nabla' s\,(P_k-P)\|_{L^2}\to0.
		\]
		For the third term, using $|s_k|\leq s_1$, we obtain
		\[
		\|s_k(\nabla'P_k-\nabla'P)\|_{L^2}
		\leq
		s_1\|\nabla'P_k-\nabla'P\|_{L^2}
		\to0.
		\]
		Finally, since $s_k\to s$ a.e. in $\omega$, $|s_k-s|\leq C$, and
		$\nabla'P\in L^2(\omega)$, dominated convergence gives
		\[
		\|(s_k-s)\nabla'P\|_{L^2}\to0.
		\]
		Combining the preceding estimates, we conclude that
		\[
		\Qc_k\to\Qc
		\quad\text{in }H^1(\omega;\Szero).
		\]
		This proves the desired density in Case (iii).
	\end{proof}
	
	\subsection{Coercivity estimate}
	\begin{lemma}
		\label{lem:coercivity}
		Assume the conclusions of Lemmas~\ref{lem:71} and~\ref{prop:admissible-correctors}. 
		Denote by $C_I > 0$ the Poincar\'{e}-Wirtinger constant of $I$, i.e.\ 
		$\|f\|_{L^2(I)} \leq C_I \|\partial_{y_3} f\|_{L^2(I)}$ for all $f \in H^1(I)$ 
		with $\int_I f \, dy_3 = 0$, and by $C_{Y'}, C_Y > 0$ the Poincar\'{e} constants 
		of $Y'$ and $Y$, respectively. Then the following two-sided bound holds in each 
		regime:
		\begin{multline}
			\label{eq:coercivity}
			c_\rho \Bigl(
			\|\nabla' \Qc\|^2_{L^2(\omega)}
			+ \|\wh{Q}_\rho\|^2_{L^2(\omega \times I;\, H^1(Y'))}
			+ \|\wh{R}\|^2_{L^2(\omega \times Y';\, H^1(I))}
			\Bigr)
			\;\leq\;
			\|\Gc_\rho\|^2_{L^2(\omega \times Y)}\\
			\;=\;
			\|\nabla' \Qc\|^2_{L^2(\omega)}
			+ \|\nabla_{y'}\wh{Q}_\rho\|^2_{L^2(\omega \times Y)}
			+ \|\partial_{y_3}\wh{R}\|^2_{L^2(\omega \times Y)},
		\end{multline}
		where the upper bound holds as an equality, and the lower bound constant 
		$c_\rho > 0$ is given explicitly as follows:
		\begin{enumerate}[label=\textup{(\roman*)}]
			\item \textup{(Regime $\rho = 0$).}
			Here $\wh{Q}_0 \in L^2\!\bigl(\omega \times I;\, H^1_{\mathrm{per},0}(Y';\, \Szero)\bigr)$, and
			\[
			c_0
			\;=\;\min\left\{
			\frac{1}{1 + C_{Y'}},
			\frac{1}{1 + C_I^2}\right\}>0.
			\]
			
			\item \textup{(Regime $\rho \in (0,\infty)$).}
			Here $\wh{Q}_\rho = \wh{Q} + \rho\,\wh{R}
			\in L^2\!\bigl(\omega;\, H^1_{\mathrm{per},0}(Y;\, \Szero)\bigr)$, and
			\[
			c_\rho
			\;=\;\min\left\{
			\frac{1}{C_Y(1 + \rho^2) + 1},
			\frac{1}{1 + C_I^2}\right\}>0.
			\]
			In particular $c_\rho \sim (1 + \rho^2)^{-1}$ as $\rho \to \infty$, 
			so the lower bound degenerates.
			
			\item \textup{(Regime $\rho = \infty$).}
			Here $\wh{Q}_\infty \in L^2\!\bigl(\omega \times I;\, H^1_{\mathrm{per},0}(Y';\, \Szero)\bigr)$ 
			and $\wh{R}$ is $y'$-independent, and
			\[
			c_\infty
			\;=\;\min\left\{
			\frac{1}{1 + C_{Y'}},
			\frac{1}{1 + C_I^2}\right\}>0.
			\]
		\end{enumerate}
	\end{lemma}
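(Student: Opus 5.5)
The equality part is a direct orthogonality computation. By Lemma~\ref{prop:admissible-correctors}, $\Gc_\rho=(\nabla'\Qc+\nabla_{y'}\wh Q_\rho,\partial_{y_3}\wh R)$, so
\[
\|\Gc_\rho\|^2_{L^2(\omega\times Y)}=\|\nabla'\Qc+\nabla_{y'}\wh Q_\rho\|^2+\|\partial_{y_3}\wh R\|^2 .
\]
Expanding the first square, the cross term is
\[
2\int_\omega\int_I \nabla'\Qc(x'):\Bigl(\int_{Y'}\nabla_{y'}\wh Q_\rho\,dy'\Bigr)\,dy_3\,dx'.
\]
It vanishes because $\nabla'\Qc$ is independent of $y$ and, for a.e.\ $(x',y_3)$, the field $\wh Q_\rho(x',\cdot,y_3)$ is $Y'$-periodic, so $\int_{Y'}\nabla_{y'}\wh Q_\rho\,dy'=0$. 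This uses periodicity only, not the zero mean. Since $|Y|=1$, the term $\|\nabla'\Qc\|^2_{L^2(\omega\times Y)}$ equals $\|\nabla'\Qc\|^2_{L^2(\omega)}$, and the identity in \cref{eq:coercivity} follows in all three regimes.

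For the lower bound I would control each piece of the left-hand side by the corresponding piece on the right, using one Poincaré inequality at a time.

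The $\wh R$ term is handled the same way in every regime. For a.e.\ $(x',y')$ we have $\int_I\wh R\,dy_3=0$, by \cref{es:ort} or by the definition of $H^1_\sharp$ and $H^1_{\mathrm{per},\sharp}$. Poincaré--Wirtinger on $I$ then gives $\|\wh R\|^2_{L^2}\le C_I^2\|\partial_{y_3}\wh R\|^2_{L^2}$, hence
\[
\|\wh R\|^2_{L^2(\omega\times Y';H^1(I))}\le (1+C_I^2)\|\partial_{y_3}\wh R\|^2 .
\]

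The $\wh Q_\rho$ term depends on the regime.
\begin{itemize}
\item For $\rho=0$ and $\rho=\infty$: $\wh Q_\rho(x',\cdot,y_3)$ lies in $H^1_{\mathrm{per},0}(Y')$ for a.e.\ $(x',y_3)$. Slice-wise Poincaré--Wirtinger on $Y'$ gives $\|\wh Q_\rho\|^2_{L^2(\omega\times I;H^1(Y'))}\le(1+C_{Y'})\|\nabla_{y'}\wh Q_\rho\|^2$, where $C_{Y'}$ is understood as the squared constant.
\item For $\rho\in(0,\infty)$: $\wh Q_\rho=\wh Q+\rho\wh R$ has zero mean only over the full cell $Y$, not on each $Y'$-slice. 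I would first remove the slice means $m(x',y_3):=\int_{Y'}\wh Q_\rho\,dy'$. Since $\wh Q$ has zero $Y'$-mean, $m=\rho\int_{Y'}\wh R\,dy'$. Then $\|m\|_{L^2}\le\rho\|\wh R\|_{L^2}\le\rho C_I\|\partial_{y_3}\wh R\|$, while the remainder $\wh Q_\rho-m$ is controlled by $C_{Y'}\|\nabla_{y'}\wh Q_\rho\|$.
\end{itemize}
Summing these estimates gives a bound of the form stated in (ii), with a constant carrying the factor $(1+\rho^2)$. The three term-wise bounds together yield the lower bound with $c_\rho$ equal to the reciprocal of the largest multiplier.

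The main obstacle is regime (ii). The mixed $y_3$-dependence of $\wh Q_\rho$ prevents a slice-wise Poincaré inequality, and a direct Poincaré inequality on $Y$ would need $\partial_{y_3}\wh Q_\rho=\rho\,\partial_{y_3}\wh R$, which introduces the $\rho^2$ factor. I would match the displayed constant $\bigl(C_Y(1+\rho^2)+1\bigr)^{-1}$ using that route, bounding $\|\nabla_y\wh Q_\rho\|^2\le\|\nabla_{y'}\wh Q_\rho\|^2+\rho^2\|\partial_{y_3}\wh R\|^2$. The precise numerical form of the constant is secondary; only positivity and the $(1+\rho^2)^{-1}$ scaling matter.
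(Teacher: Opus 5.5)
Your proposal is correct and follows the paper's proof essentially step for step: periodicity in $y'$ kills the cross term, Poincar\'e--Wirtinger on $I$ handles $\wh R$, slice-wise Poincar\'e on $Y'$ handles $\rho\in\{0,\infty\}$, and for $\rho\in(0,\infty)$ the full-cell Poincar\'e inequality on $Y$ with $\partial_{y_3}\wh Q_\rho=\rho\,\partial_{y_3}\wh R$ produces the $(1+\rho^2)$ factor; your slice-mean alternative in regime (ii) is also valid and gives a constant in terms of $C_{Y'}$ and $C_I$ instead of $C_Y$.

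One bookkeeping point, which the paper also glosses over, affects the exact constant. In regime (ii), both the $\wh Q_\rho$ estimate and the $\wh R$ estimate use up $\|\partial_{y_3}\wh R\|^2$, so adding them naively recovers the displayed $c_\rho$ only up to a factor $2$. To obtain it exactly, keep the coefficients $C_Y+1$ on $\|\nabla_{y'}\wh Q_\rho\|^2$ and $C_Y\rho^2+1+C_I^2$ on $\|\partial_{y_3}\wh R\|^2$. Then note that one may take $C_I^2\le C_Y$, since zero-mean functions of $y_3$ alone, extended constantly in $y'$, are admissible in the Poincar\'e inequality on $Y$. Hence both coefficients are at most $C_Y(1+\rho^2)+1$.
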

	
	\begin{proof}
		We verify the equality (upper bound) first, then establish the lower bound 
		regime by regime.
		
		In all regimes the two-scale limit decomposes as
		\[
		\Gc_\rho
		= \bigl(\nabla'\Qc + \nabla_{y'}\wh{Q}_\rho,\; 0\bigr)
		+ \bigl(0,\; \partial_{y_3}\wh{R}\bigr),
		\]
		and the two summands are orthogonal in $L^2(\omega \times Y;\, \Szero \otimes \mathbb{R}^3)$. 
		It therefore suffices to show that the cross term in the in-plane part vanishes. 
		Since $\nabla'\Qc$ is independent of $y$,
		\[
		\int_{\omega \times Y} \nabla'\Qc \cdot \nabla_{y'}\wh{Q}_\rho \, dy\, dx'
		= \int_\omega \nabla'\Qc(x') \cdot \Bigl(\int_Y \nabla_{y'}\wh{Q}_\rho \, dy\Bigr) dx'.
		\]
		In every regime $\wh{Q}_\rho$ is $Y'$-periodic, so the divergence theorem on 
		$Y'$ gives $\int_{Y'} \nabla_{y'}\wh{Q}_\rho \, dy' = 0$ for a.e.\ $y_3 \in I$. 
		Hence the cross term vanishes, and we obtain 
		\begin{equation}
			\label{eq:upper-equality}
			\|\Gc_\rho\|^2_{L^2(\omega \times Y)}
			= \|\nabla'\Qc\|^2_{L^2(\omega)}
			+ \|\nabla_{y'}\wh{Q}_\rho\|^2_{L^2(\omega \times Y)}
			+ \|\partial_{y_3}\wh{R}\|^2_{L^2(\omega \times Y)}.
		\end{equation}	
		The orthogonality condition~\cref{es:ort} gives $\int_I \wh{R}(\cdot, y_3)\, dy_3 = 0$ 
		a.e.\ in $\omega \times Y'$. The Poincar\'{e} inequality on $I$ yields
		$\|\wh{R}\|^2_{L^2(I)} \leq C_I^2 \|\partial_{y_3}\wh{R}\|^2_{L^2(I)}$,
		and hence
		\[
		\|\wh{R}\|^2_{H^1(I)}
		= \|\wh{R}\|^2_{L^2(I)} + \|\partial_{y_3}\wh{R}\|^2_{L^2(I)}
		\leq (1 + C_I^2)\,\|\partial_{y_3}\wh{R}\|^2_{L^2(I)}.
		\]
		Integrating over $\omega \times Y'$ (and using $y'$-independence of $\wh{R}$ 
		in regime $\rho = \infty$, which only replaces $L^2(\omega \times Y')$ by 
		$|Y'|^{1/2} L^2(\omega)$ without changing the argument):
		\begin{equation}
			\label{eq:R-lower}
			\|\partial_{y_3}\wh{R}\|^2_{L^2(\omega \times Y)}
			\;\geq\;
			\frac{1}{1 + C_I^2}\,
			\|\wh{R}\|^2_{L^2(\omega \times Y';\, H^1(I))}.
		\end{equation}
		This bound is independent of $\rho$.
		
		{\bf For $\rho=0$:} Since $\wh{Q}_0 \in L^2\!\bigl(\omega \times I;\, H^1_{\mathrm{per},0}(Y';\, \Szero)\bigr)$, 
		we have $\int_{Y'} \wh{Q}_0\, dy' = 0$ for a.e.\ $(x', y_3)$. The Poincar\'{e}-Wirtinger 
		inequality on $Y'$ gives
		$\|\wh{Q}_0\|^2_{L^2(Y')} \leq C_{Y'}\|\nabla_{y'}\wh{Q}_0\|^2_{L^2(Y')}$,
		and therefore
		\[
		\|\wh{Q}_0\|^2_{H^1(Y')}
		\leq (1 + C_{Y'})\,\|\nabla_{y'}\wh{Q}_0\|^2_{L^2(Y')}.
		\]
		Integrating over $\omega \times I$:
		\begin{equation}
			\label{eq:Q-lower-rho0}
			\|\nabla_{y'}\wh{Q}_0\|^2_{L^2(\omega \times Y)}
			\;\geq\;
			\frac{1}{1 + C_{Y'}}\,
			\|\wh{Q}_0\|^2_{L^2(\omega \times I;\, H^1(Y'))}.
		\end{equation}
		
		{\bf For $\rho\in(0,\infty)$:} Since $\wh{Q}_\rho  \in L^2\!\bigl(\omega;\, H^1_{\mathrm{per},0}(Y;\, \Szero)\bigr)$ 
		with $\int_Y \wh{Q}_\rho\, dy = 0$, the Poincar\'{e}-Wirtinger inequality on the full 
		cell $Y$ gives
		\[
		\|\wh{Q}_\rho\|^2_{L^2(\omega \times Y)}
		\leq C_Y\,\|\nabla_y\wh{Q}_\rho\|^2_{L^2(\omega \times Y)}
		= C_Y\Bigl(
		\|\nabla_{y'}\wh{Q}_\rho\|^2_{L^2(\omega \times Y)}
		+ \rho^2\,\|\partial_{y_3}\wh{R}\|^2_{L^2(\omega \times Y)}
		\Bigr),
		\]
		where we used $\partial_{y_3}\wh{Q}_\rho = \rho\,\partial_{y_3}\wh{R}$. 
		Consequently,
		\begin{align*}
			\|\wh{Q}_\rho\|^2_{L^2(\omega \times I;\, H^1(Y'))}
			&= \|\wh{Q}_\rho\|^2_{L^2(\omega \times Y)}
			+ \|\nabla_{y'}\wh{Q}_\rho\|^2_{L^2(\omega \times Y)} \\
			&\leq (C_Y + 1)\,\|\nabla_{y'}\wh{Q}_\rho\|^2_{L^2(\omega \times Y)}
			+ C_Y\rho^2\,\|\partial_{y_3}\wh{R}\|^2_{L^2(\omega \times Y)} \\
			&\leq \bigl(C_Y(1+\rho^2) + 1\bigr)
			\Bigl(
			\|\nabla_{y'}\wh{Q}_\rho\|^2_{L^2(\omega \times Y)}
			+ \|\partial_{y_3}\wh{R}\|^2_{L^2(\omega \times Y)}
			\Bigr),
		\end{align*}
		where the last inequality uses $C_Y + 1 \leq C_Y(1+\rho^2)+1$ and 
		$C_Y\rho^2 \leq C_Y(1+\rho^2)+1$. Therefore
		\begin{equation}
			\label{eq:Q-lower-rho}
			\|\nabla_{y'}\wh{Q}_\rho\|^2_{L^2(\omega \times Y)}
			+ \|\partial_{y_3}\wh{R}\|^2_{L^2(\omega \times Y)}
			\;\geq\;
			\frac{1}{C_Y(1+\rho^2)+1}\,
			\|\wh{Q}_\rho\|^2_{L^2(\omega \times I;\, H^1(Y'))}.
		\end{equation}
		
		{\bf For $\rho=\infty$:} Here $\wh{Q}_\infty \in L^2\!\bigl(\omega \times I;\, H^1_{\mathrm{per},0}(Y';\, \Szero)\bigr)$. 
		Both $\wh{Q}$ and $\wh{R}_1$ have zero mean over $Y'$, hence so does 
		$\wh{Q}_\infty$. The Poincar\'{e}-Wirtinger inequality on $Y'$ applies as above,
		\begin{equation}
			\label{eq:Q-lower-infty}
			\|\nabla_{y'}\wh{Q}_\infty\|^2_{L^2(\omega \times Y)}
			\;\geq\;
			\frac{1}{1+C_{Y'}}\,
			\|\wh{Q}_\infty\|^2_{L^2(\omega \times I;\, H^1(Y'))}.
		\end{equation}
		Combining the equality~\cref{eq:upper-equality} with the lower bounds 
		\cref{eq:R-lower} and the appropriate estimate 
		among~\cref{eq:Q-lower-rho0}--\cref{eq:Q-lower-infty}, we obtain in each regime
		\[
		\|G_\rho\|^2_{L^2(\omega \times Y)}
		\;\geq\;
		c_\rho\Bigl(
		\|\nabla'\Qc\|^2_{L^2(\omega)}
		+ \|\wh{Q}_\rho\|^2_{L^2(\omega \times I;\, H^1(Y'))}
		+ \|\wh{R}\|^2_{L^2(\omega \times Y';\, H^1(I))}
		\Bigr),
		\]
		with $c_\rho$ as stated.  This completes the proof.
	\end{proof}
	
%	\subsection{Proof of Lemma \ref{lem:density-Astar}}\label{app:den}

\addcontentsline{toc}{section}{References}
\bibliographystyle{plain}
{ \bibliography{Ref}
}
\end{document}